\theoremstyle{plain}
\newtheorem{theorem}{Theorem}[section]
\newtheorem{proposition}[theorem]{Proposition}
\newtheorem{lemma}[theorem]{Lemma}
\newtheorem{claim}[theorem]{Claim}
\newtheorem{corollary}[theorem]{Corollary}
\theoremstyle{definition}
\newtheorem{definition}[theorem]{Definition}
\theoremstyle{remark}
\newtheorem{remark}[theorem]{Remark}
\newenvironment{pf}{\begin{proof}}{\end{proof}}
\begin{document}
 
\title
[$K3$ surfaces with involution IV]
{$K3$ surfaces with involution, equivariant analytic torsion, and automorphic forms on the moduli space IV:  the structure of invariant}
\author{Shouhei Ma}
\address{Department of Mathematics, 
Tokyo Institute of Technology, 
Tokyo 152-8551, JAPAN}
\email{ma@math.titech.ac.jp}
\author{Ken-Ichi Yoshikawa}
\address{Department of Mathematics, 
Faculty of Science,
Kyoto University,
Kyoto 606-8502, JAPAN}
\email{yosikawa@math.kyoto-u.ac.jp}

\begin{abstract}
In \cite{Yoshikawa04}, a holomorphic torsion invariant of $K3$ surfaces with involution was introduced. 
In this paper, we completely determine its structure as an automorphic function on the moduli space of such $K3$ surfaces.
On every component of the moduli space, it is expressed as the product of an explicit Borcherds lift and a classical Siegel modular form. 
We also introduce its twisted version. We prove its modularity and a certain uniqueness of the modular form corresponding to
the twisted holomorphic torsion invariant. This is used to study an equivariant analogue of Borcherds' conjecture.
\end{abstract}

\maketitle

\tableofcontents

\section*{Introduction}\label{sect:0}
In \cite{Yoshikawa04}, a holomorphic torsion invariant of $K3$ surfaces with involution was introduced by the second-named author.
The purpose of the present paper is to completely determine the structure of this invariant as a function on the moduli space of such $K3$ surfaces.
We will express it using Borcherds products and Siegel modular forms. 
Let us explain our result in detail.
\par
A pair consisting of a $K3$ surface and its anti-symplectic involution is called a $2$-elementary $K3$ surface.
By Nikulin \cite{Nikulin83}, the deformation type of a $2$-elementary $K3$ surface is determined by the isometry class of 
the invariant lattice of the induced involution on the second integral cohomology.
There exist $75$ deformation types, labeled by primitive $2$-elementary Lorentzian sublattices of the K3 lattice ${\Bbb L}_{K3}$.
Let $M\subset{\Bbb L}_{K3}$ be one such sublattice of rank $r$. 
Its orthogonal complement $\Lambda = M^{\perp}\cap{\Bbb L}_{K3}$ is a $2$-elementary lattice of signature $(2,20-r)$. 
Let $\Omega_{\Lambda}^{+}$ be the Hermitian domain of type IV of dimension $20-r$ associated to $\Lambda$
and let $O^{+}(\Lambda)$ be the index $2$ subgroup of the orthogonal group of $\Lambda$ preserving $\Omega_{\Lambda}^{+}$. 
Via the period map, the moduli space of $2$-elementary $K3$ surfaces of type $M$ is isomorphic to the quotient
$$
{\mathcal M}_{\Lambda}^{0} = (\Omega_{\Lambda}^{+}-{\mathcal D}_{\Lambda})/O^{+}(\Lambda),
$$
where ${\mathcal D}_{\Lambda}$ is the discriminant divisor.
Hence ${\mathcal M}_{\Lambda}^{0}$ is a Zariski open subset of a modular variety of orthogonal type of dimension $20-r$.
\par
In \cite{Yoshikawa04}, a holomorphic torsion invariant of $2$-elementary $K3$ surfaces was defined as follows.
Let $(X,\iota)$ be a $2$-elementary $K3$ surface of type $M$. Write $X^{\iota}$ for the set of fixed points of $\iota$. 
Depending on $M$, $X^{\iota}$ is either empty or the disjoint union of smooth curves of total genus $g=g(M)$. 
(See \S \ref{sect:3.2} for a formula for $g$.)
When $X^{\iota}$ is empty, the corresponding type is unique and is called {\em exceptional}, in which case $(X,\iota)$ is the universal covering
of an Enriques surface endowed with the non-trivial covering transformation.
Take an $\iota$-invariant Ricci-flat K\"ahler form $\gamma$ on $X$ (cf. \cite{Yau78}) and a holomorphic $2$-form $\eta\not=0$ on $X$. 
Let $\tau_{{\bf Z}_{2}}(X,\gamma)(\iota)$ be the equivariant analytic torsion of $(X,\gamma)$ with respect to the $\iota$-action
and let $\tau(X^{\iota},\gamma|_{X^{\iota}})$ be the analytic torsion of $(X^{\iota},\gamma|_{X^{\iota}})$ (cf. \cite{RaySinger73}, \cite{Bismut95}).
Then the real number
$$
\tau_{M}(X,\iota)
=
{\rm Vol}(X,\gamma)^{\frac{14-r}{4}}
\tau_{{\bf Z}_{2}}(X,\gamma)(\iota)
{\rm Vol}(X^{\iota},\gamma|_{X^{\iota}})
\tau(X^{\iota},\gamma|_{X^{\iota}})
$$
depends only on the isomorphism class of $(X,\iota)$, so that it gives rise to a function $\tau_{M}$ on ${\mathcal M}_{\Lambda}^{0}$.
The goal of this paper is to give an explicit expression of $\tau_M$ in terms of modular forms.
It turns out that $\tau_{M}$ is expressed by two types of modular forms: Borcherds products and Siegel modular forms.
Let us explain these modular forms.
\par
Let $\rho_{\Lambda}\colon{\rm Mp}_{2}({\bf Z})\to{\rm GL}({\bf C}[A_{\Lambda}])$ be the Weil representation attached to 
the discriminant group $A_{\Lambda}$ of $\Lambda$, where ${\bf C}[A_{\Lambda}]$ is the group ring of $A_{\Lambda}$. 
By Borcherds \cite{Borcherds98}, 
given an $O^{+}(\Lambda)$-invariant elliptic modular form $f$ of type $\rho_{\Lambda}$ and of weight $1-(20-r)/2$
with integral Fourier expansion, we can take its Borcherds lift $\Psi_{\Lambda}(\cdot,f)$. 
This is a (possibly meromorphic) automorphic form for $O^{+}(\Lambda)$ with infinite product expansion, 
and its Petersson norm $\|\Psi_{\Lambda}(\cdot,f)\|$ descends to a function on ${\mathcal M}_{\Lambda}^{0}$.
To express $\tau_{M}$, the Borcherds lift of the following elliptic modular form will be used.
Let $\eta(\tau)$ be the Dedekind $\eta$-function and let $\theta_{{\Bbb A}_{1}^{+}}(\tau)$ be the theta series of the $A_{1}$-lattice.
We put
$$
\phi_{\Lambda}(\tau)=\eta(\tau)^{-8}\eta(2\tau)^{8}\eta(4\tau)^{-8}\,\theta_{{\Bbb A}_{1}^{+}}(\tau)^{r-10}. 
$$ 
This induces the following modular form of type $\rho_{\Lambda}$ (cf. \cite{Borcherds00}, \cite{Scheithauer06}, \cite{Yoshikawa13}):
$$
F_{\Lambda}
=
\sum_{\gamma\in\widetilde{\Gamma}_{0}(4)\backslash{\rm Mp}_{2}({\bf Z})}
\phi_{\Lambda}|_{\gamma}\,
\rho_{\Lambda}(\gamma^{-1})\,{\bf e}_{0},
$$
where $|_{\gamma}$ is the Petersson slash operator and ${\bf e}_{0}\in{\bf C}[A_{\Lambda}]$ is the vector corresponding to $0\in A_{\Lambda}$.
Except for two types, the Borcherds lift of $(2^{g-1}+\delta_{r,10})F_{\Lambda}$ will be used in the expression of $\tau_{M}$,
where $\delta_{i,j}$ denotes the Kronecker delta.
\par
On the other hand, Siegel modular forms also yield functions on ${\mathcal M}_{\Lambda}^{0}$.
The period map for the fixed curves of $2$-elementary $K3$ surfaces induces a holomorphic map
$$
J_{M}\colon{\mathcal M}_{\Lambda}^{0}\to{\mathcal A}_{g},
$$
where ${\mathcal A}_{g}$ is the Siegel modular variety of degree $g$.
Then the pullback of the Petersson norm of a Siegel modular form by $J_{M}$ is a function on ${\mathcal M}_{\Lambda}^{0}$.
The following Siegel modular forms of weights $2^{g+1}(2^{g}+1)$ and $2(2^{g}-1)(2^{g}+2)$ will be used to express $\tau_{M}$:
$$
\chi_{g}^{8}=\prod_{(a,b)\,{\rm even}}\theta_{a,b}^{8},
\qquad
\Upsilon_{g}=\chi_{g}^{8}\sum_{(a,b)\,{\rm even}}\theta_{a,b}^{-8},
$$
where $\theta_{a,b}$ is the Riemann theta constant with even characteristic $(a, b)$.
Let $\|\chi_{g}\|^{2}$ and $\|\Upsilon_{g}\|^{2}$ be their Petersson norms. 
Hence $J_{M}^{*}\|\chi_{g}^{8}\|$ and $J_{M}^{*}\|\Upsilon_{g}\|$ are functions on ${\mathcal M}_{\Lambda}^{0}$.
For convenience, if $M$ is exceptional, we set $g=1$ and $J_{M}^{*}\|\Upsilon_{g}\|=1$.
\par
The main result of this paper is the following 
(Theorems~\ref{thm:structure:g<6:r<10}, \ref{thm:structure:r=6:delta=0}, \ref{thm:structure:r=10:delta=0}, \ref{thm:formula:Phi:U}, \ref{thm:formula:Phi:U(2)}).

\begin{theorem}
\label{thm:main:theorem:1}
Let $M\subset{\Bbb L}_{K3}$ be a primitive $2$-elementary Lorentzian sublattice of rank $r$ with orthogonal complement $\Lambda$
and let $\delta\in\{0,1\}$ be the parity of its discriminant form.
Then there exists a constant $C_{M}>0$ depending only on $M$ such that 
the following equality of functions on ${\mathcal M}_{\Lambda}^{0}$ holds:
\begin{itemize}
\item[(1)]
If $(r,\delta)\not=(2,0),(10,0)$, then
$$
\tau_{M}^{-2^{g}(2^{g}+1)}
=
C_{M}\,
\left\|
\Psi_{\Lambda}(\cdot,2^{g-1}F_{\Lambda}+f_{\Lambda})
\right\|
\cdot 
J_{M}^{*}
\left\|
\chi_{g}^{8}
\right\|.
$$
\item[(2)]
If $(r,\delta)=(2,0)$ or $(10,0)$, then
$$
\tau_{M}^{-(2^{g}-1)(2^{g}+2)}
=
C_{M}\,
\left\|
\Psi_{\Lambda}(\cdot,2^{g-1}F_{\Lambda}+f_{\Lambda})
\right\|
\cdot 
J_{M}^{*}
\left\|
\Upsilon_{g}
\right\|.
$$
\end{itemize}
Here $f_{\Lambda}$ is the elliptic modular form of type $\rho_{\Lambda}$ given by $f_{\Lambda}=\delta_{r,10}\,F_{\Lambda}$ for $r\not=2$
and by \eqref{eqn:correcting:modular:form:(r,a)=(2,0)}, \eqref{eqn:correcting:modular:form:(r,a)=(2,2)} below for $r=2$.
\end{theorem}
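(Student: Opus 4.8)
The plan is to realize both sides as Petersson norms of automorphic forms on $\Omega_{\Lambda}^{+}$ and then to force equality by matching their weights, their divisors on the discriminant, and their growth at the Baily--Borel boundary, so that the difference of their logarithms is a bounded pluriharmonic function, hence a constant, with positivity yielding $C_{M}>0$. Since there are $75$ lattice types, the argument proceeds type by type, the boundary values $r=2$ and $r=10$ requiring the corrections encoded in $f_{\Lambda}$.

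First I would establish that $\tau_{M}$, raised to the power appearing on the left, is itself the Petersson norm of a holomorphic automorphic form for $O^{+}(\Lambda)$. This rests on the anomaly and curvature formulas for the equivariant Quillen metric (Bismut): over the universal family of $2$-elementary $K3$ surfaces one computes $dd^{c}\log\tau_{M}$ and finds it to be the sum of a multiple of the Bergman K\"ahler form $\omega_{\Lambda}$ on $\Omega_{\Lambda}^{+}$ and the pullback $J_{M}^{*}\omega_{\mathcal{A}_{g}}$ of the Siegel K\"ahler form coming from the fixed-curve terms ${\rm Vol}(X^{\iota},\cdot)\,\tau(X^{\iota},\cdot)$ in the definition of $\tau_{M}$. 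This simultaneously fixes the weight in the $\Omega_{\Lambda}^{+}$-direction and in the $\mathcal{A}_{g}$-direction and shows that the corresponding form is holomorphic away from $\mathcal{D}_{\Lambda}$, with only logarithmic singularities along $\mathcal{D}_{\Lambda}$.

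The decisive step is to match this form with the right-hand side along $\mathcal{D}_{\Lambda}$. Near a generic point of a component of the discriminant the $K3$ surface acquires an $A_{1}$-configuration and the fixed curve $X^{\iota}$ acquires a node; the local index computation yields the precise logarithmic rate of degeneration of $\tau_{M}$ there. On the automorphic side, Borcherds's theorem expresses the divisor of $\Psi_{\Lambda}(\cdot,2^{g-1}F_{\Lambda}+f_{\Lambda})$ through the principal part of the vector-valued input form. The generating series $\phi_{\Lambda}=\eta(\tau)^{-8}\eta(2\tau)^{8}\eta(4\tau)^{-8}\,\theta_{{\Bbb A}_{1}^{+}}(\tau)^{r-10}$ is engineered so that, after the Weil-representation lift to $F_{\Lambda}$, its negative Fourier coefficients reproduce exactly these rates uniformly over the $75$ types, while the scalar $2^{g-1}$ together with the correction $f_{\Lambda}$ adjusts the multiplicities so that $\|\Psi_{\Lambda}(\cdot,2^{g-1}F_{\Lambda}+f_{\Lambda})\|$ and $\tau_{M}^{-w}$ have identical divisors on $\mathcal{D}_{\Lambda}$. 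I expect the main obstacle to lie precisely in this coincidence of principal parts, since it couples the delicate local analysis of equivariant torsion near a nodal/$A_{1}$ degeneration to the combinatorics of the lift $F_{\Lambda}$.

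It remains to account for the factor supported away from $\mathcal{D}_{\Lambda}$, the Siegel contribution. Using the explicit expression of the analytic torsion of a genus-$g$ Riemann surface in terms of even theta constants (a Thomae/Fay-type formula), the fixed-curve part of $\tau_{M}$ is identified with the pullback under $J_{M}$ of the Petersson norm of a product of theta constants. For $(r,\delta)\neq(2,0),(10,0)$ all even characteristics enter symmetrically and one obtains $\|\chi_{g}^{8}\|$, whose weight is consistent with the exponent $2^{g}(2^{g}+1)$; for the two remaining pairs $(r,\delta)=(2,0),(10,0)$ the fixed configuration is special, the symmetry among even characteristics breaks down, and $\chi_{g}^{8}$ must be replaced by $\Upsilon_{g}$, which accounts for the exponent $(2^{g}-1)(2^{g}+2)$, paralleling the corrections in $f_{\Lambda}$ forced at $r=2$ and $r=10$. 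Having matched weights, divisors on $\mathcal{D}_{\Lambda}$, and boundary growth, the difference $\log\tau_{M}^{-w}-\log\bigl(\|\Psi_{\Lambda}(\cdot,2^{g-1}F_{\Lambda}+f_{\Lambda})\|\cdot J_{M}^{*}\|\cdot\|\bigr)$ has vanishing $dd^{c}$, extends pluriharmonically across $\mathcal{D}_{\Lambda}$, and is bounded at the boundary; by the $dd^{c}$-lemma on the compactified modular variety it is constant, and positivity of $\tau_{M}$ and of the Petersson norms gives $C_{M}>0$.
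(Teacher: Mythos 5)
Your overall frame --- realize both sides as Petersson norms of automorphic forms of equal weight, match divisors, and conclude by a Koecher-type principle on the compactified modular variety --- is indeed how the comparison is closed in the paper (the quotient $\varphi_{\Lambda}$ of the two forms is shown to be a holomorphic function on ${\mathcal M}_{\Lambda}^{*}$ with effective divisor, hence constant), and the automorphy of $\tau_{M}$ with prescribed weight and divisor is taken as input from the Quillen-metric computations. But there is a genuine gap at what you call the decisive step: you match divisors only along ${\mathcal D}_{\Lambda}$, whereas by Borcherds' theorem the divisor of $\Psi_{\Lambda}(\cdot,2^{g-1}F_{\Lambda}+f_{\Lambda})$ contains the characteristic Heegner divisor ${\mathcal H}_{\Lambda}$ with a \emph{negative} coefficient (a pole of order $2^{g-1}\cdot2^{16-r(\Lambda)}=2^{k+4}$ when $r<10$), coming from the term $-2^{16-r(\Lambda)}q^{(12-r(\Lambda))/4}\,{\bf e}_{{\bf 1}_{\Lambda}}$ in the principal part of $F_{\Lambda}$. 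Since $\tau_{M}$ is smooth and positive on all of ${\mathcal M}_{\Lambda}^{0}$, in particular on ${\mathcal H}_{\Lambda}\setminus{\mathcal D}_{\Lambda}$, this pole must be cancelled exactly by a zero of $J_{M}^{*}\chi_{g}^{8}$ along ${\mathcal H}_{\Lambda}$. Establishing that ${\rm div}(J_{M}^{*}\chi_{g}^{8})=2^{2g-1}{\mathcal D}_{\Lambda}^{-}+2^{k+4}{\mathcal H}_{\Lambda}$ (plus a ${\mathcal D}_{\Lambda}^{+}$ term only for $r=1$) is the core of the paper: it requires the geometric characterization of when the fixed curve carries an effective even theta characteristic (rank-$3$ quadrics cutting out bi-anticanonical curves on del Pezzo surfaces), an induction on $k$ using the compatibility $J_{M}|_{H_{d}}=J_{[M\perp d]}$ of Torelli maps together with the pullback relations $i^{*}{\mathcal H}_{\Lambda}=2{\mathcal H}_{\Lambda\cap d^{\perp}}$, and the lower bounds $a_{g}\geq2^{2g-1}$, $b_{g}\geq2^{4}$ obtained from the Teixidor i Bigas formula ${\rm div}(j^{*}\chi_{g})|_{{\frak M}_{g}}=2\,{\frak M}'_{g}$. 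None of this appears in your outline, and without it the divisor comparison cannot be closed.

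Two further points would fail as stated. Your identification of ``the fixed-curve part of $\tau_{M}$'' with a product of theta constants via a Thomae/Fay formula is not available: the spin-$1/2$ bosonization formula expresses the \emph{combination} ${\rm Vol}(C,\omega)\tau(C,\omega)\tau(C,\Sigma;\omega)^{2}$ as ${\frak c}_{g}\|\theta_{a,b}\|^{-4}$, not the scalar torsion $\tau(C,\omega)$ alone; the paper uses bosonization only afterwards, in Section~\ref{sect:10}, to deduce the spin-twisted Theorem~\ref{thm:main:theorem:2} from Theorem~\ref{thm:main:theorem:1}, not to prove Theorem~\ref{thm:main:theorem:1}. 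And your local picture near the discriminant ignores the splitting ${\mathcal D}_{\Lambda}={\mathcal D}_{\Lambda}^{+}+{\mathcal D}_{\Lambda}^{-}$: the fixed curve acquires a node only along ${\mathcal D}_{\Lambda}^{-}$, while along ${\mathcal D}_{\Lambda}^{+}$ it stays smooth and the lattice type jumps; the two components enter ${\rm div}(J_{M}^{*}\chi_{g}^{8})$ with different coefficients, and this distinction is precisely what forces the replacement of $\chi_{g}^{8}$ by $\Upsilon_{g}$ (and the estimate of Lemma~\ref{lemma:estimate:alpha:U(2)}) in the cases $(r,\delta)=(2,0),(10,0)$ where $J_{M}^{*}\chi_{g}$ vanishes identically.
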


The majority is the case (1), which covers $67$ types. 
The case (2) covers $8$ types.
The formula of (1) does not hold in case (2) because $J_{M}^{*}\chi_{g}$ vanishes identically there. 
In \cite{Yoshikawa04}, \cite{Yoshikawa12}, the automorphy of $\tau_{M}$ was proved (cf.~\eqref{eqn:automorphic:property:sect:0} below), 
but the corresponding modular form was given explicitly only for the exceptional $M$.
In \cite{Yoshikawa13}, the elliptic modular form $F_{\Lambda}$ was introduced and (1) was proved for $r\geq10$.
Theorem~\ref{thm:main:theorem:1} completes this series of work. As a by-product of Theorem~\ref{thm:main:theorem:1}, we prove
the quasi-affinity of ${\mathcal M}_{\Lambda}^{0}$ when $r>6$ (Theorem~\ref{thm:quasiaffinity}).
\par
The invariant $\tau_{M}$ is closely related to the BCOV invariant $\tau_{\rm BCOV}$ of Calabi-Yau threefolds, which was introduced in \cite{FLY08} 
after the prediction of Bershadsky-Cecotti-Ooguri-Vafa \cite{BCOV94} on the mirror symmetry at genus one.
On the moduli space of Calabi-Yau threefolds of Borcea-Voisin associated to $2$-elementary $K3$ surfaces of type $M$ and elliptic curves, 
one has the following equality of functions (cf. \cite{Yoshikawa14})
\begin{equation}
\label{eqn:factorization:BCOV:inv}
\tau_{\rm BCOV}={\mathcal C}_{M}\,\tau_{M}^{-4}\,\|\eta^{24}\|^{2},
\end{equation}
where ${\mathcal C}_{M}$ is a constant depending only on $M$. 
By Theorem~\ref{thm:main:theorem:1} and \eqref{eqn:factorization:BCOV:inv}, 
$\tau_{\rm BCOV}$ for Borcea-Voisin threefolds of type $M$ is given by the product of the Petersson norms of the modular forms
$\Psi_{\Lambda}(\cdot,F_{\Lambda})$, $\chi_{g}^{8}$ (or $\Upsilon_{g}$) and $\eta$. 
Since 
$$
F_{1}=-\log\tau_{\rm BCOV}
$$ 
is the genus one string amplitude $F_{1}$ in B-model (cf. \cite{BCOV94}, \cite{FLY08}),
Theorem~\ref{thm:main:theorem:1} gives an exact result of $F_{1}$ in B-model for all Borcea-Voisin threefolds. 
\par
In Theorem~\ref{thm:main:theorem:1}, the automorphic form corresponding to $\tau_{M}$ splits into two factors.
It is natural to ask if this factorization is realized at the level of holomorphic torsion invariants of $2$-elementary $K3$ surfaces.
Thanks to the spin-$1/2$ bosonization formula \cite{AGMV86}, \cite{BostNelson86}, \cite{Fay92}, we have an affirmative answer
to this question. Let us introduce the following twisted version of $\tau_{M}$
$$
\tau_{M}^{\rm spin}(X,\iota)
=
\prod_{\Sigma^{2}=K_{X^{\iota}},\,h^{0}(\Sigma)=0}
{\rm Vol}(X,\gamma)^{\frac{14-r}{4}}
\tau_{{\bf Z}_{2}}(X,\gamma)(\iota)\,
\tau(X^{\iota},\Sigma;\gamma|_{X^{\iota}})^{-2},
$$
where $\Sigma$ runs over all {\em ineffective} theta characteristics on $X^{\iota}$ and 
$\tau(X^{\iota},\Sigma;\gamma|_{X^{\iota}})$ is the analytic torsion of $\Sigma$ with respect to $\gamma|_{X^{\iota}}$.
It turns out that $\tau_{M}^{\rm spin}(X,\iota)$ is independent of the choice of an $\iota$-invariant Ricci-flat K\"ahler form $\gamma$
and gives rise to a function $\tau_{M}^{\rm spin}$ on ${\mathcal M}_{\Lambda}^{0}$.
Our second result is stated as follows (cf. Theorem~\ref{thm:tau:spin}):

\begin{theorem}
\label{thm:main:theorem:2}
Let ${\mathcal H}_{\Lambda}\subset{\mathcal M}_{\Lambda}$ be the characteristic Heegner divisor (cf. Section~\ref{sect:2}).
Then there exists a constant $C'_{M}>0$ depending only on $M$ such that 
the following equality of functions on ${\mathcal M}_{\Lambda}^{0}\setminus{\mathcal H}_{\Lambda}$ holds:
$$
\tau_{M}^{\rm spin}=C'_{M}\,\|\Psi_{\Lambda}(\cdot,2^{g-1}F_{\Lambda}+f_{\Lambda})\|^{-1/2}.
$$
\end{theorem}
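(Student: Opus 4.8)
The plan is to deduce Theorem~\ref{thm:main:theorem:2} from the already-established Theorem~\ref{thm:main:theorem:1} by means of the spin-$1/2$ bosonization formula of \cite{AGMV86}, \cite{BostNelson86}, \cite{Fay92}, which is the only genuinely new analytic ingredient; the rest is a comparison of the defining factors of $\tau_{M}^{\rm spin}$ with those of $\tau_{M}$. Throughout I write $C=X^{\iota}$ for the fixed curve and $N'$ for the number of ineffective theta characteristics occurring in the product, which will turn out to be $N'=2^{g-1}(2^{g}+1)$ in case (1) and $N'=2^{g-1}(2^{g}+1)-1$ in case (2).

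First I would identify the index set of the product. On ${\mathcal M}_{\Lambda}^{0}\setminus{\mathcal H}_{\Lambda}$ every even theta characteristic on $C$ is ineffective and every odd one is effective, so the $\Sigma$ with $\Sigma^{2}=K_{C}$ and $h^{0}(\Sigma)=0$ are exactly the even ones; by construction ${\mathcal H}_{\Lambda}$ is the locus where some even theta constant vanishes, i.e. where an even characteristic becomes effective (in case (2) one even theta constant vanishes identically along $J_{M}({\mathcal M}_{\Lambda}^{0})$, whence $N'=2^{g-1}(2^{g}+1)-1$). Factoring out of the definition of $\tau_{M}^{\rm spin}$ the terms ${\rm Vol}(X,\gamma)^{(14-r)/4}$, $\tau_{{\bf Z}_{2}}(X,\gamma)(\iota)$ and the anomaly exponential, which are independent of $\Sigma$, and comparing with the definition of $\tau_{M}$, I obtain
$$
\tau_{M}^{\rm spin}
=
\tau_{M}^{N'}
\prod_{\Sigma}
\frac{\tau(C,\Sigma;\gamma|_{C})^{-2}}
{{\rm Vol}(C,\gamma|_{C})\,\tau(C,\gamma|_{C})},
$$
the $N'$ common copies of ${\rm Vol}(X)^{(14-r)/4}\tau_{{\bf Z}_{2}}(X)(\iota)\exp[\cdots]$ assembling, together with ${\rm Vol}(C)\tau(C)$, into the power $\tau_{M}^{N'}$.

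Next I would apply the bosonization formula to each factor. For an ineffective even theta characteristic $\Sigma_{a,b}$ on $C$ it expresses the ratio $\tau(C,\Sigma_{a,b})^{-2}/[{\rm Vol}(C)\tau(C)]$ as a universal constant $c_{g}$ times the Petersson norm $\|\theta_{a,b}\|^{4}$, the metric-dependent volume, ordinary-torsion and local curvature contributions cancelling exactly between numerator and denominator (the exponent $-2$ in the definition of $\tau_{M}^{\rm spin}$ being calibrated precisely so that the product below reproduces the Siegel factor of Theorem~\ref{thm:main:theorem:1}). Using $\prod_{(a,b)\,{\rm even}}\theta_{a,b}=\chi_{g}$ (respectively $\prod_{(a,b)\neq(a_{0},b_{0})}\theta_{a,b}=\Upsilon_{g}^{1/8}$ along the vanishing locus in case (2)), the theta factors assemble via $\|\chi_{g}\|^{4}=\|\chi_{g}^{8}\|^{1/2}$ (resp. $\|\Upsilon_{g}^{1/8}\|^{4}=\|\Upsilon_{g}\|^{1/2}$) into
$$
\tau_{M}^{\rm spin}
=
C''_{M}\,\tau_{M}^{N'}\,J_{M}^{*}\|\chi_{g}^{8}\|^{1/2}
\qquad
\left(\text{resp. } C''_{M}\,\tau_{M}^{N'}\,J_{M}^{*}\|\Upsilon_{g}\|^{1/2}\right).
$$
Finally, substituting Theorem~\ref{thm:main:theorem:1}, which reads $\tau_{M}^{-2N'}=C_{M}\|\Psi_{\Lambda}(\cdot,2^{g-1}F_{\Lambda}+f_{\Lambda})\|\cdot J_{M}^{*}\|\chi_{g}^{8}\|$ in case (1) and likewise with $\Upsilon_{g}$ in case (2) (because $2^{g}(2^{g}+1)=2N'$ and $(2^{g}-1)(2^{g}+2)=2N'$ respectively), eliminates both $\tau_{M}^{N'}$ and the Siegel factor and yields $\tau_{M}^{\rm spin}=C'_{M}\|\Psi_{\Lambda}(\cdot,2^{g-1}F_{\Lambda}+f_{\Lambda})\|^{-1/2}$.

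The hard part will be the third step: establishing the bosonization formula in exactly the right normalization for $C=X^{\iota}$, which is in general a \emph{disjoint union} of smooth curves. I would need the multiplicativity of analytic torsion and of theta characteristics over connected components, the factorization $\theta_{a,b}=\prod_{i}\theta_{a_{i},b_{i}}$ of the theta constants along the decomposable locus of ${\mathcal A}_{g}$ containing $J_{M}({\mathcal M}_{\Lambda}^{0})$, and — most delicately — the verification that the metric normalizations (the induced K\"ahler metric $\gamma|_{C}$, the $L^{2}$-metrics on the determinant lines, and the Petersson normalization of $\theta_{a,b}$) are mutually compatible, so that the volume, ordinary-torsion and local curvature terms cancel and only the universal constant $c_{g}$ survives. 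Once this pointwise identity is secured the remaining steps are formal, and the $\gamma$-independence already known for $\tau_{M}$ is inherited by $\tau_{M}^{\rm spin}$, so that the displayed equality indeed holds as functions on ${\mathcal M}_{\Lambda}^{0}\setminus{\mathcal H}_{\Lambda}$.
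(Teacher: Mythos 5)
Your proposal is correct and follows essentially the same route as the paper: the paper's proof of this theorem (Theorem~\ref{thm:tau:spin}) likewise writes $\tau_{M}^{\rm spin}=\tau_{M}^{N'}/\prod_{\Sigma}\widetilde{\tau}_{g}(X^{\iota},\Sigma)$ with $\widetilde{\tau}_{g}(C,\Sigma)={\rm Vol}(C,\omega)\tau(C,\omega)\tau(C,\Sigma;\omega)^{2}$, applies the spin-$1/2$ bosonization formula $\widetilde{\tau}_{g}=\mathfrak{c}_{g}\|\theta_{a,b}\|^{-4}$ componentwise (with the constants $\mathfrak{c}_{g},\mathfrak{c}_{0}^{k}$ and the convention $\|\theta_{a,b}\|=1$ in genus $0$ handling the disconnected fixed curve, which is the delicate normalization point you flagged), assembles the product into $J_{M}^{*}\|\chi_{g}^{8}\|^{1/2}$ or $J_{M}^{*}\|\Upsilon_{g}\|^{1/2}$ according as no or exactly one even theta constant vanishes, and then substitutes Theorem~\ref{thm:main:theorem:1}. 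The only ingredient you leave implicit that the paper makes explicit is the case distinction for the exceptional and $(r,l,\delta)=(10,8,0)$ types, where $X^{\iota}$ is empty or a pair of elliptic curves.
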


We remark that if ${\mathcal H}_{\Lambda}\not=\emptyset$, then $\tau_{M}^{\rm spin}$ jumps along ${\mathcal H}_{\Lambda}$
and thus $\tau_{M}^{\rm spin}$ is a {\em discontinuous} function on ${\mathcal M}_{\Lambda}^{0}$.
For an explicit relation between the constants in Theorems~\ref{thm:main:theorem:1} and \ref{thm:main:theorem:2},
see Section~\ref{sect:10.2} below. 
After Theorem~\ref{thm:main:theorem:2}, it is very likely that $C'_{M}$ can be determined up to an algebraic number 
by an Arakelov geometric study of $2$-elementary $K3$ surfaces with maximal Picard number. 
This subject will be discussed elsewhere. 
Theorem~\ref{thm:main:theorem:2} can be interpreted as a formula for the equivariant determinant of Laplacian 
(with certain correction term) on the space of invariant Ricci-flat metrics on a $K3$ surface with involution. 
In Section~\ref{sect:11}, we use this interpretation to study an equivariant analogue of Borcherds' conjecture \cite{Borcherds98}.
\par
According to Theorem~\ref{thm:main:theorem:2}, the invariant $\tau_{M}^{\rm spin}$ is elliptic modular in the sense that 
it is the Borcherds lift of an elliptic modular form. When ${\mathcal D}_{\Lambda}\not=\emptyset$, the corresponding modular form 
is determined by $\tau_{M}^{\rm spin}$ in a canonical manner (cf. Theorem~\ref{thm:converse}). In this way, there is a natural one-to-one 
correspondence between the holomorphic torsion invariants $\tau_{M}^{\rm spin}$ and the elliptic modular forms $2^{g-1}F_{\Lambda}+f_{\Lambda}$.
A conceptual account for this unexpected elliptic modularity as well as the geometric meaning of the corresponding modular forms is strongly desired. 
\par
Let us explain the outline of the proof of Theorem~\ref{thm:main:theorem:1}.
For the sake of simplicity, we explain only the major case $\delta=1$.
Since $\tau_{M}$ was determined in \cite{Yoshikawa13} when $g\leq2$, we assume $3\leq g\leq10$.
The strategy is summarized as follows:
\begin{itemize}
\item[(a)]
Reduce Theorem~\ref{thm:main:theorem:1} to determining the divisor of $J_{M}^{*}\chi_{g}^{8}$.
\item[(b)]
Determine the support of the divisor of $J_{M}^{*}\chi_{g}^{8}$ for certain key lattices $M_{g,0}$.
\item[(c)]
Determine inductively from (b) the support of the divisor of $J_{M}^{*}\chi_{g}^{8}$ for all $M$ and prove sharp estimates for its coefficients.
\item[(d)]
Deduce Theorem~\ref{thm:main:theorem:1} from the estimates in step (c).
\end{itemize}
Let us see each step in more detail.
\par
{\bf (a) }  
From the theory of Quillen metrics \cite{BGS88}, \cite{Bismut95}, \cite{MaX00}, the automorphy of $\tau_{M}$ follows (\cite{Yoshikawa04}, \cite{Yoshikawa12}):
There exist $\ell\in{\bf Z}_{>0}$ and an automorphic form $\Phi_{M}$ on $\Omega_{\Lambda}^{+}$ with
\begin{equation}
\label{eqn:automorphic:property:sect:0}
\tau_{M}=\|\Phi_{M}\|^{-1/2\ell},
\qquad
{\rm wt}\,\Phi_{M}=((r-6)\ell,4\ell),
\qquad
{\rm div}\,\Phi_{M}=\ell\,{\mathcal D}_{\Lambda}.
\end{equation} 
(We will eventually see that $\ell$ can be taken to be $2^{g-1}(2^{g}+1)$.) 
By construction, the automorphic form $\Psi_{\Lambda}(\cdot,2^{g-1}\,F_{\Lambda})^{\ell}\otimes J_{M}^{*}\chi_{g}^{8\ell}$ has 
the same weight as $\Phi_{M}^{2^{g-1}(2^{g}+1)}$. 
Hence by the Koecher principle, it is sufficient to show that 
\begin{equation}
\label{eqn:effectivitytoprove:sect:0}
{\rm div}(\Psi_{\Lambda}(\cdot,2^{g-1}\,F_{\Lambda})^{\ell}\otimes J_{M}^{*}\chi_{g}^{8\ell}) 
\geq {\rm div}(\Phi_{M}^{2^{g-1}(2^{g}+1)}). 
\end{equation}
The divisor of $\Psi_{\Lambda}(\cdot,2^{g-1}\,F_{\Lambda})$ is calculated by the theory of Borcherds product \cite{Borcherds98}, 
so the problem is reduced to calculating the divisor of $J_{M}^{*}\chi_{g}^{8}$.  
\par
Since $\delta=1$, the isometry class of $M$ is determined by $g$ and the number $k+1$ of the components of the fixed curve.
(See Section~\ref{sect:3.2} for the formula for the invariants $g=g(M)$ and $k=k(M)$.)
Write $M_{g,k}$ for the lattice with these invariants and set $\Lambda_{g,k}=M_{g,k}^{\perp}$.
Let ${\mathcal D}_{\Lambda_{g,k}}={\mathcal D}_{\Lambda_{g,k}}^{+}+{\mathcal D}_{\Lambda_{g,k}}^{-}$ be the decomposition according to 
the type of $(-2)$-vectors of $\Lambda_{g,k}$, 
and let ${\mathcal H}_{\Lambda_{g,k}}$ be the characteristic Heegner divisor (cf. Section~\ref{sect:2}). 
We will proceed inductively on $k$. 
\par{\bf (b) }
We first study the series $k=0$ by a geometric approach (Section~\ref{sect:6}). 
Curves with vanishing theta constants are characterized by the existence of certain half-canonical bundle. 
By analyzing bi-anticanonical sections of Del Pezzo surfaces with such property, 
we prove that the support of ${\rm div}(J_{M_{g,0}}^{*}\chi_{g}^{8})$ on ${\mathcal M}_{\Lambda_{g,0}}^{0}$ coincides with
${\mathcal H}_{\Lambda_{g,0}}$. 
Hence there exist integers $a_{g}, b_{g}, c_{g}\geq0$ such that 
\begin{equation}
\label{eqn:sect0:2}
{\rm div}(J_{M_{g,0}}^{*}\chi_{g}^{8})
=
a_{g}\,{\mathcal D}_{\Lambda_{g,0}}^{-}
+
b_{g}\,{\mathcal H}_{\Lambda_{g,0}}
+
c_{g}\,{\mathcal D}_{\Lambda_{g,0}}^{+}.
\end{equation}
\par{\bf (c) }
We have a natural inclusion of lattices $\Lambda_{g,k+1}\subset\Lambda_{g,k}$, which induces an inclusion of domains
$i\colon\Omega_{\Lambda_{g,k+1}}\hookrightarrow\Omega_{\Lambda_{g,k}}$.
Then we show that $J_{M_{g,k}}\circ i=J_{M_{g,k+1}}$ outside a locus of codimension $2$
and that $i^{*}{\mathcal H}_{\Lambda_{g,k}}=2{\mathcal H}_{\Lambda_{g,k+1}}$ and similar relation between
${\mathcal D}_{\Lambda_{g,k}}^{\pm}$ and ${\mathcal D}_{\Lambda_{g,k+1}}^{\pm}$ (cf. Sections \ref{sect:2} and \ref{sect:3}).
This enables us to inductively extend \eqref{eqn:sect0:2} to the case $k\geq1$ (Section~\ref{sect:8}): 
\begin{equation*}
\label{eqn:sect0:2:k>0}
{\rm div}(J_{M_{g,k}}^{*}\chi_{g}^{8})
=
a_{g,k}\,{\mathcal D}_{\Lambda_{g,k}}^{-}
+
b_{g,k}\,{\mathcal H}_{\Lambda_{g,k}}
+
c_{g,k}\,{\mathcal D}_{\Lambda_{g,k}}^{+},
\end{equation*}
where $a_{g,k}, b_{g,k}, c_{g,k}$ are integers satisfying  
\begin{equation*}
\label{eqn:sect0:3}
a_{g,k}=a_{g},
\qquad
b_{g,k}=2^{k}b_{g},
\qquad
c_{g,k}=0\,\,(g<10).
\end{equation*}
By using a formula in \cite{Yoshikawa13} and the formula \cite{TeixidoriBigas88} for the thetanull divisor on the moduli space of curves, 
we also prove the estimates $a_{g}\geq2^{2g-1}$ and $b_{g}\geq2^{4}$.
\par{\bf (d) }
Substituting these relations and estimates in the left-hand side of \eqref{eqn:effectivitytoprove:sect:0}, 
we obtain the desired inequality when $g<10$. 
In case $g=10$, an extra argument is required. See Section~\ref{sect:8}.
Note that ${\mathcal H}_{\Lambda}$ vanishes if $r\geq10$, which explains why the proof of Theorem~\ref{thm:main:theorem:1}
is much simpler in the range $r\geq10$, $(r,\delta)\not=(10,0)$ (cf. \cite{Yoshikawa13}).
\par
This paper is organized as follows.
Sections~\ref{sect:1}-\ref{sect:5} are mainly preliminaries.
In Section~\ref{sect:6} (resp.~\ref{sect:7}), we study the even theta characteristics of the fixed curve for $2$-elementary $K3$ surfaces with $\delta=1$
(resp. $\delta=0$).
In Section~\ref{sect:8} (resp.~\ref{sect:9}), we prove Theorem~\ref{thm:main:theorem:1} when $\delta=1$ (resp. $\delta=0$).
In Section~\ref{sect:10}, we introduce the twisted holomorphic torsion invariant $\tau_{M}^{\rm spin}$ and prove Theorem~\ref{thm:main:theorem:2}.
In Section~\ref{sect:11}, we study an equivariant analogue of Borcherds' conjecture.
\par
{\bf Acknowledgements }
The first-named author was partially supported by JSPS KAKENHI Grant Numbers 12809324 and 22224001.
The second-named author is partially supported by JSPS KAKENHI Grant Numbers 23340017, 22244003, 22224001, 2522070.
He is grateful to Professor Jean-Michel Bismut for helpful discussions and to Professor Riccardo Salvati Manni for answering his questions.

\section{Lattices}
\label{sect:1}
\par
A free ${\bf Z}$-module of finite rank endowed with a non-degenerate, integral, symmetric bilinear form is called a lattice.
The rank and signature of a lattice $L$ are denoted by $r(L)$ and ${\rm sign}(L)=(b^{+}(L),b^{-}(L))$, respectively.
For a lattice $L=({\bf Z}^{r},\langle\cdot,\cdot\rangle)$ and an integer $k\in{\bf Z}\setminus\{0\}$, we define $L(k):=({\bf Z}^{r},k\langle\cdot,\cdot\rangle)$.
The group of isometries of $L$ is denoted by $O(L)$.
The set of roots of $L$ is defined as $\Delta_{L}:=\{d\in L;\,\langle d,d\rangle=-2\}$.  
For the root systems $A_{k}$, $D_{k}$, $E_{k}$, the corresponding {\em negative-definite} root lattices are denoted by 
${\Bbb A}_{k}$, ${\Bbb D}_{k}$, ${\Bbb E}_{k}$, respectively. We set ${\Bbb A}_{1}^{+}:={\Bbb A}_{1}(-1)$ etc.
The hyperbolic plane ${\Bbb U}$ is the even unimodular lattice of signature $(1,1)$.
\par
For an even lattice $L$, its dual lattice is denoted by $L^{\lor}$.
The finite abelian group $A_{L}:=L^{\lor}/L$ is called the discriminant group of $L$, which is equipped with
the ${\bf Q}/2{\bf Z}$-valued quadratic form $q_{L}$ called the discriminant form and the ${\bf Q}/{\bf Z}$-valued bilinear form $b_{L}$
called the discriminant bilinear form. The automorphism group of $(A_{L},q_{L})$ is denoted by $O(q_{L})$. 
\par
A lattice $L$ is $2$-elementary if there exists $l\in{\bf Z}_{\geq0}$ with $A_{L}\cong({\bf Z}/2{\bf Z})^{\oplus l}$. 
For a $2$-elementary lattice $L$, we set $l(L):=\dim_{{\bf F}_{2}}A_{L}$.
When $L$ is an even $2$-elementary lattice, the parity of $q_{L}$ is denoted by $\delta(L)\in\{0,1\}$.
By Nikulin \cite[Th.\,3.6.2]{Nikulin80a}, the isometry class of an indefinite even $2$-elementary lattice $L$
is determined by the triplet $({\rm sign}(L),l(L),\delta(L))$. 
For an even $2$-elementary lattice $L$, 
there is a unique element ${\bf 1}_{L}\in A_{L}$, called the {\em characteristic element}, such that 
$b_{L}(\gamma,{\bf 1}_{L})=q_{L}(\gamma)\mod{\bf Z}$ for all $\gamma\in A_{L}$. 
Then $g({\bf 1}_{L})={\bf 1}_{L}$ for all $g\in O(q_{L})$. By definition, ${\bf 1}_{L}=0$ if and only if $\delta(L)=0$.
\par
The $K3$-lattice is defined as the even unimodular lattice of signature $(3,19)$
\begin{center}
${\Bbb L}_{K3}:={\Bbb U}\oplus{\Bbb U}\oplus{\Bbb U}\oplus{\Bbb E}_{8}\oplus{\Bbb E}_{8}$.
\end{center}
It is classical that ${\Bbb L}_{K3}$ is isometric to the second integral cohomology lattice of a $K3$ surface.
For a sublattice $\Lambda\subset{\Bbb L}_{K3}$, we define $\Lambda^{\perp}:=\{l\in{\Bbb L}_{K3};\,\langle l,\Lambda\rangle=0\}$.
\par
A primitive $2$-elementary Lorentzian sublattice of ${\Bbb L}_{K3}$ isometric to ${\Bbb U}(2)\oplus{\Bbb E}_{8}(2)$ is said to be {\em exceptional}. 
(For the reason why this is exceptional, see \S~\ref{sect:3.2} below.)
Its orthogonal complements in ${\Bbb L}_{K3}$, i.e., ${\Bbb U}\oplus{\Bbb U}(2)\oplus{\Bbb E}_{8}(2)$ is also said to be exceptional.

\begin{proposition}
\label{prop:classification:sublattice:sign(2,r-2):K3} 
The isometry classes of primitive $2$-elementary sublattices $\Lambda$ of ${\Bbb L}_{K3}$ with signature $(2,r(\Lambda)-2)$ 
consist of the following $75$ classes in Table~\ref{table:list:sublattice:sign(2,r-2):K3}, where
\begin{center}
$g(\Lambda):=\{r(\Lambda)-l(\Lambda)\}/2$. 
\end{center}
\begin{table}[h]
\caption{Primitive $2$-elementary sublattices of ${\Bbb L}_{K3}$ with $b^{+}=2$}
\label{table:list:sublattice:sign(2,r-2):K3}
\begin{center}
\begin{tabular}{|c|cc|c|} \hline
$g$&$\delta=1$&\,&$\delta=0$
\\ \hline
$0$
&
$({\Bbb A}_{1}^{+})^{\oplus2}\oplus{\Bbb A}_{1}^{\oplus t}$
&
$(0\leq t\leq 9)$
&${\Bbb U}(2)^{\oplus2}$
\\ \hline
$1$
&
${\Bbb U}\oplus{\Bbb A}_{1}^{+}\oplus{\Bbb A}_{1}^{\oplus t}$
&
$(0\leq t\leq 9)$
&
${\Bbb U}\oplus{\Bbb U}(2)$,\,\,
${\Bbb U}(2)^{\oplus2}\oplus{\Bbb D}_{4}$,
\\
\,
&
\,
&
\,
&
${\Bbb U}\oplus{\Bbb U}(2)\oplus{\Bbb E}_{8}(2)$
\\ \hline
$2$
&
${\Bbb U}^{\oplus2}\oplus{\Bbb A}_{1}^{\oplus t}$
&
$(1\leq t\leq 9)$
&
${\Bbb U}^{\oplus2}$,\,\,
${\Bbb U}\oplus{\Bbb U}(2)\oplus{\Bbb D}_{4}$,\,\,
${\Bbb U}^{\oplus2}\oplus{\Bbb E}_{8}(2)$
\\ \hline
$3$
&
${\Bbb U}^{\oplus2}\oplus{\Bbb D}_{4}\oplus{\Bbb A}_{1}^{\oplus t}$
&
$(1\leq t\leq 6)$
&
${\Bbb U}^{\oplus2}\oplus{\Bbb D}_{4}$,\,\,
${\Bbb U}\oplus{\Bbb U}(2)\oplus{\Bbb D}_{4}^{\oplus2}$
\\ \hline
$4$
&
${\Bbb U}^{\oplus2}\oplus{\Bbb D}_{6}\oplus{\Bbb A}_{1}^{\oplus t}$
&
$(0\leq t\leq 5)$
&
${\Bbb U}^{\oplus2}\oplus{\Bbb D}_{4}^{\oplus2}$
\\ \hline
$5$
&
${\Bbb U}^{\oplus2}\oplus{\Bbb E}_{7}\oplus{\Bbb A}_{1}^{\oplus t}$
&
$(0\leq t\leq 5)$
&
${\Bbb U}^{\oplus2}\oplus{\Bbb D}_{8}$
\\ \hline
$6$
&
${\Bbb U}^{\oplus2}\oplus{\Bbb E}_{8}\oplus{\Bbb A}_{1}^{\oplus t}$
&
$(1\leq t\leq 5)$
&
${\Bbb U}^{\oplus2}\oplus{\Bbb E}_{8}$,\,\,
${\Bbb U}^{\oplus2}\oplus{\Bbb D}_{4}\oplus{\Bbb D}_{8}$
\\ \hline
$7$
&
${\Bbb U}^{\oplus2}\oplus{\Bbb D}_{4}\oplus{\Bbb E}_{8}\oplus{\Bbb A}_{1}^{\oplus t}$
&
$(1\leq t\leq 2)$
&
${\Bbb U}^{\oplus2}\oplus{\Bbb D}_{4}\oplus{\Bbb E}_{8}$
\\ \hline
$8$
&
${\Bbb U}^{\oplus2}\oplus{\Bbb D}_{6}\oplus{\Bbb E}_{8}\oplus{\Bbb A}_{1}^{\oplus t}$
&
$(0\leq t\leq 1)$
&\,
\\ \hline
$9$
&
${\Bbb U}^{\oplus2}\oplus{\Bbb E}_{7}\oplus{\Bbb E}_{8}\oplus{\Bbb A}_{1}^{\oplus t}$
&
$(0\leq t\leq 1)$
&
${\Bbb U}^{\oplus2}\oplus{\Bbb D}_{8}\oplus{\Bbb E}_{8}$
\\ \hline
$10$
&
${\Bbb U}^{\oplus2}\oplus{\Bbb E}_{8}^{\oplus2}\oplus{\Bbb A}_{1}$
&\,
&
${\Bbb U}^{\oplus2}\oplus{\Bbb E}_{8}^{\oplus2}$
\\ \hline
\end{tabular}
\end{center}
\end{table}
\end{proposition}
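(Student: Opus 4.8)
The plan is to reduce the classification to Nikulin's arithmetic of even $2$-elementary lattices. First I would record that in all but one case $\Lambda$ is indefinite (its signature is $(2,r-2)$ with $b^{+}=2$), so by the uniqueness theorem \cite[Th.\,3.6.2]{Nikulin80a} quoted above its isometry class is completely determined by the triple $({\rm sign}(\Lambda),l(\Lambda),\delta(\Lambda))$. Hence the whole problem is to decide \emph{which} triples $(r,l,\delta)$ with signature $(2,r-2)$ are realized by a primitive sublattice of ${\Bbb L}_{K3}$, and then to exhibit one explicit representative for each. The single definite case $r=2$, of signature $(2,0)$, lies outside the indefinite uniqueness theorem, and I would handle it separately, either by a direct rank-$2$ computation or, more uniformly, by passing to the indefinite orthogonal complement as in the next step.

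The key reduction is to the orthogonal complement. Since ${\Bbb L}_{K3}$ is even and unimodular, a primitive embedding $\Lambda\hookrightarrow{\Bbb L}_{K3}$ exists if and only if there is an even lattice $M=\Lambda^{\perp}$ with ${\rm sign}(M)=(1,21-r)$ and discriminant form $q_{M}\cong -q_{\Lambda}$; this is Nikulin's description of primitive embeddings into a unimodular lattice \cite{Nikulin80a}. Negation preserves both the elementary divisors and the parity, so $M$ is again $2$-elementary with $l(M)=l(\Lambda)=:l$ and $\delta(M)=\delta(\Lambda)=:\delta$. Thus the existence of the sublattice $\Lambda$ is equivalent to the \emph{simultaneous} existence of two even $2$-elementary lattices with invariants $\bigl((2,r-2),l,\delta\bigr)$ and $\bigl((1,21-r),l,\delta\bigr)$, together with the compatibility $q_{M}\cong -q_{\Lambda}$. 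Because a $2$-elementary discriminant form is pinned down by $(l,\delta)$ together with its signature modulo $8$, and since ${\rm sign}(M)\equiv -\,{\rm sign}(\Lambda)\pmod 8$, the compatibility is automatic once the two existence statements hold; I would verify this bookkeeping via the Milgram--Gauss formula ${\rm sign}(L)\equiv{\rm sign}(q_{L})\pmod 8$.

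Next I would feed these two existence requirements into Nikulin's explicit existence criterion (again \cite[Th.\,3.6.2]{Nikulin80a}). Writing $l=r-2g$, the constraints unwind to a finite system: $r\ge 2$ from $b^{-}(\Lambda)\ge 0$; $r\le 21$ and $l\le 22-r$, i.e.\ $r\le 11+g$, from the rank and signature of $M$; the congruence $r\equiv 0\pmod 4$ exactly when $\delta=0$, coming from $t_{+}-t_{-}\equiv 0\pmod 4$ for forms assembled from ${\Bbb U}(2)$- and ${\Bbb D}_{4}$-blocks; and the boundary clauses of the criterion at the extreme values of $l$ (for instance $l=0$ forcing $\delta=0$ and ${\rm sign}\equiv 0\pmod 8$, and $l$ equal to the rank forcing the appropriate signature congruence mod $8$). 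Solving this system organizes the admissible triples by $g\ge 0$, and a direct enumeration yields exactly the entries of Table~\ref{table:list:sublattice:sign(2,r-2):K3}. To finish, for each admissible triple I would point to the lattice in that row and check by inspection that its signature, $l$, and $\delta$ match $(r,l,\delta)$, so that by uniqueness it is the unique representative; counting the rows gives $75$.

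The main obstacle is the careful handling of the boundary clauses in Nikulin's existence criterion. The interior constraints—the inequalities and the mod-$4$ congruence—are routine, but the delicate work is in the extremal strata, where $l$ equals $0$, $1$, or the full rank of $\Lambda$ or of $M$; there the finer mod-$8$ conditions decide whether a class is permitted or forbidden. These are precisely the places where, for example, the distinction between ${\Bbb U}\oplus{\Bbb U}(2)$ and ${\Bbb U}^{\oplus 2}$, or the appearance of an ${\Bbb E}_{8}(2)$-summand, is forced. Verifying that no spurious triple survives and that each surviving triple is realized by the single tabulated lattice—so that the list is at once complete and irredundant—is the part demanding genuine case analysis rather than a uniform argument.
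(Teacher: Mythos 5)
Your strategy is sound, but it is a genuinely different route from the paper's: the paper disposes of this proposition by citing the tables of Finashin--Kharlamov \cite{FinashinKharlamov08} (noting only that the chosen representatives may differ), whereas you re-derive the classification from Nikulin's arithmetic. Your reduction is the standard and correct one: uniqueness of the indefinite $\Lambda$ from $(\mathrm{sign},l,\delta)$ via \cite[Th.\,3.6.2]{Nikulin80a}; existence of a primitive embedding into the unimodular ${\Bbb L}_{K3}$ rephrased as existence of an even lattice of signature $(1,21-r(\Lambda))$ with discriminant form $-q_{\Lambda}$; the observation that the mod $8$ compatibility of signatures is automatic (indeed $r-20\equiv r-4\bmod 8$); and then the finite enumeration of admissible triples against Nikulin's existence criterion. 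Two small caveats. First, for the single definite case $r(\Lambda)=2$, passing to the complement gives you existence and the uniqueness of $M$, but uniqueness of $\Lambda$ itself still requires the (easy) fact that the genus of $\langle 2\rangle^{\oplus2}$ has class number one --- your alternative ``direct rank-$2$ computation'' is the honest way to close this. Second, you have described rather than executed the boundary clauses of Theorem 3.6.2 and the resulting enumeration; that bookkeeping is exactly where all the content of the table lives, so until it is written out your argument is a correct plan rather than a complete proof. What your approach buys is self-containedness and an explicit logical dependence on Nikulin alone; what the paper's citation buys is brevity and the ability to lean on a published, checked table.
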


\begin{pf}
See e.g. \cite[p.705 Table 2 and p.706 Table 3]{FinashinKharlamov08}. Notice that the representative of each isometry class
is not necessarily identical to the one in \cite[p.705 Table 2 and p.706 Table 3]{FinashinKharlamov08}.
\end{pf}

\par
For a primitive $2$-elementary Lorentzian sublattice $M\subset{\Bbb L}_{K3}$ and a root $d\in\Delta_{M^{\perp}}$, 
the smallest sublattice of ${\Bbb L}_{K3}$ containing $M$ and $d$ is given by
$$
[M\perp d]:=(M^{\perp}\cap d^{\perp})^{\perp}.
$$ 
Then $[M\perp d]$ is again a primitive $2$-elementary Lorentzian sublattice of ${\Bbb L}_{K3}$ with $[M\perp d]^{\perp}=M^{\perp}\cap d^{\perp}$.

\section{Domains of type IV and modular varieties of orthogonal type}
\label{sect:2}
\par
In Section~\ref{sect:2}, $\Lambda$ is assumed to be an even $2$-elementary lattice with ${\rm sign}(\Lambda)=(2,r(\Lambda)-2)$.
We define the complex manifold $\Omega_{\Lambda}$ with projective $O(\Lambda)$-action by
$$
\Omega_{\Lambda}
:=
\{[x]\in{\bf P}(\Lambda\otimes{\bf C});\,
\langle x,x\rangle=0,\,
\langle x,\bar{x}\rangle>0\}.
$$
Then $\Omega_{\Lambda}$ has two connected components $\Omega_{\Lambda}^{\pm}$, 
each of which is isomorphic to a bounded symmetric domain of type IV of dimension $r(\Lambda)-2$. 
The orthogonal modular variety ${\mathcal M}_{\Lambda}$ associated to $\Lambda$ is defined as the analytic space
$$
{\mathcal M}_{\Lambda}
:=
\Omega_{\Lambda}/O(\Lambda)
=
\Omega_{\Lambda}^{+}/O^{+}(\Lambda),
$$
where 
$$
O^{+}(\Lambda):=\{g\in O(\Lambda);\,g(\Omega_{\Lambda}^{\pm})=\Omega_{\Lambda}^{\pm}\}.
$$
We denote by ${\mathcal M}_{\Lambda}^{*}$ the Baily--Borel--Satake compactification of ${\mathcal M}_{\Lambda}$,
which is an irreducible normal projective variety of dimension $r(\Lambda)-2$ with 
$\dim({\mathcal M}_{\Lambda}^{*}\setminus{\mathcal M}_{\Lambda})\leq1$.

\subsection{Discriminant locus}
\label{sect:2.1}
\par
For $\lambda\in\Lambda$ with $\langle\lambda,\lambda\rangle<0$, we define
$$
H_{\lambda}:=\{[x]\in\Omega_{\Lambda};\,\langle x,\lambda\rangle=0\}.
$$
Then $H_{\lambda}$ is a nonzero divisor on $\Omega_{\Lambda}$.
For any root $d\in\Delta_{\Lambda}$, we have the relation
\begin{equation}
\label{eqn:root:hyperplane}
H_{d}=\Omega_{\Lambda\cap d^{\perp}}.
\end{equation}
The {\em discriminant locus} of $\Omega_{\Lambda}$ is the reduced divisor of $\Omega_{\Lambda}$ defined by 
$$
{\mathcal D}_{\Lambda}:=\sum_{d\in\Delta_{\Lambda}/\pm1}H_{d}.
$$
We define the $O(\Lambda)$-invariant Zariski open subset $\Omega_{\Lambda}^{0}$ of $\Omega_{\Lambda}$ by
$$
\Omega_{\Lambda}^{0}:=\Omega_{\Lambda}\setminus{\mathcal D}_{\Lambda}.
$$
We set
$$
\overline{\mathcal D}_{\Lambda}:={\mathcal D}_{\Lambda}/O(\Lambda),
\qquad
{\mathcal M}_{\Lambda}^{0}:=\Omega_{\Lambda}^{0}/O(\Lambda)={\mathcal M}_{\Lambda}\setminus\overline{\mathcal D}_{\Lambda}.
$$

\subsection{Some subloci of ${\mathcal D}_{\Lambda}$}
\label{sect:2.2}
\par
We define the decomposition $\Delta_{\Lambda}=\Delta^{+}_{\Lambda}\amalg\Delta^{-}_{\Lambda}$ by
$$
\Delta^{+}_{\Lambda}:=\{d\in\Delta_{\Lambda},\,d/2\in\Lambda^{\lor}\},
\qquad
\Delta^{-}_{\Lambda}:=\{d\in\Delta_{\Lambda},\,d/2\not\in\Lambda^{\lor}\}.
$$
Then $\Delta^{\pm}_{\Lambda}$ are $O(\Lambda)$-invariant. 
We define the $O(\Lambda)$-invariant reduced divisors ${\mathcal D}^{\pm}_{\Lambda}$ on $\Omega_{\Lambda}$ 
and the corresponding divisors $\overline{\mathcal D}^{\pm}_{\Lambda}$ on ${\mathcal M}_{\Lambda}$ by
$$
{\mathcal D}^{\pm}_{\Lambda}:=\sum_{d\in\Delta^{\pm}_{\Lambda}/\pm1}H_{d},
\qquad
\overline{\mathcal D}^{\pm}_{\Lambda}:={\mathcal D}^{\pm}_{\Lambda}/O(\Lambda).
$$

\begin{proposition}
\label{prop:quasi:pull:back:discriminant:divisor}
Let $d\in\Delta_{\Lambda}^{+}$. 
Let $i\colon\Omega_{\Lambda\cap d^{\perp}}=H_{d}\hookrightarrow\Omega_{\Lambda}$ be the inclusion. 
Then the following equalities of divisors on $\Omega_{\Lambda\cap d^{\perp}}$ hold:
$$
i^{*}({\mathcal D}_{\Lambda}^{+} - H_{d}) 
=
{\mathcal D}_{\Lambda\cap d^{\perp}}^{+},
\qquad\quad
i^{*}{\mathcal D}_{\Lambda}^{-}
=
{\mathcal D}_{\Lambda\cap d^{\perp}}^{-}.
$$
\end{proposition}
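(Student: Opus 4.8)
The plan is to reduce everything to an orthogonal splitting of $\Lambda$ induced by $d$. Set $\Lambda':=\Lambda\cap d^{\perp}$, so that $H_{d}=\Omega_{\Lambda'}$ by \eqref{eqn:root:hyperplane} and ${\rm sign}(\Lambda')=(2,r(\Lambda)-3)$. Since $d\in\Delta_{\Lambda}^{+}$ we have $d/2\in\Lambda^{\lor}$, hence $\langle\lambda,d\rangle\in2{\bf Z}$ for every $\lambda\in\Lambda$. Consequently the orthogonal projection ${\rm pr}(\lambda):=\lambda+\tfrac12\langle\lambda,d\rangle\,d$ lands in $\Lambda'$ and restricts to the identity on $\Lambda'$; it therefore splits the inclusion $\Lambda'\hookrightarrow\Lambda$, giving $\Lambda=\Lambda'\perp{\bf Z}d$. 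I would record two consequences. First, for a root $\lambda\neq\pm d$ one has $H_{d}\not\subset H_{\lambda}$ (containment holds precisely when $\lambda\in\langle d\rangle_{{\bf Q}}$, i.e. $\lambda=\pm d$), so $i^{*}H_{\lambda}$ is a well-defined divisor, and the only roots proportional to $d$ are $\pm d$, accounting for the single term $H_{d}$ removed on the left. Second, a point of $\Omega_{\Lambda'}$ is represented by $x\in\Lambda'\otimes{\bf C}$ with $x\perp d$, whence $\langle x,\lambda\rangle=\langle x,{\rm pr}(\lambda)\rangle$ and therefore $i^{*}H_{\lambda}=H_{{\rm pr}(\lambda)}$ as divisors on $\Omega_{\Lambda'}$.

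Next I would determine which roots actually contribute. By the projection formula,
$$
\langle{\rm pr}(\lambda),{\rm pr}(\lambda)\rangle
=\langle\lambda,\lambda\rangle+\tfrac12\langle\lambda,d\rangle^{2}
=-2+\tfrac12\langle\lambda,d\rangle^{2}
=2(m^{2}-1),
\qquad m:=\tfrac12\langle\lambda,d\rangle\in{\bf Z}.
$$
A signature argument shows that for $0\neq v\in\Lambda'\otimes{\bf Q}$ the locus $H_{v}\cap\Omega_{\Lambda'}$ is nonempty (and then a divisor) if and only if $\langle v,v\rangle<0$: a point of $\Omega_{\Lambda'}$ is a positive-definite real $2$-plane in $\Lambda'\otimes{\bf R}$ contained in $v^{\perp}$, and since ${\rm sign}(\Lambda')=(2,r(\Lambda)-3)$ the space $v^{\perp}$ contains such a plane exactly when $v$ has negative norm. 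Applying this to $v={\rm pr}(\lambda)$, the norm $2(m^{2}-1)$ is negative only for $m=0$, while $|m|=1$ gives norm $0$ and $|m|\geq2$ gives positive norm, in which cases $i^{*}H_{\lambda}=\emptyset$. Hence the surviving hyperplanes come exactly from the roots with $\langle\lambda,d\rangle=0$, that is from $\Delta_{\Lambda}\cap d^{\perp}=\Delta_{\Lambda'}$, and for these ${\rm pr}(\lambda)=\lambda$. This is the crux of the argument, and it is here that the hypothesis $d\in\Delta_{\Lambda}^{+}$ is indispensable: the evenness of $\langle\lambda,d\rangle$ is what excludes the intermediate value $m=\pm\tfrac12$ (norm $-\tfrac32$) that would otherwise produce spurious Heegner divisors in the pullback.

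It remains to match the $\pm$ decompositions and to check reducedness. Using $\Lambda=\Lambda'\perp{\bf Z}d$, for $\lambda\in\Delta_{\Lambda'}$ the condition $\langle\lambda,v\rangle\in2{\bf Z}$ for all $v\in\Lambda$ is equivalent to the same condition for all $v\in\Lambda'$, since the $d$-component of $v$ contributes $0$; thus $\lambda/2\in\Lambda^{\lor}\Leftrightarrow\lambda/2\in(\Lambda')^{\lor}$, giving $\Delta_{\Lambda}^{\pm}\cap d^{\perp}=\Delta_{\Lambda'}^{\pm}$. Finally, distinct pairs $\pm\lambda$ in $\Delta_{\Lambda'}$ give distinct hyperplanes, and $H_{\lambda}$ meets $H_{d}$ transversally (the linear forms $\langle\cdot,\lambda\rangle$ and $\langle\cdot,d\rangle$ are independent), so each $i^{*}H_{\lambda}$ is reduced of multiplicity one. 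Summing over $\Delta_{\Lambda}^{-}/\pm1$, which contains no multiple of $d$, yields $i^{*}{\mathcal D}_{\Lambda}^{-}={\mathcal D}_{\Lambda'}^{-}$, and summing over $(\Delta_{\Lambda}^{+}\setminus\{\pm d\})/\pm1$ yields $i^{*}({\mathcal D}_{\Lambda}^{+}-H_{d})={\mathcal D}_{\Lambda'}^{+}$, which is the assertion. The main obstacle is the non-emptiness criterion of the second paragraph, combined with keeping careful track of the single excluded pair $\pm d$ on the $+$ side.
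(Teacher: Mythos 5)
Your proof is correct and follows essentially the same route as the paper's: both hinge on the evenness of $\langle\lambda,d\rangle$ forced by $d/2\in\Lambda^{\lor}$, and your criterion that ${\rm pr}(\lambda)=\lambda+\tfrac{1}{2}\langle\lambda,d\rangle d$ have negative norm is the same computation as the paper's criterion that ${\bf Z}d+{\bf Z}\lambda$ be negative definite (the determinant of the Gram matrix and the norm of the projection differ by the factor $\langle d,d\rangle$). The supporting details you supply --- the orthogonal splitting $\Lambda=(\Lambda\cap d^{\perp})\oplus{\bf Z}d$, the identification $\Delta_{\Lambda}^{\pm}\cap d^{\perp}=\Delta_{\Lambda\cap d^{\perp}}^{\pm}$, and the multiplicity-one check --- are all correct.
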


\begin{pf}
Let $\delta\in\Delta_{\Lambda}\setminus\{\pm d\}$. Then $H_{d}\cap H_{\delta}\not=\emptyset$ if and only if $L:={\bf Z}d+{\bf Z}\delta$ is negative-definite.
Since the Gram matrix of $L$ with respect to the basis $\{d,\delta\}$ is given by $-2\binom{1\,a}{a\,1}$ where $a=-\langle\delta,d/2\rangle\in{\bf Z}$,
we conclude that $H_{d}\cap H_{\delta}\not=\emptyset$ if and only if $a=0$, i.e., $\delta\in\Delta_{\Lambda}\cap d^{\perp}=\Delta_{\Lambda\cap d^{\perp}}$.
Since $\Delta_{\Lambda}^{\pm}\cap d^{\perp}=\Delta_{\Lambda\cap d^{\perp}}^{\pm}$, we get
$$
i^{*}({\mathcal D}_{\Lambda}^{+}-H_{d})
=
\sum_{\delta\in(\Delta_{\Lambda}^{+}\setminus\{\pm d\})/\pm1}i^{*}H_{\delta}
=
\sum_{\delta\in\Delta_{\Lambda}^{+}\cap d^{\perp}/\pm1}i^{*}H_{\delta}
=
\sum_{\delta\in\Delta_{\Lambda\cap d^{\perp}}^{+}/\pm1}H_{\delta}
=
{\mathcal D}_{\Lambda\cap d^{\perp}}^{+},
$$
$$
i^{*}{\mathcal D}_{\Lambda}^{-}
=
\sum_{\delta\in\Delta_{\Lambda}^{-}/\pm1}i^{*}H_{\delta}
=
\sum_{\delta\in\Delta_{\Lambda}^{-}\cap d^{\perp}/\pm1}i^{*}H_{\delta}
=
\sum_{\delta\in\Delta_{\Lambda\cap d^{\perp}}^{-}/\pm1}H_{\delta}
=
{\mathcal D}_{\Lambda\cap d^{\perp}}^{-}.
$$
This proves the result.
\end{pf}

For $d\in\Delta_{\Lambda}$, we define a non-empty Zariski open subset $H^{0}_{d}\subset H_{d}$ by
\begin{center}
$H_{d}^{0}:=H_{d}\setminus\bigcup_{\delta\in\Delta_{\Lambda}\setminus\{\pm d\}}H_{\delta}$.
\end{center}
We set
$$
{\mathcal D}^{0,\pm}_{\Lambda}:=\sum_{d\in\Delta^{\pm}_{\Lambda}/\pm1}H^{0}_{d},
\qquad
{\mathcal D}_{\Lambda}^{0}
:=
{\mathcal D}^{0,+}_{\Lambda}+{\mathcal D}^{0,-}_{\Lambda}
=
\sum_{d\in\Delta_{\Lambda}/\pm1}H_{d}^{0}.
$$
Then $\Omega_{\Lambda}^{0}\amalg{\mathcal D}_{\Lambda}^{0}$ is a Zariski open subset of $\Omega_{\Lambda}$, 
whose complement has codimension $\geq2$ when $r(\Lambda)\geq4$ and is empty when $r(\Lambda)\leq3$.

\subsection{Characteristic Heegner divisor}
\label{sect:2.3}
\par
Set 
$$
\varepsilon_{\Lambda}:=\{12-r(\Lambda)\}/2.
$$ 
We define the {\em characteristic Heegner divisor} of $\Omega_{\Lambda}$ as the reduced divisor
$$
{\mathcal H}_{\Lambda}
=
{\mathcal H}_{\Lambda}(\varepsilon_{\Lambda},{\bf 1}_{\Lambda})
:=
\sum_{\lambda\in\Lambda^{\lor}/\pm1,\,\lambda^{2}=\varepsilon_{\Lambda},\,[\lambda]={\bf 1}_{\Lambda}}H_{\lambda},
$$
where $[\lambda]:=\lambda+\Lambda\in A_{\Lambda}$.
Since ${\bf 1}_{\Lambda}$ is $O(q_{\Lambda})$-invariant, ${\mathcal H}_{\Lambda}$ is $O(\Lambda)$-invariant.
Since $\varepsilon_{\Lambda}\geq0$ when $r(\Lambda)\leq12$, we get
${\mathcal H}_{\Lambda}=0$ if $r(\Lambda)\leq12$.

\begin{proposition}
\label{prop:quasi:pull:back:characteristic:divisor}
Let $d\in\Delta_{\Lambda}^{+}$. Let $i\colon\Omega_{\Lambda\cap d^{\perp}}=H_{d}\hookrightarrow\Omega_{\Lambda}$ be the inclusion. 
Then the following equality of divisors on $\Omega_{\Lambda\cap d^{\perp}}$ holds
$$
i^{*}{\mathcal H}_{\Lambda}=2\,{\mathcal H}_{\Lambda\cap d^{\perp}}.
$$
\end{proposition}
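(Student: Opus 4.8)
The plan is to compute $i^{*}\mathcal H_{\Lambda}$ hyperplane by hyperplane and compare with the defining sum of $\mathcal H_{\Lambda\cap d^{\perp}}$. Set $\Lambda':=\Lambda\cap d^{\perp}$ and $K:={\bf Z}d$, so that $\langle d,d\rangle=-2$ and $K^{\lor}={\bf Z}(d/2)$. The decisive first step is a splitting lemma. Since $d\in\Delta_{\Lambda}^{+}$ means $d/2\in\Lambda^{\lor}$, we have $\langle v,d\rangle\in2{\bf Z}$ for every $v\in\Lambda$; hence the orthogonal projection of any $v\in\Lambda$ to ${\bf Q}d$ already lies in $K$, and writing $v=v_{\parallel}+v_{\perp}$ shows $v_{\perp}\in\Lambda\cap d^{\perp}=\Lambda'$. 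This yields the orthogonal splitting $\Lambda=K\oplus\Lambda'$, whence $\Lambda^{\lor}={\bf Z}(d/2)\oplus(\Lambda')^{\lor}$, $A_{\Lambda}=({\bf Z}/2)\oplus A_{\Lambda'}$ with $q_{\Lambda}=q_{K}\perp q_{\Lambda'}$, and therefore ${\bf 1}_{\Lambda}=([d/2],{\bf 1}_{\Lambda'})$. I also record $\varepsilon_{\Lambda'}=\varepsilon_{\Lambda}+\tfrac12$.

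Next I translate the two defining conditions of $\mathcal H_{\Lambda}$ into this decomposition. Writing $\lambda\in\Lambda^{\lor}$ as $\lambda=a(d/2)+w$ with $a\in{\bf Z}$ and $w\in(\Lambda')^{\lor}$, one computes $\lambda_{\perp}=w$, so that on $H_{d}$ one has $\langle x,\lambda\rangle=\langle x,w\rangle$ and thus $i^{*}H_{\lambda}=H_{w}$ as divisors on $\Omega_{\Lambda'}$. The norm condition $\lambda^{2}=\varepsilon_{\Lambda}$ becomes $w^{2}=\varepsilon_{\Lambda}+a^{2}/2$, and the characteristic condition $[\lambda]={\bf 1}_{\Lambda}$ splits into $a$ odd (the $A_{K}$-component) together with $[w]={\bf 1}_{\Lambda'}$ (the $A_{\Lambda'}$-component).

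The third step is a positivity/range analysis deciding which $\lambda$ survive the pullback. The divisor $H_{w}$ is nonzero only when $w^{2}<0$. For odd $a$ with $|a|\ge3$ we get $w^{2}\ge\varepsilon_{\Lambda}+9/2\ge0$ throughout the relevant range $r(\Lambda)\le20$ (so $\varepsilon_{\Lambda}\ge-4$); such $\lambda$ restrict to the empty divisor and drop out. The survivors are exactly $a=\pm1$, for which $w^{2}=\varepsilon_{\Lambda}+\tfrac12=\varepsilon_{\Lambda'}$, i.e. $w$ satisfies precisely the conditions defining $\mathcal H_{\Lambda'}$. For the multiplicity, I pass to classes modulo $\pm1$ by normalizing $a=+1$; the assignment $\lambda\mapsto w$ then identifies the contributing classes bijectively with all $w\in(\Lambda')^{\lor}$ of norm $\varepsilon_{\Lambda'}$ and class ${\bf 1}_{\Lambda'}$, not yet taken modulo $\pm1$. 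Since $H_{w}=H_{-w}$, each component of $\mathcal H_{\Lambda'}$ is hit exactly twice, giving $i^{*}\mathcal H_{\Lambda}=2\,\mathcal H_{\Lambda'}$.

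I expect the main obstacle to be the splitting lemma together with the correct bookkeeping of the characteristic element through it: the hypothesis $d\in\Delta_{\Lambda}^{+}$ is exactly what forces the projection into $K$ rather than $K^{\lor}$, and is thus indispensable (for $d\in\Delta_{\Lambda}^{-}$ no such orthogonal splitting exists). The remaining delicate point is getting the factor $2$ right by tracking the $\pm1$-identifications consistently on both sides, while the exclusion of $|a|\ge3$ is where the ambient bound $r(\Lambda)\le20$ must be invoked explicitly.
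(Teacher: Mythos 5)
Your proof is correct and follows essentially the same route as the paper's: split $\Lambda={\bf Z}d\oplus(\Lambda\cap d^{\perp})$, write $\lambda=a(d/2)+w$, use $[\lambda]={\bf 1}_{\Lambda}$ to force $a$ odd and the norm constraint to force $a=\pm1$, then observe that exactly two $\pm1$-classes of $\lambda$ lie over each $H_{w}$. The only divergence is at the splitting step: you derive $\Lambda={\bf Z}d\oplus\Lambda'$ directly from $d/2\in\Lambda^{\lor}$ (the projection of any $v\in\Lambda$ onto ${\bf Q}d$ already lands in ${\bf Z}d$), whereas the paper deduces it by comparing the invariants $(r,l,\delta)$ via a result of Finashin--Kharlamov; your argument is more elementary and self-contained. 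One small slip: the ambient bound is $r(\Lambda)\leq21$, i.e.\ $\varepsilon_{\Lambda}\geq-9/2$ (not $r(\Lambda)\leq20$), but for $|a|\geq3$ this still yields $w^{2}\geq\varepsilon_{\Lambda}+9/2\geq0$, so the exclusion goes through unchanged.
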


\begin{pf}
Since $r(\Lambda)\leq21$, we get $\varepsilon_{\Lambda}\geq-\frac{9}{2}$.  Set $\Lambda':=\Lambda\cap d^{\perp}$.
Since $d\in\Delta_{\Lambda}^{+}$, we deduce from  \cite[Prop.\,3.1]{FinashinKharlamov08}
that $\Lambda$ and $\Lambda'\oplus{\bf Z}d$ have the same invariants $(r,l,\delta)$.
Hence we get the orthogonal decomposition $\Lambda=\Lambda'\oplus{\bf Z}d$.
Let $\lambda\in\Lambda^{\lor}$ be such that $\lambda^{2}=\varepsilon_{\Lambda}$ and $[\lambda]={\bf 1}_{\Lambda}$.
Then we can write $\lambda=\lambda'+a(d/2)$, where $\lambda'\in(\Lambda')^{\lor}$ and $a=-\langle\lambda,d\rangle\in{\bf Z}$.
Since $[\lambda]={\bf 1}_{\Lambda}$, we get $a\equiv1\mod2$. Hence $a=2k+1$ for some $k\in{\bf Z}$ and $\lambda'\in{\bf 1}_{\Lambda'}$.
Since $H_{\lambda'}\not=\emptyset$ if and only if $(\lambda')^{2}<0$, we get $i^{*}H_{\lambda}=H_{\lambda'}\not=\emptyset$ if and only if
$0>(\lambda')^{2}=\lambda^{2}+a^{2}/2=(\varepsilon_{\Lambda}+\frac{1}{2})+2k(k+1)$.
Since $\varepsilon_{\Lambda}\geq-\frac{9}{2}$ and hence $-2\leq\frac{-1-\sqrt{-2\varepsilon_{\Lambda}}}{2}<\frac{-1+\sqrt{-2\varepsilon_{\Lambda}}}{2}\leq1$, 
we see that $(\varepsilon_{\Lambda}+\frac{1}{2})+2k(k+1)<0$ if and only if $k=0,-1$.
This proves that 
\begin{equation}
\label{eqn:fiber:projection}
i^{*}H_{\lambda}=H_{\lambda'}\not=\emptyset
\qquad\Longleftrightarrow\qquad
\lambda=\lambda'\pm(d/2).
\end{equation}
When $\lambda=\lambda'\pm(d/2)$, we get $(\lambda')^{2}=\varepsilon_{\Lambda}+\frac{1}{2}=\varepsilon_{\Lambda'}$. 
This, together with \eqref{eqn:fiber:projection}, yields that
$$
\begin{aligned}
i^{*}{\mathcal H}_{\Lambda}
&=
\sum_{\lambda\in\Lambda^{\lor}/\pm1,\,\lambda^{2}=\varepsilon_{\Lambda},\,[\lambda]={\bf 1}_{\Lambda}}i^{*}H_{\lambda}
=
\sum_{\lambda'\in(\Lambda')^{\lor}/\pm1,\,(\lambda')^{2}=\varepsilon_{\Lambda'},\,[\lambda']={\bf 1}_{\Lambda'}}i^{*}H_{\lambda'\pm(d/2)}
\\
&=
2\sum_{\lambda'\in(\Lambda')^{\lor}/\pm1,\,(\lambda')^{2}=\varepsilon_{\Lambda'},\,[\lambda']={\bf 1}_{\Lambda'}}H_{\lambda'}
=
2\,{\mathcal H}_{\Lambda'}.
\end{aligned}
$$
This proves the proposition.
\end{pf}

\section{$2$-elementary $K3$ surfaces and the Torelli map}
\label{sect:3}
\par

\subsection
{$2$-elementary $K3$ surfaces}
\label{sect:3.1}
\par
A $K3$ surface $X$ equipped with a holomorphic involution $\iota\colon X\to X$ is called a {\it $2$-elementary $K3$ surface} if $\iota$ is anti-symplectic:
\begin{equation}
\label{eqn:condition:2-elementary:K3}
\iota^{*}|_{H^{0}(K_{X})}=-1.
\end{equation}
The possible deformation types of $2$-elementary $K3$ surfaces were determined by Nikulin. 
(See \cite[Sect.\,2.3]{AlexeevNikulin06} and the references therein.) 
Let $\alpha\colon H^{2}(X,{\bf Z})\cong{\Bbb L}_{K3}$ be an isometry of lattices.
Set $H^{2}(X,{\bf Z})_{\pm}:=\{l\in H^{2}(X,{\bf Z});\,\iota^{*}l=\pm l\}$ and
\begin{equation}
\label{eqn:type:2-elementary:K3}
M:=\alpha(H^{2}(X,{\bf Z})_{+}),
\qquad
\Lambda:=M^{\perp}=\alpha(H^{2}(X,{\bf Z})_{-}).
\end{equation}
Then $M\subset{\Bbb L}_{K3}$ must be a primitive $2$-elementary Lorentzian sublattice. 
Conversely, for any primitive $2$-elementary Lorentzian sublattice $M\subset{\Bbb L}_{K3}$,
there exists a $2$-elementary $K3$ surface with \eqref{eqn:type:2-elementary:K3}.
For a $2$-elementary $K3$ surface $(X,\iota)$, the isometry class of $H^{2}(X,{\bf Z})_{+}$ is called the {\em type} of  $(X,\iota)$.
By an abuse of notation, the sublattice itself $\alpha(H^{2}(X,{\bf Z})_{+})\subset{\Bbb L}_{K3}$ is also called the type of $(X,\iota)$.
Then there is a one-to-one correspondence between the deformation types of $2$-elementary $K3$ surfaces and the triplets $(r,l,\delta)$.
Since the latter consists of $75$ points, there exist mutually distinct $75$ deformation types of $2$-elementary $K3$ surfaces.
For a given primitive $2$-elementary Lorentzian sublattice $M\subset{\Bbb L}_{K3}$, the moduli space of $2$-elementary $K3$ surfaces of type $M$
is given as follows.
\par
Let $(X,\iota)$ be a $2$-elementary $K3$ surface of type $M$ and let $\alpha\colon H^{2}(X,{\bf Z})\cong{\Bbb L}_{K3}$ be an isometry
satisfying \eqref{eqn:type:2-elementary:K3}. 
Since $H^{2,0}(X,{\bf C})\subset H^{2}(X,{\bf Z})_{-}\otimes{\bf C}$ by \eqref{eqn:condition:2-elementary:K3}, we get
$$
\pi_{M}(X,\iota,\alpha):=[\alpha(H^{2,0}(X,{\bf C}))]\in\Omega_{\Lambda}^{0}.
$$
Its $O(\Lambda)$-orbit is called the Griffiths period of $(X,\iota)$ and is denoted by
$$
\overline{\pi}_{M}(X,\iota):=O(\Lambda)\cdot\pi_{M}(X,\iota,\alpha)\in{\mathcal M}_{\Lambda}^{0}.
$$
By \cite[Th.\,1.8]{Yoshikawa04}, \cite[Prop.\,11.2]{Yoshikawa13}, 
the coarse moduli space of $2$-elementary $K3$ surfaces of type $M$ is isomorphic to ${\mathcal M}_{\Lambda}^{0}$ via the period map $\overline{\pi}_{M}$.
In the rest of this paper, we identify the point $\overline{\pi}_{M}(X,\iota)\in {\mathcal M}_{\Lambda}^{0}$ 
with the isomorphism class of $(X,\iota)$.

\subsection
{The Torelli map for $2$-elementary $K3$ surfaces}
\label{sect:3.2}
\par

\subsubsection{The set of fixed points}
\label{sect:3.2.1}
For a $2$-elementary $K3$ surface $(X,\iota)$ of type $M$, set 
$$
X^{\iota}:=\{x\in X;\,\iota(x)=x\}.
$$
Then $X^{\iota}=\emptyset$ if and only if $M$ is exceptional, i.e., $M\cong{\Bbb U}(2)\oplus{\Bbb E}_{8}(2)$.
In this case, the quotient $X/\iota$ is an Enriques surface and $(X,\iota)$ is the universal covering of an Enriques surface
endowed with the non-trivial covering transformation.
When $M$ is non-exceptional, by Nikulin \cite[Th.\,4.2.2]{Nikulin83}, we have
\begin{equation}
\label{eqn:fixed:points}
X^{\iota}=C^{(g(M))}\amalg E_{1}\amalg\cdots\amalg E_{k(M)}
\end{equation}
for $M\not\cong{\Bbb U}(N)\oplus{\Bbb E}_{8}(2)$ $(N=1,2)$ and we have
$X^{\iota}=C_{1}^{(1)}\amalg C_{2}^{(1)}$ for $M\cong{\Bbb U}\oplus{\Bbb E}_{8}(2)$,
where $C^{(g)}$ is a projective curve of genus $g$  and $E_{i}\cong{\bf P}^{1}$ and
$$
g(M):=\{22-r(M)-l(M)\}/2,
\qquad
k(M):=\{r(M)-l(M)\}/2.
$$
Since $r(\Lambda)=22-r(M)$ and $l(\Lambda)=l(M)$, we have the following relations
$$
g(M)=\{r(\Lambda)-l(\Lambda)\}/2,
\qquad
k(M)=\{22-r(\Lambda)-l(\Lambda)\}/2.
$$
As we defined $g(\Lambda)=(r(\Lambda)-l(\Lambda))/2$ in Proposition~\ref{prop:classification:sublattice:sign(2,r-2):K3},
we have $g(M)=g(\Lambda)$. Notice that, when $M\cong{\Bbb U}(2)\oplus{\Bbb E}_{8}(2)$ and hence $X^{\iota}$ is empty, 
$g(M)=g(\Lambda)=1$ has {\em no} geometric meaning.

\subsubsection{The Torelli map}
\label{sect:3.2.2}
For $g\geq0$,  let ${\frak S}_{g}$ be the Siegel upper half-space of degree $g$ and 
let ${\rm Sp}_{2g}({\bf Z})$ be the symplectic group of degree $2g$ over ${\bf Z}$. We define
$$
{\mathcal A}_{g}:={\frak S}_{g}/{\rm Sp}_{2g}({\bf Z}).
$$
The Satake compactification of ${\mathcal A}_{g}$ is denoted by ${\mathcal A}_{g}^{*}$. 
\par
For a $2$-elementary $K3$ surface $(X,\iota)$ of type $M$, the period of $X^{\iota}$ is denoted by $\varOmega(X^{\iota})\in{\mathcal A}_{g(M)}$.
We define a map $\overline{J}_{M}^{0}\colon{\mathcal M}_{\Lambda}^{0}\to{\mathcal A}_{g(M)}$ by
$$
\overline{J}_{M}^{0}(X,\iota)
=
\overline{J}_{M}^{0}(\overline{\pi}_{M}(X,\iota))
:=
\varOmega(X^{\iota}).
$$
Let $\varPi_{\Lambda}\colon\Omega_{\Lambda}\to{\mathcal M}_{\Lambda}$ be the projection. 
The {\em Torelli map} is the $O(\Lambda)$-equivariant holomorphic map $J_{M}^{0}\colon\Omega_{\Lambda}^{0}\to{\mathcal A}_{g(M)}$ defined by
$$
J_{M}^{0}:=\overline{J}_{M}^{0}\circ\varPi_{\Lambda}|_{\Omega_{\Lambda}^{0}}.
$$
In Theorem~\ref{thm:extension:Torelli:map} below, we will extend $J_{M}^{0}$ to a certain Zariski open subset of $\Omega_{\Lambda}$ containing
$\Omega_{\Lambda}^{0}\cup{\mathcal D}_{\Lambda}^{0}$ and prove its compatibility 
with respect to the inclusion $\Omega_{\Lambda\cap d^{\perp}}=H_{d}\hookrightarrow\Omega_{\Lambda}$ for $d\in\Delta_{\Lambda}$.
For this, we introduce a stratification on $\Omega_{\Lambda}$.

\subsubsection{A stratification of $\Omega_{\Lambda}$}
\label{sect:3.2.3}
\par
For a primitive sublattice $L\subset\Lambda$ generated by $\Delta_{L}$, we define 
\begin{center}
$H_{L}:=\bigcap_{d\in\Delta_{L}}H_{d}$,
\qquad
$H_{L}^{0}:=H_{L}\setminus\bigcup_{\delta\in\Delta_{\Lambda}\setminus\Delta_{L}}H_{\delta}$.
\end{center}
Then $H_{L}\not=\emptyset$ if and only if $L$ is negative-definite, i.e., {\em $L$ is a root lattice}. 
If $H_{L}\not=\emptyset$, then $H_{L}^{0}$ is a non-empty dense Zariski open subset of $H_{L}$. 
By definition, it is obvious that if $r(L)=r(L')$ and $L\not=L'$, then $H_{L}^{0}\cap H_{L'}^{0}=\emptyset$.
Set
\begin{center}
$\Omega_{\Lambda}^{k}:=\amalg_{L\subset\Lambda,\,r(L)=k}H_{L}^{0}$,
\qquad
$\Omega_{\Lambda}^{\geq k}:=\amalg_{l\geq k}\Omega_{\Lambda}^{l}=\bigcup_{L\subset\Lambda,\,r(L)=k}H_{L}$,
\end{center}
where $L$ runs over the set of all primitive root sublattices of $\Lambda$ of rank $k$.
Then $\Omega_{\Lambda}^{\geq k}$ is a Zariski closed subset of $\Omega_{\Lambda}$ of pure dimension $k$,
and $\Omega_{\Lambda}\setminus\Omega_{\Lambda}^{\geq k+1}$ is a dense Zariski open subset of $\Omega_{\Lambda}$, 
whose complement has codimension $k+1$. We have
\begin{center}
${\mathcal D}_{\Lambda}=\Omega_{\Lambda}^{\geq 1}=\sum_{d\in\Delta_{\Lambda}/\pm1}H_{d}$,
\qquad
$\Omega_{\Lambda}^{0}=\Omega_{\Lambda}\setminus{\mathcal D}_{\Lambda}$.
\end{center}
\par
For a root lattice ${\Bbb K}$, let ${\Bbb K}(\Lambda)$ be the set of primitive sublattices of $\Lambda$ isometric to ${\Bbb K}$. 
Since a root lattice of rank $2$ is either ${\Bbb A}_{1}^{\oplus2}$ or ${\Bbb A}_{2}$, we have
\begin{center}
$\Omega_{\Lambda}^{\geq2}=(\bigcup_{L\in{\Bbb A}_{1}^{\oplus2}(\Lambda)}H_{L})\cup(\bigcup_{L\in{\Bbb A}_{2}(\Lambda)}H_{L})$.
\end{center}
We set
\begin{center}
${\mathcal D}_{\Lambda}^{1,+}:=\bigcup_{L\in{\Bbb A}_{1}^{\oplus2}(\Lambda)}H_{L}^{0}$,
\qquad
${\mathcal D}_{\Lambda}^{1,-}:=\bigcup_{L\in{\Bbb A}_{2}(\Lambda)}H_{L}^{0}$.
\end{center}
Then ${\mathcal D}_{\Lambda}^{1,+}\amalg{\mathcal D}_{\Lambda}^{1,-}=\Omega_{\Lambda}^{2}$ and
we have the stratification
\begin{equation}
\label{eqn:stratification}
\Omega_{\Lambda}\setminus\Omega_{\Lambda}^{\geq3}
=
\Omega_{\Lambda}^{0}\amalg{\mathcal D}_{\Lambda}^{0}\amalg{\mathcal D}_{\Lambda}^{1,+}\amalg{\mathcal D}_{\Lambda}^{1,-}
\end{equation}
such that 
${\mathcal D}_{\Lambda}\setminus\Omega_{\Lambda}^{\geq3}={\mathcal D}_{\Lambda}^{0}\amalg{\mathcal D}_{\Lambda}^{1,+}\amalg{\mathcal D}_{\Lambda}^{1,-}$
is a divisor of $\Omega_{\Lambda}\setminus\Omega_{\Lambda}^{\geq3}$.

\begin{lemma}
\label{lemma:stratification:discriminant:locus}
If $d\in\Delta_{\Lambda}^{+}$, then 
$H_{d}\setminus\Omega_{\Lambda}^{\geq3}=\Omega_{\Lambda\cap d^{\perp}}^{0}\amalg{\mathcal D}_{\Lambda\cap d^{\perp}}^{0}$.
\end{lemma}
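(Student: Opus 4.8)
The plan is to exploit the hypothesis $d\in\Delta_{\Lambda}^{+}$ to split $\Lambda$ orthogonally and then to pin down, for a period point on $H_{d}$, exactly which roots of $\Lambda$ it is orthogonal to. First I would record the decomposition: as in the proof of Proposition~\ref{prop:quasi:pull:back:characteristic:divisor}, since $d\in\Delta_{\Lambda}^{+}$ the lattices $\Lambda$ and $\Lambda'\oplus{\bf Z}d$ share the same invariants $(r,l,\delta)$, where $\Lambda':=\Lambda\cap d^{\perp}$, so that $\Lambda=\Lambda'\oplus{\bf Z}d$ is an orthogonal direct sum. Under the identification $H_{d}=\Omega_{\Lambda'}$ from \eqref{eqn:root:hyperplane}, the stratification \eqref{eqn:stratification} applied to $\Lambda'$ yields $\Omega_{\Lambda'}^{0}\amalg{\mathcal D}_{\Lambda'}^{0}=\Omega_{\Lambda'}\setminus\Omega_{\Lambda'}^{\geq2}$. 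Thus I would reduce the asserted identity to the equivalent statement
\begin{equation*}
\Omega_{\Lambda}^{\geq3}\cap H_{d}=\Omega_{\Lambda'}^{\geq2}
\end{equation*}
of subsets of $H_{d}=\Omega_{\Lambda'}$, obtained by taking complements inside $\Omega_{\Lambda'}$.

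The key step is to compute, for $[x]\in H_{d}$, the roots of $\Lambda$ lying in $x^{\perp}$. Writing a root as $\delta=\delta'+a\,d$ with $\delta'\in\Lambda'$ and $a\in{\bf Z}$, and using $x\perp d$, one has $\langle x,\delta\rangle=\langle x,\delta'\rangle$, so $\delta\perp x$ if and only if $\delta'\perp x$. I would then show that the only such roots with $a\neq0$ are $\pm d$. Indeed $\langle\delta,\delta\rangle=-2$ forces $\langle\delta',\delta'\rangle=2(a^{2}-1)\geq0$, whereas $[x]\in\Omega_{\Lambda'}$ determines a positive-definite real $2$-plane in $\Lambda'\otimes{\bf R}$ whose orthogonal complement is negative-definite; since $\delta'\in\Lambda'\cap x^{\perp}$ lies in that complement, $\langle\delta',\delta'\rangle\leq0$ with equality only when $\delta'=0$. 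Hence $\langle\delta',\delta'\rangle=0$, forcing $\delta'=0$ and $a=\pm1$, i.e. $\delta=\pm d$. Therefore the roots of $\Lambda$ orthogonal to $x$ are exactly $\{\pm d\}$ together with the roots of $\Lambda'$ orthogonal to $x$, and these two families are mutually orthogonal.

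Granting this description, both inclusions are straightforward. If $[x]\in\Omega_{\Lambda'}^{\geq2}$, pick a primitive rank-$2$ root sublattice $L'\subset\Lambda'$ with $x\perp L'$; then $L:=L'\oplus{\bf Z}d$ is a primitive rank-$3$ root sublattice of $\Lambda=\Lambda'\oplus{\bf Z}d$ with $x\perp L$, giving $[x]\in\Omega_{\Lambda}^{\geq3}$. Conversely, given $[x]\in\Omega_{\Lambda}^{\geq3}\cap H_{d}$ and a primitive rank-$3$ root sublattice $L\subset\Lambda$ with $x\perp L$, every root of $L$ is $\pm d$ or a root of $\Lambda'$ in $x^{\perp}$, so $L\subset{\bf Z}d\oplus(\Lambda'\cap x^{\perp})$; splitting according to whether $d\in L$, a short argument with primitivity of direct summands shows that $L\cap\Lambda'$ (when $d\in L$) or $L$ itself (when $d\notin L$) is a primitive root sublattice of $\Lambda'$ of rank $2$ or $3$ orthogonal to $x$, whence $[x]\in\Omega_{\Lambda'}^{\geq2}$.

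The main obstacle is exactly the potential presence of ``mixed'' roots $\delta'+a\,d$ with $a\neq0$ in $x^{\perp}$: such roots could a priori enlarge the span of the roots through $[x]$ and place $[x]$ in $\Omega_{\Lambda}^{\geq3}$ without producing two independent roots inside $\Lambda'$, which would break the desired equality. The definiteness computation of the second paragraph is what eliminates this possibility, and it is here that both the hypothesis $d\in\Delta_{\Lambda}^{+}$ (furnishing the orthogonal splitting) and the positivity condition defining $\Omega_{\Lambda'}$ are indispensable.
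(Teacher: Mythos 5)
Your proof is correct. It does not follow the paper's route verbatim, though it rests on the same essential input. The paper proves the lemma stratum by stratum: it intersects $H_{d}$ with each piece of the decomposition $\Omega_{\Lambda}\setminus\Omega_{\Lambda}^{\geq3}=\Omega_{\Lambda}^{0}\amalg{\mathcal D}_{\Lambda}^{0}\amalg{\mathcal D}_{\Lambda}^{1,+}\amalg{\mathcal D}_{\Lambda}^{1,-}$, showing that $H_{d}\cap{\mathcal D}_{\Lambda}^{0}=\Omega_{\Lambda\cap d^{\perp}}^{0}$, that $H_{d}\cap{\mathcal D}_{\Lambda}^{1,-}=\emptyset$ (an ${\Bbb A}_{2}$ containing $d$ would force $\langle d,\delta\rangle=1$, impossible since $\langle d,\delta\rangle=2\langle d/2,\delta\rangle\in2{\bf Z}$), and that $H_{d}\cap{\mathcal D}_{\Lambda}^{1,+}={\mathcal D}_{\Lambda\cap d^{\perp}}^{0}$. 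You instead pass to complements, reducing to the single equality $\Omega_{\Lambda}^{\geq3}\cap H_{d}=\Omega_{\Lambda\cap d^{\perp}}^{\geq2}$, and treat all root sublattices uniformly via the splitting $\Lambda=(\Lambda\cap d^{\perp})\oplus{\bf Z}d$ together with the observation that a root $\delta'+ad$ with $a\neq0$ orthogonal to a period point forces $\delta'=0$ by negative definiteness of $(\Lambda\cap d^{\perp})\cap x^{\perp}$. Both arguments hinge on the same fact --- for $d\in\Delta_{\Lambda}^{+}$, any root whose hyperplane meets $H_{d}$ lies in $\{\pm d\}\cup\Delta_{\Lambda\cap d^{\perp}}$ --- but your version absorbs the paper's case distinction between the two rank-$2$ root lattices ${\Bbb A}_{1}^{\oplus2}$ and ${\Bbb A}_{2}$ into one computation, at the mild cost of the primitivity bookkeeping for $L\cap d^{\perp}$ in the reverse inclusion, which you flag and which does go through.
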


\begin{pf}
{\em (Step 1) }
By the definition of ${\mathcal D}_{\Lambda}^{0}$, we have $H_{d}^{0}=H_{d}\cap{\mathcal D}_{\Lambda}^{0}$.
Since $d\in\Delta_{\Lambda}^{+}$, we have $H_{d}\cap H_{\delta}\not=\emptyset$ for $\delta\in\Delta_{\Lambda}\setminus\{\pm d\}$ 
if and only if $\delta\in\Delta_{\Lambda\cap d^{\perp}}$.
Hence $H_{d}\cap{\mathcal D}_{\Lambda}^{0}=H_{d}^{0}=H_{d}\setminus\bigcup_{\delta\in\Delta_{\Lambda}\setminus\{\pm d\}}H_{\delta}
=H_{d}\setminus\bigcup_{\delta\in\Delta_{\Lambda\cap d^{\perp}}}H_{\delta}=\Omega_{\Lambda\cap d^{\perp}}^{0}$.
\par{\em (Step 2) }
Assume $H_{d}\cap{\mathcal D}_{\Lambda}^{1,-}\not=\emptyset$. Then there exist $L\in{\Bbb A}_{2}(\Lambda)$ and $[\eta]\in H_{d}\cap H_{L}^{0}$.
By the definition of $H_{L}^{0}$, we have $d\in\Delta_{L}$. 
Since $L\cong{\Bbb A}_{2}$, there exists $\delta\in\Delta_{L}$ with $\langle d,\delta\rangle=1$.
Since $d\in\Delta_{\Lambda}^{+}$, this yields the contradiction $\langle d,\delta\rangle=2\langle(d/2),\delta\rangle\in2{\bf Z}$.
This proves $H_{d}\cap{\mathcal D}_{\Lambda}^{1,-}=\emptyset$.
\par{\em (Step 3) }
Assume $H_{d}\cap{\mathcal D}_{\Lambda}^{1,+}\not=\emptyset$. Then there exists $L\in{\Bbb A}_{1}^{\oplus2}(\Lambda)$ with $[\eta]\in H_{d}\cap H_{L}^{0}$.
By the definition of $H_{L}^{0}$, we get $d\in\Delta_{L}$. 
Hence $\Delta_{L}=\{\pm d,\pm\delta\}$ for some $\delta\in\Delta_{\Lambda\cap d^{\perp}}$. Thus
$$
\begin{aligned}
H_{d}\cap{\mathcal D}_{\Lambda}^{1,+}
&=
\bigcup_{L\in{\Bbb A}_{1}^{\oplus2}(\Lambda),\,d\in L}H_{d}\cap H_{L}^{0}
=
\bigcup_{\delta\in\Delta_{\Lambda\cap d^{\perp}}}
\{
(H_{d}\cap H_{\delta})\setminus\bigcup_{\epsilon\in\Delta_{\Lambda}\setminus\{\pm d,\pm\delta\}}H_{\epsilon}
\}
\\
&=
\bigcup_{\delta\in\Delta_{\Lambda\cap d^{\perp}}}
\{
(H_{d}\cap H_{\delta})\setminus\bigcup_{\epsilon\in\Delta_{\Lambda\cap d^{\perp}}\setminus\{\pm\delta\}}H_{\epsilon}
\}
=
{\mathcal D}_{\Lambda\cap d^{\perp}}^{0}.
\end{aligned}
$$
Since $d\in\Delta_{\Lambda}^{+}$, the third equality follows from the fact that $H_{d}\cap H_{\epsilon}\not=\emptyset$ 
for $\epsilon\in\Delta_{\Lambda}\setminus\{\pm d\}$ if and only if $\epsilon\in\Delta_{\Lambda\cap d^{\perp}}$.
This proves $H_{d}\cap{\mathcal D}_{\Lambda}^{1,+}={\mathcal D}_{\Lambda\cap d^{\perp}}^{0}$.
\par{\em (Step 4) }
Since $H_{d}\cap\Omega_{\Lambda}^{0}=\emptyset$, the result follows from (Steps 1-3) and \eqref{eqn:stratification}.
\end{pf}

\subsubsection{The local structure of $\Omega_{\Lambda}^{0}$ near ${\mathcal D}_{\Lambda}^{1}$}
\label{sect:3.2.4}
\par
Let $\varDelta\subset{\bf C}$ be the unit disc and set $\varDelta^{*}:=\varDelta\setminus\{0\}$.
Let $L_{0}\subset\varDelta^{2}$ be the diagonal locus and set $L_{1}:=\{0\}\times\varDelta$ and $L_{2}:=\varDelta\times\{0\}$.
Then $L_{0}$, $L_{1}$, $L_{2}$ are lines with $L_{i}\cap L_{j}=\{0\}$ for any $i\not=j$. We have $(\varDelta^{*})^{2}=\varDelta^{2}\setminus(L_{1}\cup L_{2})$.
Set $L_{i}^{*}:=L_{i}\setminus\{(0,0)\}$.

\begin{lemma}
\label{lemma:local:structure:discr:locus}
Let $[\eta]\in{\mathcal D}_{\Lambda}^{1,+}\amalg{\mathcal D}_{\Lambda}^{1,-}$ and set $n:=\dim\Omega_{\Lambda}$. Then the following hold.
\begin{itemize}
\item[(1)]
If $[\eta]\in{\mathcal D}_{\Lambda}^{1,+}$, then there is a neighborhood $U$ of $[\eta]$ in $\Omega_{\Lambda}\setminus\Omega_{\Lambda}^{\geq3}$
such that 
$$
U\cap\Omega_{\Lambda}^{0}\cong(\varDelta^{*})^{2}\times\varDelta^{n-2},
\quad
U\cap{\mathcal D}_{\Lambda}^{0}\cong(L_{1}^{*}\amalg L_{2}^{*})\times\varDelta^{n-2},
\quad
U\cap{\mathcal D}_{\Lambda}^{1,+}\cong\{(0,0)\}\times\varDelta^{n-2}.
$$
\item[(2)]
If $[\eta]\in{\mathcal D}_{\Lambda}^{1,-}$, then there is a neighborhood $U$ of $[\eta]$ in $\Omega_{\Lambda}\setminus\Omega_{\Lambda}^{\geq3}$
such that $U\setminus{\mathcal D}_{\Lambda}\cong(\varDelta^{2}\setminus L_{0}\cup L_{1}\cup L_{2})\times\varDelta^{n-2}$.
\end{itemize}
\end{lemma}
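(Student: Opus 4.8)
The plan is to build, near $[\eta]$, an explicit holomorphic coordinate system in which the finitely many irreducible components of ${\mathcal D}_{\Lambda}$ through $[\eta]$ become hyperplanes, and then to read off each stratum directly from its lattice-theoretic definition.

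First I would record which root hyperplanes pass through $[\eta]$. If $[\eta]\in{\mathcal D}_{\Lambda}^{1,+}$, then $[\eta]\in H_{L}^{0}$ for some $L\in{\Bbb A}_{1}^{\oplus2}(\Lambda)$, say $\Delta_{L}=\{\pm d_{1},\pm d_{2}\}$ with $\langle d_{1},d_{2}\rangle=0$; here $(d_{1}+d_{2})^{2}=-4$, so no further root lies in $L$, and only $H_{d_{1}},H_{d_{2}}$ pass through $[\eta]$. If $[\eta]\in{\mathcal D}_{\Lambda}^{1,-}$, then $[\eta]\in H_{L}^{0}$ for some $L\in{\Bbb A}_{2}(\Lambda)$, so exactly three root hyperplanes $H_{d_{1}},H_{d_{2}},H_{d_{3}}$ pass through $[\eta]$. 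Since the arrangement $\{H_{\delta}\}_{\delta\in\Delta_{\Lambda}}$ is locally finite on $\Omega_{\Lambda}$, the union $\bigcup_{\delta\in\Delta_{\Lambda}\setminus\Delta_{L}}H_{\delta}$ is closed and, by the definition of $H_{L}^{0}$, misses $[\eta]$; hence I may choose a neighborhood $U\subset\Omega_{\Lambda}\setminus\Omega_{\Lambda}^{\geq3}$ of $[\eta]$ with $U\cap H_{\delta}=\emptyset$ for every $\delta\in\Delta_{\Lambda}\setminus\Delta_{L}$ and with $\langle\cdot,e\rangle$ nonvanishing on $U$ for a fixed $e\in\Lambda\otimes{\bf C}$ with $\langle\eta,e\rangle\neq0$.

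Next I would construct the coordinates. Put $f_{i}=\langle\cdot,d_{i}\rangle/\langle\cdot,e\rangle$, holomorphic on $U$ with zero locus $H_{d_{i}}$. Under the identification $T_{[\eta]}\Omega_{\Lambda}=T_{[\eta]}Q\cong\eta^{\perp}/{\bf C}\eta$, where $Q=\{\langle x,x\rangle=0\}$, the differential $df_{i}|_{[\eta]}$ is, up to the nonzero factor $\langle\eta,e\rangle^{-1}$, the functional $v\mapsto\langle v,d_{i}\rangle$. A relation $a\,df_{1}+b\,df_{2}\equiv0$ on $\eta^{\perp}$ would force $a\,d_{1}+b\,d_{2}\in(\eta^{\perp})^{\perp}={\bf C}\eta$; if the scalar were nonzero this would give $\eta\in L\otimes{\bf C}$, impossible since $L$ is negative definite whereas $\langle\eta,\bar{\eta}\rangle>0$, and otherwise $a=b=0$ by independence of $d_{1},d_{2}$. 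Thus $df_{1},df_{2}$ are independent, so I may take $z_{1}=f_{1}$, $z_{2}=f_{2}$ as the first two members of a coordinate system $(z_{1},z_{2},w_{1},\dots,w_{n-2})$ centered at $[\eta]$ with $U\cong\varDelta^{2}\times\varDelta^{n-2}$ and $H_{d_{i}}=\{z_{i}=0\}$. In case (2), normalizing the generators so that $\langle d_{1},d_{2}\rangle=-1$ makes $d_{3}:=d_{1}-d_{2}$ the third root, whence $\langle\cdot,d_{3}\rangle=\langle\cdot,d_{1}\rangle-\langle\cdot,d_{2}\rangle$ gives $f_{3}=z_{1}-z_{2}$ and $H_{d_{3}}=\{z_{1}=z_{2}\}=L_{0}$.

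Finally I would identify the strata inside $U$. In case (1), ${\mathcal D}_{\Lambda}\cap U=H_{d_{1}}\cup H_{d_{2}}$ is a normal crossing, so $U\cap\Omega_{\Lambda}^{0}=\{z_{1}z_{2}\neq0\}\cong(\varDelta^{*})^{2}\times\varDelta^{n-2}$; from the definitions $H_{d}^{0}=H_{d}\setminus\bigcup_{\delta\in\Delta_{\Lambda}\setminus\{\pm d\}}H_{\delta}$ and ${\mathcal D}_{\Lambda}^{0}=\sum_{d\in\Delta_{\Lambda}/\pm1}H_{d}^{0}$, together with $U\cap H_{\delta}=\emptyset$ for $\delta\notin\Delta_{L}$, one gets $U\cap{\mathcal D}_{\Lambda}^{0}=\{z_{1}=0,z_{2}\neq0\}\amalg\{z_{1}\neq0,z_{2}=0\}\cong(L_{1}^{*}\amalg L_{2}^{*})\times\varDelta^{n-2}$ and $U\cap{\mathcal D}_{\Lambda}^{1,+}=H_{L}^{0}\cap U=\{z_{1}=z_{2}=0\}\cong\{(0,0)\}\times\varDelta^{n-2}$. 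In case (2), $U\cap\Omega_{\Lambda}^{0}=U\setminus(H_{d_{1}}\cup H_{d_{2}}\cup H_{d_{3}})$ is directly $(\varDelta^{2}\setminus(L_{0}\cup L_{1}\cup L_{2}))\times\varDelta^{n-2}$. The one genuinely geometric step is the transversality established in the third paragraph, which I expect to be the crux; once the linear model is in place, matching it to the definitions of ${\mathcal D}_{\Lambda}^{0}$ and ${\mathcal D}_{\Lambda}^{1,\pm}$ is bookkeeping, and the local finiteness of the root arrangement is a standard feature of type IV domains.
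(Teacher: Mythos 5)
Your proposal is correct and follows essentially the same route as the paper: restrict to a neighborhood meeting only the root hyperplanes of $\Delta_{L}$, produce coordinates $(z_{1},z_{2},w)$ adapted to those hyperplanes, and read off the strata from the definitions. The paper simply asserts the existence of such coordinates, whereas you supply the justification (the functionals $\langle\cdot,d_{i}\rangle/\langle\cdot,e\rangle$ and the independence of their differentials via negative definiteness of $L$ against $\langle\eta,\overline{\eta}\rangle>0$), which is a correct filling-in of that step rather than a different argument.
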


\begin{pf}
Since $[\eta]\in{\mathcal D}_{\Lambda}^{1,+}\amalg{\mathcal D}_{\Lambda}^{1,-}$,
there exist $L\in{\Bbb A}_{1}^{\oplus2}(\Lambda)\amalg{\Bbb A}_{2}(\Lambda)$ and a neighborhood $U$ of $[\eta]$
in $\Omega_{\Lambda}\setminus\Omega_{\Lambda}^{\geq3}$ such that $U\cap{\mathcal D}_{\Lambda}=U\cap\bigcup_{d\in\Delta_{L}}H_{d}$.
\par{\bf (1) }
Assume $[\eta]\in{\mathcal D}_{\Lambda}^{1,+}$. Then $L\in{\Bbb A}_{1}^{\oplus2}(\Lambda)$.
There exists $d_{1},d_{2}\in\Delta_{L}$ with $\langle d_{1},d_{2}\rangle=0$ such that
$U\cap{\mathcal D}_{\Lambda}=U\cap(H_{d_{1}}\cup H_{d_{2}})$.
Replacing $U$ by a smaller neighborhood if necessary,
there exist a system of coordinates $(z_{1},z_{2},w)$, $w=(w_{1},\ldots,w_{n-2})$, on $U$ such that 
$H_{d_{1}}={\rm div}(z_{1})$, $H_{d_{2}}={\rm div}(z_{2})$.
The isomorphism $\psi\colon(U,[\eta])\cong(\varDelta^{n},0)$ induced by $(z_{1},z_{2},w)$ has the desired property.
\par{\bf (2) }
Assume $[\eta]\in{\mathcal D}_{\Lambda}^{1,-}$. Since $L\in{\Bbb A}_{2}(\Lambda)$,
there exist $d_{0},d_{1},d_{2}\in\Delta_{L}$ with $d_{0}=d_{1}+d_{2}$ and $\langle d_{1},d_{2}\rangle=1$ such that
$U\cap{\mathcal D}_{\Lambda}=U\cap(H_{d_{0}}\cup H_{d_{1}}\cup H_{d_{2}})$.
Replacing $U$ by a smaller neighborhood if necessary,
there exist a system of coordinates $(z_{1},z_{2},w)$ on $U$ such that 
$H_{d_{1}}={\rm div}(z_{1})$, $H_{d_{2}}={\rm div}(z_{2})$, $H_{d_{0}}={\rm div}(z_{1}+z_{2})$.
The isomorphism $\psi\colon(U,[\eta])\cong(\varDelta^{n},0)$ induced by $(z_{1},z_{2},w)$ has the desired property.
\end{pf}

\subsubsection{Inclusion of lattices and the Torelli map}
\label{sect:3.2.5}
\par
Recall that for $d\in\Delta_{\Lambda}^{+}$, it follows from Lemma~\ref{lemma:stratification:discriminant:locus} the equality of sets
$H_{d}\setminus\Omega_{\Lambda}^{\geq3}=\Omega_{\Lambda\cap d^{\perp}}^{0}\cup{\mathcal D}_{\Lambda\cap d^{\perp}}^{0}$.
By Lemma~\ref{lemma:local:structure:discr:locus} (1), 
$\Omega_{\Lambda}^{0}\cup{\mathcal D}_{\Lambda}^{0}\cup{\mathcal D}_{\Lambda}^{1,+}$ is a Zariski open subset of
$\Omega_{\Lambda}\setminus\Omega_{\Lambda}^{\geq3}$.

\begin{theorem}
\label{thm:extension:Torelli:map}
$J_{M}^{0}$ extends to a holomorphic map from 
$\Omega_{\Lambda}^{0}\cup{\mathcal D}_{\Lambda}^{0}\cup{\mathcal D}_{\Lambda}^{1,+}$ to ${\mathcal A}_{g}^{*}$.
\end{theorem}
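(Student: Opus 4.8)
The plan is to exhibit $J_{M}^{0}$ as a holomorphic map on the complement of a normal crossing divisor and then invoke Borel's extension theorem. Set $W:=\Omega_{\Lambda}^{0}\cup{\mathcal D}_{\Lambda}^{0}\cup{\mathcal D}_{\Lambda}^{1,+}$; by Lemma~\ref{lemma:local:structure:discr:locus}~(1) this is a Zariski open, hence smooth, subset of $\Omega_{\Lambda}\setminus\Omega_{\Lambda}^{\geq3}$, and $\Omega_{\Lambda}^{0}=W\setminus({\mathcal D}_{\Lambda}^{0}\cup{\mathcal D}_{\Lambda}^{1,+})$. Recall that ${\mathcal A}_{g}^{*}$ is the Baily--Borel (Satake) compactification of the locally symmetric variety ${\mathcal A}_{g}={\frak S}_{g}/{\rm Sp}_{2g}({\bf Z})$. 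Since a holomorphic extension across a lower-dimensional set is unique once it exists, it suffices to produce, near each point of ${\mathcal D}_{\Lambda}^{0}\cup{\mathcal D}_{\Lambda}^{1,+}$, a local holomorphic extension of $J_{M}^{0}$ into ${\mathcal A}_{g}^{*}$; these then patch to the desired map on $W$.

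I would first treat the codimension-one stratum ${\mathcal D}_{\Lambda}^{0}$. At a point $[\eta]\in H_{d}^{0}$ only the single hyperplane $H_{d}$ passes through, so a neighborhood $U\subset W$ satisfies $U\cap\Omega_{\Lambda}^{0}\cong\varDelta^{*}\times\varDelta^{n-1}$ with $U\cap{\mathcal D}_{\Lambda}^{0}\cong\{0\}\times\varDelta^{n-1}$, where $n=\dim\Omega_{\Lambda}$; that is, $\Omega_{\Lambda}^{0}$ is locally the complement of the smooth divisor $\{z_{1}=0\}$. Borel's extension theorem then extends $J_{M}^{0}|_{U\cap\Omega_{\Lambda}^{0}}$ to a holomorphic map $U\to{\mathcal A}_{g}^{*}$.

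The substantive step is the codimension-two stratum ${\mathcal D}_{\Lambda}^{1,+}$, and here Lemma~\ref{lemma:local:structure:discr:locus}~(1) provides exactly the needed input: near $[\eta]\in{\mathcal D}_{\Lambda}^{1,+}$ a neighborhood $U\subset W$ is a polydisc $\varDelta^{n}$ in which $U\cap\Omega_{\Lambda}^{0}\cong(\varDelta^{*})^{2}\times\varDelta^{n-2}$ is the complement of the genuine normal crossing divisor $\{z_{1}z_{2}=0\}$, while $U\cap{\mathcal D}_{\Lambda}^{0}$ and $U\cap{\mathcal D}_{\Lambda}^{1,+}$ recover the two branches and their intersection $\{(0,0)\}\times\varDelta^{n-2}$. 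Applying Borel's extension theorem to the map $(\varDelta^{*})^{2}\times\varDelta^{n-2}\to{\mathcal A}_{g}$ across this normal crossing divisor yields a holomorphic extension $U\to{\mathcal A}_{g}^{*}$ covering the stratum. Patching the local extensions by uniqueness gives the global map $W\to{\mathcal A}_{g}^{*}$. (If one prefers a neat target, one may first pass to a finite-index torsion-free subgroup of ${\rm Sp}_{2g}({\bf Z})$, extend there, and descend.)

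The main point to watch, and the reason ${\mathcal D}_{\Lambda}^{1,-}$ is deliberately \emph{excluded} from the domain, is the normal crossing hypothesis. By Lemma~\ref{lemma:local:structure:discr:locus}~(2), near an ${\Bbb A}_{2}$-point of ${\mathcal D}_{\Lambda}^{1,-}$ the discriminant has three concurrent branches $\{z_{1}z_{2}(z_{1}+z_{2})=0\}$, which is \emph{not} a normal crossing divisor, so Borel's theorem does not apply there; this matches the fact that the fixed curve undergoes a genus-dropping degeneration whose period behaviour at such triple points is not controlled by the present argument. A short lattice computation also explains why the target must be ${\mathcal A}_{g}^{*}$ rather than ${\mathcal A}_{g}$: for $d\in\Delta_{\Lambda}^{+}$ one has the orthogonal splitting $\Lambda=(\Lambda\cap d^{\perp})\oplus{\bf Z}d$ (as in the proof of Proposition~\ref{prop:quasi:pull:back:characteristic:divisor}), whence $g(\Lambda\cap d^{\perp})=g$ and the extension across ${\mathcal D}_{\Lambda}^{0,+}$ lands in the interior ${\mathcal A}_{g}$; whereas for $d\in\Delta_{\Lambda}^{-}$ the genus drops by one along $H_{d}$, so the period degenerates into the boundary ${\mathcal A}_{g-1}^{*}\subset{\mathcal A}_{g}^{*}$, which is precisely the behaviour that Borel's theorem captures.
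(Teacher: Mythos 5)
Your proposal is correct and follows essentially the same route as the paper: the paper's proof likewise takes an arbitrary point of ${\mathcal D}_{\Lambda}^{0}\cup{\mathcal D}_{\Lambda}^{1,+}$, uses Lemma~\ref{lemma:local:structure:discr:locus}~(1) to exhibit $\Omega_{\Lambda}^{0}$ locally as $\varDelta^{*}\times\varDelta^{n-1}$ or $(\varDelta^{*})^{2}\times\varDelta^{n-2}$, and applies Borel's extension theorem to get the holomorphic extension into ${\mathcal A}_{g}^{*}$. Your additional remarks on why ${\mathcal D}_{\Lambda}^{1,-}$ must be excluded and on the behaviour along ${\mathcal D}_{\Lambda}^{0,\pm}$ match the paper's own remark following the theorem.
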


\begin{pf}
Set $n:=\dim\Omega_{\Lambda}$.
Let $[\eta]\in{\mathcal D}_{\Lambda}^{0}\cup{\mathcal D}_{\Lambda}^{1,+}$. By Lemma~\ref{lemma:local:structure:discr:locus} (1),
there is a neighborhood $U$ of $[\eta]$ in $\Omega_{\Lambda}$ such that either
$U\setminus({\mathcal D}_{\Lambda}^{0}\cup{\mathcal D}_{\Lambda}^{1,+})\cong\varDelta^{*}\times\varDelta^{n-1}$
or
$U\setminus({\mathcal D}_{\Lambda}^{0}\cup{\mathcal D}_{\Lambda}^{1,+})\cong(\varDelta^{*})^{2}\times\varDelta^{n-2}$.
By Borel \cite{Borel72}, $J_{M}^{0}$ extends to a holomorphic map from $U$ to ${\mathcal A}_{g}^{*}$.
Since $[\eta]\in{\mathcal D}_{\Lambda}^{0}\cup{\mathcal D}_{\Lambda}^{1,+}$ is an arbitrary point, we get the result.
\end{pf}

\begin{remark}
By Lemma~\ref{lemma:local:structure:discr:locus} (2), Borel's extension theorem does not apply to $J_{M}^{0}$ near ${\mathcal D}_{\Lambda}^{1,-}$.
This explains why $J_{M}$ does not extend to $\Omega_{\Lambda}\setminus\Omega_{\Lambda}^{\geq3}$ in general.
\end{remark}

Denote the extension of $J_{M}^{0}$ by
\begin{center}
$J_{M}\colon\Omega_{\Lambda}^{0}\cup{\mathcal D}_{\Lambda}^{0}\cup{\mathcal D}_{\Lambda}^{1,+}\to{\mathcal A}_{g}^{*}$
\end{center}
and call it again the Torelli map. 
By \cite[Th.\,2.5]{Yoshikawa13}, the following equality holds
$$
J_{M}|_{H_{d}^{0}}=J_{[M\perp d]}|_{\Omega_{\Lambda\cap d^{\perp}}^{0}}
$$
for all $d\in\Delta_{\Lambda}$. The following refinement is crucial for the proof Theorem~\ref{thm:main:theorem:1}.

\begin{theorem}
\label{thm:functoriality:Torelli:map}
If $d\in\Delta_{\Lambda}^{+}$, then
\begin{equation}
\label{eqn:functoriality:Torelli:map}
J_{M}|_{H_{d}\setminus\Omega_{\Lambda}^{\geq3}}
=
J_{[M\perp d]}|_{\Omega_{\Lambda\cap d^{\perp}}^{0}\cup{\mathcal D}_{\Lambda\cap d^{\perp}}^{0}}.
\end{equation}
\end{theorem}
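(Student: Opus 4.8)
The plan is to reduce the theorem to the already-established pointwise identity $J_M|_{H_d^0}=J_{[M\perp d]}|_{\Omega_{\Lambda\cap d^\perp}^0}$ from \cite[Th.\,2.5]{Yoshikawa13}, and then upgrade this equality from the open dense stratum $H_d^0=\Omega_{\Lambda\cap d^\perp}^0$ to the full domain $H_d\setminus\Omega_\Lambda^{\geq3}$ by a continuity and density argument. First I would invoke Lemma~\ref{lemma:stratification:discriminant:locus}: since $d\in\Delta_\Lambda^+$, it gives the set-theoretic decomposition $H_d\setminus\Omega_\Lambda^{\geq3}=\Omega_{\Lambda\cap d^\perp}^0\amalg{\mathcal D}_{\Lambda\cap d^\perp}^0$. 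This shows that the left-hand side of \eqref{eqn:functoriality:Torelli:map} is defined precisely on the target domain of the right-hand side, so both sides are holomorphic maps into ${\mathcal A}_g^*$ defined on the same space $\Omega_{\Lambda\cap d^\perp}^0\cup{\mathcal D}_{\Lambda\cap d^\perp}^0$. Here I must check that $H_d\setminus\Omega_\Lambda^{\geq3}\subset\Omega_\Lambda^0\cup{\mathcal D}_\Lambda^0\cup{\mathcal D}_\Lambda^{1,+}$, so that $J_M$ is actually holomorphic there by Theorem~\ref{thm:extension:Torelli:map}; this follows from Steps 2 and 3 of the proof of Lemma~\ref{lemma:stratification:discriminant:locus}, which show $H_d$ meets ${\mathcal D}_\Lambda^{1,-}$ trivially and meets ${\mathcal D}_\Lambda^{1,+}$ exactly in ${\mathcal D}_{\Lambda\cap d^\perp}^0$.

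Next I would observe that on the open dense subset $H_d^0=\Omega_{\Lambda\cap d^\perp}^0$, the two sides agree by \cite[Th.\,2.5]{Yoshikawa13} together with the identification $[M\perp d]^\perp=M^\perp\cap d^\perp=\Lambda\cap d^\perp$ recorded at the end of Section~\ref{sect:1}. Indeed, restricting $J_M$ to $H_d^0$ lands in $\Omega_{\Lambda\cap d^\perp}^0$ and coincides there with the Torelli map $J_{[M\perp d]}^0$ for the enlarged lattice $[M\perp d]$, since on this generic stratum the fixed curve of the corresponding $2$-elementary K3 surface degenerates exactly as predicted by the sublattice $[M\perp d]$. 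The point is that the cited theorem gives the equality on the dense open stratum, and the extended map $J_{[M\perp d]}$ of Theorem~\ref{thm:extension:Torelli:map} is by construction the unique holomorphic extension of $J_{[M\perp d]}^0$ to $\Omega_{\Lambda\cap d^\perp}^0\cup{\mathcal D}_{\Lambda\cap d^\perp}^0$.

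Finally I would conclude by uniqueness of holomorphic extension. Both $J_M|_{H_d\setminus\Omega_\Lambda^{\geq3}}$ and $J_{[M\perp d]}$ are holomorphic maps from the connected complex manifold $\Omega_{\Lambda\cap d^\perp}^0\cup{\mathcal D}_{\Lambda\cap d^\perp}^0$ to the compact analytic space ${\mathcal A}_g^*$; they agree on the dense open subset $\Omega_{\Lambda\cap d^\perp}^0$. Since ${\mathcal A}_g^*$ is separated (a projective variety), two holomorphic maps that agree on a dense subset of a reduced connected space agree everywhere, which gives \eqref{eqn:functoriality:Torelli:map}. The one subtlety requiring care is that $\Omega_{\Lambda\cap d^\perp}^0\cup{\mathcal D}_{\Lambda\cap d^\perp}^0$ is not all of $\Omega_{\Lambda\cap d^\perp}$ and that ${\mathcal D}_{\Lambda\cap d^\perp}^0$ is a divisor rather than being contained in the open part; here the decisive input is that by Lemma~\ref{lemma:local:structure:discr:locus}(1) the local model near points of ${\mathcal D}_\Lambda^{1,+}$ is $(\varDelta^*)^2\times\varDelta^{n-2}$, so Borel's extension theorem \cite{Borel72} applies and the extension is unique; the density of $\Omega_{\Lambda\cap d^\perp}^0$ in $\Omega_{\Lambda\cap d^\perp}^0\cup{\mathcal D}_{\Lambda\cap d^\perp}^0$ then forces the identity across the whole domain.

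The main obstacle I anticipate is not the extension step itself but verifying cleanly that the two maps have literally the same domain of definition, i.e.\ that $J_M$ restricted to $H_d$ never strays into the bad stratum ${\mathcal D}_\Lambda^{1,-}$ where Borel's theorem fails (cf.\ the Remark following Theorem~\ref{thm:extension:Torelli:map}). This is exactly what the positivity hypothesis $d\in\Delta_\Lambda^+$ buys us through the parity argument in Step 2 of Lemma~\ref{lemma:stratification:discriminant:locus}, so the hypothesis is essential and the whole argument would break down for $d\in\Delta_\Lambda^-$.
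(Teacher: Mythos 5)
Your proposal is correct and follows essentially the same route as the paper: Lemma~\ref{lemma:stratification:discriminant:locus} identifies $H_{d}\setminus\Omega_{\Lambda}^{\geq3}$ with $\Omega_{\Lambda\cap d^{\perp}}^{0}\amalg{\mathcal D}_{\Lambda\cap d^{\perp}}^{0}$, Theorem~\ref{thm:extension:Torelli:map} makes both sides holomorphic maps on that common domain, and the equality on the dense stratum $\Omega_{\Lambda\cap d^{\perp}}^{0}$ from \cite[Th.\,2.5]{Yoshikawa13} propagates by continuity. The extra checks you flag (that $H_{d}$ avoids ${\mathcal D}_{\Lambda}^{1,-}$, which is where $d\in\Delta_{\Lambda}^{+}$ is used) are exactly the content of the cited lemma, so nothing is missing.
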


\begin{pf}
Since both $J_{M}|_{H_{d}\setminus\Omega_{\Lambda}^{\geq3}}$ and 
$J_{[M\perp d]}|_{\Omega_{\Lambda\cap d^{\perp}}^{0}\cup{\mathcal D}_{\Lambda\cap d^{\perp}}^{0}}$
are holomorphic maps from $\Omega_{\Lambda\cap d^{\perp}}^{0}\cup{\mathcal D}_{\Lambda\cap d^{\perp}}^{0}$ to ${\mathcal A}_{g}^{*}$
by Lemma~\ref{lemma:stratification:discriminant:locus} and Theorem~\ref{thm:extension:Torelli:map},
it suffices to prove the equality on $\Omega_{\Lambda\cap d^{\perp}}^{0}$. 
Since this was proved in \cite[Th.\,2.5]{Yoshikawa13}, we get the result.
\end{pf}

\subsection{Hyperelliptic linear system}\label{sect:3.3}

This subsection is the technical basis for Sections \ref{sect:6} and \ref{sect:7}. 
We advise the reader to skip for the moment and return when necessary. 
We prepare some tools to realize a given 2-elementary $K3$ surface 
as a double cover of $\mathbf{P}^2$ or a Hirzebruch surface. 

We will use the following notation: 
For $n\geq0$ let 
$$
\mathbf{F}_{n}=\mathbf{P}(\mathcal{O}_{\mathbf{P}^1}\oplus\mathcal{O}_{\mathbf{P}^1}(n))
$$
be the $n$-th Hirzebruch surface, equipped with the natural projection $\pi:\mathbf{F}_{n}\to\mathbf{P}^1$.  
When $n>0$, denote by $\Sigma\subset\mathbf{F}_{n}$ its unique $(-n)$-section. 
We write $L_{a,b}$ for the line bundle on $\mathbf{F}_{n}$ of $\pi$-degree $a$ with $(L_{a,b}, \Sigma)=b$. 
In particular, we have $\pi^{\ast}\mathcal{O}_{\mathbf{P}^1}(1)\simeq L_{0,1}$, 
$\mathcal{O}_{\mathbf{F}_{n}}(\Sigma)\simeq L_{1,-n}$ and $K_{\mathbf{F}_{n}}\simeq L_{-2,-2+n}$. 

Let $(X, \iota)$ be a 2-elementary $K3$ surface. 
A line bundle $L$ on $X$ with $(L, L)=2d>0$ is called \textit{hyperelliptic} (\cite{SD}) 
if the linear system $|L|$ contains a smooth hyperelliptic member. 
In that case, $L$ is base point free and  
every smooth member of $|L|$ is hyperelliptic of genus $d+1$. 
The associated morphism 
\begin{equation}\label{eqn:HE morphism}
\phi_L : X \to |L|^{\vee} \simeq \mathbf{P}^{d+1}
\end{equation} 
is generically two-to-one onto its image, 
mapping a smooth member of $|L|$ to a rational normal curve in a hyperplane of $|L|^{\vee}$. 
According to Saint-Donat (\cite{SD} \S 5), we have the following possibilities 
for the image surface $\phi_L(X)$. 
\begin{enumerate}
\renewcommand{\theenumi}{\Roman{enumi}}
\item $\phi_L(X)$ coincides with $|L|^{\vee}\simeq\mathbf{P}^2$: this is the case $d=1$.  
\item $\phi_L(X)$ is a Veronese surface in $|L|^{\vee}\simeq\mathbf{P}^5$:  
          this happens when $L=2L'$ for $L'\in {\rm Pic}(X)$ with $(L', L')=2$. 
\item $\phi_L(X)$ is a rational normal scroll, that is, the embedding image of $\mathbf{F}_n$ 
           by a line bundle $L_{1,m}$ with $m>0$ and $n+2m=d$. 
\item $\phi_L(X)$ is a cone over a rational normal curve, that is, the image of $\mathbf{F}_d$ by the bundle $L_{1,0}$. 
           In this case $\phi_L$ lifts to a morphism $X\to\mathbf{F}_d$, 
           and we must have $2\leq d\leq4$.     
\end{enumerate}

We now assume that the hyperelliptic bundle $L$ is $\iota$-invariant, in the sense that there exists an isomorphism 
\begin{equation*}
\iota^{\ast}L\simeq L. 
\end{equation*}
Although $\iota$ may not necessarily act on $L$ equivariantly, it does so on the morphism \eqref{eqn:HE morphism}. 
We then obtain an $\iota$-equivariant morphism 
\begin{equation}\label{eqn:HE 2:1 cover}
\phi : X\to Y 
\end{equation}
with $Y=\mathbf{P}^2$ in cases (I), (II), and $Y=\mathbf{F}_n$, $\mathbf{F}_d$ in cases (III), (IV). 
\par
It will be useful to have a purely lattice-theoretic method for finding an $\iota$-invariant hyperelliptic bundle. 
This is based on the following lemmas. 
Denote by $H_+=H^2(X, \mathbf{Z})_+$ the invariant lattice of $(X, \iota)$.

\begin{lemma}\label{HE LB}
Let $L\in H_+$ be nef with $(L, L)=2d>0$. 
Assume that 
\begin{itemize}
\item[(a)] 
there exists $E\in {\rm Pic}(X)$ with $(E, E)\geq0$ and $(E, L)=2$, and 
\item[(b)] 
there is no $F\in H_+$ with $(F, F)=0$ and $(F, L)=1$. 
\end{itemize}
\noindent
Then $L$ is hyperelliptic. 
\end{lemma}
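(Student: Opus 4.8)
The plan is to run the two classical criteria of Saint-Donat \cite{SD} — one for base-point-freeness and one for hyperellipticity — and to feed them hypotheses (a) and (b) after propagating the involution $\iota$ through the divisor classes involved. Throughout, $L$ is nef with $(L,L)=2d>0$, hence nef and big.

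First I would establish that $|L|$ is base point free; this is where (b) enters. By Saint-Donat, if $|L|$ had base points, then $L\sim aE+\Gamma$ with $|E|$ a free genus-one pencil ($(E,E)=0$), $\Gamma$ an irreducible $(-2)$-curve, $(E,\Gamma)=1$ and $a\geq2$, the fixed part of $|L|$ being exactly $\Gamma$. Since $\iota^{\ast}L\simeq L$, the involution preserves $|L|$ and therefore its fixed part, so $\iota^{\ast}\Gamma=\Gamma$, i.e. $\Gamma\in H_{+}$. Then $aE=L-\Gamma\in H_{+}$, and as ${\rm Pic}(X)$ is torsion free this forces $\iota^{\ast}E=E$, so $E\in H_{+}$. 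But $E\in H_{+}$ with $(E,E)=0$ and $(E,L)=(E,aE+\Gamma)=1$ contradicts (b). Hence $|L|$ is base point free and $\phi_{L}$ is a morphism.

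Next I would treat the genus by splitting on $d$. If $d=1$, then $\phi_{L}:X\to\mathbf{P}^2$ is a double cover and every smooth member of $|L|$ has genus $2$, hence is automatically hyperelliptic (case I); no further input is needed. For $d\geq2$ I would use (a) to produce a genuine elliptic pencil meeting $L$ in degree $2$, which is exactly what Saint-Donat's hyperellipticity criterion requires. Starting from $E$ with $(E,E)\geq0$ and $(E,L)=2$, the Hodge index inequality $(E,E)(L,L)\leq(E,L)^2=4$ gives $(E,E)\leq 2/d\leq1$, so $(E,E)=0$; Riemann--Roch with $(E,L)=2>0$ then makes $E$ effective. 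Subtracting its fixed part and reflecting in the contracted $(-2)$-curves, I replace $E$ by a primitive nef isotropic class $E_{0}$ (an elliptic pencil) without increasing $E_{0}\cdot L$, so that $1\leq E_{0}\cdot L\leq2$, the lower bound because $E_{0}\cdot L=0$ would force $E_{0}=0$ by Hodge index.

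The remaining point — and the one requiring genuine care — is to exclude $E_{0}\cdot L=1$, and here $\iota$ is propagated once more. If some primitive nef elliptic pencil $P$ satisfied $(P,L)=1$, then $\iota^{\ast}P$ is another such pencil: either $\iota^{\ast}P=P$, giving $P\in H_{+}$ with $(P,P)=0$, $(P,L)=1$ and contradicting (b), or $\iota^{\ast}P\neq P$, in which case $(P,\iota^{\ast}P)\geq1$ and $F:=P+\iota^{\ast}P\in H_{+}$ has $(F,F)=2(P,\iota^{\ast}P)\geq2$ while $(F,L)=2$, violating Hodge index since $(L,L)=2d\geq4$. Thus $E_{0}\cdot L=2$, and Saint-Donat's criterion shows the generic member of $|L|$ is hyperelliptic, completing the proof. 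The main obstacle is precisely this last step: the key is that averaging a would-be degree-one pencil over $\iota$ either returns it to the invariant lattice (contradicting (b)) or yields an invariant class of positive square meeting $L$ in degree $2$ (contradicting the Hodge index bound forced by $d\geq2$).
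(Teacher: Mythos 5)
Your proposal is correct, and the base--point--freeness half is essentially the paper's argument: both invoke the Mayer/Saint-Donat structure $|L|=|(d+1)F|+\Gamma$ of a non-free nef big linear system, observe that $\iota$ preserves the fixed part and hence the class of the elliptic pencil $F$, and derive a contradiction with (b) from $(F,L)=1$. Where you diverge is the hyperellipticity step. The paper's route is shorter and needs less from $E$: it takes a general smooth $C\in|L|$ and restricts $E$ directly, using $h^{0}(E)\geq 2$ (Riemann--Roch plus $h^{0}(-E)=0$) together with the vanishing $h^{0}(E-L)=0$ (from $(E-L,L)<0$ and nefness of $L$) in the sequence $0\to H^{0}(E-L)\to H^{0}(E)\to H^{0}(E|_{C})$ to exhibit $E|_{C}$ as a $g^{1}_{2}$; no normalization of $E$ and no knowledge of $(E,E)$ is required. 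You instead force $(E,E)=0$ by Hodge index, reflect $E$ to a primitive nef elliptic pencil $E_{0}$ with $1\leq E_{0}\cdot L\leq 2$, and appeal to Saint-Donat's classification of hyperelliptic polarizations. This is valid, but note that your final involution argument excluding $E_{0}\cdot L=1$ is superfluous: a free pencil restricting to a moving degree-$1$ divisor on a curve of genus $d+1\geq 3$ is already impossible (equivalently, such a pencil would contradict the base-point-freeness you have just established). What your route buys is the explicit elliptic pencil of $L$-degree $2$ realizing the double cover, which is the geometric picture the paper uses later anyway; what it costs is the extra lattice-theoretic reduction that the paper's one-line cohomological argument avoids.
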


\begin{proof}
We first show that $L$ is base point free. 
Otherwise, by \cite{May} Proposition 5 
the linear system $|L|$ would be of the form $|(d+1)F|+\Gamma$ where $F$ is a smooth elliptic curve 
and $\Gamma$ is a $(-2)$-curve with $(F, \Gamma)=1$. 
Since $\iota$ acts on $|L|$, the class of $F$ is $\iota$-invariant and then 
would violate the assumption $(\textrm{b})$. 
Hence $L$ is free, and a general member $C\in|L|$ is smooth and irreducible of genus $d+1$. 
We show that $C$ is hyperelliptic. 
Since $g(C)=2$ when $d=1$, we may assume $d>1$. 
Let $E$ be a divisor as in the assumption $(\textrm{a})$. 
Since $h^0(-E)=0$, we have $h^0(E)\geq2$ by the Riemann-Roch inequality. 
Consider the exact sequence 
\begin{equation}
0 \to H^0(E-L) \to H^0(E) \to H^0(E|_C) \to \cdots 
\end{equation}
Since $(E-L, L)<0$, we have $h^0(E-L)=0$ by the nefness of $L$. 
Hence $h^0(E|_C)\geq h^{0}(E)\geq2$ and $E|_C$ gives a $g^1_2$ on $C$. 
\end{proof}

The conditions (a) and (b) are purely arithmetic. 
To meet the nefness condition is always possible by the following.

\begin{lemma}\label{reflection}
Let $W(X)$ be the Weyl group of ${\rm Pic}(X)$ generated by the reflections with respect to $(-2)$-vectors in ${\rm Pic}(X)$. 
Let $L\in H_+$ be a line bundle with $(L, L)\geq0$ and $(L, L_0)>0$ for some ample class $L_0$. 
Then there exists $w\in W(X)$ such that $w\circ\iota=\iota\circ w$ and $w(L)$ is nef. 
\end{lemma}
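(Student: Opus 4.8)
The plan is to produce $w$ by an $\iota^{*}$-equivariant version of the classical algorithm that moves a class of nonnegative self-intersection into the nef cone by reflecting away offending $(-2)$-curves. Throughout I work in ${\rm Pic}(X)$, noting that the anti-symplectic condition forces $H^{2,0}(X)\subset H_{-}\otimes\mathbf{C}$, hence $H_{+}\subset H^{2}(X,\mathbf{Z})\cap H^{1,1}={\rm Pic}(X)$; in particular $L\in{\rm Pic}(X)$. Let $\mathcal{C}_{X}^{+}\subset{\rm Pic}(X)\otimes\mathbf{R}$ denote the connected component of $\{x:(x,x)>0\}$ containing the ample classes. Since $\iota$ is a biholomorphism, $\iota^{*}$ preserves the ample cone, hence $\mathcal{C}_{X}^{+}$ and its closure, and it permutes the $(-2)$-curves, so it normalizes $W(X)$ via $\iota^{*}s_{\delta}\iota^{*}=s_{\iota^{*}\delta}$. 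The hypotheses $(L,L)\geq0$ and $(L,L_{0})>0$ say precisely that $L\in\overline{\mathcal{C}_{X}^{+}}$. I fix the $\iota$-invariant ample class $H:=L_{0}+\iota^{*}L_{0}$ and use $(\,\cdot\,,H)$ as a potential: for any nonzero $x\in\overline{\mathcal{C}_{X}^{+}}$ one has $(x,H)\geq1$, since $H$ lies in the open positive cone.

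I would run the following procedure. Start with $L^{(0)}:=L$; at stage $j$ I hold an $\iota$-invariant class $L^{(j)}\in\overline{\mathcal{C}_{X}^{+}}$. If $L^{(j)}$ is nef, stop. Otherwise choose a $(-2)$-curve $\delta$ with $(L^{(j)},\delta)<0$ and set $\delta':=\iota^{*}\delta$, again an effective $(-2)$-curve; as $L^{(j)}$ is $\iota$-invariant, $(L^{(j)},\delta')=(L^{(j)},\delta)<0$. I then apply an element $w_{j}\in W(X)$ commuting with $\iota^{*}$, chosen by the value of $(\delta,\delta')$: if $\delta'=\delta$ (so $\delta\in H_{+}$) take $w_{j}=s_{\delta}$; if $\delta'\neq\delta$ and $(\delta,\delta')=0$ take $w_{j}=s_{\delta}s_{\delta'}$ (these reflections commute and are interchanged by $\iota^{*}$, so $w_j$ commutes with $\iota^{*}$ on all of $H^{2}(X,\mathbf{Z})$); if $(\delta,\delta')=1$ then $\alpha:=\delta+\delta'\in H_{+}$ is an effective $(-2)$-class fixed by $\iota^{*}$, and I take $w_{j}=s_{\alpha}$. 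In every case $w_{j}\in W(X)$ commutes with $\iota^{*}$, and a direct computation gives $(w_{j}L^{(j)},H)=(L^{(j)},H)+(L^{(j)},\delta)\,c$ for a positive integer $c$, so the potential strictly decreases. Putting $L^{(j+1)}:=w_{j}L^{(j)}\in\overline{\mathcal{C}_{X}^{+}}$, the strictly decreasing sequence of positive integers $(L^{(j)},H)$ must terminate, and the terminal class is nef. The accumulated product $w:=w_{N-1}\cdots w_{1}w_{0}$ then commutes with $\iota^{*}$ and satisfies that $w(L)$ is nef.

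The case omitted above, $(\delta,\delta')\geq2$, is the main obstacle, and it is ruled out by the light-cone inequality. Indeed, if $\delta'\neq\delta$ and $(\delta,\delta')\geq2$, then $\alpha:=\delta+\delta'$ is effective with $(\alpha,\alpha)=2(\delta,\delta')-4\geq0$; being effective it satisfies $(\alpha,L_{0})>0$, so $\alpha\in\overline{\mathcal{C}_{X}^{+}}$. Since both $L^{(j)}$ and $\alpha$ lie in $\overline{\mathcal{C}_{X}^{+}}$, the reverse Cauchy--Schwarz inequality for the Lorentzian form on ${\rm Pic}(X)$ forces $(L^{(j)},\alpha)\geq0$; but $(L^{(j)},\alpha)=2(L^{(j)},\delta)<0$, a contradiction. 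Hence at every stage one of the three good cases occurs, which is exactly what lets the equivariant algorithm proceed. The remaining points---that $\iota^{*}$ carries effective $(-2)$-curves to effective $(-2)$-curves, that the displayed reflections really commute with $\iota^{*}$ on all of $H^{2}(X,\mathbf{Z})$, and that nefness of a class in $\overline{\mathcal{C}_{X}^{+}}$ is detected by the $(-2)$-curves alone---are routine.
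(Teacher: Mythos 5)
Your proof is correct, and it is exactly the argument the paper has in mind: the paper's own ``proof'' merely cites the chamber-transitivity argument of Barth--Hulek--Peters--Van de Ven and leaves the ``few minor modifications'' (namely, making the descent on the potential $(\,\cdot\,,H)$ equivariant by reflecting in $\delta$, in the commuting pair $\delta,\iota^{*}\delta$, or in the invariant $(-2)$-class $\delta+\iota^{*}\delta$, and excluding $(\delta,\iota^{*}\delta)\geq 2$ by the light-cone inequality) to the reader. You have supplied precisely those details, so no further comparison is needed.
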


\begin{proof}
The same argument as in \cite{B-H-P-V} Proposition 21.1 applies with few minor modification. 
We leave it to the reader. 
\end{proof}

Thus we can obtain an $\iota$-equivariant morphism \eqref{eqn:HE 2:1 cover} 
by just finding a vector in $H_+$ with the arithmetic conditions (a) and (b). 
The $\iota$-equivariant Weyl group action as in Lemma \ref{reflection} would then carry this vector to a class of hyperelliptic bundle. 
\par
We are interested in when $\iota$ acts by the covering transformation of $\phi:X\to Y$, 
in which case $(X, \iota)$ may be recovered from $Y$ and the branch curve of $\phi$.  
Let $g$ denote the genus of the main component of the fixed curve $X^{\iota}$. 

\begin{lemma}\label{cover trans}
The involution $\iota$ acts trivially on $Y$ when 

$(1)$ $g\geq3$ in case $Y=\mathbf{P}^2$, 

$(2)$ $g\geq4$ in case $Y=\mathbf{P}^1\times\mathbf{P}^1$, and 

$(3)$ $g\geq n+2$ in case $Y=\mathbf{F}_n$ with $n>0$. 
\end{lemma}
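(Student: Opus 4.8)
The plan is to argue by contradiction, assuming that the induced involution $\bar\iota$ on $Y$, defined by $\phi\circ\iota=\bar\iota\circ\phi$, is nontrivial, and to derive a bound on $g$ that violates the hypothesis. The first step is the observation that $\phi(X^{\iota})\subseteq Y^{\bar\iota}$: indeed, for $x\in X^{\iota}$ one has $\bar\iota(\phi(x))=\phi(\iota(x))=\phi(x)$. In particular the main component $C:=C^{(g)}$ of the fixed curve, which is smooth and irreducible of genus $g$, maps into the fixed locus $Y^{\bar\iota}$. Since $\phi$ is a finite morphism (as $L$ is base point free), it contracts no curve, so $\phi(C)$ is an irreducible curve $\ell$, necessarily a one-dimensional component of $Y^{\bar\iota}$.

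Next I would classify the one-dimensional components of $Y^{\bar\iota}$ for a nontrivial involution $\bar\iota$, in each of the three cases. For $Y=\mathbf{P}^2$ every involution is linear with fixed locus a line together with an isolated point, so $\ell$ is a line with $(\ell,\ell)=1$. For $Y=\mathbf{P}^1\times\mathbf{P}^1$, an involution either acts as a product $(\tau_1,\tau_2)$ on the two factors or is a twisted swap $(a,b)\mapsto(\tau(b),\tau^{-1}(a))$; the only one-dimensional fixed components are a ruling fibre (self-intersection $0$) or the graph of an automorphism, a $(1,1)$-curve of self-intersection $2$. For $Y=\mathbf{F}_n$ with $n>0$ the unique ruling $\pi$ and the negative section $\Sigma$ are $\bar\iota$-invariant; distinguishing whether $\bar\iota$ acts trivially on the base, the one-dimensional fixed components are rational and lie among $\Sigma$ (self-intersection $-n$), a fibre $f$ (self-intersection $0$), and the disjoint section $\Sigma'=\Sigma+nf$ (self-intersection $n$). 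In every case $\ell$ is smooth rational with $(\ell,\ell)_Y\le\beta$, where $\beta=1,2,n$ respectively.

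Finally I would run the genus estimate. Because $\phi$ has degree two, the restriction $\phi|_{C}\colon C\to\ell\simeq\mathbf{P}^1$ has degree $1$ or $2$. If the degree is $1$ then $C\simeq\mathbf{P}^1$, contradicting $g\ge 3$. If the degree is $2$, then the generic fibre of $\phi$ over $\ell$ is already contained in $C$; comparing degrees over $\ell$ (each of $\deg\phi=2$ and $\deg(\phi|_C)=2$) forces the pullback divisor to satisfy $\phi^{\ast}\ell=C$ with multiplicity one and no residual component. Since $K_X=0$, adjunction then gives
$$
g=p_a(C)=p_a(\phi^{\ast}\ell)=1+\tfrac12(\phi^{\ast}\ell,\phi^{\ast}\ell)=1+(\ell,\ell)_Y\le 1+\beta .
$$
As $1+\beta$ equals $2$, $3$, $n+1$ in the three cases, this contradicts the hypotheses $g\ge 3$, $g\ge 4$, $g\ge n+2$. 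Hence $\bar\iota$ must be trivial, i.e. $\iota$ acts by the covering transformation of $\phi$.

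The main obstacle I anticipate is the case analysis for $\mathbf{F}_n$ and the verification that its one-dimensional fixed loci are rational with the stated self-intersections: this relies on the fact that for $n>0$ the ruling and $\Sigma$ are $\bar\iota$-invariant, so no horizontal fixed curve (which would have to dominate the base and could a priori carry positive genus) can occur, and the fixed curves are forced to be sections or fibres. The only other delicate point is the multiplicity bookkeeping identifying $\phi^{\ast}\ell$ with $C$ in the degree-two case, which is where the hypothesis $\deg\phi=2$ is used decisively.
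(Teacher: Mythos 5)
Your argument is correct in substance but takes a genuinely different route from the paper's. The paper works on the base: if $\iota$ acts nontrivially on $Y$, the genus-$g$ component of $X^{\iota}$ is the normalization of the double cover of a fixed curve $D\subset Y^{\iota}$ branched over $B|_{D}$, where $B\in|-2K_{Y}|$ is the branch curve, and Riemann--Hurwitz gives $2g+2\leq (D,B)=(D,-2K_{Y})$, which is then bounded case by case. You instead work upstairs on the $K3$: you identify $C$ with $\phi^{*}\ell$ and apply adjunction with $K_{X}=0$ to get $g\leq 1+(\ell,\ell)_{Y}$. Since $(D,-K_{Y})=(D,D)+2$ for a smooth rational $D$ by adjunction on $Y$, the two bounds are numerically identical ($g\leq 2,3,n+1$ in the three cases); your version only needs the self-intersections of the fixed curves rather than their intersection with the branch divisor, while the paper's avoids any analysis of the pullback divisor $\phi^{*}\ell$. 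Your classification of the one-dimensional fixed loci, including the dichotomy on $\mathbf{F}_{n}$ according to whether the induced involution on the base is trivial, is more explicit than what the paper records and agrees with it.

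One imprecision should be repaired. The morphism $\phi=\phi_{L}$ is \emph{not} finite in general: base-point-freeness of $L$ does not prevent $\phi$ from contracting $(-2)$-curves $E$ with $(L,E)=0$, so the asserted equality $\phi^{*}\ell=C$ ``with no residual component'' can fail; one only gets $\phi^{*}\ell=C+Z$ with $Z$ effective and supported on $\phi$-exceptional curves lying over points of $\ell$. (Your degree count does correctly show that $C$ appears with multiplicity one and that there is no other \emph{horizontal} component; and $C$ itself is never contracted since $C^{2}=2g-2>0$ forces $(L,C)>0$ for nef $L$, which is the correct reason $\phi(C)$ is a curve.) The conclusion survives: by the projection formula $(\phi^{*}\ell,Z)=(\ell,\phi_{*}Z)=0$, so $C^{2}=(\phi^{*}\ell)^{2}+Z^{2}=2(\ell,\ell)+Z^{2}\leq 2(\ell,\ell)$ because the $\phi$-exceptional curves span a negative definite lattice, and adjunction still yields $g\leq 1+(\ell,\ell)_{Y}$, which is all your contradiction requires.
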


\begin{proof}
We may assume that $g>0$. 
Let $B\subset Y$ be the branch curve of $\phi$, which belongs to $|-2K_Y|$. 
Suppose that $\iota$ acts nontrivially on $Y$; 
then the genus $g$ component of $X^{\iota}$ is 
the normalization of the double cover of a curve component $D$ of $Y^{\iota}$ branched over $B|_D$. 
When $Y=\mathbf{P}^2$, $D$ must be a line and so intersects with $B$ at six points. 
When $Y=\mathbf{P}^1\times\mathbf{P}^1$, $D$ is either a ruling fiber or a smooth bidegree $(1, 1)$ curve, 
which satisfies $(D, B)\leq8$. 
Finally, let $Y=\mathbf{F}_n$ with $n>0$. 
If $\iota$ acts nontrivially on the $(-n)$-section $\Sigma$, 
then $D$ is a ruling fiber so that $(D, -2K_Y)=4$. 
If $\iota$ acts trivially on $\Sigma$, we have $Y^{\iota}=H+\Sigma$ for a smooth $H\in|L_{1,0}|$
because $\iota$ must preserve every fiber of $\mathbf{F}_n$ and hence induces a non-trivial involution on every fiber. 
Then $(\Sigma,B)=(\Sigma, -2K_Y)\leq2$ and $(H,B)=(H, -2K_Y)=2n+4$. 
This gives us the estimate $2g+2\leq(D, B)\leq 2n+4$. 
\end{proof}

This criterion is coarse, but will suffice for our purpose.

\section{Automorphic forms on the period domain}
\label{sect:4}
\par

\subsection{Siegel modular forms}
\label{sect:4.1}
\par
Recall that the line bundle on ${\mathcal A}_{g}$ associated with the automorphic factor 
${\rm Sp}_{2g}({\bf Z})\ni\binom{A\,B}{C\,D}\mapsto\det(C\varOmega+D)\in{\mathcal O}({\frak S}_{g})$, $\varOmega\in{\frak S}_{g}$ 
is called the {\em Hodge line bundle} on ${\mathcal A}_{g}$ and is denoted by ${\mathcal F}_{g}$ in this paper. 
A holomorphic section of ${\mathcal F}_{g}^{\otimes q}$ is identified with a Siegel modular form of weight $q$,
and ${\mathcal F}_{g}^{\otimes q}$ is equipped with the Hermitian metric $\|\cdot\|_{{\mathcal F}_{g}^{\otimes q}}$ called the Petersson norm:
For any Siegel modular form $S$ of weight $q$, we define $\|S(\varOmega)\|_{{\mathcal F}_{g}^{\otimes q}}^{2}:=(\det\Im\,\varOmega)^{q}|S(\varOmega)|^{2}$.
\par
In this paper, the following Siegel modular forms on ${\frak S}_{g}$ play crucial roles:
$$
\chi_{g}(\varOmega)^{8}:=\prod_{(a,b)\,{\rm even}}\theta_{a,b}(\varOmega)^{8},
$$
$$
\Upsilon_{g}(\varOmega):=\chi_{g}(\varOmega)^{8}\sum_{(a,b)\,{\rm even}}\theta_{a,b}(\varOmega)^{-8}.
$$
Here 
$$
\theta_{a,b}(\varOmega):=\sum_{n\in{\bf Z}^{g}}\exp\{\pi\sqrt{-1}{}^{t}(n+a)\varOmega(n+a)+2\pi\sqrt{-1}{}^{t}(n+a)b\}
$$
is the theta constant with even characteristic $(a,b)$, where $a,b\in\{0,1/2\}^{g}$ and $4{}^{t}ab\equiv0\mod2$.
For $g=0$, we set $\chi_{0}=\Upsilon_{0}=1$. 
Note that $\Upsilon_{g}(\varOmega)$ is the elementary symmetric polynomial of degree $2^{g-1}(2^{g}+1)-1=(2^{g-1}+1)(2^{g}-1)$ 
in the even theta constants $\theta_{a,b}(\varOmega)^{8}$.
By \cite[p.176 Cor. and p.182 Th.\,3]{Igusa72}, $\chi_{g}^{8}$ (resp. $\Upsilon_{g}$) is a Siegel modular form of weight $2^{g+1}(2^{g}+1)$ 
(resp. $2(2^{g}-1)(2^{g}+2)$).
The locus of vanishing thetanull $\theta_{{\rm null},g}$ is the reduced divisor on ${\mathcal A}_{g}$ defined by $\chi_{g}$
$$
\theta_{{\rm null},g}:=\{[\varOmega]\in{\mathcal A}_{g};\,\chi_{g}(\varOmega)=0\}.
$$

\begin{lemma}
\label{lemma:locus:vanishing:two:theta:char}
There exist at least two distinct vanishing even theta constants at $\varOmega\in{\frak S}_{g}$ 
if and only if $\chi_{g}(\varOmega)=\Upsilon_{g}(\varOmega)=0$.
In particular, a smooth projective curve $C$ of genus $g$ has at least two effective even theta characteristics if and only if
$\chi_{g}(\varOmega(C))=\Upsilon_{g}(\varOmega(C))=0$.
\end{lemma}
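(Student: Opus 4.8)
The plan is to reduce the first equivalence to an elementary statement about symmetric functions, and then to transport it to curves by the classical dictionary between vanishing theta-nulls and effective theta characteristics. First I would set $x_{a,b}:=\theta_{a,b}(\varOmega)^{8}$ and write $N=2^{g-1}(2^{g}+1)$ for the number of even characteristics, so that
$$
\chi_{g}(\varOmega)^{8}=\prod_{(a,b)\,{\rm even}}x_{a,b},
\qquad
\Upsilon_{g}(\varOmega)=\sum_{(c,d)\,{\rm even}}\ \prod_{(a,b)\not=(c,d)}x_{a,b}.
$$
Thus $\chi_{g}^{8}$ and $\Upsilon_{g}$ are precisely the two top elementary symmetric polynomials $e_{N}$ and $e_{N-1}$ in the $N$ variables $x_{a,b}$, and since $\theta_{a,b}(\varOmega)=0$ exactly when $x_{a,b}=0$, the first assertion becomes: at least two of the $x_{a,b}$ vanish if and only if $e_{N}=e_{N-1}=0$.

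I would then prove this algebraic equivalence directly. If $x_{a,b}=x_{c,d}=0$ for two distinct even characteristics, then $e_{N}=0$ trivially, while every monomial in $e_{N-1}$ omits exactly one variable and hence still retains one of $x_{a,b},x_{c,d}$, so $e_{N-1}=0$ as well. Conversely, if $e_{N}=e_{N-1}=0$, then $e_{N}=0$ forces some $x_{a_{0},b_{0}}=0$; in the expansion of $e_{N-1}$ every summand indexed by $(c,d)\not=(a_{0},b_{0})$ contains the factor $x_{a_{0},b_{0}}$ and so vanishes, leaving $e_{N-1}=\prod_{(a,b)\not=(a_{0},b_{0})}x_{a,b}$. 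Since this equals $0$, a second variable must vanish, giving at least two vanishing even theta constants.

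For the ``in particular'' statement I would invoke the classical correspondence, furnished by Riemann's singularity theorem, between the even theta constants of $C$ and the even theta characteristics of $C$: under the identification of an even characteristic $(a,b)$ with a half-canonical line bundle $L_{a,b}$ satisfying $L_{a,b}^{\otimes2}\cong K_{C}$, one has $\theta_{a,b}(\varOmega(C))=0$ if and only if $h^{0}(L_{a,b})>0$, that is, $L_{a,b}$ is effective. Hence the vanishing even theta constants of $C$ are in bijection with its effective even theta characteristics, and the second assertion is immediate from the first equivalence. I do not expect a serious obstacle: the symmetric-function step is elementary, and the only point needing care is to cite the precise form of Riemann's theorem so that a vanishing theta-null matches an effective even theta characteristic, together with the trivial observation that $\chi_{g}=0$ if and only if $\chi_{g}^{8}=0$.
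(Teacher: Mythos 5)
Your proof is correct and follows essentially the same route as the paper's: the paper likewise observes that once $\chi_{g}(\varOmega)=0$ forces some $\theta_{a_{0},b_{0}}(\varOmega)=0$, the sum $\Upsilon_{g}(\varOmega)$ collapses to the single product $\prod_{(a,b)\neq(a_{0},b_{0})}\theta_{a,b}(\varOmega)^{8}$, whose vanishing yields a second vanishing theta constant, and then cites the Riemann singularity theorem for the curve statement. Your reformulation via the elementary symmetric polynomials $e_{N}$ and $e_{N-1}$ is just a more explicit packaging of the same argument.
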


\begin{pf}
Assume $\chi_{g}(\varOmega)=\Upsilon_{g}(\varOmega)=0$. Since $\chi_{g}(\varOmega)=0$, there is an even pair $(a,b)$ with $\theta_{a,b}(\varOmega)=0$.
Then $\Upsilon_{g}(\varOmega)=\prod_{(c,d)\not=(a,b)}\theta_{c,d}(\varOmega)^{8}$, where $(c,d)\in\{0,1/2\}^{2g}$ runs over all even pairs
distinct from $(a,b)$. Since $\Upsilon_{g}(\varOmega)=0$, we get $\theta_{c,d}(\varOmega)=0$ for some even pair $(c,d)\not=(a,b)$.
Thus $\theta_{a,b}(\varOmega)=\theta_{c,d}(\varOmega)=0$. The converse is trivial. 
The second assertion follows from the Riemann singularity theorem \cite[p.226]{A-C-G-H}. 
\end{pf}

For the proof of Theorem~\ref{thm:main:theorem:1} (2), (3),
we need an estimate for the vanishing order of $\Upsilon_{g}$ for certain ordinary singular families of curves.

\begin{lemma}
\label{lemma:estimate:zero:Upsilon_g:ordinary:singular:family}
Let $p\colon{\mathcal C}\to\varDelta$ be an ordinary singular family of curves of genus $g>0$ with irreducible $C_{0}:=p^{-1}(0)$. 
Namely, $p\colon{\mathcal C}\to\varDelta$ is a proper surjective holomorphic function from a complex surface ${\mathcal C}$ to the unit disc $\varDelta$
without critical points on ${\mathcal C}\setminus C_{0}$ and with a unique, non-degenerate critical point on $C_{0}$.
Assume that $\chi_{g}(\varOmega(C_{t}))=0$ and $\Upsilon_{g}(\varOmega(C_{t}))\not=0$ for all $t\in\varDelta^{*}$ and that 
$\chi_{g-1}(\varOmega(\widehat{C}_{0}))\not=0$, where $\widehat{C}_{0}$ is the normalization of $C_{0}$.
Then there exists $h(t)\in{\mathcal O}(\varDelta)$ such that
$$
\log\|\Upsilon_{g}(\varOmega(C_{t}))\|^{2}
=
(2^{2g-2}-1)\log|t|^{2}+\log|h(t)|^{2}+O\left(\log\log|t|^{-1}\right)
\qquad
(t\to0).
$$
\end{lemma}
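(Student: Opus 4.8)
The plan is to reduce $\Upsilon_{g}$ along the family to a single product of theta constants and then read off its order of vanishing from the classical degeneration of the period matrix at a node. First I would combine the hypotheses with Lemma~\ref{lemma:locus:vanishing:two:theta:char}. Since $\chi_{g}(\varOmega(C_{t}))=0$ for every $t\in\varDelta^{*}$, at least one even theta constant vanishes at each $C_{t}$; since $\Upsilon_{g}(\varOmega(C_{t}))\neq0$, Lemma~\ref{lemma:locus:vanishing:two:theta:char} forbids two of them vanishing simultaneously. As $\varDelta^{*}$ is connected and the even characteristics are finite in number, a single even characteristic $(a_{0},b_{0})$ satisfies $\theta_{a_{0},b_{0}}(\varOmega(C_{t}))\equiv0$ on $\varDelta^{*}$, while every other even theta constant is nowhere zero there. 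Substituting $\theta_{a_{0},b_{0}}\equiv0$ into $\Upsilon_{g}=\sum_{(a,b)}\prod_{(c,d)\neq(a,b)}\theta_{c,d}^{8}$ annihilates every summand except the one indexed by $(a_{0},b_{0})$, so that on the family
$$
\Upsilon_{g}(\varOmega(C_{t}))=\prod_{(c,d)\neq(a_{0},b_{0})}\theta_{c,d}(\varOmega(C_{t}))^{8}.
$$

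Next I would invoke the description of the degenerating period matrix for an irreducible one-nodal degeneration (cf.~\cite{Fay92}): there is a symplectic frame in which $\varOmega(C_{t})=\left(\begin{smallmatrix}\tau(t)&z(t)\\ {}^{t}z(t)&\sigma(t)\end{smallmatrix}\right)$ with $\tau(t)\to\varOmega(\widehat{C}_{0})$ and $z(t)$ holomorphic and bounded, while $q:=e^{2\pi\sqrt{-1}\sigma(t)}$ is holomorphic in $t$. Because the total space is smooth and the critical point is non-degenerate, $q$ has a simple zero at $t=0$, so $q=t\,u(t)$ for a holomorphic unit $u$, and $\Im\sigma=\tfrac{1}{2\pi}\log|t|^{-1}+O(1)$. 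Writing a genus-$g$ characteristic as $(c,d)=((c',c_{g}),(d',d_{g}))$ and expanding $\theta_{c,d}$ in powers of $q$, the exponent of the $n_{g}$-th term is $\tfrac12(n_{g}+c_{g})^{2}$. Hence for $c_{g}=0$ one has $\theta_{c,d}(\varOmega(C_{t}))=\theta_{c',d'}(\tau(t))+O(q^{1/2})$ with nonzero limit $\theta_{c',d'}(\varOmega(\widehat{C}_{0}))$, whereas for $c_{g}=\tfrac12$ the two lowest terms $n_{g}=0,-1$ give $\theta_{c,d}(\varOmega(C_{t}))=q^{1/8}\Psi_{c,d}(t)$ with $\Psi_{c,d}$ holomorphic in $t$.

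The counting is decisive: among the even characteristics exactly $2^{2g-2}$ have $c_{g}=\tfrac12$, and the identically vanishing $(a_{0},b_{0})$ is necessarily one of them, for if $(a_{0})_{g}=0$ its limit would be the nonzero value $\theta_{a_{0}',b_{0}'}(\varOmega(\widehat{C}_{0}))$, contradicting $\theta_{a_{0},b_{0}}\equiv0$. Thus the product above has exactly $2^{2g-2}-1$ factors with $c_{g}=\tfrac12$, each contributing $\theta_{c,d}^{8}=q\,\Psi_{c,d}^{8}$, while the $c_{g}=0$ factors contribute a power series in $q$ whose constant term equals $\chi_{g-1}(\varOmega(\widehat{C}_{0}))^{16}\neq0$; the apparent half-integer powers of $q$ cancel because $\Upsilon_{g}$ is $\mathrm{Sp}_{2g}(\mathbf{Z})$-invariant and hence single-valued. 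Collecting these, $\Upsilon_{g}(\varOmega(C_{t}))=q^{\,2^{2g-2}-1}G(q)$ with $G\in\mathcal{O}(\varDelta)$, so that with $h(t):=u(t)^{2^{2g-2}-1}G(q(t))\in\mathcal{O}(\varDelta)$ one gets $\log|\Upsilon_{g}(\varOmega(C_{t}))|^{2}=(2^{2g-2}-1)\log|t|^{2}+\log|h(t)|^{2}$ exactly. Finally, in the Petersson norm the factor $(\det\Im\varOmega(C_{t}))^{w}$, with $w=2(2^{g}-1)(2^{g}+2)$ the weight of $\Upsilon_{g}$, satisfies $\log\det\Im\varOmega(C_{t})=\log\Im\sigma+O(1)=\log\log|t|^{-1}+O(1)$, contributing the error term $O(\log\log|t|^{-1})$ and yielding the asserted formula.

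I expect the main obstacle to lie in the second step: pinning down the precise normalization of the degenerating period matrix (the simple zero of $q$ in $t$, the convergence $\tau(t)\to\varOmega(\widehat{C}_{0})$, and boundedness of $z(t)$) and justifying that the half-integer powers of $q$ arising from the $c_{g}=0$ factors cancel, so that $G$ is genuinely holomorphic. Both rest on the classical analytic theory of theta functions under degeneration, and the hypothesis $\chi_{g-1}(\varOmega(\widehat{C}_{0}))\neq0$ is precisely what keeps the $c_{g}=0$ factors away from zero and prevents the order from exceeding $2^{2g-2}-1$.
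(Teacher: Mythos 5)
Your argument is correct and follows essentially the same route as the paper's proof: isolate the unique identically vanishing even theta constant via Lemma~\ref{lemma:locus:vanishing:two:theta:char}, use $\chi_{g-1}(\varOmega(\widehat{C}_{0}))\neq0$ to force its characteristic to be nontrivial in the degenerating slot, count the remaining $2^{2g-2}-1$ such factors (each contributing $q$ after raising to the eighth power), remove the half-integer powers by the translation invariance $\Upsilon_{g}(\varOmega+A)=\Upsilon_{g}(\varOmega)$, and absorb $\det\Im\varOmega\sim\log|t|^{-1}$ into the $O(\log\log|t|^{-1})$ term. The only difference is cosmetic: you place the degenerating entry in the last diagonal slot and rederive the $q^{1/8}$-factorization from the classical theory, where the paper puts it in the first slot and cites the corresponding equations from \cite{Yoshikawa13}.
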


\begin{pf}
We follow \cite[Proof of Lemma 4.1]{Yoshikawa13}.
For $\varOmega\in{\frak S}_{g}$, write $\varOmega=\binom{z\,{}^{t}\omega}{\omega\,Z}$,
where $z\in{\frak H}$, $\omega\in{\bf C}^{g-1}$, $Z\in{\frak S}_{g-1}$.
For $t\in\varDelta^{*}$, we can express
$$
\varOmega(C_{t})=\left[\frac{\log t}{2\pi i}A+\psi(t)\right]\in{\mathcal A}_{g},
\qquad
A=
\begin{pmatrix}
1&{}^{t}{\bf 0}_{g-1}\\
{\bf 0}_{g-1}&O_{g-1}
\end{pmatrix},
$$
where $\psi(t)$ is a holomorphic function on $\varDelta$ with values in complex symmetric $g\times g$-matrices such that
$\psi(0)=\binom{\psi_{0}\,{}^{t}\omega_{0}}{\omega_{0}\,\,Z_{0}}$, $Z_{0}\in{\frak S}_{g-1}$, $\varOmega(\widehat{C}_{0})=[Z_{0}]\in{\mathcal A}_{g-1}$.
\par
By the assumption $\chi_{g}(\varOmega(C_{t}))=0$, $\Upsilon_{g}(\varOmega(C_{t}))\not=0$ for all $t\in\varDelta^{*}$
and Lemma~\ref{lemma:locus:vanishing:two:theta:char}, $C_{t}$ has a unique effective even theta characteristic for $t\in\varDelta^{*}$.
By fixing a marking of a reference curve, there is a unique even pair $(a,b)$, $a,b\in\{0,1/2\}^{g}$ such that 
$\theta_{a,b}(\varOmega(C_{t}))=0$ on $\varDelta$ and $\theta_{c,d}(\varOmega(C_{t}))\not=0$ on $\varDelta^{*}$ for all even $(c,d)\not=(a,b)$.
Write $a=(a_{1},a')$ and $b=(b_{1},b')$. 
If $a_{1}=0$, then we get $\theta_{a',b'}(Z_{0})=0$ for the even pair $(a',b')$, $a',b'\in\{0,1/2\}^{g-1}$ by \cite[Eq.\,(4.4)]{Yoshikawa13}, 
which contradicts the assumption $\chi_{g-1}(Z_{0})=\chi_{g-1}(\varOmega(\widehat{C}_{0}))\not=0$. Thus $a_{1}=1/2$.
\par 
By \cite[Eqs.(4.3), (4.4)]{Yoshikawa13}, there is a holomorphic function $F_{a,b}(\zeta,\omega,Z)$ such that
$$
\prod_{(c,d)\not=(a,b)}\theta_{c,d}(\varOmega)
=
(e^{\pi iz/4})^{2^{2(g-1)}-1}\,F_{a,b}(e^{\pi iz},\omega,Z).
$$
Hence there is a holomorphic function $\phi(\zeta,\omega,Z)$ such that
$$
\Upsilon_{g}(\varOmega)=(e^{2\pi iz})^{2^{2(g-1)}-1}\,\phi(e^{\pi iz},\omega,Z).
$$
Since $\Upsilon_{g}(\varOmega)$ is a Siegel modular form and hence $\Upsilon_{g}(\varOmega+A)=\Upsilon_{g}(\varOmega)$,
$\phi(\zeta,\omega,Z)$ is an even function in $\zeta$. There exists a holomorphic function $h(t)\in{\mathcal O}(\varDelta)$ such that
$$
\Upsilon_{g}((\log t/2\pi i)A+\psi(t))=t^{2^{2g-2}-1}h(t).
$$
This, together with \cite[(4.7)]{Yoshikawa13}, implies the result.
\end{pf}

As a consequence of Lemma~\ref{lemma:estimate:zero:Upsilon_g:ordinary:singular:family},
we get the following.

\begin{lemma}
\label{lemma:estimate:alpha:U(2)}
Let $M\subset{\Bbb L}_{K3}$ be a primitive $2$-elementary Lorentzian sublattice and set $\Lambda=M^{\perp}$.
Let $\gamma\colon\varDelta\to{\mathcal M}_{\Lambda}$ be a holomorphic curve with $\gamma(\varDelta^{*})\subset{\mathcal M}_{\Lambda}^{0}$
intersecting $\overline{\mathcal D}_{\Lambda}^{0}$ transversally at $\gamma(0)\in\overline{\mathcal D}_{\Lambda}^{0}$. 
Let $\widetilde{\gamma}\colon\varDelta\to\Omega_{\Lambda}$ be its lift with $\gamma(t^{2})=\varPi_{\Lambda}\circ\widetilde{\gamma}(t)$
and let $d\in\Delta_{\Lambda}^{-}$ be such that $\widetilde{\gamma}(0)\in H_{d}$.
If $\chi_{g}({J}_{M}(\widetilde{\gamma}(t)))=0$ and $\Upsilon_{g}({J}_{M}(\widetilde{\gamma}(t)))\not=0$ for all $t\in\varDelta^{*}$ 
and if $\chi_{g-1}(J_{[M\perp d]}(\widetilde{\gamma}(0)))\not=0$, then 
$$
{\rm ord}_{t=0}\widetilde{\gamma}^{*}(J_{M}^{*}\Upsilon_{g})\geq2(2^{2(g-1)}-1).
$$
\end{lemma}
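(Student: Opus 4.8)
The plan is to deduce Lemma~\ref{lemma:estimate:alpha:U(2)} from Lemma~\ref{lemma:estimate:zero:Upsilon_g:ordinary:singular:family} by producing, from the transversal curve $\gamma$, an ordinary singular family of fixed curves to which the previous lemma applies. The key point is to understand what happens to the fixed curve $X^{\iota}_{t}$ as the period $\widetilde{\gamma}(t)$ crosses the divisor $H_{d}$ with $d\in\Delta_{\Lambda}^{-}$.

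First I would set up the geometry of the degeneration. Since $\gamma$ meets $\overline{\mathcal D}_{\Lambda}^{0}$ transversally at $\gamma(0)$ and $\widetilde{\gamma}(0)\in H_{d}^{0}$ for a single root $d\in\Delta_{\Lambda}^{-}$, the family of $2$-elementary $K3$ surfaces over $\varDelta^{*}$ acquires, in the limit $t\to0$, a nodal degeneration localized along the $(-2)$-curve associated to $d$. Because $d$ is of \emph{minus} type, this $(-2)$-curve is $\iota$-anti-invariant in the appropriate sense, so that the induced degeneration of the fixed curve $X^{\iota}_{t}$ is an \emph{ordinary} (single node) degeneration: the central fiber $C_{0}$ acquires one non-degenerate critical point and the genus drops by one. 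This is exactly the geometric input needed, and I would extract it from the description of the Torelli map near ${\mathcal D}_{\Lambda}^{1,-}$ together with the local analysis in \S\ref{sect:3.2.4}; the reparametrization $\gamma(t^{2})=\varPi_{\Lambda}\circ\widetilde{\gamma}(t)$ accounts for the $\pm1$-ambiguity of the lift, so the family $p\colon{\mathcal C}\to\varDelta$, $t\mapsto X^{\iota}_{\widetilde{\gamma}(t)}$, is an ordinary singular family of genus-$g$ curves with irreducible central fiber.

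Next I would verify the hypotheses of Lemma~\ref{lemma:estimate:zero:Upsilon_g:ordinary:singular:family} for this family. The conditions $\chi_{g}(J_{M}(\widetilde{\gamma}(t)))=0$ and $\Upsilon_{g}(J_{M}(\widetilde{\gamma}(t)))\neq0$ for $t\in\varDelta^{*}$ are assumed directly, giving $\chi_{g}(\varOmega(C_{t}))=0$ and $\Upsilon_{g}(\varOmega(C_{t}))\neq0$. For the normalization $\widehat{C}_{0}$ I would identify its period: as $t\to0$ the genus-$g$ curve $C_{t}$ degenerates to a genus-$(g-1)$ curve, and by the compatibility of the Torelli map in Theorem~\ref{thm:functoriality:Torelli:map} together with $J_{M}|_{H_{d}^{0}}=J_{[M\perp d]}$, the period of $\widehat{C}_{0}$ is $J_{[M\perp d]}(\widetilde{\gamma}(0))$. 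Hence the assumption $\chi_{g-1}(J_{[M\perp d]}(\widetilde{\gamma}(0)))\neq0$ supplies precisely the remaining hypothesis $\chi_{g-1}(\varOmega(\widehat{C}_{0}))\neq0$. Applying Lemma~\ref{lemma:estimate:zero:Upsilon_g:ordinary:singular:family} then yields
$$
\log\|\Upsilon_{g}(\varOmega(C_{t}))\|^{2}
=
(2^{2g-2}-1)\log|t|^{2}+\log|h(t)|^{2}+O(\log\log|t|^{-1}),
$$
so that ${\rm ord}_{t=0}\,\widetilde{\gamma}^{*}(J_{M}^{*}\Upsilon_{g})\geq 2^{2g-2}-1$ from the pole-order term alone, and the factor $2$ in the claimed bound $2(2^{2(g-1)}-1)$ comes from the square in the reparametrization $\gamma(t^{2})=\varPi_{\Lambda}\circ\widetilde{\gamma}(t)$, which doubles the vanishing order measured in $t$.

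The main obstacle I anticipate is the first step: rigorously justifying that a transversal crossing of $\overline{\mathcal D}_{\Lambda}^{-}$ (a \emph{minus}-type root, corresponding to an ${\Bbb A}_{2}$-type stratum in ${\mathcal D}_{\Lambda}^{1,-}$ where $J_{M}$ does \emph{not} extend, per the Remark after Theorem~\ref{thm:extension:Torelli:map}) induces an \emph{ordinary} single-node degeneration of the fixed curve with the correct drop of genus, and that the induced family $p$ really satisfies the technical conditions (irreducible $C_{0}$, unique non-degenerate critical point, no critical points off $C_{0}$). This requires carefully tracking how the lattice-theoretic data at $H_{d}$ with $d\in\Delta_{\Lambda}^{-}$ translates into the local monodromy on $X^{\iota}_{t}$; once this geometric identification is in place, the analytic estimate is a direct citation, and the only arithmetic is the bookkeeping of the factor $2$ from the double cover $t\mapsto t^{2}$ and the matching of the normalization period via Theorem~\ref{thm:functoriality:Torelli:map}.
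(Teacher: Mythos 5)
Your overall strategy is the paper's: produce an ordinary singular family of fixed curves from the transversal crossing, identify the period of the normalization via the compatibility $J_{M}|_{H_{d}^{0}}=J_{[M\perp d]}$, and feed everything into Lemma~\ref{lemma:estimate:zero:Upsilon_g:ordinary:singular:family}. (The geometric input you flag as the main obstacle is exactly what the paper imports wholesale from \cite[Th.\,2.3 (1),(2)]{Yoshikawa13}: for a disc in ${\mathcal M}_{\Lambda}$ meeting $\overline{\mathcal D}_{\Lambda}^{0}$ transversally there is an ordinary singular family of genus-$g$ curves with irreducible central fiber whose period map is $\overline{J}_{M}\circ\gamma$; you do not need to redo the monodromy analysis.)

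However, there is a genuine error in your bookkeeping of the factor $2$, and as written your argument only delivers half the required bound. You place the ordinary singular family over the parameter $t$ of the lift $\widetilde{\gamma}$ (``the family $t\mapsto X^{\iota}_{\widetilde{\gamma}(t)}$'') and apply Lemma~\ref{lemma:estimate:zero:Upsilon_g:ordinary:singular:family} there, obtaining ${\rm ord}_{t=0}\widetilde{\gamma}^{*}(J_{M}^{*}\Upsilon_{g})\geq 2^{2(g-1)}-1$. At that point the order in $t$ has already been computed, and there is no further reparametrization left to perform: asserting that $\gamma(t^{2})=\varPi_{\Lambda}\circ\widetilde{\gamma}(t)$ ``doubles'' it is a non sequitur, since $\widetilde{\gamma}$ is already the upstairs curve. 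The correct arrangement is the opposite one: the ordinary singular family lives over the \emph{downstairs} disc, with parameter $s$ and period map $\overline{J}_{M}\circ\gamma$ (this is where the hypothesis of a unique non-degenerate critical point actually holds --- the pullback of $xy=s$ under $s=t^{2}$ acquires an $A_{1}$-singularity in the total space, so the $t$-parametrized family is \emph{not} an ordinary singular family in the sense of Lemma~\ref{lemma:estimate:zero:Upsilon_g:ordinary:singular:family}). Applying the lemma downstairs gives ${\rm ord}_{s=0}\gamma^{*}(\overline{J}_{M}^{*}\Upsilon_{g})\geq 2^{2(g-1)}-1$, and only then does the substitution $s=t^{2}$ produce ${\rm ord}_{t=0}\widetilde{\gamma}^{*}(J_{M}^{*}\Upsilon_{g})\geq 2(2^{2(g-1)}-1)$. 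A further small point: you invoke Theorem~\ref{thm:functoriality:Torelli:map} to identify $\varOmega(\widehat{C}_{0})$ with $J_{[M\perp d]}(\widetilde{\gamma}(0))$, but that theorem is stated for $d\in\Delta_{\Lambda}^{+}$; for $d\in\Delta_{\Lambda}^{-}$ the relevant statement is the equality $J_{M}|_{H_{d}^{0}}=J_{[M\perp d]}|_{\Omega_{\Lambda\cap d^{\perp}}^{0}}$ valid for all roots (quoted from \cite[Th.\,2.5]{Yoshikawa13} just before that theorem), combined with the fact that $g([M\perp d])=g-1$ for minus-type roots.
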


\begin{pf}
Let $\overline{J}_{M}\colon{\mathcal M}_{\Lambda}^{0}\cup\overline{\mathcal D}_{\Lambda}^{0}\to{\mathcal A}_{g}^{*}$ be the extension of 
$\overline{J}_{M}^{0}\colon{\mathcal M}_{\Lambda}^{0}\to{\mathcal A}_{g}$.
By \cite[Theorem 2.3 (1), (2)]{Yoshikawa13}, there is an ordinary singular family of curves $p\colon{\mathcal C}\to\varDelta$ of genus $g$ 
with irreducible $C_{0}$ and with period map $\overline{J}_{M}\circ\gamma$.
By Lemma~\ref{lemma:estimate:zero:Upsilon_g:ordinary:singular:family}, we get
$$
{\rm ord}_{t=0}\gamma^{*}(\overline{J}_{M}^{*}\Upsilon_{g})\geq2^{2(g-1)}-1,
$$
which, together with $J_{M}(\gamma(t^{2}))=\overline{J}_{M}(\widetilde{\gamma}(t))$, yields the result.
\end{pf}

\subsection{Automorphic forms on $\Omega_{\Lambda}$}
\label{sect:4.2}
\par
Let $M\subset{\Bbb L}_{K3}$ be a primitive $2$-elementary Lorentzian sublattice and set $\Lambda=M^{\perp}$ as before.
Let $q\in{\bf Z}_{>0}$ be such that ${\mathcal F}_{g}^{\otimes q}$ extends to a very ample line bundle on ${\mathcal A}_{g}^{*}$.
Let $i\colon\Omega_{\Lambda}^{0}\cup{\mathcal D}_{\Lambda}^{0}\hookrightarrow\Omega_{\Lambda}$ be the inclusion 
and define $\lambda_{M}^{q}$ as the trivial extension of $J_{M}^{*}{\mathcal F}_{g(M)}^{\otimes q}$ 
from $\Omega_{\Lambda}^{0}\cup{\mathcal D}_{\Lambda}^{0}$ to $\Omega_{\Lambda}$, i.e.,
$$
\lambda_{M}^{q}:=i_{*}{\mathcal O}_{\Omega_{\Lambda}^{0}\cup{\mathcal D}_{\Lambda}^{0}}\left(J_{M}^{*}{\mathcal F}_{g(M)}^{\otimes q}\right).
$$
Since $\Omega_{\Lambda}\setminus(\Omega_{\Lambda}^{0}\cup{\mathcal D}_{\Lambda}^{0})$ has codimension $2$ in $\Omega_{\Lambda}$,
$\lambda_{M}^{q}$ is an $O(\Lambda)$-equivariant invertible sheaf on $\Omega_{\Lambda}$.
On $\Omega_{\Lambda}^{0}$, $\lambda_{M}^{q}$ is equipped with the Hermitian metric
$$
\|\cdot\|_{\lambda_{M}^{q}}:=J_{M}^{*}\|\cdot\|_{{\mathcal F}_{g}^{\otimes q}}.
$$
\par
Fix $l_{\Lambda}\in\Lambda\otimes{\bf R}$ with $\langle l_{\Lambda},l_{\Lambda}\rangle\geq0$.
Define $j_{\Lambda}(\gamma,\cdot)\in{\mathcal O}^{*}_{\Omega_{\Lambda}}$, $\gamma\in O(\Lambda)$ 
and  $K_{\Lambda}(\cdot)\in C^{\infty}(\Omega_{\Lambda})$ by
$$
j_{\Lambda}(\gamma,[\eta])
:=
\frac{\langle\gamma(\eta),l_{\Lambda}\rangle}{\langle\eta,l_{\Lambda}\rangle},
\qquad
K_{\Lambda}([\eta]):=\frac{\langle\eta,\overline{\eta}\rangle}{|\langle\eta,l_{\Lambda}\rangle|^{2}}.
$$
Let $p,q\in{\bf Z}$.
Then $F\in H^{0}(\Omega_{\Lambda},\lambda_{M}^{q})$ is called an 
{\em automorphic form on $\Omega_{\Lambda}$ for $O(\Lambda)$ of weight $(p,q)$}
if it satisfies the following functional equation on $\Omega_{\Lambda}$:
\begin{equation}
\label{eqn:functional:eq:automorphic:form}
F(\gamma\cdot[\eta])=j_{\Lambda}(\gamma,[\eta])^{p}\,\gamma(F([\eta])),
\qquad
\forall\,\gamma\in O(\Lambda).
\end{equation}
The notion of automorphic forms on $\Omega_{\Lambda}^{+}$ for $O^{+}(\Lambda)$ of weight $(p,q)$ is defined in the same way.
In the rest of this paper, the vector space of automorphic forms on $\Omega_{\Lambda}$ for $O(\Lambda)$ of weight $(p,q)$ is identified with
the vector space of automorphic forms on $\Omega_{\Lambda}^{+}$ for $O^{+}(\Lambda)$ of weight $(p,q)$ via the restriction map 
$$
H^{0}(\Omega_{\Lambda},\lambda_{M}^{q})\ni F\to F|_{\Omega_{\Lambda}^{+}}\in H^{0}(\Omega_{\Lambda}^{+},\lambda_{M}^{q}).
$$
\par
We define the {\em Petersson norm} of an automorphic form $F$ on $\Omega_{\Lambda}$ for $O(\Lambda)$ of weight $(p,q)$
as the $O(\Lambda)$-invariant $C^{\infty}$ function on $\Omega_{\Lambda}^{0}$ defined as
$$
\|F([\eta])\|^{2}:=K_{\Lambda}([\eta])^{p}\cdot\|F([\eta])\|_{\lambda_{M}^{q}}^{2}.
$$

\section{The invariant $\tau_{M}$ and its automorphic property}
\label{sect:5}
\par
Let $(X,\iota)$ be a $2$-elementary $K3$ surface of type $M$. Let $\gamma$ be an $\iota$-invariant K\"ahler form on $X$. 
The Laplacian acting on $(0,q)$-forms on $X$ is denoted by $\square_{0.q}$.
Write $\sigma(\square_{0,q})$ for the spectrum of $\square_{0,q}$ and $E_{0,q}(\lambda)$ for eigenspace of $\square_{0,q}$ 
corresponding to $\lambda\in\sigma(\square_{0,q})$. 
Since $\iota$ preserves $\gamma$, $\iota$ acts on $E_{0,q}(\lambda)$. 
The equivariant zeta function of $\square_{0,q}$ is defined as the following convergent series
for $s\in{\bf C}$ with $\Re s\gg0$:
$$
\zeta_{0,q}(\iota)(s)
:=
\sum_{\lambda\in\sigma(\square_{0,q})\setminus\{0\}}
{\rm Tr}\,(\iota|_{E_{0,q}(\lambda)})\,\lambda^{-s}.
$$ 
Then $\zeta_{0,q}(\iota)(s)$ extends meromorphically to ${\bf C}$ and is holomorphic at $s=0$. 
We define the {\it equivariant analytic torsion} \cite{Bismut95} of $(X,\gamma)$ as
$$
\tau_{{\bf Z}_{2}}(X,\gamma)(\iota)
:=
\exp[
-\sum_{q\geq0}(-1)^{q}q\,\zeta'_{0,q}(\iota)(0)].
$$
Let $\eta\in H^{0}(X,K_{X})\setminus\{0\}$ and set $\|\eta\|_{L^{2}}^{2}:=(2\pi)^{-2}\int_{X}\eta\wedge\bar{\eta}$.
For a compact K\"ahler manifold $(V,\omega)$, define ${\rm Vol}(V,\omega):=(2\pi)^{-\dim V}\int_{V}\omega^{\dim V}/(\dim V)!$.
\par
When $X^{\iota}\not=\emptyset$, write $X^{\iota}=\amalg_{i}C_{i}$ for the decomposition into the connected components. 
Let $\tau(C_{i},\gamma|_{C_{i}})$ be the analytic torsion \cite{RaySinger73} of $(C_{i},\gamma|_{C_{i}})$.
We define
$$
\tau(X^{\iota},\gamma|_{X^{\iota}}):=\prod_{i}\tau(C_{i},\gamma|_{C_{i}}),
\qquad
{\rm Vol}(X^{\iota},\gamma|_{X^{\iota}}):=\prod_{i}{\rm Vol}(C_{i},\gamma|_{C_{i}}).
$$
When $X^{\iota}=\emptyset$, we set $\tau(X^{\iota},\gamma|_{X^{\iota}})={\rm Vol}(X^{\iota},\gamma|_{X^{\iota}})=1$.
Let $c_{1}(X^{\iota},\gamma|_{X^{\iota}})$ be the first Chern form of $(X^{\iota},\gamma|_{X^{\iota}})$.
By \cite[Th.\,5.7]{Yoshikawa04}, the real number 
$$
\begin{aligned}
\tau_{M}(X,\iota)
&:=
{\rm Vol}(X,\gamma)^{\frac{14-r(M)}{4}}
\tau_{{\bf Z}_{2}}(X,\gamma)(\iota)\,
{\rm Vol}(X^{\iota},\gamma|_{X^{\iota}})
\tau(X^{\iota},\gamma|_{X^{\iota}})
\\
&\quad
\times
\exp\left[
\frac{1}{8}\int_{X^{\iota}}
\log\left.\left(\frac{\eta\wedge\overline{\eta}}
{\gamma^{2}/2!}\cdot
\frac{{\rm Vol}(X,\gamma)}{\|\eta\|_{L^{2}}^{2}}
\right)\right|_{X^{\iota}}
c_{1}(X^{\iota},\gamma|_{X^{\iota}})
\right]
\end{aligned}
$$
is determined by the isomorphism class of $(X,\iota)$ and hence the period $\overline{\pi}_{M}(X,\iota)$. 
For the arithmetic counterpart of the invariant $\tau_{M}(X,\iota)$, we refer to \cite{MaillotRoessler09}.
\par
We set $\Lambda=M^{\perp}$ and we regard $\tau_{M}$ as the $O(\Lambda)$-invariant function on $\Omega_{\Lambda}^{0}$ or equivalently 
the function on ${\mathcal M}_{\Lambda}^{0}$ defined by
$$
\tau_{M}\left(\overline{\pi}_{M}(X,\iota)\right)
:=
\tau_{M}(X,\iota).
$$
\par
As an application of the theory of (equivariant) Quillen metrics \cite{BGS88}, \cite{Bismut95}, \cite{MaX00}, 
the automorphy of $\tau_{M}$ was established.

\begin{theorem}[\cite{Yoshikawa04}, \cite{Yoshikawa13}, \cite{Yoshikawa12}]
\label{thm:automorphic:property:Phi:M}
There exist $\ell\in{\bf Z}_{>0}$ and an automorphic form $\Phi_{M}$ on $\Omega_{\Lambda}$ for $O(\Lambda)$ of weight $(\ell(r(M)-6),4\ell)$ with
\begin{equation}
\label{eqn:automorphic:property:torsion}
\tau_{M}=\left\|\Phi_{M}\right\|^{-\frac{1}{2\ell}},
\qquad
{\rm div}\,\Phi_{M}=\ell\,{\mathcal D}_{\Lambda}.
\end{equation}
\end{theorem}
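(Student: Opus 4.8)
The plan is to realize $\tau_M$ as a Quillen-type metric on a natural line bundle over $\Omega_\Lambda^0$ and to analyze its curvature and boundary behaviour. First I would fix the universal family $\pi\colon\mathcal{X}\to\Omega_\Lambda^0$ of $2$-elementary $K3$ surfaces of type $M$, together with its fibrewise involution $\iota$ and the induced family $\mathcal{X}^\iota\to\Omega_\Lambda^0$ of fixed curves. On this family the equivariant determinant of cohomology carries the equivariant Quillen metric, whose definition incorporates exactly the equivariant analytic torsion $\tau_{\mathbf{Z}_2}(X,\gamma)(\iota)$; likewise the determinant of cohomology of $\mathcal{X}^\iota$ carries its own Quillen metric built from $\tau(X^\iota,\gamma|_{X^\iota})$. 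The Chern-form correction term in the definition of $\tau_M$ is designed precisely so that the resulting combination is independent of the auxiliary K\"ahler form $\gamma$, which is why $\tau_M$ descends to a function on $\Omega_\Lambda^0$.

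Second, I would compute the curvature current of $-\log\tau_M$ using the curvature formula for equivariant Quillen metrics of Bismut--Gillet--Soul\'e and Bismut \cite{BGS88}, \cite{Bismut95}, refined by the immersion formula of Bismut and Ma \cite{MaX00} relating the equivariant torsion of $X$ to the torsion of the fixed curve $X^\iota$. The outcome should be an identity $dd^c\log\tau_M=\alpha\,c_1(\mathcal{L}_\Lambda)+\beta\,c_1(J_M^*\mathcal{F}_g)$ on $\Omega_\Lambda^0$, where $\mathcal{L}_\Lambda$ is the tautological line bundle attached to the factor $\langle\eta,l_\Lambda\rangle$, $J_M^*\mathcal{F}_g$ is the pulled-back Hodge bundle from $\mathcal{A}_g$, and $(\alpha,\beta)$ are the constants forcing the weight $((r(M)-6)\ell,4\ell)$. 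This identity says that $\tau_M^{-2\ell}$ has the curvature of a metrized $O(\Lambda)$-equivariant line bundle, namely $\mathcal{L}_\Lambda^{\otimes(r(M)-6)\ell}\otimes J_M^*\mathcal{F}_g^{\otimes4\ell}$.

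Third, I would control the behaviour of $\tau_M$ along the discriminant $\mathcal{D}_\Lambda$. As one approaches a generic point of $\mathcal{D}_\Lambda$ the period crosses a root hyperplane $H_d$ and the $K3$ surface acquires an ordinary node; the singularity theorem for Quillen metrics, together with degeneration estimates of the type used in Lemma~\ref{lemma:estimate:zero:Upsilon_g:ordinary:singular:family}, yields $\log\tau_M=-\ell\,\log|t|^2+O(\log\log|t|^{-1})$ in a normal coordinate $t$ transverse to $H_d$. This pins down the divisor as $\ell\,\mathcal{D}_\Lambda$ and shows that the metric on $\mathcal{L}_\Lambda^{\otimes(r(M)-6)\ell}\otimes J_M^*\mathcal{F}_g^{\otimes4\ell}$ extending $\tau_M^{-2\ell}$ has at worst logarithmic singularities there. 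Combining the curvature identity with this boundary analysis, a standard $\partial\bar\partial$-argument (using that $\Omega_\Lambda\setminus(\Omega_\Lambda^0\cup\mathcal{D}_\Lambda^0)$ has codimension $\geq2$ and the Koecher principle) produces a holomorphic section $\Phi_M$, nonvanishing away from $\mathcal{D}_\Lambda$, whose Petersson norm satisfies $\|\Phi_M\|=\tau_M^{-2\ell}$.

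The main obstacle is the curvature computation of the second step: the equivariant immersion formula must be applied carefully to separate the contribution of the ambient $K3$ from that of the fixed curve, and the anomaly terms must be matched against the explicit Chern-form correction built into $\tau_M$ so that the mixed curvature collapses to the two clean characteristic forms above. Keeping track of the $\iota$-equivariant eigenbundle decomposition and of the precise rational constants, which ultimately force the integer $\ell$ and the weight $((r(M)-6)\ell,4\ell)$, is the delicate part; the boundary estimates, by contrast, are governed by the general singularity theorem once the local degeneration is identified as a single node.
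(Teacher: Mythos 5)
Your proposal follows essentially the same route as the paper, which for this statement simply cites \cite{Yoshikawa04}, \cite{Yoshikawa13}, \cite{Yoshikawa12} and attributes it to the theory of (equivariant) Quillen metrics of \cite{BGS88}, \cite{Bismut95}, \cite{MaX00}: curvature identity for the Quillen-type metric underlying $\tau_{M}$, boundary estimate along ${\mathcal D}_{\Lambda}$, and a $\partial\bar{\partial}$/Koecher argument to produce $\Phi_{M}$. One small slip worth noting: with the normalization $\tau_{M}=\|\Phi_{M}\|^{-1/2\ell}$ and ${\rm div}\,\Phi_{M}=\ell\,{\mathcal D}_{\Lambda}$, the generic transverse asymptotic is $\log\tau_{M}=-\tfrac{1}{4}\log|t|^{2}+O(\log\log|t|^{-1})$, not $-\ell\log|t|^{2}$; the coefficient is independent of $\ell$ (the integer $\ell$ enters only to clear denominators and kill the character), though this does not affect the structure of your argument.
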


In the rest of this paper, we determine $\Phi_{M}$ for all $M$.
Since it was done for exceptional $M$ in \cite{Yoshikawa04}, {\em $M$ is assumed to be non-exceptional} in what follows.

\section{The locus of vanishing theta-null: the case $\delta=1$}
\label{sect:6}
\par
In the rest of this paper, for a primitive $2$-elementary Lorentzian sublattice $M\subset{\Bbb L}_{K3}$, 
we write $r$, $l$, $\delta$, $g$, $k$ for $r(M)$, $l(M)$, $\delta(M)$, $g(M)$, $k(M)$, respectively, when there is no possibility of confusion. 
Recall that these invariants are introduced in Sections \ref{sect:1} and \ref{sect:3.2.1}. 
We write 
$$
\Lambda=M^{\perp}.
$$
Let ${\frak M}_{g}$ be the moduli space of smooth curves of genus $g\geq1$.
In Sections \ref{sect:6} and \ref{sect:7}, we study the Torelli map $J_M: \Omega_{\Lambda}^0\to \mathcal{A}_g$ from the geometric point of view. 
We here view the Torelli map rather as a morphism 
\begin{equation*}
\overline{\mu}_{\Lambda} : \mathcal{M}_{\Lambda}^{0} \to \frak{M}_g 
\end{equation*}
from the moduli space of 2-elementary $K3$ surfaces to that of curves, 
which associates to $(X, \iota)$ the genus $g$ component of $X^{\iota}$. 
Our main purpose here is to describe the inverse image of a certain geometric locus in $\frak{M}_g$ 
as a Heegner divisor of $\mathcal{M}_{\Lambda}^{0}$ in a few cases. 
This will be the first and necessary step toward a more complete description, Theorem~\ref{thm:divisor:pullback:chi:upsilon}, which will be obtained 
in the final part of the paper. 
\par
Recall that 
the characteristic Heegner divisor $\mathcal{H}_{\Lambda}$ of $\Omega_{\Lambda}$ was defined in Section \ref{sect:2.3}, 
and that the thetanull divisor $\theta_{{\rm null},g}$ of ${\mathcal A}_{g}$ was defined in Section \ref{sect:4.1}. 
We denote by 
\begin{equation*}
\overline{\mathcal H}_{\Lambda} \subset \mathcal{M}_{\Lambda} 
\end{equation*}
the {\em reduced} algebraic divisor corresponding to $\mathcal{H}_{\Lambda}$, 
and 
\begin{equation*}
{\frak M}_{g}' = \theta_{{\rm null},g} \cap {\frak M}_{g} 
\end{equation*}
the reduced thetanull divisor in ${\frak M}_{g}$.  
It is well-known that ${\frak M}_{g}'\subset{\frak M}_{g}$ is the locus of curves $C$ having an effective even theta characteristic, 
namely an effective line bundle $L$ with $L^{\otimes2}\simeq K_C$ and $h^0(L)$ even. 

In the present section we treat the moduli spaces in the following two series:  
\begin{itemize}
\item 
$k=0$, $\delta=1$, $3\leq g\leq 10$,   
\item 
$k=1$, $6\leq g\leq 9$.    
\end{itemize}
Notice that in the second series we have $\delta=0$ only when $g=6$. We will prove the following.

\begin{theorem}
\label{thetanull=Heegner}
If $3\leq g\leq10$ and $(k,\delta)=(0,1)$, the Heegner divisor $\overline{\mathcal H}_{\Lambda}$ is irreducible 
and the following set-theoretic equality of (reduced) divisors of $\mathcal{M}_{\Lambda}^0$ holds:
\begin{equation*}\label{eqn:thetanull=Heegner}
\overline{\mu}_{\Lambda}^{-1}({\frak M}_{g}') = \overline{\mathcal H}_{\Lambda}\cap \mathcal{M}_{\Lambda}^0.
\end{equation*} 
\end{theorem}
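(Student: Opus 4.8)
The plan is to realize the general member of the family as an explicit double cover, translate the thetanull condition on the fixed curve into an algebraicity condition on the $K3$ surface, match the resulting class with a characteristic Heegner vector, and deduce irreducibility from transitivity of $O^{+}(\Lambda)$. Since $(k,\delta)=(0,1)$, the fixed locus $X^{\iota}=C$ is a single smooth curve of genus $g$, so $\overline{\mu}_{\Lambda}(X,\iota)=[C]$ and the quotient $Y=X/\iota$ is a del Pezzo surface of degree $g-1$ (with $2\le g-1\le 9$) carrying $C\in|-2K_{Y}|$ as its branch curve. Using Lemma~\ref{HE LB} and Lemma~\ref{reflection} I would first exhibit an $\iota$-invariant bi-anticanonical bundle realizing $\phi\colon X\to Y$ (as a double cover of $\mathbf{P}^{2}$ or a Hirzebruch surface in the relevant cases), and Lemma~\ref{cover trans} shows that in the range $3\le g\le 10$ the involution $\iota$ is the covering transformation, so $C$ is exactly the branch curve. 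By adjunction $K_{C}\cong(-K_{Y})|_{C}$, i.e.\ $C$ inherits a natural half-canonical structure from the bi-anticanonical system of $Y$.

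Next I would build the dictionary between effective even theta characteristics and characteristic Heegner vectors. By the Riemann singularity theorem and Lemma~\ref{lemma:locus:vanishing:two:theta:char}, a period lies in $\overline{\mu}_{\Lambda}^{-1}(\mathfrak{M}_{g}')$ if and only if $C$ carries an effective even theta characteristic $L$, with $L^{\otimes2}\cong K_{C}\cong(-K_{Y})|_{C}$. The heart of the argument is to show that such an $L$ corresponds to a vector $\lambda\in\Lambda^{\vee}$ with $\lambda^{2}=\varepsilon_{\Lambda}$ and $[\lambda]=\mathbf{1}_{\Lambda}$. Concretely, analyzing the half-anticanonical classes cut on $C$ by the bi-anticanonical sections of $Y$ (its $(-1)$-curves, conic pencils, and more degenerate configurations), I would produce an algebraic class on $X$ whose anti-invariant projection is $\lambda$; the square-root condition $L^{\otimes2}\cong K_{C}$ becomes the characteristic condition $[\lambda]=\mathbf{1}_{\Lambda}$, while adjunction on $Y$ together with $\varepsilon_{\Lambda}=(12-r(\Lambda))/2$ pins down $\lambda^{2}=\varepsilon_{\Lambda}$. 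Since the period annihilates $\lambda$ precisely when $L$ exists, this yields the inclusion $\overline{\mu}_{\Lambda}^{-1}(\mathfrak{M}_{g}')\subset\overline{\mathcal H}_{\Lambda}\cap\mathcal{M}_{\Lambda}^{0}$.

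For the reverse inclusion I would run the construction backwards: starting from a period on $H_{\lambda}$ for a characteristic Heegner vector $\lambda$, the corresponding half-anticanonical class restricts to an effective even theta characteristic on $C$, so that $[C]\in\mathfrak{M}_{g}'$; here the explicit study of $|-2K_{Y}|$ is needed both to verify effectivity and the parity of $h^{0}$ (controlled by the discriminant form and $\mathbf{1}_{\Lambda}$) and to exclude extraneous theta characteristics, with the degenerations of $Y$ and of the anticanonical curve matched to the boundary strata of $\mathcal{M}_{\Lambda}^{0}$ so that the equality holds set-theoretically for all $3\le g\le 10$. Finally, to prove that $\overline{\mathcal H}_{\Lambda}$ is irreducible I would show that $O^{+}(\Lambda)$ acts transitively, up to sign, on $\{\lambda\in\Lambda^{\vee}:\lambda^{2}=\varepsilon_{\Lambda},\ [\lambda]=\mathbf{1}_{\Lambda}\}$: for $g\ge2$ the lattice $\Lambda$ contains $\mathbb{U}^{\oplus2}$, so Eichler's criterion (Nikulin's theory of $2$-elementary lattices) guarantees that the orbit of such a primitive vector is determined by its length and its class in $A_{\Lambda}$, both fixed here, whence $\mathcal{H}_{\Lambda}$ is a single $O^{+}(\Lambda)$-orbit of hyperplanes and $\overline{\mathcal H}_{\Lambda}$ is irreducible.

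The main obstacle is the dictionary of the two middle paragraphs, and especially the reverse inclusion: one must prove that every effective even theta characteristic of the branch curve is cut out by a half-anticanonical class coming from the del Pezzo surface, uniformly in $g$ and across all degenerations of $Y$, and that the arithmetic condition $[\lambda]=\mathbf{1}_{\Lambda}$ corresponds precisely to the evenness of the theta characteristic. This is the genuinely geometric content that the lattice bookkeeping alone cannot supply, and I expect the case-by-case control of $|-2K_{Y}|$ on the various del Pezzo surfaces to be the most delicate part.
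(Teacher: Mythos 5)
Your overall strategy (double covers of del Pezzo surfaces, a dictionary between effective even theta characteristics and characteristic Heegner vectors, Eichler's criterion for irreducibility) is the same as the paper's, but your plan commits you to proving the reverse inclusion $\overline{\mathcal H}_{\Lambda}\cap\mathcal{M}_{\Lambda}^{0}\subset\overline{\mu}_{\Lambda}^{-1}({\frak M}_{g}')$ directly, and you yourself flag this as the step you cannot yet carry out. That is a genuine gap as written --- but it is also an unnecessary one, and the missing idea is purely logical: since $\overline{\mathcal H}_{\Lambda}$ is irreducible (which you do prove) and ${\frak M}_{g}'$ is the zero divisor of $\chi_{g}$, the nonempty preimage $\overline{\mu}_{\Lambda}^{-1}({\frak M}_{g}')$ is of pure codimension one, so once it is shown to be nonempty and \emph{contained} in $\overline{\mathcal H}_{\Lambda}\cap\mathcal{M}_{\Lambda}^{0}$ it must coincide with it. The paper only ever proves the forward inclusion plus non-emptiness; the ``most delicate part'' you identify never has to be done.

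For the forward inclusion your sketch is too vague at exactly the point where the geometry enters. The class whose anti-invariant projection is the Heegner vector is not extracted from a general analysis of $|-2K_{Y}|$; it is $D_{-}=E-\iota(E)$ for a smooth elliptic curve $E$ on $X$ with $E+\iota(E)\sim X^{\iota}$, and the computation $(D_{-},D_{-})=-(X^{\iota},X^{\iota})=2r-20=4\varepsilon_{\Lambda}$ together with the fact that $[X^{\iota}/2]$ is the characteristic element of $A_{H_{+}}$ is what forces $[D_{-}/2]=\mathbf{1}_{H_{-}}$. The elliptic curve itself is produced by showing (for $4\le g\le 10$, after checking $h^{0}(L)=2$ via Clifford's theorem) that an effective even theta characteristic forces the canonical curve $C\subset\bar{Y}\subset\mathbf{P}^{g-1}$ to be cut out by a rank-$3$ quadric $Q$, with a separate case-by-case argument (the codimension-$2$ locus $\mathcal Z$) to exclude $\bar{Y}\subset Q$; the double cover of $Q$ is a rank-$4$ quadric cone whose two rulings restrict to the pencil $|E|$, swapped by $\iota$. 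The case $g=3$ (hyperelliptic $=$ vanishing thetanull) needs a separate but easier version of the same construction. Without this concrete mechanism your ``square-root condition becomes the characteristic condition'' remains an assertion rather than a proof, so even the direction you do need is not yet established.
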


The same assertion also holds for the second series when $\delta=1$, 
but the following weaker version will suffice for the rest of the paper.

\begin{proposition}
\label{thetanull k=1}
If $6\leq g\leq9$ and $k=1$, 
$\overline{\mu}_{\Lambda}(\mathcal{M}_{\Lambda}^0)$ is not contained in ${\frak M}_{g}'$. 
\end{proposition}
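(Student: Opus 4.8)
The plan is to realize a general $2$-elementary $K3$ surface of type $M$ as a double cover of a Hirzebruch surface, so that the genus $g$ component of its fixed curve becomes an explicit trigonal curve on the associated scroll, and then to show that a general such curve carries no effective even theta characteristic. Since $\frak{M}_g'$ is Zariski closed in $\frak{M}_g$ and is exactly the locus of curves admitting an effective even theta characteristic, it suffices to exhibit a single $(X,\iota)\in\mathcal{M}_\Lambda^0$ whose genus $g$ fixed curve lies off $\frak{M}_g'$.

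First I would set $n=g-6\in\{0,1,2,3\}$ and work on $\mathbf{F}_n$, with positive section $\Sigma'=L_{1,0}$ satisfying $(\Sigma',\Sigma')=n$ and fibre class $f$. Using the hyperelliptic machinery of Section~\ref{sect:3.3}, an $\iota$-invariant hyperelliptic class produced by Lemmas~\ref{HE LB} and~\ref{reflection} yields an $\iota$-equivariant morphism $\phi\colon X\to Y=\mathbf{F}_n$; since $g=n+6\geq n+2$, Lemma~\ref{cover trans} shows that $\iota$ is the covering involution, so that $X^\iota$ is isomorphic to the branch divisor $B\in|-2K_{\mathbf{F}_n}|$. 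I would arrange $B=C+\Sigma$, where $\Sigma$ is a rational curve (the negative section when $n>0$, a ruling when $n=0$) and $C\in|3\Sigma'+(4-n)f|$. A direct adjunction computation gives $g(C)=3n+2(4-n)-2=n+6=g$, while $\Sigma$ accounts for the unique rational component $E_1$, matching $k=1$. Passing to the canonical resolution of $\phi$ separates $C$ and $\Sigma$ and produces a smooth $K3$ surface; the lattice bookkeeping identifying the type as $M$ runs parallel to the $k=0$ series. Thus $\overline{\mu}_\Lambda(X,\iota)$ is the class of a smooth $C\in|3\Sigma'+(4-n)f|$ on $\mathbf{F}_n$, and a dimension count ($\dim|C|$ modulo $\mathrm{Aut}(\mathbf{F}_n)$ equals $8+g=\dim\mathcal{M}_\Lambda^0$) shows this realization is dominant.

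It then remains to prove that a general such $C$ has no effective even theta characteristic. As $C\cdot f=3$, the ruling cuts out a $g^1_3$, so $C$ is a trigonal curve with $\mathbf{F}_n$ as its trigonal scroll of Maroni invariant $n$; adjunction gives $K_C=(\Sigma'+2f)|_C$, whose $\Sigma'$-coefficient is odd, so $K_C$ is not $2$-divisible in $\mathrm{Pic}(\mathbf{F}_n)$ and no theta characteristic is cut out by a line bundle on the scroll. I would then classify the remaining theta characteristics through their position in the scrollar filtration of $\mathrm{Pic}(C)$ and compute $h^0$ of each: an effective even theta characteristic $\theta$ (of degree $g-1$ with $h^0(\theta)\geq2$) would force a special pencil incompatible with the general trigonal geometry of Maroni invariant $n=g-6$. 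For $g=6,7$ the scroll is balanced ($n=0,1$), $C$ is a \emph{general} trigonal curve, and the non-existence is classical; alternatively one may degenerate $C$ to a curve whose normalisation has $\chi_{g-1}\neq0$ and argue that the limiting even theta characteristics are ineffective, as in the local analysis of theta constants used for Lemma~\ref{lemma:estimate:zero:Upsilon_g:ordinary:singular:family} and Theorem~\ref{thetanull=Heegner}.

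The genuine obstacle is the case $g=8,9$, where $n=g-6\in\{2,3\}$ is Maroni-special: here one must check that the \emph{unbalanced} scroll is not special enough to force an effective even theta characteristic, the phenomenon being governed precisely by the Maroni invariant (as already in the classical genus $4$ picture, where the cone quadric produces the vanishing thetanull). I expect to settle this by the explicit half-canonical computation on the scroll, bounding $h^0(\theta)<2$ for a general $C$ through the scrollar filtration, with the delicate extremal case being $g=9$, $n=3$, which lies closest to $\frak{M}_9'$. This is the step I anticipate will require the most care, in contrast to the transparent balanced cases $g=6,7$.
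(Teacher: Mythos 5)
Your overall strategy --- realize the general member as a double cover of $\mathbf{F}_{g-6}$ branched over $C+\Sigma$ with $C\in|L_{3,10-g}|$, and then show that a general such $C$ carries no effective even theta characteristic --- is exactly the route the paper takes (the paper simply cites \cite{Ma} for the realization). But there are two genuine gaps. First, the decisive step is never carried out: you announce that you ``would classify the remaining theta characteristics through their position in the scrollar filtration'' and that you ``expect to settle'' the cases $g=8,9$, but no argument is given. The tool that closes this is Maroni's description of $W^{r}_{g-1}$ for trigonal curves (Proposition~\ref{Maroni}), packaged in Lemma~\ref{even theta-chara Hirze}: a theta characteristic with $h^{0}\geq2$ exists if and only if some $H\in|L_{1,m-2}|$ satisfies $H|_{C}=2D$. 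Here you have also misidentified the Maroni invariant: for $C\in|L_{3,10-g}|$ on $\mathbf{F}_{g-6}$ one has $m=b+n-2=(10-g)+(g-6)-2=2$ for \emph{every} $g$ in the range (it is the scroll invariant that equals $g-6$), so the relevant system is $|L_{1,0}|$, of dimension $g-5$, while total tangency imposes $(L_{1,0},C)/2=g-4$ conditions; hence a general $C$ admits no such $H$, uniformly in $g$, and the cases $g=8,9$ need no special care. Chasing ``Maroni invariant $n=g-6$'' would send you to the wrong linear system, and the distinction you draw between balanced and Maroni-special scrolls is a red herring here.

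Second, the proposition covers every lattice with $k=1$ and $6\leq g\leq9$, and at $g=6$ there are two deformation types, $\delta=1$ and $\delta=0$. Your construction produces only the $\delta=1$ family (trigonal of Maroni invariant $2$); for $(r,l,\delta)=(6,4,0)$ the general fixed curve is a smooth plane quintic, and one needs the separate classical fact (via the description of $W^{1}_{5}(C)$ in \cite{A-C-G-H}) that a smooth plane quintic has no effective even theta characteristic. Your fallback suggestion of degenerating to a nodal curve with $\chi_{g-1}\neq0$ is not a substitute either: the local expansions behind Lemma~\ref{lemma:estimate:zero:Upsilon_g:ordinary:singular:family} control vanishing orders along a boundary divisor, not non-vanishing of theta constants on the open stratum.
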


These results will be used in Section \ref{sect:8}. 
Theorem \ref{thetanull=Heegner} will be proved in Sections \ref{sect:6.2}--\ref{ssec:3<g<11,k=0}, 
and Proposition \ref{thetanull k=1} in Section \ref{ssec:k=1}.

\subsection{Proof of Theorem \ref{thetanull=Heegner}: the strategy}
\label{sect:6.2}
\par
Let us first explain the outline of the proof of Theorem \ref{thetanull=Heegner}, 
reducing it to the construction of certain elliptic curves. 

As the first step we see the irreducibility of $\overline{\mathcal H}_{\Lambda}$, 
which holds in a wider range. 

\begin{lemma}
\label{lemma:irreducibility:characteristic:divisor}
When $\delta=1$ and $r\leq9$, $\overline{\mathcal H}_{\Lambda}$ is an irreducible divisor of $\mathcal{M}_{\Lambda}$. 
\end{lemma}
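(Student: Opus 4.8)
The plan is to translate the irreducibility of $\overline{\mathcal H}_{\Lambda}$ into a transitivity statement for a finite set of vectors and then to settle it by Eichler's criterion. Put
$$
S_{\Lambda}:=\{\lambda\in\Lambda^{\lor};\ \lambda^{2}=\varepsilon_{\Lambda},\ [\lambda]={\bf 1}_{\Lambda}\},
$$
so that ${\mathcal H}_{\Lambda}=\sum_{\lambda\in S_{\Lambda}/\pm1}H_{\lambda}$. Since $\varepsilon_{\Lambda}\neq0$, two members $H_{\lambda},H_{\lambda'}$ with $\lambda,\lambda'\in S_{\Lambda}$ coincide exactly when $\lambda'=\pm\lambda$: proportionality of the cutting hyperplanes forces $\lambda'=c\lambda$ with $c\in{\bf Q}^{\times}$, and equality of norms gives $c=\pm1$. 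As ${\mathcal M}_{\Lambda}=\Omega_{\Lambda}/O(\Lambda)$, the irreducible components of the reduced divisor $\overline{\mathcal H}_{\Lambda}$ are therefore in bijection with the $O(\Lambda)$-orbits on $S_{\Lambda}/\pm1$. First I would note that ${\bf 1}_{\Lambda}$ is fixed by $O(q_{\Lambda})$, so $S_{\Lambda}$ is $O(\Lambda)$-stable and it even suffices to prove transitivity of the stable orthogonal group $\widetilde{O}(\Lambda)=\ker(O(\Lambda)\to O(q_{\Lambda}))$ on $S_{\Lambda}$.

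Next I would pin down the shape of $\Lambda$. Under the hypothesis $\delta=1$, $r\le9$ we have $r(\Lambda)=22-r\ge13$, and scanning Proposition~\ref{prop:classification:sublattice:sign(2,r-2):K3} shows that all such $\Lambda$ occur in the rows $g(\Lambda)\ge2$ of Table~\ref{table:list:sublattice:sign(2,r-2):K3}; in every one of these the lattice splits orthogonally as $\Lambda\cong{\Bbb U}^{\oplus2}\oplus K$ with $K$ negative definite and $2$-elementary. This splitting also yields the nonemptiness of $S_{\Lambda}$: since $A_{\Lambda}=A_{K}$ and ${\bf 1}_{\Lambda}={\bf 1}_{K}$, writing $\lambda=(u,\kappa)\in{\Bbb U}^{\oplus2}\oplus K^{\lor}$ gives $\lambda^{2}=u^{2}+\kappa^{2}$ with $u^{2}\in2{\bf Z}$ arbitrary, so choosing $\kappa$ in the coset ${\bf 1}_{K}$ and then $u$ with $u^{2}=\varepsilon_{\Lambda}-\kappa^{2}$ produces an element of $S_{\Lambda}$; the required congruence $\varepsilon_{\Lambda}\equiv q_{\Lambda}({\bf 1}_{\Lambda})\bmod2{\bf Z}$ is exactly the compatibility making the characteristic Heegner divisor well defined. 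Because $\varepsilon_{\Lambda}<0$, every $H_{\lambda}$ is then a genuine divisor.

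The core step is the transitivity of $\widetilde{O}(\Lambda)$ on $S_{\Lambda}$. Here I would invoke Eichler's criterion in the form valid for a lattice containing two orthogonal hyperbolic planes: for $\Lambda\cong{\Bbb U}^{\oplus2}\oplus K$ the $\widetilde{O}(\Lambda)$-orbit of a vector $v\in\Lambda^{\lor}$ depends only on the pair $(v^{2},[v]\in A_{\Lambda})$. Applying this to the common value $(\varepsilon_{\Lambda},{\bf 1}_{\Lambda})$ shows that $S_{\Lambda}$ is a single $\widetilde{O}(\Lambda)$-orbit, hence a single $O(\Lambda)$-orbit, and therefore $\overline{\mathcal H}_{\Lambda}$ is irreducible.

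The main obstacle I anticipate is not the formal reduction but the precise invocation of Eichler's criterion for vectors of the \emph{dual} lattice $\Lambda^{\lor}$ rather than primitive vectors of $\Lambda$; this is where the presence of two orthogonal copies of ${\Bbb U}$ is essential, and I would isolate it as a clean lattice-theoretic lemma (or a direct citation) so that the remaining bookkeeping — the splitting $\Lambda\cong{\Bbb U}^{\oplus2}\oplus K$ case by case and the norm/class congruence giving $S_{\Lambda}\neq\emptyset$ — becomes routine.
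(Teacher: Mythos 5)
Your overall route is the paper's: reduce irreducibility to transitivity of $O^{+}(\Lambda)$ on the set $S_{\Lambda}$ of vectors $\lambda\in\Lambda^{\lor}$ with $\lambda^{2}=\varepsilon_{\Lambda}$ and $[\lambda]={\bf 1}_{\Lambda}$, and settle transitivity by Eichler's criterion using a splitting $\Lambda\cong{\Bbb U}^{\oplus2}\oplus K$. The reduction, the table check that every relevant $\Lambda$ contains ${\Bbb U}^{\oplus2}$, and the non-emptiness discussion are all sound (the paper does not even address non-emptiness). The problem is the core step, which you flag as "the main obstacle" and then defer: the statement you propose to invoke --- that for $\Lambda\cong{\Bbb U}^{\oplus2}\oplus K$ the $\widetilde{O}(\Lambda)$-orbit of an \emph{arbitrary} vector $v\in\Lambda^{\lor}$ is determined by $(v^{2},[v])$ --- is false, not merely in need of a careful citation. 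Eichler's criterion governs \emph{primitive} vectors of $\Lambda$, with the class being $[v/{\rm div}(v)]$, and passing to dual vectors loses the primitivity data. A counterexample inside the hypotheses of the lemma: for $M={\Bbb A}_{1}^{+}$ (so $r=1\leq9$, $\delta=1$, $\Lambda\cong{\Bbb U}^{\oplus2}\oplus{\Bbb E}_{8}^{\oplus2}\oplus{\Bbb A}_{1}$, $\varepsilon_{\Lambda}=-9/2$), both $\lambda_{1}=\tfrac{3}{2}e$ (with $e$ generating the ${\Bbb A}_{1}$-summand) and $\lambda_{2}=\mu+\tfrac{1}{2}e$ (with $\mu\in{\Bbb U}^{\oplus2}$, $\mu^{2}=-4$) have norm $-9/2$ and class ${\bf 1}_{\Lambda}$, yet they cannot be $O(\Lambda)$-equivalent because $2\lambda_{1}=3e$ is imprimitive while $2\lambda_{2}$ is primitive; correspondingly $H_{\lambda_{1}}=H_{e}\subset{\mathcal D}_{\Lambda}^{+}$ while $H_{\lambda_{2}}$ is not, so the endpoint $r=1$ is genuinely delicate.

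What is missing is precisely the paper's one move: replace $\lambda$ by $l'=2\lambda\in\Lambda$. Since $\delta=1$ forces ${\bf 1}_{\Lambda}\neq0$, one has $\lambda\notin\Lambda$; one then checks that $l'$ is \emph{primitive} with ${\rm div}(l')=2$, $[l'/2]={\bf 1}_{\Lambda}$ and $(l')^{2}=4\varepsilon_{\Lambda}$, and applies the standard criterion for primitive lattice vectors. The primitivity is where the numerics enter and cannot be waved away: writing $2\lambda=mw$ with $w\in\Lambda$ primitive, $m$ must be odd (else $\lambda\in\Lambda$), and $m\geq3$ would give $\varepsilon_{\Lambda}=\tfrac{m^{2}}{4}w^{2}\leq-\tfrac{9}{2}$, which is ruled out for $r\geq2$ since then $\varepsilon_{\Lambda}=(12-r(\Lambda))/2\geq-4$ --- this is exactly the case excluded by the counterexample above. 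Without supplying this (or an equivalent primitivity hypothesis in your "clean lattice-theoretic lemma"), the transitivity claim is unproved. Note also that the paper treats $(r,l)=(9,9)$, where $l'$ is a $(-2)$-vector, by a separate argument via Nikulin's uniqueness of $(l')^{\perp}$ rather than by Eichler; your uniform argument would need to justify covering that case as well.
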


\begin{proof}
This is restated as the property that 
vectors $l\in\Lambda^{\vee}$ with $(l, l)=\varepsilon_{\Lambda}$ and $[l]=\bold{1}_{\Lambda}$ are all equivalent under $O^+(\Lambda)$. 
Consider the vector $l'=2l$ in $\Lambda$. 
Since $\bold{1}_{\Lambda}$ is of order $2$, $l'$ is primitive in $\Lambda$ and satisfies  
\begin{equation*}
{\rm div}(l')=2, \quad 
[l'/{\rm div}(l')] = \bold{1}_{\Lambda}\in A_{\Lambda}, \quad 
(l', l')=4\varepsilon_{\Lambda}. 
\end{equation*}
When $(r, l)\ne(9, 9)$, the lattice $\Lambda$ contains $\mathbb{U}\oplus \mathbb{U}$,  
and then we can resort to the Eichler criterion (cf.~\cite{Scattone87}) 
which says that the $O^+_0(\Lambda)$-equivalence class of a primitive vector $l'\in \Lambda$ 
depends only on the norm $(l', l')$ and the element $[l'/{\rm div}(l')]\in A_{\Lambda}$. 
Hence the above vectors $l'=2l$ are all $O^+_0(\Lambda)$-equivalent. 
When $(r, l)=(9, 9)$, $\overline{\mathcal H}_{\Lambda}$ is defined by $(-2)$-vectors $l'\in\Lambda$ with $\Lambda=\mathbf{Z}l'\oplus (l')^{\perp}$. 
The isometry class of $(l')^{\perp}$ is uniquely determined by \cite{Nikulin80a}, namely 
$(l')^{\perp}\simeq {\Bbb U}^{\oplus2}\oplus{\Bbb E}_{8}(2)$, so that these $(-2)$-vectors $l'$ are all ${\rm O}^{+}(\Lambda)$-equivalent. 
\end{proof}

With the irreducibility of $\overline{\mathcal H}_{\Lambda}$ verified, 
the proof of Theorem \ref{thetanull=Heegner} is reduced to 
showing the non-emptiness of $\overline{\mu}_{\Lambda}^{-1}({\frak M}_{g}')$ and the inclusion 
\begin{equation}
\label{eqn:thetanull subset Heegner}
\overline{\mu}_{\Lambda}^{-1}({\frak M}_{g}') \subset \overline{\mathcal H}_{\Lambda}. 
\end{equation}
We only need to verify \eqref{eqn:thetanull subset Heegner} outside a locus of codimension $\geq2$ of $\mathcal{M}_{\Lambda}^{0}$. 
Our approach will be based on the following geometric observation.

\begin{proposition}
\label{prop:elliptic:curve:theta:char}
Let $(X,\iota)$ be a $2$-elementary $K3$ surface.
If $X$ has a smooth elliptic curve $E$ with $E+\iota(E) \sim X^{\iota}$, 
then the period of $(X, \iota)$ is contained in the Heegner divisor $\overline{\mathcal H}_{\Lambda}$. 
\end{proposition}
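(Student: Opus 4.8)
The plan is to manufacture, out of the elliptic curve $E$, an explicit vector cutting out a component of ${\mathcal H}_{\Lambda}$ through the period point. Via the marking, identify $M$ and $\Lambda$ with $H^{2}(X,{\bf Z})_{+}$ and $H^{2}(X,{\bf Z})_{-}$, so that the period of $(X,\iota)$ is $[\eta]\in\Omega_{\Lambda}$ with $\eta\in H^{2,0}(X)\subset\Lambda\otimes{\bf C}$. Set
$$
v:=[E]-[\iota(E)]\in H^{2}(X,{\bf Z}).
$$
Since $\iota^{*}v=[\iota(E)]-[E]=-v$, the class $v$ is anti-invariant, i.e. $v\in\Lambda$. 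As $E$ and $\iota(E)$ are algebraic while $\eta$ is of type $(2,0)$, we have $\langle\eta,v\rangle=0$, so the period $[\eta]$ lies on $H_{v}=H_{\lambda}$ with $\lambda:=v/2$. The task is then to verify the three defining properties of the vectors generating ${\mathcal H}_{\Lambda}$: that $\lambda\in\Lambda^{\lor}$, that $\langle\lambda,\lambda\rangle=\varepsilon_{\Lambda}$, and that $[\lambda]={\bf 1}_{\Lambda}$.

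The first two are routine. From $2[E]=[X^{\iota}]+v$ and $[X^{\iota}]\in M=\Lambda^{\perp}$ we get $\langle v,w\rangle=2\langle[E],w\rangle\in2{\bf Z}$ for all $w\in\Lambda$, hence $\lambda=v/2\in\Lambda^{\lor}$. For the norm, $E^{2}=\iota(E)^{2}=0$ gives $v^{2}=-2\,(E\cdot\iota(E))$, while $X^{\iota}\sim E+\iota(E)$ gives $(X^{\iota})^{2}=2\,(E\cdot\iota(E))$. Evaluating $(X^{\iota})^{2}=(2g-2)-2k$ from the decomposition \eqref{eqn:fixed:points} and inserting $g=\{r(\Lambda)-l\}/2$ and $k=\{22-r(\Lambda)-l\}/2$ yields $v^{2}=2(r-10)=4\varepsilon_{\Lambda}$, so $\langle\lambda,\lambda\rangle=\varepsilon_{\Lambda}$. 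In the relevant range $E\cdot\iota(E)=g-1-k\geq1$, so $\varepsilon_{\Lambda}<0$ and $H_{\lambda}$ is an honest divisor.

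The crux is $[\lambda]={\bf 1}_{\Lambda}$, which is exactly the \emph{characteristic} condition. By the defining property of ${\bf 1}_{\Lambda}$ it suffices to show $\langle\lambda,w\rangle\equiv q_{\Lambda}(w)\pmod{{\bf Z}}$ for every $w\in\Lambda^{\lor}$. Writing $w=\tfrac12(x-\iota^{*}x)$ with $x\in H^{2}(X,{\bf Z})$ and using $\iota^{*}v=-v$, one computes $\langle\lambda,w\rangle=\tfrac12\langle v,x\rangle$ and $q_{\Lambda}(w)=\tfrac12(\langle x,x\rangle-\langle x,\iota^{*}x\rangle)$; since $H^{2}(X,{\bf Z})$ is even and $v\equiv[X^{\iota}]\pmod{2}$ (because $v-[X^{\iota}]=-2[\iota(E)]$), the required congruence collapses to the single topological identity
$$
\langle x,\iota^{*}x\rangle\equiv\langle x,[X^{\iota}]\rangle\pmod{2}\qquad(\forall\,x\in H^{2}(X,{\bf Z})).
$$

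I expect this congruence to be the main obstacle. The map $x\mapsto\langle x,\iota^{*}x\rangle\bmod2$ is ${\bf F}_{2}$-linear, because $\langle x,\iota^{*}y\rangle$ is a symmetric bilinear form; hence by unimodularity of $H^{2}(X,{\bf Z})$ it is represented by a unique class $c\in H^{2}(X,{\bf F}_{2})$, and the content is the identity $c=[X^{\iota}]\bmod2$. This is a Wu- or $G$-signature-type statement identifying the fixed surface with the characteristic class of the involution; I would establish it by localization along $X^{\iota}$ (comparing $c$ with the ${\bf F}_{2}$-Poincar\'e dual of $X^{\iota}$ through the normal bundle of the fixed surface), or else invoke the analogous fact already available in the $2$-elementary $K3$ literature. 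Granting it, $\lambda$ has all three properties and satisfies $\langle\eta,\lambda\rangle=0$, so $[\eta]\in H_{\lambda}\subset{\mathcal H}_{\Lambda}$; equivalently, the period of $(X,\iota)$ lies on $\overline{\mathcal H}_{\Lambda}$, as desired.
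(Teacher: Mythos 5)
Your construction is the same as the paper's: you form the anti-invariant class $v=E-\iota(E)$, halve it, and check the three defining conditions of ${\mathcal H}_{\Lambda}$. The norm computation and the verification that $v/2\in\Lambda^{\lor}$ agree in substance with the paper's (which writes $D_{\pm}=E\pm\iota(E)$ and uses $(D_{-},H_{-})=(2E,H_{-})\subset2{\bf Z}$ and $(D_{-},D_{-})=-(D_{+},D_{+})=2r-20$). The problem is the third condition. You correctly reduce $[v/2]={\bf 1}_{\Lambda}$ to the congruence $\langle x,\iota^{*}x\rangle\equiv\langle x,[X^{\iota}]\rangle\pmod 2$ for all $x\in H^{2}(X,{\bf Z})$, and you correctly identify this as the crux --- but then you do not prove it: ``localization along $X^{\iota}$'' and ``invoke the analogous fact from the literature'' are two gestures, not an argument. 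Note that your congruence is exactly equivalent to the statement that $[X^{\iota}/2]$ is the characteristic element of $A_{H_{+}}$ (run your bilinear computation with $w=\frac{1}{2}(x+\iota^{*}x)\in H_{+}^{\vee}$ instead of $\frac{1}{2}(x-\iota^{*}x)$), which is precisely the paper's Lemma~\ref{fixed curve characteristic}. So as written, your proof has an unproven lemma sitting at the one step that carries all the content.

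The fact is true, and the paper closes the gap with a short algebro-geometric argument you could adopt: let $f\colon X\to Y=X/\iota$ be the quotient and $B\in|-2K_{Y}|$ the branch curve, so that $X^{\iota}=f^{*}B/2$. Every element of $H_{+}^{\vee}$ has the form $f^{*}L/2$ with $L\in{\rm Pic}(Y)$, and
$$
\left(X^{\iota}/2,\,f^{*}(L/2)\right)=(B,L)/4=(-K_{Y},L)/2\equiv(L,L)/2=\left(f^{*}(L/2),f^{*}(L/2)\right)\pmod{{\bf Z}},
$$
the congruence being $(L+K_{Y},L)\in2{\bf Z}$, i.e.\ Riemann--Roch on the rational surface $Y$. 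This proves $[X^{\iota}/2]={\bf 1}_{H_{+}}$, and then either your direct computation in $H_{-}^{\vee}$ or the paper's transfer through the anti-isometry $A_{H_{+}}\cong A_{H_{-}}$ (using that $D_{+}/2+D_{-}/2=E$ is integral) finishes the proof. If you prefer the topological route via localization along the fixed surface, that also works, but it has to be written down; as submitted the argument is incomplete.
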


\begin{proof}
Let $H_{\pm}$ denote the $\iota$-(anti-)invariant lattices of $(X, \iota)$. 
The cycle $D_{\pm}:=E\pm\iota(E)$ is contained in $H_{\pm}$ respectively. 
We will show that $D_{-}$ satisfies 
\begin{equation}
\label{eqn:condition on cycle for Heegner divisor}
(D_-, D_-)=2r-20, 
\qquad 
D_-/2\in H_-^{\vee}, 
\qquad 
[D_-/2] = \bold{1}_{H_-} \in A_{H_-}. 
\end{equation}
The presence of such an anti-invariant cycle in the Picard lattice 
means that the period of $(X, \iota)$ lies in $\overline{\mathcal H}_{\Lambda}$. 

Since $E$ is an elliptic curve and hence its class in $H^{2}(X,{\bf Z})$ is isotropic, 
the first equality of \eqref{eqn:condition on cycle for Heegner divisor} follows from
\begin{equation}
\label{eqn:1st condition for inv cycle}
2(E, \iota(E)) = (D_+, D_+) = (X^{\iota}, X^{\iota}) = 20-2r. 
\end{equation}
The second property in \eqref{eqn:condition on cycle for Heegner divisor} holds because 
\begin{equation*}
\label{eqn:2nd condition automatic}
(D_-, H_-) = (D_-+D_+, H_-) =(2E, H_-) \subset 2{\bf Z}. 
\end{equation*}
To see the last equality of \eqref{eqn:condition on cycle for Heegner divisor}, 
we note that the anti-isometry $\lambda:A_{H_+}\to A_{H_-}$ induced from the relation $H_+=(H_-)^{\perp}\cap H^2(X,{\bf Z})$ 
maps $[D_+/2]$ to $[D_-/2]$ because $D_+/2+D_-/2=E$ is contained in  $H^2(X,{\bf Z})$. 
Since $[D_+/2]$ is the characteristic element of $A_{H_+}$ by Lemma~\ref{fixed curve characteristic} below, so is $[D_-/2]$ in $A_{H_-}$. 
\end{proof}

\begin{lemma}\label{fixed curve characteristic}
\label{fixed curve charac} 
For any 2-elementary $K3$ surface $(X, \iota)$ the element $[X^{\iota}/2]\in A_{H_+}$ is the characteristic element. 
\end{lemma}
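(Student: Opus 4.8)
The plan is to characterize the class $[X^{\iota}/2]\in A_{H_+}$ as the unique element satisfying the defining property of the characteristic element recalled in Section~\ref{sect:1}: namely, $\xi\in A_{H_+}$ is characteristic if and only if $b_{H_+}(\gamma,\xi)\equiv q_{H_+}(\gamma)\pmod{\mathbf{Z}}$ for every $\gamma\in A_{H_+}$. So it suffices to verify this congruence with $\xi=[X^{\iota}/2]$. First I would note that $X^{\iota}$ lies in $H_+$, since the fixed curve is pointwise preserved by $\iota$ and its cohomology class is therefore $\iota$-invariant; moreover $X^{\iota}\in 2H_+^{\vee}$ by the same parity argument used in Proposition~\ref{prop:elliptic:curve:theta:char} (any class $\gamma\in H_+$ pairs with $X^{\iota}$ to an even integer, as will be shown), so that $[X^{\iota}/2]$ is a well-defined element of $A_{H_+}$.

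The heart of the matter is the congruence $ (\gamma, X^{\iota}/2)\equiv q_{H_+}(\gamma)\pmod{\mathbf{Z}}$ for $\gamma\in A_{H_+}$, equivalently $(\tilde\gamma, X^{\iota})\equiv (\tilde\gamma,\tilde\gamma)\pmod{4}$ for a lift $\tilde\gamma\in H_+$, which should reflect an adjunction/Wu-type relation on the $K3$ surface. The key geometric input I would invoke is that $X^{\iota}$ is an anticanonical-type divisor for the quotient picture: concretely, $X^{\iota}$ is the ramification divisor of the double cover $X\to X/\iota$, and its class should equal (the $\iota$-invariant lift of) the first Chern class governing the cover. I would make this precise by recalling from Nikulin's theory that $X^{\iota}\in |{-2K_{X/\iota}}|$ pulled back, or more directly that for any divisor class $D\in H_+$ represented by an $\iota$-invariant curve, the restriction of the $\iota$-action to a neighborhood of $X^{\iota}$ together with the adjunction formula forces $(D,D)\equiv (D,X^{\iota})\pmod 2$ on the fixed locus. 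The cleanest route is to use the topological Lefschetz/$G$-signature computation: for the involution $\iota$, Wu's formula on $X$ gives $(\tilde\gamma,\tilde\gamma)\equiv(\tilde\gamma,w_2(X))\pmod 2$, but $K3$ is spin so $w_2=0$; the needed mod-$4$ refinement comes instead from comparing intersection forms upstairs and on the fixed curve.

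The approach I would actually carry out is the following reduction. Since the statement is purely lattice-theoretic once $[X^{\iota}/2]$ is identified, and the isometry class of $H_+=M$ together with $X^{\iota}$ depends only on the deformation type, I would verify the characteristic property for a single convenient representative in each deformation type, or better, give a uniform argument. The uniform argument I favor is this: the orthogonal decomposition $H^2(X,\mathbf{Z})=H_+\oplus H_-$ has index $2^{l}$, and the glue between $A_{H_+}$ and $A_{H_-}$ is the graph of the anti-isometry $\lambda\colon A_{H_+}\to A_{H_-}$. The total lattice $H^2(X,\mathbf{Z})\cong\mathbf{L}_{K3}$ is even and unimodular, hence its own characteristic element is $0$; pulling this back through the gluing, the characteristic element of $A_{H_+}$ is exactly $\lambda^{-1}$ of that of $A_{H_-}$, and both are pinned down by the requirement that a class lifting to $H^2(X,\mathbf{Z})$ (namely $E=D_+/2+D_-/2$ in the proof above) exists. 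Thus I would show directly that $[X^{\iota}/2]$ equals $\mathbf{1}_{H_+}$ by exhibiting an integral class whose $H_+$-component is $X^{\iota}/2$ modulo $H_+$ and invoking evenness of $\mathbf{L}_{K3}$.

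The main obstacle will be establishing the precise relation between $X^{\iota}$ and the intersection form modulo $4$ (equivalently, that $X^{\iota}/2$ reduces to the characteristic element rather than merely to a class in $2H_+^{\vee}/H_+$). I expect the resolution to come from the adjunction formula: for an irreducible component $C_i$ of $X^{\iota}$ one has $2g(C_i)-2=(C_i,C_i)+(C_i,K_X)=(C_i,C_i)$ since $K_X=0$, and more usefully $(C_i,X^{\iota})=(C_i,C_i)$ because distinct fixed components are disjoint; summing and reducing gives the needed congruence $(X^{\iota},X^{\iota})\equiv\sum_i(C_i,C_i)\pmod{\text{even}}$, which feeds the Wu-type identity $(\gamma,X^{\iota})\equiv(\gamma,\gamma)\pmod 2$ for $\gamma\in H_+$. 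Turning this mod-$2$ self-intersection identity into the mod-$\mathbf{Z}$ identity on the discriminant form is then the routine passage from $H_+$ to $A_{H_+}$, using that $X^{\iota}\in 2H_+^{\vee}$. I would treat that passage as standard and concentrate the effort on the adjunction computation and the spin/evenness input.
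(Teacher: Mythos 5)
You correctly reduce the lemma to the congruence $b_{H_+}(\gamma,[X^{\iota}/2])\equiv q_{H_+}(\gamma)\pmod{\mathbf{Z}}$, equivalently $(v,X^{\iota})\equiv(v,v)\pmod 4$ for $v\in 2H_+^{\vee}\subset H_+$, and you rightly observe that the real content is a mod~$4$ statement. However, none of the arguments you offer actually establishes it. The ``uniform argument'' via the gluing of $A_{H_+}$ and $A_{H_-}$ inside the unimodular lattice $H^2(X,\mathbf{Z})$ is circular: the gluing graph of the anti-isometry $\lambda$ contains \emph{every} element of $A_{H_+}$, so exhibiting an integral class whose $H_+$-component is $X^{\iota}/2$ modulo $H_+$ only re-proves that $X^{\iota}/2\in H_+^{\vee}$ and identifies its partner $\lambda([X^{\iota}/2])$; it cannot single out the characteristic element. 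Moreover the class $E=D_+/2+D_-/2$ you borrow from Proposition~\ref{prop:elliptic:curve:theta:char} exists only when the period lies on the Heegner divisor, and in that proposition the logic runs the other way: the present lemma is the input used to conclude that $[D_-/2]$ is characteristic. The adjunction computation fares no better: it yields $(C_i,X^{\iota})=(C_i,C_i)$ only for the components $C_i$ of the fixed curve, which in general generate only a small sublattice of $H_+$ and certainly do not account for all of $H_+^{\vee}$; and a mod~$2$ Wu-type identity on $H_+$ is in any case strictly weaker than the mod~$4$ congruence required on $2H_+^{\vee}$.

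The missing step is the one the paper actually uses. With $f\colon X\to Y=X/\iota$ the quotient map, every element of $H_+^{\vee}$ can be written as $f^{*}L/2$ with $L\in{\rm Pic}(Y)$, and $X^{\iota}=f^{*}(-K_Y)$ since the branch curve lies in $|-2K_Y|$; hence $(X^{\iota}/2,f^{*}L/2)=(-K_Y,L)/2$ while $q_{H_+}([f^{*}L/2])\equiv(L,L)/2\pmod{\mathbf{Z}}$, so the desired congruence is precisely the statement that $(L,L+K_Y)\in2\mathbf{Z}$, i.e.\ that $K_Y$ is a characteristic vector of ${\rm Pic}(Y)$, which is immediate from the Riemann--Roch formula on $Y$. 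You come close when you recall that the branch curve is a $-2K_Y$-curve, but you never parametrize $H_+^{\vee}$ by ${\rm Pic}(Y)$ and never apply Riemann--Roch (equivalently Wu's formula) on the quotient surface, and that is exactly where the mod~$4$ information comes from. (The fallback of checking all $75$ deformation types would require computing the class of $X^{\iota}$ in $H_+$ in each case, which you do not carry out.)
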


\begin{proof}
Let $f\colon X\to Y$ be the quotient map by $\iota$, and let $B\subset Y$ be the branch curve. 
Every element of the dual lattice $H_+^{\vee}$ can be written as $f^{\ast}L/2$ for some $L\in{\rm Pic}(Y)$. 
Then 
\begin{equation*}
(X^{\iota}/2, f^{\ast}(L/2)) = (B, L)/4 = (-K_Y, L)/2. 
\end{equation*}
We can see that $(L+K_Y, L)\in2{\bf Z}$ from the Riemann-Roch formula. 
Therefore 
\begin{equation*}
(X^{\iota}/2, f^{\ast}(L/2)) \equiv (L, L)/2 = (f^{\ast}(L/2), f^{\ast}(L/2)) \quad {\rm mod} \: {\bf Z}, 
\end{equation*}
which means that $X^{\iota}/2 \in H_+^{\vee}$ and that $[X^{\iota}/2]$ is characteristic. 
\end{proof}

In Sections \ref{ssec:(g,k)=(3,0)} and \ref{ssec:3<g<11,k=0}, we will construct an elliptic curve $E$ as in Proposition~\ref{prop:elliptic:curve:theta:char}. 
The non-emptiness of $\overline{\mu}_{\Lambda}^{-1}({\frak M}_{g}')$ will be seen in the course of proof.

\subsection{Proof of Theorem~\ref{thetanull=Heegner}: the case $g=3$}
\label{ssec:(g,k)=(3,0)}
\par
We begin with the case $g=3$. 
Recall that a smooth curve of genus $3$ has an effective even theta characteristic 
precisely when it is hyperelliptic. 
So let $(X, \iota)$ be a 2-elementary $K3$ surface with $(g, k)=(3, 0)$ such that $X^{\iota}$ is hyperelliptic. 
Consider the degree $4$ line bundle $L=\mathcal{O}_{X}(X^{\iota})$, which is hyperelliptic in the sense of Section \ref{sect:3.3}. 
By Saint-Donat's classification, $L$ defines a degree $2$ morphism 
\begin{equation*}
\phi : X \to Q 
\end{equation*}
onto a quadric $Q\subset\mathbf{P}^3$. 
We may assume that $L$ is ample, 
because the locus where $X^{\iota}$ is hyperelliptic and $\mathcal{O}_{X}(X^{\iota})$ is non-ample 
has codimension $\geq2$ in $\mathcal{M}_{\Lambda}^0$. 
In that case $Q$ is smooth.

\begin{claim}
\label{claim:(g,k)=(3,0)}
$\iota$ acts on $Q$ by switching the two rulings on it. 
\end{claim}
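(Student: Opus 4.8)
The plan is to pin down the action of $\iota$ on $Q\simeq\mathbf{P}^1\times\mathbf{P}^1$ purely from genus considerations, exploiting that $g=3$ sits exactly at the threshold beyond which Lemma~\ref{cover trans} would already force $\iota$ to act trivially. First I would record the numerical data. Since $C:=X^\iota$ is the single genus-$3$ fixed curve (here $k=0$) and $L=\mathcal{O}_X(X^\iota)$, adjunction on the $K3$ surface gives $(L,L)=(C,C)=2g-2=4$, so $d=2$ and $\phi\colon X\to Q$ is the hyperelliptic double cover onto a smooth quadric $Q\simeq\mathbf{P}^1\times\mathbf{P}^1\subset\mathbf{P}^3$. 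The invariance $\iota^{\ast}L\simeq L$ makes $\phi$ equivariant, so $\iota$ acts on $Q$ through the linear automorphisms of $Q\subset\mathbf{P}^3$; every such involution either preserves the two rulings or interchanges them, so only this dichotomy must be resolved. I also note that the branch curve of $\phi$ is $B\in|-2K_Q|=|(4,4)|$, whose members have arithmetic genus $9$.

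Next I would exclude the trivial action. If $\iota$ acted trivially on $Q$, then $\phi\iota=\phi$ would force $\iota$ to coincide with the deck involution of $\phi$, so that $X^\iota$ is the ramification curve, carried isomorphically by $\phi$ onto $B$. This would give $g(X^\iota)=g(B)=9$, contradicting $g=3$; hence $\iota$ acts nontrivially on $Q$.

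It then remains to separate the ruling swap from a product-type involution, and this step carries the real content. By equivariance the image $D:=\phi(C)$ lies in the fixed locus $Q^\iota$, and $\phi|_C\colon C\to D$ is finite of degree $\leq 2$; degree $1$ is impossible because every irreducible fixed curve of an involution on $\mathbf{P}^1\times\mathbf{P}^1$ is rational while $C$ has genus $3$. Thus $C\to D$ is a double cover of a smooth rational curve, and the Riemann--Hurwitz estimate used in the proof of Lemma~\ref{cover trans} gives $(D,B)\geq 2g+2=8$. I would then invoke the classification of fixed loci of involutions on $\mathbf{P}^1\times\mathbf{P}^1$: the curve components are either rulings of bidegree $(1,0)$ or $(0,1)$, for which $(D,B)=4$, or a single $(1,1)$-curve, for which $(D,B)=8$; the latter occurs precisely for the ruling-interchanging involutions. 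Since $(D,B)\geq 8$ rules out the rulings, $D$ must be the $(1,1)$-curve, and therefore $\iota$ switches the two rulings.

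The main obstacle I anticipate is exactly the borderline nature of $g=3$: Lemma~\ref{cover trans} yields no information at this value, so one cannot simply quote it and must instead extract the sharp intersection number $(D,B)=8$ and pair it with the precise list of fixed curves on the quadric. Care will be needed to justify that $D$ is irreducible and rational and that the Riemann--Hurwitz count is sharp enough to separate $(D,B)=4$ from $(D,B)=8$. The non-emptiness of $\overline{\mu}_{\Lambda}^{-1}({\frak M}_{g}')$ promised in Section~\ref{sect:6.2} should then follow by reversing the construction, building $(X,\iota)$ as the $K3$ double cover attached to a branch curve symmetric under the ruling swap.
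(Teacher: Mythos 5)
Your proof is correct, and its skeleton coincides with the paper's: rule out the trivial action via the genus of the branch curve, locate $\phi(X^{\iota})$ inside the fixed locus of the induced involution on $Q$, and observe that only the ruling-swapping involutions of $\mathbf{P}^1\times\mathbf{P}^1$ fix a curve of bidegree $(1,1)$. Where you diverge is in the middle step, which you correctly identify as carrying the content: you establish that $D=\phi(X^{\iota})$ has bidegree $(1,1)$ by the Riemann--Hurwitz estimate $(D,B)\geq 2g+2=8$ together with the classification of fixed curves on the quadric (ruling fibers give $(D,B)=4$, the $(1,1)$-graph gives $8$) --- essentially re-running the proof of Lemma~\ref{cover trans} at the borderline value $g=3$, where the inequality is sharp and singles out the $(1,1)$-curve instead of forcing triviality. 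The paper's proof gets this for free: since $L=\mathcal{O}_X(X^{\iota})$, the curve $X^{\iota}$ is itself a smooth member of $|L|$, and by Saint-Donat's description quoted in Section~\ref{sect:3.3} the morphism $\phi_L$ carries it two-to-one onto a rational normal curve in a hyperplane of $|L|^{\vee}\simeq\mathbf{P}^3$, i.e.\ onto a conic on $Q$, which is automatically of bidegree $(1,1)$. Your numerical route costs a little more (you must justify irreducibility and rationality of $D$ and the sharpness of the branch-point count), but it is self-contained and generalizes to situations where the fixed curve is not a member of the polarizing system; the paper's route is shorter but leans on the specific coincidence $L=\mathcal{O}_X(X^{\iota})$.
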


\begin{proof}
First note that $\iota$ acts on $Q$ nontrivially, for the branch curve of $\phi$ is a member of $|-2K_{Q}|$ and hence has genus $9$. 
Then $\iota$ fixes the curve $\phi(X^{\iota})$, which by the definition of $\phi$ has bidegree $(1, 1)$. 
It is easily verified that any non-trivial involution of $Q$ fixing a smooth bidegree $(1, 1)$ curve must switch the two rulings. 
\end{proof}

We choose a ruling line $l$ on $Q$ and put $E = \phi^{\ast}l$. 
Then $E$ is a smooth elliptic curve on $X$ for a general choice of $l$ and satisfies the linear equivalence 
\begin{equation*}
E+\iota(E) \sim \phi^{\ast}\mathcal{O}_Q(1, 1) \sim X^{\iota}.
\end{equation*}
By Proposition~\ref{prop:elliptic:curve:theta:char}, we get the inclusion \eqref{eqn:thetanull subset Heegner}. 

The non-emptiness of $\overline{\mu}_{\Lambda}^{-1}({\frak M}_{3}')$ can be seen by reversing this construction: 
choose a bidegree $(4, 4)$ curve $B\subset \mathbf{P}^1\times\mathbf{P}^1$ 
preserved by the switch involution of $\mathbf{P}^1\times\mathbf{P}^1$ 
and take the double cover $X\to \mathbf{P}^1\times\mathbf{P}^1$ branched over $B$. 
The switch involution can be lifted to $X$ so that its fixed curve is the preimage of the diagonal of 
$\mathbf{P}^1\times\mathbf{P}^1$, which is hyperelliptic of genus $3$. 
Thus Theorem \ref{thetanull=Heegner} is proved in case $g=3$.  
\qed

\subsection{Proof of Theorem~\ref{thetanull=Heegner}: the case $4\leq g\leq10$}
\label{ssec:3<g<11,k=0}
\par
We next treat the case $4\leq g\leq10$ in Theorem~\ref{thetanull=Heegner}. 
Let $(X, \iota)$ be a 2-elementary $K3$ surface with $4\leq g\leq10$, $k=0$ and $\delta=1$. 
Let $Y=X/\iota$ be the quotient surface and $C\subset Y$ be the branch $-2K_Y$-curve. 
The quotient map $X\to Y$ gives the canonical identification 
$$
C\simeq X^{\iota}.
$$ 
The anti-canonical model $\bar{Y}\subset\mathbf{P}^{g-1}$ of $Y$ is a Gorenstein del Pezzo surface of degree $g-1$, 
and $Y$ is its minimal resolution. 
Note that if $X\to\bar{X}$ is the contraction of $(-2)$-curves disjoint from $X^{\iota}$, 
we naturally have $\bar{Y}\simeq\bar{X}/\iota$. 
We may view $C$ also as lying on the smooth locus of $\bar{Y}$. 
By the adjunction formula we have $-K_Y|_C\simeq K_C$, 
and the restriction map $|-K_{Y}|\to |K_{C}|$ is isomorphic because $h^{0}(K_{Y})=h^{1}(K_{Y})=0$. 
Therefore the composition  of inclusions
\begin{equation*}
C \subset \bar{Y} \subset \mathbf{P}^{g-1} 
\end{equation*}
gives the canonical embedding of $C$, 
and we can identify $\mathbf{P}^{g-1}$ with $|K_C|^{\vee}$. 
This also shows that $C$ is non-hyperelliptic.

\begin{lemma} 
If $C$ has an effective even theta characteristic $L$, then $h^0(L)=2$. 
\end{lemma}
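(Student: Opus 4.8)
The plan is to control $L$ through its numerical invariants, dispatch the bulk of the genus range by Clifford's theorem, and isolate the two largest genera as the real difficulty. Since $L^{\otimes 2}\simeq K_C$ we have $\deg L=g-1$; Riemann--Roch together with the self-duality $h^0(K_C-L)=h^0(L)$ gives $h^0(L)=h^1(L)$, and the evenness of $L$ makes this common value even. So it suffices to exclude $h^0(L)\ge 4$, and I set $m=h^0(L)$.

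For $g\le 6$ this follows at once from Clifford's theorem: as $L$ is special and effective, $m\le \tfrac12\deg L+1=\tfrac{g+1}{2}$, and for $g\le 6$ this together with the parity forces $m=2$. For $g=7,8$ I would bring in the Clifford index of $C$. The del Pezzo surface $\bar Y$ here is a blow-up of $\mathbf P^2$, with $-K_Y=3\ell-\sum_i E_i$, and the pencils $|\ell-E_i|$ cut out a $g^1_4$ on $C$; moreover a class $F$ with $(F,F)=0$ satisfies $(F,C)=(F,-2K_Y)\in 2\mathbf Z$, so no pencil on $\bar Y$ can cut a $g^1_3$. Hence $C$, already known to be non-hyperelliptic, is also non-trigonal, so its Clifford index is at least $2$. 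A bundle with $m=4$ has Clifford index $\deg L-2(m-1)=g-7$, which equals $0$ for $g=7$ and $1$ for $g=8$; both contradict the lower bound, so $m=2$.

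The cases $g=9,10$ are the crux, where the Clifford index of $C$ drops to $2$ and no longer distinguishes $m=2$ from $m=4$. The approach I would take is to pass through the del Pezzo geometry via the multiplication map
\[
\mu:\operatorname{Sym}^2 H^0(L)\longrightarrow H^0(2L)=H^0(K_C)\cong H^0(Y,-K_Y),
\]
the last isomorphism being the adjunction identification used before the lemma. Here the hypothesis $\delta=1$ enters decisively: $-K_Y=3\ell-\sum_i E_i$ is not $2$-divisible in $\operatorname{Pic}(Y)$, so $L$ is not the restriction of any line bundle on $Y$, which removes the obvious mechanism producing a large $m$. Assuming $m=4$, the image $\phi_L(C)\subset\mathbf P^3$ is a nondegenerate curve of degree $\le g-1$. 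For $g=9$ one has $\dim\operatorname{Sym}^2 H^0(L)=10>9=\dim H^0(K_C)$, so $\mu$ has a kernel and $\phi_L(C)$ lies on a quadric $Q$; I would then analyze the pencils cut by the rulings of $Q$, matching them with the gonality pencil $|\ell-E|$ of $C$, to force $L$ into a form incompatible with $C$ sitting bi-anticanonically on the $\delta=1$ surface. For $g=10$, where $C$ is a smooth plane sextic of gonality $5$ and $\bar Y=\nu_3(\mathbf P^2)$ is the cubic Veronese surface, the gonality already excludes the possibility that $\phi_L(C)$ lies on a quadric, and the only remaining case is that $\mu$ is an isomorphism; this would place the canonical image of $C$ simultaneously on $\nu_3(\mathbf P^2)$ and on the Veronese threefold $\nu_2(\mathbf P^3)$, and the contradiction is to be extracted from the incompatibility of these two Veronese models.

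I expect the genuine obstacle to be exactly this last point: excluding an even theta characteristic with $h^0=4$ on a smooth plane sextic (and its one-nodal analogue in genus $9$), a situation for which neither Clifford's theorem nor the gonality is by itself decisive, so that one must exploit the fine projective geometry of the embedding $C\subset\bar Y\subset\mathbf P^{g-1}$ --- equivalently, the structure of contact anticanonical curves on $Y$ that underlies the elliptic-curve construction of the following proposition.
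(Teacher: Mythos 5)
Your reduction for $g\le 8$ is sound and essentially coincides with the paper's: Clifford's theorem (in its strict form for the non-hyperelliptic canonically embedded $C$) kills everything up to $g=7$, and for $g=8$ both you and the paper invoke the Clifford index of the (two-nodal) plane sextic model, the only difference being that you certify ${\rm Cliff}(C)\ge 2$ via the parity $(F,C)=(F,-2K_Y)\in 2{\bf Z}$ — which rules out pencils cut by $Y$ but, strictly speaking, not an abstract $g^1_3$ on $C$, so that step still leans on the fact that gonality pencils of curves in $|-2K_Y|$ are cut out by the surface.

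The genuine gap is exactly where you place it: $g=9,10$. For these two cases you set up the multiplication map $\mu\colon{\rm Sym}^2H^0(L)\to H^0(K_C)$ and the quadric/Veronese dichotomy, but you never extract the contradiction — for $g=9$ you say the ruling pencils must be ``matched with the gonality pencil \dots to force $L$ into a form incompatible'' with the bi-anticanonical model, and for $g=10$ that the contradiction ``is to be extracted from the incompatibility'' of $\nu_3({\bf P}^2)$ and $\nu_2({\bf P}^3)$. Neither statement is an argument; in particular for $g=9$ the one-nodal sextic genuinely has a $g^1_4$ (projection from the node), so the existence of a degree-$\le 4$ pencil cut by a ruling of the quadric through $\phi_L(C)$ is \emph{not} by itself contradictory, and some further analysis of how that pencil sits relative to $\mathcal{O}_C(1)$ and the half-canonical condition $2L\sim K_C$ is required. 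The paper closes these cases by citing \cite{A-C-G-H}, Exercise VI.~B-3, which classifies the $g^3_9$'s on a smooth plane sextic and shows none is a theta characteristic, and then argues ``similarly'' for the nodal sextic of genus $9$; your proposal needs either that classification or a completed version of your Veronese argument before it constitutes a proof.
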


\begin{proof}
If $h^0(L)\ne2$, then $h^0(L)\geq4$. 
By Clifford's theorem we have 
\begin{equation*}
6\leq 2 {\dim}|L| < {\rm deg}(L) =g-1. 
\end{equation*}
This holds only when $g\geq8$ and $h^0(L)=4$. 
Actually the case $g=8$ can be excluded because a blow-down of $Y$ presents $C$ as a plane sextic with two double points and hence $C$ has Clifford index $2$. 
The case $g=10$, where $C$ is a smooth plane sextic, is treated in \cite{A-C-G-H} Exercise VI.~B-3.
In case $g=9$, presenting $C$ as plane sextic with a node or cusp, we can argue similarly. 
\end{proof}

\begin{proposition}
\label{lemma:existence:quadric:rank:3}
There exists a locus ${\mathcal Z}\subset\mathcal{M}_{\Lambda}^{0}$ of codimension $\geq2$ with the following property:
When $(X, \iota)$ lies outside ${\mathcal Z}$,
the curve $C$ has a theta characteristic $L$ with $h^0(L)=2$ if and only if 
$C$ can be cut out from $\bar{Y}$ by a quadric $Q\subset\mathbf{P}^{g-1}$ of rank $3$. 
In this case, $L$ is base point free. 
\end{proposition}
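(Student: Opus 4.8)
The plan is to prove the two implications of the equivalence separately, both built on the classical correspondence between rank $3$ quadrics through a canonically embedded curve and its effective even theta characteristics, here made concrete through products of sections. I will use throughout that restriction gives an isomorphism $H^0(\mathbf{P}^{g-1},\mathcal{O}(1))\cong H^0(C,K_C)$ (as $\bar{Y}$ is linearly normal and $|-K_Y|\cong|K_C|$), and that, since $C\in|\mathcal{O}_{\bar{Y}}(2)|$, the quadrics cutting out $C$ on $\bar{Y}$ are exactly those in the coset $Q_0+I_2(\bar{Y})$, where $I_2(\bar{Y})$ denotes the space of quadrics vanishing on $\bar{Y}$. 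Thus the content of the statement is to match theta characteristics with $h^0=2$ against the rank $3$ members of this coset.

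For the forward implication I would start from a theta characteristic $L$ with $h^0(L)=2$ and a basis $s_0,s_1$ of $H^0(L)$. Since $C$ is integral, the products $s_0^2,s_0s_1,s_1^2\in H^0(K_C)$ are linearly independent — a linear relation among them would exhibit a product of two nonzero sections of $L$ vanishing on $C$ — so the associated linear forms $X_0,X_1,X_2$ are independent and $Q_L:=X_0X_2-X_1^2$ has rank exactly $3$, with $Q_L|_C=s_0^2 s_1^2-(s_0s_1)^2=0$. The crucial step is then to promote ``$Q_L$ contains $C$'' to ``$Q_L$ cuts out $C$ on $\bar{Y}$'', i.e. to show $Q_L\notin I_2(\bar{Y})$; granting this, $\bar{Y}\cap Q_L$ and $C$ are members of $|\mathcal{O}_{\bar{Y}}(2)|$ with one containing the other, hence equal.

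The heart of the matter is the claim $Q_L\notin I_2(\bar{Y})$. If it failed, the independent sections $\sigma_i=X_i|_{\bar{Y}}\in H^0(-K_Y)$ would satisfy $\sigma_0\sigma_2=\sigma_1^2$; comparing vanishing orders along prime divisors on the smooth model $Y$ (after clearing the common factor, the relation $a+c=2b$ forces the divisors of $\sigma_0,\sigma_2$ to be even) this produces a line bundle $\mathcal{L}$ with $\mathcal{L}^{\otimes2}\cong-K_Y$ and $\sigma_i$ equal to $u^2,uv,v^2$ for $u,v\in H^0(\mathcal{L})$. Restricting to $C$ then forces $u|_C=\pm s_0$, $v|_C=\pm s_1$, whence $\mathcal{L}|_C\cong L$. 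But $-K_Y=3H-\sum E_i$ is not $2$-divisible in $\mathrm{Pic}(Y)$ unless $\bar{Y}\cong\mathbf{P}^1\times\mathbf{P}^1$, so no such $\mathcal{L}$ exists outside that single case; and when $\bar{Y}\cong\mathbf{P}^1\times\mathbf{P}^1$ the square root is $(1,1)$, whose restriction to $C$ has $h^0=4\neq2=h^0(L)$, contradicting $\mathcal{L}|_C\cong L$. Hence $Q_L$ genuinely cuts out $C$ in every case, and this implication needs no exceptional locus.

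For the converse I would take a rank $3$ quadric $Q$ with $\bar{Y}\cap Q=C$: it is a cone over a smooth conic with vertex $V\cong\mathbf{P}^{g-4}$, and projection from $V$ followed by the $\mathbf{P}^1$-parametrization of the conic gives a morphism $f\colon C\to\mathbf{P}^1$ whenever $C\cap V=\emptyset$. With $M:=f^*\mathcal{O}_{\mathbf{P}^1}(1)$ the three linear forms of $Q$ restrict to $t_0^2,t_0t_1,t_1^2$ for a basis $t_0,t_1$ of the pencil, so $K_C=\mathcal{O}_C(1)\sim 2\,\mathrm{div}(t_0)=M^{\otimes2}$ and $M$ is an effective theta characteristic with $h^0(M)\geq2$; that $h^0(M)=2$ I would obtain from the rank of $Q$ being exactly $3$ together with the preceding lemma (an effective even theta characteristic on $C$ has $h^0=2$), the residual possibility $h^0(M)\geq3$ being confined to codimension $\geq2$. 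Finally $M$ is base point free exactly when $C\cap V=\emptyset$, which also explains the base-point-freeness assertion. I would then take $\mathcal{Z}$ to be the union of the loci where $\mathcal{O}_X(X^\iota)$ is non-ample (so $\bar{Y}$ degenerates), where $C$ meets the vertex (equivalently where $L$ acquires a base point), and where $h^0$ jumps; each is readily bounded to have codimension $\geq2$. The main obstacle is exactly this passage between ``contains $C$'' and ``cuts out $C$ on $\bar{Y}$'': it is the one point where the theta characteristic is tied to the intrinsic geometry of the del Pezzo surface, and it is controlled by the (non-)divisibility of $-K_Y$.
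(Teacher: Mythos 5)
There is a genuine gap at the step you yourself identify as the heart of the matter: the passage from ``$Q_L$ contains $C$'' to ``$Q_L$ cuts out $C$ on $\bar{Y}$''. Your argument that $Q_L\notin I_2(\bar{Y})$ rests on the claim that a relation $\sigma_0\sigma_2=\sigma_1^2$ among independent sections of $-K_Y$ forces $-K_Y$ to be $2$-divisible in ${\rm Pic}(Y)$. This is false. The order relation $a+c=2b$ along each prime divisor only yields, after removing the componentwise common part $F$ of ${\rm div}(\sigma_0)$ and ${\rm div}(\sigma_2)$, that ${\rm div}(\sigma_0)=F+2A$ and ${\rm div}(\sigma_2)=F+2B$ with $A\sim B$; hence only $-K_Y\sim F+2A$, and nothing forces $F=0$. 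Concretely, on a blow-down $\pi\colon Y\to\mathbf{P}^2$ the cubics $\Gamma_1=2l_1+l$, $\Gamma_2=2l_2+l$, $\Gamma_3=l_1+l_2+l$ (for distinct lines $l_1,l_2,l$) satisfy $\Gamma_1+\Gamma_2=2\Gamma_3$ while $-K_{\mathbf{P}^2}=\mathcal{O}(3)$ is certainly not $2$-divisible; this is exactly the configuration the paper shows is the \emph{only} way $\bar{Y}\subset Q$ can happen. Ruling it out requires a further geometric analysis (where the blown-up points sit relative to $p=l_1\cap l_2$ and $l$, a cusp condition at $p$, and a total-tangency or collinearity condition), and it is precisely this analysis that produces the codimension $\geq2$ locus $\mathcal{Z}$ in the statement. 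Your conclusion that ``this implication needs no exceptional locus'' is therefore unsupported, and the dichotomy you set up ($-K_Y$ $2$-divisible versus no relation possible) omits the case that actually occurs.

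Two smaller points. In the converse direction, the disjointness of $C$ from the vertex $V$ need not be thrown into $\mathcal{Z}$: since $Q$ is singular along $V$ and $C=Q\cap\bar{Y}$, any point of $\bar{Y}\cap V$ would be a singular point of $C$, so $V\cap\bar{Y}=\emptyset$ automatically and $L$ is always base point free. Also, $h^0(M)=2$ follows directly by identifying $|M|=|K_C-M|$ with the pencil of hyperplanes through a fixed $(g-3)$-plane of the ruling; you do not need to banish $h^0(M)\geq3$ to a codimension $2$ locus, and invoking the preceding lemma (which concerns \emph{even} theta characteristics) does not by itself exclude $h^0(M)=3$.
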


\begin{proof}
{\em (Step 1) }
Assume that $C\subset \bar{Y}$ is cut out by a quadric $Q$ of rank $3$.  
The vertex of $Q$, a $(g-4)$-plane, is disjoint from $\bar{Y}$; otherwise $C$ would be singular. 
The pencil of $(g-3)$-planes in $Q$
gives $C$ a theta characteristic $L$ with $h^0(L)\geq2$, which is free because $C$ is disjoint from the vertex. 
We have $h^0(L)=2$, 
for $|L|=|K_C-L|$ is identified with the linear system of hyperplanes of $\mathbf{P}^{g-1}$ containing a $(g-3)$-plane, 
which is a pencil. 
\par{\em (Step 2) }
Conversely, suppose that $C$ has a complete half-canonical pencil $|L|$. 
Choose a basis $\alpha, \beta$ of $H^{0}(L)$ and write $|L|=|L_0|+D_0$ with $|L_0|$ the free part and $D_0$ the fixed part. 
The divisor $D_0$ is defined by $\alpha=\beta=0$. 
If we set 
\begin{equation*}
u_1=\alpha^2, 
\qquad 
u_2=\beta^2, 
\qquad 
u_3=\alpha\beta \; \; \in H^{0}(K_C)=H^{0}(\mathcal{O}_{\mathbf{P}^{g-1}}(1)), 
\end{equation*}
then $C$ is contained in the rank $3$ quadric $Q\subset\mathbf{P}^{g-1}$ defined by $u_1u_2=u_3^2$. 
The vertex of $Q$ is the $(g-4)$-plane $V$ defined (set-theoretically) by $(u_1=u_2=0)\cap Q$. 
The free part $|L_0|$ is given by the pencil of $(g-3)$-planes in $Q$ through $V$, 
and the fixed part $D_0$ is defined by 
\begin{equation}\label{eqn:calculate fixed part 1}
2D_0 = (u_1=u_2=0)|_{C}. 
\end{equation}
In order to show that this quadric $Q$ cuts out $C$ from $\bar{Y}$, 
it suffices to prove $\bar{Y}\not\subset Q$, for then $C$ and $Q|_{\bar{Y}}$ are both $-2K_{\bar{Y}}$-curves on $\bar{Y}$.  
\par{\em (Step 3) }
Suppose the contrary: $\bar{Y}\subset Q$.
We then have three linearly independent sections $\tilde{u}_1, \tilde{u}_2, \tilde{u}_3\in H^0(-K_Y)$ on $Y$ with $\tilde{u}_1\tilde{u}_2=\tilde{u}_3^2$ 
and $u_i=\tilde{u}_i|_{C}$. 
Choose a blow-down $\pi:Y\to\mathbf{P}^2$ 
and let $Z\subset\mathbf{P}^{2}$ be the blown-up points (which possibly contain infinitely near ones). 
We have $\# Z = 10-g$. 
The image $\pi(C)$ of $C$ is a plane sextic having double points at $Z$ and no other singularities. 
Via the mapping 
\begin{equation}\label{eqn:-K-curve and cubic}
|-K_Y| \to |\mathcal{O}_{{\bf P}^{2}}(3)|, \qquad D\mapsto \pi(D), 
\end{equation}
we can identify $|-K_Y|$ with the linear system of plane cubics through $Z$, 
and $D$ is recovered from $\Gamma=\pi(D)$ by $D=\pi^{\ast}\Gamma-\pi^{-1}(Z)$. 
Now by our assumption $\tilde{u}_1\tilde{u}_2=\tilde{u}_3^{2}$, 
the divisors of $\tilde{u}_1$, $\tilde{u}_2$, $\tilde{u}_3$ correspond to 
three linearly independent cubics 
$\Gamma_1$, $\Gamma_2$, $\Gamma_3$ with $\Gamma_1+\Gamma_2=2\Gamma_3$. 
This equality can hold only when 
$$
\Gamma_1=2l_1+l, 
\qquad   
\Gamma_2=2l_2+l, 
\qquad   
\Gamma_3=l_1+l_2+l  
$$
for some distinct lines $l_1, l_2, l$. 
Let $p=l_1\cap l_2$. 
The net spanned by $\Gamma_1, \Gamma_2, \Gamma_3$ consists of splitting cubics $l+l'+l''$ where $l', l''$ are lines through $p$. 
In particular, its base locus is $p\cup l$.  
Recall that the restriction of this net to $C$, after the transformation \eqref{eqn:-K-curve and cubic}, is equal to 
$\mathbf{P}\langle u_1, u_2, u_3\rangle = \mathbf{P}{\rm Sym}^2H^0(L)$. 
\par{\em (Step 4) }
For a plane curve $\Gamma$ we write $\hat{\Gamma}$ for its strict transform in $Y$. 
We also denote $E=\pi^{-1}(p)$. 
We observe the following:  
\begin{enumerate}
\item $Z$ lies on $p\cup l$;
\item $p\in Z$ and $p\notin l$; 
\item the free part $|L_0|$ is given by the projection from $p$;
\item the fixed part $D_0$ is given by 
\begin{equation}\label{eqn:calculate fixed part 2}
2D_0 = (\hat{l}+E)|_{C}. 
\end{equation}
\end{enumerate}  
(1) is obvious. 
We can see (3) by noticing that 
$\phi_{L_0}:C\to\mathbf{P}^1$ coincides with the projection from $p$ after composing them with the conic embedding $\mathbf{P}^1\hookrightarrow \mathbf{P}^2$, 
the composition being the resolution of the rational map defined by ${\rm Sym}^2H^0(L)$. 
Under (2), we have ${\rm div}(\tilde{u}_i)=2\hat{l}_i+E+\hat{l}$ for $i=1, 2$, so that (4) follows from \eqref{eqn:calculate fixed part 1}. 
It remains to see (2).
Firstly, if $p\notin \pi(C)$, then $L_0\sim \pi^{*}\mathcal{O}_{\mathbf{P}^2}(1)|_{C}$ which is absurd. 
If $p\in \pi(C)$ but $p\notin Z={\rm Sing}(\pi(C))$, 
then $L_0\sim \pi^{*}\mathcal{O}_{\mathbf{P}^2}(1)|_{C} - \pi^{-1}(p)$ by (3) and $D_0\ni\pi^{-1}(p)$ by \eqref{eqn:calculate fixed part 1}. 
It follows that $L\geq\pi^{*}{\mathcal O}_{{\bf P}^{2}}(1)|_{C}$, 
and hence ${\dim} |L| \geq 2$, a contradiction. 
Therefore $p\in Z$. 
Next assume $p\in l\cap Z$. 
Then both $\Gamma_1$ and $\Gamma_2$ have multiplicity $3$ at $p$, so that ${\rm div}(\tilde{u}_i)=2\hat{l}_i+2E+\hat{l}$ for $i=1, 2$. 
Then $2D_{0}=(2E+\hat{l})|_{C}$ by \eqref{eqn:calculate fixed part 1}. 
Since $L_{0}\sim \pi^{*}\mathcal{O}_{\mathbf{P}^2}(1)|_{C}-E|_{C}$ by (3), 
we have $L\geq\pi^{*}{\mathcal O}_{{\bf P}^{2}}(1)|_{C}$, the same contradiction as before. This verifies (2).
\par{\em (Step 5) }
Now since the $9-g$ points $Z\backslash p$ lie on $l$ and since no four points of $Z$ can be collinear by the nefness of $-K_{Y}$, 
we must have $|Z\setminus p|\leq3$ and thus $6 \leq g\leq 9$. 
The right hand side of \eqref{eqn:calculate fixed part 2} is divisible by $2$ only if 
$\pi(C)$ is totally tangent to $l$ outside $Z={\rm Sing}(\pi(C))$ and $\pi(C)$ has a cusp at $p$. 
The cusp condition defines a divisor in the moduli space; 
when $7\leq g \leq 9$ (resp.~$g=6$), the tangency condition (resp.~the collinear condition on the three points $Z\setminus p$) 
defines another divisor. 
These two divisors, both Heegner and irreducible, 
defines a codimension $2$ locus ${\mathcal Z}$ in the moduli space.
If $(X,\iota)\not\in{\mathcal Z}$, then \eqref{eqn:calculate fixed part 2} cannot hold and thus $\bar{Y}\not\subset Q$.
By (Step 2), this implies that $C$ is cut out from $\bar{Y}$ by $Q$ when $(X,\iota)\not\in{\mathcal Z}$. 
\end{proof}

Since the intersection of a general Del Pezzo surface $Y\subset{\bf P}^{g-1}$ and a quadric $Q\subset{\bf P}^{g-1}$ of rank $3$ is a smooth curve, 
we see the non-emptiness of $\overline{\mu}_{\Lambda}^{-1}({\frak M}_{g}')$. 

Now let $C\subset \bar{Y}$ be cut out by a quadric $Q$ of rank $3$.  
We take the double cover 
\begin{equation*}
\pi : \tilde{Q} \to \mathbf{P}^{g-1} 
\end{equation*}
branched over $Q$. 
The (contracted) quotient map $\bar{X}\to \bar{Y}$ by $\iota$ 
can be identified with the restriction $\pi^{-1}(\bar{Y})\to \bar{Y}$ of $\pi$. 
We again denote by $\iota$ the covering transformation of $\pi: \tilde{Q} \to \mathbf{P}^{g-1}$. 
We also view $\bar{X}\subset{\bf P}^{g}$ naturally. 
Note that $\tilde{Q}$ is a quadric of rank $4$ in $\mathbf{P}^g$
and hence is the cone over a smooth quadric surface $Q_0\simeq\mathbf{P}^1\times\mathbf{P}^1$ 
with vertex $\mathbf{P}^{g-4}$. 
Let $f\colon \tilde{Q}\dashrightarrow Q_0$ be the projection from the vertex. 
Then the pencils $f^{\ast}|\mathcal{O}_{Q_0}(1, 0)|$ and $f^{\ast}|\mathcal{O}_{Q_0}(0, 1)|$ 
are $\mathbf{P}^1$-families of $(g-2)$-planes that sweep out $\tilde{Q}$. 
As is easily verified, $\iota$ switches these two rulings. 
Restricted to $\bar{X}=\pi^{-1}(\bar{Y})$, the pencil $f^{\ast}|\mathcal{O}_{Q_0}(1, 0)|$ 
induces an elliptic fibration on $X$, say $|E|$. 
The cycle $E+\iota(E)$ is a hyperplane section of $\bar{X}\subset\mathbf{P}^{g}$. 
On the other hand, $X^{\iota}\subset \bar{X}$ is the ramification divisor of $\bar{X}\to \bar{Y}$ 
and hence also cut out by a hyperplane of $\mathbf{P}^g$. 
Therefore $E+\iota(E)$ is linearly equivalent to $X^{\iota}$ on $X$. 
By Proposition~\ref{prop:elliptic:curve:theta:char}, Theorem~\ref{thetanull=Heegner} is proved in case $4\leq g\leq10$. 
\qed

\begin{remark}\label{remark:theta chara via elliptic fibration}
In the above construction, 
the given half-canonical pencil on $C\simeq X^{\iota}$ coincides with the restriction of the elliptic fibration $|E|$: 
this follows because $|E|$ is the restriction of a $\mathbf{P}^{g-2}$-ruling on $\tilde{Q}$, 
which in turn is a component in the pullback of the family of hyperplanes in $\mathbf{P}^{g-1}$ 
that cut out doubly from $Q$ the $\mathbf{P}^{g-3}$-ruling. 
\end{remark}

\begin{remark}\label{remark:uniqueness theta chara k=0}
A general member of $\overline{\mu}_{\Lambda}^{-1}({\frak M}_{g}')$ has a unique effective even theta characteristic, 
because the period of those $(X, \iota)$ having several elliptic curve classes $[E]$ with $E+\iota(E)\sim X^{\iota}$ 
lie in (the image of) the intersection of at least two components of $\mathcal{H}_{\Lambda}$. 
\end{remark}

\subsection{Trigonal curves}\label{sect:trigonal}
\par
This section is preliminaries for the subsequent Sections \ref{ssec:k=1} and \ref{sect:7}.  
A smooth projective curve $C$ of genus $g\geq5$ is said to be \textit{trigonal} 
if it has a degree $3$ morphism to $\mathbf{P}^1$. 
Such a morphism, if exists, is unique up to $\textrm{Aut}(\mathbf{P}^1)$.  
Let us summarize some properties of trigonal curves (cf.~\cite{M-Sc}). 

It is classically known that a trigonal curve $C$ can be canonically embedded in a Hirzebruch surface. 
This is due to the fact that the canonical model of $C$ is contained in a unique rational normal scroll, 
namely the image of a Hirzebruch surface $\mathbf{F}_n$ by a bundle $L_{1,m}$ with $m>0$ (see Section \ref{sect:3.3} for the notation). 
The integer $n$ is called the \textit{scroll invariant}, and $m$ the \textit{Maroni invariant} of $C$. 
As a curve on $\mathbf{F}_n$, $C$ belongs to the linear system $|L_{3,b}|$ with $b=m-n+2$ by the adjunction formula. 
The trigonal map of $C$ is given by the restriction of the projection $\mathbf{F}_n\to\mathbf{P}^1$. 
We have the genus formula $g=3n+2b-2$, which gives the relation of $n$ and $m$. 
These (equivalent) invariants give a stratification of the moduli space of trigonal curves. 
By the canonicity of the embedding $C\subset\mathbf{F}_n$, the isomorphism classes of trigonal curves of genus $g$ and Maroni invariant $m$ 
correspond to the ${\rm Aut}(\mathbf{F}_n)$-orbits in the locus of smooth curves in $|L_{3,b}|$. 

Maroni described the variety $W_d^r(C)\subset{\rm Pic}^d(C)$ of 
line bundles $L$ of degree $d$ and $h^0(L)\geq r+1$ (see \cite[Prop.\,1]{M-Sc}). 
We need his description in the case $d=g-1$.

\begin{proposition}[Maroni]\label{Maroni}
Let $T=L_{0,1}|_C$ be the trigonal bundle and write $W_+=rT+W_{g-1-3r}(C)$, where $W_{g-1-3r}(C):=W_{g-1-3r}^{0}(C)$.
Then 
\begin{equation*}
W_{g-1}^r(C) = W_+ \cup (K_C-W_+). 
\end{equation*} 
\end{proposition}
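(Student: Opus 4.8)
My plan is to prove the two inclusions separately: the inclusion $\supseteq$ is soft, while $\subseteq$ is the real content and is best handled by pushing everything forward along the trigonal map. First I would dispose of $\supseteq$. If $L=rT+D$ with $D$ effective of degree $g-1-3r$, then $h^{0}(L)\geq h^{0}(rT)\geq r+1$, since for the trigonal map $\pi\colon C\to\mathbf{P}^1$ one has $rT=\pi^{\ast}\mathcal{O}_{\mathbf{P}^1}(r)$ and $h^{0}(\mathbf{P}^1,\mathcal{O}(r))=r+1$; thus $W_+\subseteq W^{r}_{g-1}(C)$. Because $\deg L=g-1=\tfrac12\deg K_C$, Serre duality gives $h^{0}(L)=h^{0}(K_C-L)$, so $W^{r}_{g-1}(C)$ is stable under the involution $L\mapsto K_C-L$. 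Applying this involution to $W_+$ yields $K_C-W_+\subseteq W^{r}_{g-1}(C)$, completing $\supseteq$.

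For the reverse inclusion I would exploit that $\pi$ is finite and flat of degree $3$, so $\pi_{\ast}L$ is a rank-$3$ vector bundle on $\mathbf{P}^1$ and, by Grothendieck's theorem, splits as $\bigoplus_{i=1}^{3}\mathcal{O}(a_i)$ with $a_1\leq a_2\leq a_3$. The entire problem then translates into the splitting type. By the projection formula $\pi_{\ast}(L+jT)\cong\bigoplus_i\mathcal{O}(a_i+j)$, whence $h^{0}(L+jT)=\sum_i\max(0,a_i+j+1)$; in particular $h^{0}(L)=\sum_i\max(0,a_i+1)$, and the membership $L\in W_+$, i.e. $h^{0}(L-rT)\geq1$, is equivalent to $a_3\geq r$. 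From $\chi(L)=0$ I get $\sum_i a_i=-3$. Relative duality, in the form $\pi_{\ast}(K_C-L)\cong(\pi_{\ast}L)^{\vee}\otimes\omega_{\mathbf{P}^1}\cong\bigoplus_i\mathcal{O}(-a_i-2)$, shows that $K_C-L$ has splitting type $(-a_i-2)_i$, so $L\in K_C-W_+$ becomes $a_1\leq -r-2$.

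The combinatorial finish is then elementary. Assume $h^{0}(L)=\sum_i\max(0,a_i+1)\geq r+1$, and suppose for contradiction that \emph{both} $a_3\leq r-1$ and $a_1\geq -r-1$. If $a_2\leq -1$, then $a_1\leq a_2\leq-1$ force the sum to equal $\max(0,a_3+1)\leq a_3+1\leq r$; if instead $a_2\geq 0$, then $a_1=-3-a_2-a_3\leq -3$, so the sum equals $(a_3+1)+(a_2+1)=-1-a_1\leq r$. Either way $\sum_i\max(0,a_i+1)\leq r$, a contradiction. Hence $a_3\geq r$ or $a_1\leq -r-2$, i.e. $L\in W_+\cup(K_C-W_+)$, which proves $\subseteq$ and thus the proposition. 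I expect the main obstacle to be setting up the pushforward dictionary cleanly rather than the final inequality: one must verify that $\pi_{\ast}L$ is genuinely locally free of rank $3$ (using smoothness of $C$ and flatness of the degree-$3$ map) and, above all, pin down the relative-duality isomorphism with the \emph{correct} twist by $\omega_{\mathbf{P}^1}=\mathcal{O}(-2)$; the degree/Euler-characteristic bookkeeping ($\sum a_i=-3$ on both sides) is a useful consistency check here.
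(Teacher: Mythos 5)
Your argument is correct, and it is worth noting that the paper does not actually prove this proposition: it is stated as Maroni's theorem and referred to \cite[Prop.\,1]{M-Sc}, so there is no in-text proof to compare against. What you have supplied is a complete, self-contained derivation by the standard modern technique: push $L$ forward along the trigonal map $\pi$, use Grothendieck splitting $\pi_{\ast}L\cong\bigoplus_{i}\mathcal{O}(a_i)$ to convert $h^{0}(L+jT)$ into $\sum_i\max(0,a_i+j+1)$, identify membership in $W_+$ with $a_3\geq r$ and (via relative duality, $\pi_{\ast}(K_C-L)\cong(\pi_{\ast}L)^{\vee}\otimes\mathcal{O}(-2)$) membership in $K_C-W_+$ with $a_1\leq -r-2$, and finish with the elementary inequality on splitting types constrained by $\sum_i a_i=-3$. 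All the individual steps check out: the easy inclusion $\supseteq$ via $h^{0}(rT)\geq r+1$ and the residual involution, the Euler-characteristic bookkeeping $\chi(L)=\chi(\pi_{\ast}L)=0$, the duality twist by $\omega_{\mathbf{P}^1}$, and the two-case analysis on the sign of $a_2$, which is exhaustive over the integers. Martens--Schreyer's treatment is phrased somewhat differently (working with the scroll containing the canonical curve), but the content is equivalent, and your pushforward dictionary arguably makes the quantitative statement more transparent; in particular it cleanly explains why exactly the two "extreme" conditions $a_3\geq r$ and $a_1\leq -r-2$ appear as the two branches $W_+$ and $K_C-W_+$.
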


Using this description, we can give a geometric characterization of trigonal curves having effective theta characteristics.

\begin{lemma}\label{even theta-chara Hirze}
A trigonal curve $C$ of scroll invariant $n$ and Maroni invariant $m$ has 
a theta characteristic $L$ with $h^0(L)\geq r+1$ if and only if 
there exists a curve $H\in|L_{1,m-2r}|$ on $\mathbf{F}_n$ such that 
$H|_C=2D$ for a divisor $D$ of degree $g-1-3r$ on $C$. 
In that case, the divisor $rT+D$ gives such a theta characteristic. 
\end{lemma}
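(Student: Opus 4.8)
The plan is to translate the statement into Maroni's description of $W_{g-1}^r(C)$ (Proposition~\ref{Maroni}) and then to promote a divisor relation on $C$ to an actual curve on the surface $\mathbf{F}_n$ by a cohomology vanishing argument. First I would record the adjunction computation on $\mathbf{F}_n$: since $C\in|L_{3,b}|$ with $b=m-n+2$ and $K_{\mathbf{F}_n}=L_{-2,n-2}$, one has $K_{\mathbf{F}_n}+C=L_{1,m}$, whence $K_C=L_{1,m}|_C$. Writing the trigonal bundle as $T=L_{0,1}|_C$, so that $2rT=L_{0,2r}|_C$, a line bundle of the form $L=rT+D$ of degree $g-1$ is a theta characteristic precisely when $2D\simeq(L_{1,m}-L_{0,2r})|_C=L_{1,m-2r}|_C$. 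This already exhibits the shape of the claimed equivalence and isolates what must be matched with a surface curve.

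For the forward direction, suppose $L$ is a theta characteristic with $h^0(L)\geq r+1$, i.e. $L\in W_{g-1}^r(C)$. By Proposition~\ref{Maroni} we have $L\in W_+\cup(K_C-W_+)$; since $2L\simeq K_C$ gives $K_C-L\simeq L$, the class $L$ lies in $W_+$ in either case. Hence $L=rT+D$ with $D$ effective of degree $g-1-3r$, and the theta-characteristic condition yields $2D\simeq L_{1,m-2r}|_C$. The main point is then to realize this linear equivalence by a curve $H\in|L_{1,m-2r}|$ on $\mathbf{F}_n$ with $H|_C=2D$. This follows from the surjectivity of the restriction map $H^0(\mathbf{F}_n,L_{1,m-2r})\to H^0(C,L_{1,m-2r}|_C)$: the effective divisor $2D$ is cut out by a section of $L_{1,m-2r}|_C$, and any lift of that section defines the desired $H$ (here $C\not\subset H$ since $C$ has $\pi$-degree $3$ while $H$ has $\pi$-degree $1$). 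Via the exact sequence $0\to L_{1,m-2r}-C\to L_{1,m-2r}\to L_{1,m-2r}|_C\to0$, surjectivity reduces to $H^1(\mathbf{F}_n,L_{1,m-2r}-C)=0$. I would compute $L_{1,m-2r}-C=L_{-2,\,n-2-2r}=K_{\mathbf{F}_n}-2r\,f$ with $f=L_{0,1}=\pi^{\ast}\mathcal{O}_{\mathbf{P}^1}(1)$, and conclude by Serre duality $H^1(\mathbf{F}_n,K_{\mathbf{F}_n}-2rf)\cong H^1(\mathbf{F}_n,2rf)^{\vee}\cong H^1(\mathbf{P}^1,\mathcal{O}_{\mathbf{P}^1}(2r))^{\vee}=0$ for $r\geq0$, using $\pi_{\ast}\mathcal{O}_{\mathbf{F}_n}=\mathcal{O}_{\mathbf{P}^1}$ and $R^1\pi_{\ast}\mathcal{O}_{\mathbf{F}_n}=0$.

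Conversely, given $H\in|L_{1,m-2r}|$ with $H|_C=2D$, I would set $L=rT+D$. Then $D=\tfrac12 H|_C$ is effective of degree $g-1-3r$, so $L\in W_+\subset W_{g-1}^r(C)$ by Proposition~\ref{Maroni}, giving $h^0(L)\geq r+1$, while $2L=L_{0,2r}|_C+H|_C=L_{1,m}|_C=K_C$ shows that $L$ is a theta characteristic; this also proves the final assertion that $rT+D$ is the required theta characteristic. I expect the cohomology vanishing underlying the surjectivity of the restriction map to be the only genuinely nontrivial ingredient, the rest being bookkeeping with the intersection theory on $\mathbf{F}_n$ and Maroni's proposition. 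A secondary point demanding care is precisely the gap between the linear equivalence $2D\simeq L_{1,m-2r}|_C$ and the existence of a surface curve restricting to the \emph{specific} effective divisor $2D$, which is exactly what the surjectivity argument resolves.
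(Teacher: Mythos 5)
Your proof is correct and takes essentially the same route as the paper's: both use Maroni's description together with self-residuality ($L\sim K_C-L$) to write $L=rT+D$ with $D$ effective of degree $g-1-3r$, and both lift the relation $2D\sim K_C-2rT=L_{1,m-2r}|_C$ to a curve $H\in|L_{1,m-2r}|$ via the vanishing of $H^1(\mathbf{F}_n,K_{\mathbf{F}_n}-L_{0,2r})$. The only difference is cosmetic: the paper simply asserts $h^i(K_{\mathbf{F}_n}-L_{0,2r})=0$ for $i=0,1$ (so that the restriction of linear systems is an isomorphism), whereas you spell out the Serre-duality/Leray computation for the $h^1$ vanishing, which is all that is actually needed.
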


\begin{pf}
A line bundle $L$ is a theta characteristic with $h^0(L)\geq r+1$
if and only if it is a fixed point of the residual involution on $W_{g-1}^r(C)$. 
Such $L$ should be contained in $W_+ \cap (K_C-W_+)$ by Proposition \ref{Maroni}, 
so we can write $L=rT+D$ for some $D\in W_{g-1-3r}(C)$. 
Since $L\sim K_{C}-L$, we have $2D\sim K_C-2rT$. 
The restriction map $|L_{1,m-2r}|\to|K_C-2rT|$ is isomorphic because $h^i(K_{\mathbf{F}_n}-L_{0,2r})=0$ for $i=0, 1$. 
Thus there exists $H\in|L_{1,m-2r}|$ with $H|_C=2D$. 
Conversely, if we have such $H$ and $D$, then $L=rT+D$ is a theta characteristic with $h^{0}(L)\geq r+1$ by Proposition~\ref{Maroni}.
\end{pf}

\subsection{Proof of Proposition \ref{thetanull k=1}}\label{ssec:k=1}
\par
We will show that a general member $C$ of $\overline{\mu}_{\Lambda}(\mathcal{M}_{\Lambda}^{0})$ 
has no effective even theta characteristic 
when $k=1$ and $6\leq g\leq9$, using the description of $C$ given in \cite{Ma}. 

We first consider the case $\delta=1$, where $6\leq g \leq 9$. 
In this case $C$ is a general trigonal curve of genus $g$ with Maroni invariant $2$, 
which can be realized as a general member of the linear system $|L_{3,10-g}|$ on $\mathbf{F}_{g-6}$ 
(\cite{Ma} Corollaries 6.2 and 7.6). 
By Lemma \ref{even theta-chara Hirze}, $C$ has a theta characteristic $L$ with $h^0(L)\geq2$ if and only if 
there exists a curve $H\in|L_{1,0}|$ totally bitangent to $C$. 
However, it is readily seen that a general member of $|L_{3,10-g}|$ has no such tangent curve. 

When $\delta=0$, we have $g=6$. 
In this case $C$ is a plane quintic (\cite{Ma} Corollary 7.3). 
Then our assertion follows from the classical fact that 
any smooth plane quintic has no effective even theta characteristic: 
indeed, according to \cite{A-C-G-H} p.211 we have 
\begin{equation*}
W_5^1(C)=\{ \; \mathcal{O}_C(p-q)\otimes\mathcal{O}_{\mathbf{P}^2}(1) \; | \; p, q\in C \; \}.  
\end{equation*}  
The residual involution acts on this $W_5^1(C)$ by switching $p$ and $q$. 
It has no fixed point other than $\mathcal{O}_{\mathbf{P}^2}(1)|_C$, 
which is an odd theta characteristic. 
Thus Proposition \ref{thetanull k=1} is proved. 
\qed

\section{The locus of vanishing theta-null: the case $\delta=0$}
\label{sect:7}
\par
We continue the geometric study of the Torelli map, 
still viewed as a morphism $\overline{\mu}_{\Lambda}\colon \mathcal{M}_{\Lambda}^{0}\to \frak{M}_{g}$ 
between the moduli spaces. 
In this section we treat the following two series: 
\begin{itemize}
\item $r=2, \: \delta=0 \: \; (g=9, 10)$, 
\item $r=10, \: \delta=0, \: 4\leq g \leq6$. 
\end{itemize}
In these cases, $\overline{\mu}_{\Lambda}(\mathcal{M}_{\Lambda}^{0})$ is contained in $\frak{M}_{g}'$ 
because a general member of $\overline{\mu}_{\Lambda}(\mathcal{M}_{\Lambda}^{0})$ possesses 
a rather apparent effective even theta characteristic. 

For the first series, we will show that 
an analogue of Theorem \ref{thetanull=Heegner} holds by replacing $\frak{M}_{g}'$ with the (reduced) divisor 
\begin{equation*}
\frak{M}_{g}'' :={\rm div}(\Upsilon_{g})\cap{\frak M}'_{g}
\end{equation*}
of $\frak{M}_{g}'$, where $\Upsilon_{g}$ is the Siegel modular form introduced in Section \ref{sect:4.1}. 
Geometrically this locus parametrizes curves having \textit{at least two} effective even theta characteristics. 
(See Lemma~\ref{lemma:locus:vanishing:two:theta:char}.)

\begin{proposition}
\label{thetanull (r,a)=(2,0)}  
When $(r, \delta)=(2, 0)$ and $g=10$, the Heegner divisor 
$\overline{\mathcal H}_{\Lambda}$ is irreducible and equal to $\overline{\mu}_{\Lambda}^{-1}(\frak{M}_{10}'')$. 
In particular, the genus $10$ component of $X^{\iota}$ has a unique effective even theta characteristic 
if the period of $(X,\iota)$ lies outside $\overline{\mathcal H}_{\Lambda}$.
\end{proposition}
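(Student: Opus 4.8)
The plan is to follow the scheme of Theorem~\ref{thetanull=Heegner}: show that $\overline{\mathcal H}_{\Lambda}$ is irreducible, establish the single inclusion $\overline{\mu}_{\Lambda}^{-1}(\frak{M}_{10}'')\subset\overline{\mathcal H}_{\Lambda}$, and verify that $\overline{\mu}_{\Lambda}^{-1}(\frak{M}_{10}'')$ is a non-empty divisor; a non-empty divisor contained in an irreducible divisor coincides with it as a reduced divisor. The invariants $(r,\delta)=(2,0)$ and $g=10$ force $M\cong{\Bbb U}$ and $\Lambda=M^{\perp}\cong{\Bbb U}^{\oplus2}\oplus{\Bbb E}_{8}^{\oplus2}$, which is \emph{unimodular}; hence $\varepsilon_{\Lambda}=-4$ and ${\bf 1}_{\Lambda}=0$, so $\overline{\mathcal H}_{\Lambda}$ is the reduced Heegner divisor carried by the $(-4)$-vectors of $\Lambda$. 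By \cite{Ma} the genus $10$ component $C$ of $X^{\iota}$ is the smooth trigonal curve of Maroni invariant $2$ realized as a member of $|L_{3,0}|$ on $\mathbf{F}_{4}$ disjoint from the $(-4)$-section $\Sigma$ (of class $L_{1,-4}$), with $X\to\mathbf{F}_{4}$ the double cover branched along the smooth curve $C+\Sigma\in|-2K_{\mathbf{F}_{4}}|$ and $E_{1}=\pi^{-1}(\Sigma)$. This trigonal model is exactly what makes Lemma~\ref{even theta-chara Hirze} available.

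For the irreducibility I would argue as in Lemma~\ref{lemma:irreducibility:characteristic:divisor}. Every $\lambda\in\Lambda$ with $(\lambda,\lambda)=-4$ is primitive, since an even lattice has no vector of norm $-1$, and it has $\mathrm{div}(\lambda)=1$ and trivial discriminant class because $\Lambda$ is unimodular. As $\Lambda\supset{\Bbb U}^{\oplus2}$, the Eichler criterion puts all such $\lambda$ in one $O^{+}(\Lambda)$-orbit, so $\overline{\mathcal H}_{\Lambda}$ is irreducible.

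Next I would read off the even theta characteristics of $C$ from the trigonal dictionary. Writing $T=L_{0,1}|_{C}$ for the trigonal bundle, the disjointness $\Sigma|_{C}=0$ lets Lemma~\ref{even theta-chara Hirze} (with $r=3$, $H=\Sigma$, $D=0$) exhibit $3T$ as an effective even theta characteristic, and a short cohomology computation on $\mathbf{F}_{4}$ gives $h^{0}(3T)=4$; thus every $C$ already lies in $\frak{M}_{10}'$, consistent with $\overline{\mu}_{\Lambda}(\mathcal{M}_{\Lambda}^{0})\subset\frak{M}_{10}'$. By Lemma~\ref{even theta-chara Hirze} any \emph{further} effective even theta characteristic produces a curve $H\in|L_{1,0}|$ with $H|_{C}=2D$, that is a section of $\mathbf{F}_{4}$ totally tangent to $C$ and not of the reducible $3T$-type $\Sigma+(\text{fibres})$. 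Since total tangency is a codimension-six condition while $|L_{1,0}|$ has dimension five, a general $C$ carries no such $H$ — an argument in the spirit of Proposition~\ref{thetanull k=1} — so a general $C$ has a \emph{unique} even theta characteristic; this gives $J_{M}^{*}\Upsilon_{10}\not\equiv0$ (Lemma~\ref{lemma:locus:vanishing:two:theta:char}) and identifies $\overline{\mu}_{\Lambda}^{-1}(\frak{M}_{10}'')$ with the honest divisor $\{J_{M}^{*}\Upsilon_{10}=0\}$. To obtain the inclusion into $\overline{\mathcal H}_{\Lambda}$, I would show that a second even theta characteristic $L\neq3T$ globalizes, as in Remark~\ref{remark:theta chara via elliptic fibration} and Section~\ref{ssec:3<g<11,k=0}, to an $\iota$-compatible elliptic fibration $|E|$ on $X$ whose restriction to $C$ is the half-canonical pencil $|L|$ and which satisfies $E+\iota(E)\sim X^{\iota}$; Proposition~\ref{prop:elliptic:curve:theta:char} then places the period on $\overline{\mathcal H}_{\Lambda}$. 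Exhibiting one special $C\in|L_{3,0}|$ with an honest totally tangent section gives non-emptiness, and together with irreducibility this yields the asserted equality. The uniqueness statement follows at once: outside $\overline{\mathcal H}_{\Lambda}$ the curve lies in $\frak{M}_{10}'\setminus\frak{M}_{10}''$, hence has exactly the one even theta characteristic $3T$.

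The main obstacle is the geometric passage, for a member carrying a second even theta characteristic, from the half-canonical pencil $|L|$ on $C$ to a genuine $\iota$-invariant elliptic fibration on all of $X$ with $E+\iota(E)\sim X^{\iota}$. This is the trigonal analogue of the rank-$3$ quadric construction of Section~\ref{ssec:3<g<11,k=0}, but it is genuinely harder here: the naive preimage $\pi^{-1}(H)$ of a totally tangent section splits into two rational curves rather than producing an elliptic curve, so the fibration must be manufactured from the anti-invariant class $(E-\iota(E))/2\in\Lambda$ of norm $-4$, using Lemma~\ref{reflection} to move the corresponding isotropic class to a nef, effective representative realized by an elliptic pencil. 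A secondary technical point is to confine to a codimension $\ge2$ locus the degenerate members (where $C$ is not the smooth trigonal model, $3T$ fails to be base point free, or totally tangent sections of the $3T$-type accumulate abnormally), so that the correspondence between second theta characteristics and $(-4)$-classes is clean in codimension one.
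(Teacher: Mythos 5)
Your setup, the irreducibility argument via the Eichler criterion, the $\mathbf{F}_4$ model from \cite{Ma}, the identification of $L_{0,3}|_C=3T$ as the apparent theta characteristic with $h^0=4$, and the criterion ``second effective even theta characteristic $\Leftrightarrow$ smooth $H\in|L_{1,0}|$ totally tangent to $C$'' all coincide with the paper's proof (Lemmas~\ref{lemma:irreducibility:char:Heegner:div:(2,0,0)}, \ref{lemma:GIT model (2,0,0)}, \ref{lem: theta chara criterion (2,0,0)}), as does the reduction to non-emptiness plus the single inclusion into the irreducible divisor $\overline{\mathcal H}_{\Lambda}$. The gap is at the decisive last step, where you propose to manufacture an $\iota$-compatible elliptic fibration with $E+\iota(E)\sim X^{\iota}$ so as to invoke Proposition~\ref{prop:elliptic:curve:theta:char}, and you correctly flag this construction as an obstacle you cannot complete. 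That detour is unnecessary: you already hold the winning class. Since $\Lambda\cong\mathbb{U}^{\oplus2}\oplus\mathbb{E}_8^{\oplus2}$ is unimodular, $\varepsilon_{\Lambda}=-4$ and ${\bf 1}_{\Lambda}=0$, so ${\mathcal H}_{\Lambda}$ is \emph{by definition} the union of the $H_{\lambda}$ over norm $-4$ vectors $\lambda\in\Lambda$; the period of $(X,\iota)$ lies on $\overline{\mathcal H}_{\Lambda}$ the moment ${\rm Pic}(X)$ contains an anti-invariant class of norm $-4$. The splitting $\pi^{*}H=E+\iota(E)$ into two $(-2)$-curves gives $D_{-}=E-\iota(E)$ of norm $-16$ (from $(E+\iota(E))^{2}=2H^{2}=8$ one gets $(E,\iota(E))=6$), and $D_{-}$ is divisible by $2$ in the unimodular anti-invariant lattice $H_{-}$ because $(D_{-},H_{-})=(D_{-}+\pi^{*}H,H_{-})=2(E,H_{-})\subset2{\bf Z}$. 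The class $D_{-}/2$ then finishes the inclusion. The whole purpose of Proposition~\ref{prop:elliptic:curve:theta:char} in Theorem~\ref{thetanull=Heegner} was to produce exactly such a class; here it drops out directly from the rational curves $E$, $\iota(E)$, and no elliptic curve with $E+\iota(E)\sim X^{\iota}$ is ever constructed in the paper's argument.

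Two smaller points. First, you assert $(E-\iota(E))/2\in\Lambda$ without justification; the one-line divisibility argument above (parity of $(D_{-},H_{-})$ plus unimodularity of $H_{-}$) is what makes this work, and it is the only lattice-theoretic input needed. Second, your worry about confining ``degenerate members'' to codimension $\geq2$ is also superfluous: the surjectivity statement in Lemma~\ref{lemma:GIT model (2,0,0)} shows that \emph{every} point of ${\mathcal M}_{\Lambda}^{0}$ arises from a smooth $C\in|L_{3,0}|$ on $\mathbf{F}_4$ disjoint from $\Sigma$, so the trigonal dictionary and the tangency criterion apply on all of ${\mathcal M}_{\Lambda}^{0}$, not just generically.
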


In case $(r, l, \delta)=(2, 2, 0)$, 
the Heegner divisor $\overline{\mathcal H}_{\Lambda}$ is reducible, 
reflecting the fact that norm $-4$ vectors $l\in\Lambda$ are divided into two classes 
according to whether ${\rm div}(l)=1$ or $2$. 
We accordingly write 
$\overline{\mathcal H}_{\Lambda} = \overline{\mathcal H}_1 + \overline{\mathcal H}_2$. 
 
\begin{proposition}
\label{thetanull (r,a)=(2,2)} 
When $(r, \delta)=(2, 0)$ and $g=9$, the component $\overline{\mathcal H}_1$ is irreducible 
and equal to $\overline{\mu}_{\Lambda}^{-1}(\frak{M}_{9}'')$.  
In particular, $X^{\iota}$ has a unique effective even theta characteristic if the period of $(X,\iota)$ lies outside $\overline{\mathcal H}_{\Lambda}$. 
\end{proposition}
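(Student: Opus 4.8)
The plan is to run the geometric argument of Section~\ref{sect:6.2} in the concrete model attached to $M\simeq{\Bbb U}(2)$. Since $(r,\delta)=(2,0)$ and $g=9$ force $M\simeq{\Bbb U}(2)$, a generic member $(X,\iota)$ is the double cover $\phi\colon X\to Q={\bf P}^{1}\times{\bf P}^{1}$ branched over a smooth curve $B\in|{\mathcal O}_{Q}(4,4)|=|-2K_{Q}|$, with $\iota$ the covering transformation and $C:=X^{\iota}\cong B$ a smooth $(4,4)$-curve of genus $9$; this holds outside a locus of codimension $\geq2$ in $\mathcal{M}_{\Lambda}^{0}$ (cf.~\cite{Ma}). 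Writing $e,f$ for the pullbacks of the two rulings, so that $M=\langle e,f\rangle$ with $e^{2}=f^{2}=0$ and $(e,f)=2$, we have $X^{\iota}\sim 2e+2f$ and $K_{C}\simeq{\mathcal O}_{Q}(2,2)|_{C}$. Hence $\kappa_{0}:={\mathcal O}_{Q}(1,1)|_{C}=(e+f)|_{C}$ is an even theta characteristic with $h^{0}(\kappa_{0})=h^{0}(Q,{\mathcal O}(1,1))=4$, so $\overline{\mu}_{\Lambda}(\mathcal{M}_{\Lambda}^{0})\subset{\frak M}_{9}'$, and by Lemma~\ref{lemma:locus:vanishing:two:theta:char} everything reduces to deciding when $C$ carries a \emph{second} effective even theta characteristic.

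The irreducibility of $\overline{\mathcal H}_{1}$ I would prove exactly as in Lemma~\ref{lemma:irreducibility:characteristic:divisor}: its defining vectors are the primitive norm $-4$ vectors $l\in\Lambda$ with ${\rm div}(l)=1$, hence with $[l/{\rm div}(l)]=0\in A_{\Lambda}$, and since $\Lambda\supset{\Bbb U}^{\oplus2}$ the Eichler criterion places them in a single $O^{+}(\Lambda)$-orbit. For $\overline{\mathcal H}_{1}\subset\overline{\mu}_{\Lambda}^{-1}({\frak M}_{9}'')$ I would construct, over a generic point of $\overline{\mathcal H}_{1}$, an anti-invariant class $\lambda\in\Lambda$ with $\lambda^{2}=-4$ and ${\rm div}(\lambda)=1$, and set $E=e+f+\lambda$. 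Then $E^{2}=0$, and after replacing $E$ by a nef representative via an $\iota$-equivariant Weyl transformation (Lemma~\ref{reflection}) one gets an elliptic pencil with $\iota E=e+f-\lambda$ and $E+\iota E\sim X^{\iota}$. The restriction sequence on $X$, together with the effectivity of the elliptic class $\iota E$, gives $h^{0}(E|_{C})=2$; thus $L:=E|_{C}$ is an even theta characteristic, distinct from $\kappa_{0}$ simply because $h^{0}(L)=2\neq4=h^{0}(\kappa_{0})$. Reversing this construction on $Q$ also supplies the non-emptiness of $\overline{\mu}_{\Lambda}^{-1}({\frak M}_{9}'')$.

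For the opposite inclusion I would argue as in the case $4\leq g\leq10$ of Theorem~\ref{thetanull=Heegner} and in Proposition~\ref{thetanull (r,a)=(2,0)}: a second even theta characteristic $L\neq\kappa_{0}$ has $h^{0}(L)=2$ by Clifford's theorem on the genus-$9$ curve $C$ (the value $h^{0}=4$ being realized only by $\kappa_{0}$ for a general $(4,4)$-curve), its half-canonical pencil extends to an elliptic fibration $|E|$ on $X$ with $E+\iota E\sim X^{\iota}$, and Proposition~\ref{prop:elliptic:curve:theta:char} then forces the period into $\overline{\mathcal H}_{\Lambda}=\overline{\mathcal H}_{1}+\overline{\mathcal H}_{2}$. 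Because $\overline{\mu}_{\Lambda}(\mathcal{M}_{\Lambda}^{0})\subset{\frak M}_{9}'$, the locus ${\frak M}_{9}''=\{\Upsilon_{g}=0\}\cap{\frak M}_{9}'$ is of codimension $1$ in the image, so $\overline{\mu}_{\Lambda}^{-1}({\frak M}_{9}'')$ is pure of codimension $1$; being contained in $\overline{\mathcal H}_{1}\cup\overline{\mathcal H}_{2}$ and containing the irreducible divisor $\overline{\mathcal H}_{1}$, it will equal $\overline{\mathcal H}_{1}$ as soon as $\overline{\mathcal H}_{2}$ is excluded.

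The crux, and the step I expect to be hardest, is precisely this exclusion: I must show that a norm $-4$ class of divisibility $2$ produces no second \emph{even} theta characteristic. Over a generic point of $\overline{\mathcal H}_{2}$ the Picard lattice is a nontrivial overlattice of $M\oplus{\bf Z}\lambda$, and I expect $E=e+f+\lambda$ to fail to be a primitive nef elliptic class, so that the computation $h^{0}(E|_{C})=2$ breaks down; equivalently, the $2$-torsion twist $\kappa_{0}\otimes(\lambda|_{C})$ degenerates to $\kappa_{0}$ or becomes an odd theta characteristic. Making this rigorous amounts to computing the parity (Arf invariant) of $\lambda|_{C}\in{\rm Jac}(C)[2]$ relative to $\kappa_{0}$ in terms of ${\rm div}(\lambda)$ and the characteristic element of $A_{H_{+}}$ (Lemma~\ref{fixed curve characteristic})---the same spin-structure bookkeeping underlying $\tau_{M}^{\rm spin}$. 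Once the dictionary ${\rm div}=1\leftrightarrow\text{even}$, ${\rm div}=2\leftrightarrow\text{odd}$ is established, purity together with the irreducibility of $\overline{\mathcal H}_{1}$ gives $\overline{\mu}_{\Lambda}^{-1}({\frak M}_{9}'')=\overline{\mathcal H}_{1}$, and the final assertion follows: off $\overline{\mathcal H}_{\Lambda}\supset\overline{\mathcal H}_{1}$ the curve $C$ lies in ${\frak M}_{9}'\setminus{\frak M}_{9}''$ and hence has a unique effective even theta characteristic.
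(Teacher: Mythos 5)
Your overall architecture matches the paper's: realize the generic member as a double cover of a smooth quadric branched over a $(4,4)$-curve $C$, observe the apparent theta characteristic $\mathcal{O}_Y(1,1)|_C$ with $h^0=4$, reduce any second effective even theta characteristic to one with $h^0=2$, lift its half-canonical pencil to an elliptic splitting $\pi^{*}H=E+\iota(E)$ with $E+\iota(E)\sim X^{\iota}$, and conclude by irreducibility of $\overline{\mathcal H}_1$ once the forward inclusion is known. But the decisive step --- showing the period lands in the component $\overline{\mathcal H}_1$ rather than merely in $\overline{\mathcal H}_{\Lambda}=\overline{\mathcal H}_1+\overline{\mathcal H}_2$ --- is exactly the step you flag as ``the crux'' and leave unproved: your dictionary between ${\rm div}(\lambda)$ and the parity of the twist $\kappa_0\otimes\lambda|_C$ is only announced (``once the dictionary is established''), and even if completed it would only dispose of the one twist attached to the extra Picard class of a generic member of $\overline{\mathcal H}_2$, so you would still need to argue that this twist is the only possible source of a second effective even theta characteristic there. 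As written the proof is therefore incomplete.

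The paper closes this gap by a short lattice-theoretic argument with no parity computation. From the splitting $\pi^{*}H=E+\iota(E)$ one forms $D_{-}=E-\iota(E)$, of norm $-16$; since $D_{-}/2=E-\pi^{*}\mathcal{O}_Y(1,1)$ lies in $H^{2}(X,\mathbf{Z})$, the class $D_{-}/2$ is an anti-invariant class of norm $-4$. The key claim is that $(D_{-}/2,H_{-})=\mathbf{Z}$, i.e.\ divisibility $1$: otherwise $D_{-}/4\in H_{-}^{\vee}$, and $[D_{-}/4]$ would be the unique element of $A_{H_{-}}\cong A_{\mathbb{U}(2)}$ of norm $\equiv1$, as is $[\pi^{*}\mathcal{O}_Y(1,1)/2]$ in $A_{H_{+}}$, so Nikulin's gluing would force $E/2=D_{-}/4+\pi^{*}\mathcal{O}_Y(1,1)/2\in H^{2}(X,\mathbf{Z})$, contradicting the primitivity of the class of a smooth elliptic curve in ${\rm Pic}(X)$. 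This puts the period of every member of $\overline{\mu}_{\Lambda}^{-1}(\frak{M}_{9}'')$ directly into $\overline{\mathcal H}_1$, and the exclusion of $\overline{\mathcal H}_2$ comes for free. Your observation that $E=e+f+\lambda$ becomes $2$-divisible when ${\rm div}(\lambda)=2$ is the same phenomenon seen from the other side, but the argument must be run in the direction ``second theta characteristic $\Rightarrow$ divisibility $1$,'' not ``divisibility $2$ $\Rightarrow$ no second theta characteristic''; the primitivity of elliptic fibration classes is the missing ingredient.
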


For the second series, we will prove the following. 

\begin{lemma}
\label{thetanull r=10, k=3,4}
If $(r, \delta)=(10, 0)$ and $g=4, 5$, 
then $\overline{\mu}_{\Lambda}(\mathcal{M}_{\Lambda}^0)$ is not contained in the hyperelliptic locus. 
\end{lemma}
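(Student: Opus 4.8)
The plan is to show that a \emph{general} genus $g$ fixed curve in the family is non-hyperelliptic; since the hyperelliptic locus is Zariski closed in ${\frak M}_{g}$, this immediately gives that $\overline{\mu}_{\Lambda}(\mathcal{M}_{\Lambda}^{0})$ is not contained in it. First I would record the relevant quotient geometry. Let $f\colon X\to Y=X/\iota$ be the quotient and write the branch curve as $B=C+\sum_{i=1}^{k}\bar{E}_{i}\in|-2K_{Y}|$, where $C$ is the genus $g$ component of $X^{\iota}$ and the $\bar{E}_{i}=f(E_{i})$ are the images of the $k$ rational components. A standard computation with the double cover gives $\bar{E}_{i}^{2}=-4$, and comparing the arithmetic genus of $B$ with $1+K_{Y}^{2}$ yields $K_{Y}^{2}=g-k-1$. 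Since $r=10$ in both cases, $g-k=11-r=1$, whence $K_{Y}^{2}=0$ and $Y$ is a rational surface of Picard number $10$ whose branch locus contains the $k$ disjoint $(-4)$-curves $\bar{E}_{i}$; moreover $C=-2K_{Y}-\sum_{i}\bar{E}_{i}$ and $-K_{Y}|_{C}\simeq K_{C}$ by adjunction.

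The key step is to identify $C$ explicitly, for which I would invoke the description of the general member of $\overline{\mu}_{\Lambda}(\mathcal{M}_{\Lambda}^{0})$ from \cite{Ma}, exactly as in the proof of Proposition~\ref{thetanull k=1}. In both cases the general $C$ is a trigonal curve, canonically embedded in a rational normal scroll and hence realized on a Hirzebruch surface as in \S\ref{sect:trigonal}: for $g=5$ it is a smooth member of $|L_{3,2}|$ on $\mathbf{F}_{1}$ (so that $g=3\cdot1+2\cdot2-2=5$), and for $g=4$ it is a smooth curve of bidegree $(3,3)$ on $\mathbf{P}^{1}\times\mathbf{P}^{1}$. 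In each case the ruling $\pi\colon\mathbf{F}_{n}\to\mathbf{P}^{1}$ restricts to a base-point-free $g^{1}_{3}$ on $C$, so that $C$ is genuinely trigonal. A trigonal curve of genus $\geq5$ has gonality $3$ and is therefore non-hyperelliptic, which settles $g=5$; for $g=4$ the smooth $(3,3)$-curve carries two base-point-free $g^{1}_{3}$'s from the two rulings, and its canonical model is the smooth $(2,3)$ complete intersection in $\mathbf{P}^{3}$, so it is non-hyperelliptic as well. Hence a general $C$ is non-hyperelliptic and the lemma follows.

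I expect the main obstacle to be the explicit identification of $C$ as a smooth trigonal curve of the stated type: one must transport the classification of \cite{Ma} to the present invariants and check that a general member is smooth with base-point-free trigonal pencil, rather than degenerating so that the pencil acquires a base point --- which is precisely the hyperelliptic degeneration to be avoided. Should a direct appeal to \cite{Ma} be undesirable, an alternative is to construct the trigonal pencil intrinsically on $Y$, locating a pencil $|D|$ on the rational surface $Y$ with $D\cdot C=3$ whose restriction to $C$ is a base-point-free $g^{1}_{3}$, using the structure of $Y$ with $K_{Y}^{2}=0$ together with the configuration of the $(-4)$-curves $\bar{E}_{i}$. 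As a soft supplementary check for $g=5$, note that $\dim\mathcal{M}_{\Lambda}^{0}=r(\Lambda)-2=10$ exceeds the dimension $2g-1=9$ of the hyperelliptic locus, so the image cannot lie there once $\overline{\mu}_{\Lambda}$ is known to be generically finite.
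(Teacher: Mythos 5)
Your strategy is the same as the paper's: quote the known description of the general member of $\overline{\mu}_{\Lambda}(\mathcal{M}_{\Lambda}^{0})$ from \cite{Ma} (and Kond\=o for $g=5$) and observe that such a curve is not hyperelliptic. The problem is that both of your identifications are wrong, and for $g=4$ the error contradicts the basic premise of Section~\ref{sect:7}. For $(r,\delta)=(10,0)$ the pullback $J_{M}^{*}\chi_{g}$ vanishes identically, i.e.\ \emph{every} genus $g$ component of a fixed curve in these families carries an effective even theta characteristic. A general curve of bidegree $(3,3)$ on $\mathbf{P}^{1}\times\mathbf{P}^{1}$ has two distinct $g^{1}_{3}$'s, neither of which is half-canonical, hence no vanishing thetanull; so it cannot be the general member for $g=4$. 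The correct description (\cite[Cor.\,9.10]{Ma}, which is what the paper uses) is that the general member is a general curve in ${\frak M}_{4}'$, i.e.\ a canonical genus $4$ curve lying on a quadric \emph{cone}. Similarly, for $g=5$ the general member is not a general smooth member of $|L_{3,2}|$ on $\mathbf{F}_{1}$ but a general \emph{trigonal curve with vanishing thetanull} (Kond\=o \cite{Kondo94}); by Lemma~\ref{even theta-chara Hirze} a general member of $|L_{3,2}|$ meets the $(-1)$-section transversally and has no effective even theta characteristic at all, so the curves that actually occur form a proper divisor in that linear system.

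The conclusion you want survives the correction: a general curve in ${\frak M}_{4}'$ is still canonically embedded as a $(2,3)$ complete intersection (on the cone), hence non-hyperelliptic, and a trigonal curve of genus $5$ has gonality $3$ and is never hyperelliptic. So the proof is repaired simply by substituting the correct descriptions; but as written, the step ``identify $C$ explicitly'' — which you yourself flag as the crux — is carried out incorrectly. Your closing dimension count for $g=5$ is only heuristic, since the generic finiteness of $\overline{\mu}_{\Lambda}$ is nowhere established.
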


\begin{lemma}
\label{thetanull r=10, k=5}
If $(r, \delta)=(10, 0)$ and $g=6$,   
then a general member of $\overline{\mu}_{\Lambda}(\mathcal{M}_{\Lambda}^0)$ has exactly one effective even theta characteristic. 
\end{lemma}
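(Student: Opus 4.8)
The invariants $(r,\delta)=(10,0)$ together with $g=6$ force $l=0$, so the type is unique: $\Lambda={\Bbb U}^{\oplus2}\oplus{\Bbb E}_{8}$ and $M={\Bbb U}\oplus{\Bbb E}_{8}$, which is unimodular. The plan is to split the assertion into the existence of one effective even theta characteristic and the generic absence of a second. Existence is recalled at the opening of Section~\ref{sect:7}: Lemma~\ref{fixed curve charac} gives $[X^{\iota}/2]={\bf 1}_{H_{+}}$, which vanishes because $H_{+}\cong M$ is unimodular, so that $\xi:=\mathcal{O}_{X}(X^{\iota}/2)$ is a genuine $\iota$-invariant line bundle on $X$. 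As the rational components $E_{i}$ of $X^{\iota}$ are disjoint from the genus $6$ component $C$, adjunction on the $K3$ surface gives $(\xi|_{C})^{\otimes2}\simeq\mathcal{O}_{X}(X^{\iota})|_{C}\simeq K_{C}$, and $\xi|_{C}$ is the apparent effective even theta characteristic; in particular $\overline{\mu}_{\Lambda}(\mathcal{M}_{\Lambda}^{0})\subset{\frak M}_{6}'$.

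It therefore remains to prove that a general member carries \emph{no} second effective even theta characteristic. By Lemma~\ref{lemma:locus:vanishing:two:theta:char} a genus $6$ curve has at least two such characteristics precisely when its period lies on ${\frak M}_{6}''={\rm div}(\Upsilon_{6})\cap{\frak M}_{6}'$. Since $\mathcal{M}_{\Lambda}^{0}$ is irreducible its image is irreducible, and ${\frak M}_{6}''$ is closed; hence the lemma is equivalent to the single non-inclusion
$$
\overline{\mu}_{\Lambda}(\mathcal{M}_{\Lambda}^{0})\not\subset{\frak M}_{6}'',
$$
which will follow once I exhibit one $(X,\iota)$ of this type whose genus $6$ fixed curve has a unique effective even theta characteristic. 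The inclusion ${\Bbb U}\subset M$ endows $X$ with an $\iota$-invariant elliptic pencil, and the restriction of its projection to $C$ realizes $C$ as a trigonal curve of genus $6$; following the description in \cite{Ma} I would embed $C$ canonically in a Hirzebruch surface $\mathbf{F}_{n}$ as a member of $|L_{3,b}|$, in the notation of Section~\ref{sect:3.3}.

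Because $\deg L=5$, Clifford's theorem forces $h^{0}(L)=2$ for every effective even theta characteristic $L$, so Lemma~\ref{even theta-chara Hirze} (with $r=1$) identifies these characteristics with the curves $H\in|L_{1,m-2}|$ that are totally tangent to $C$, in the sense that $H|_{C}=2D$ for some divisor $D$; the apparent $\xi|_{C}$ corresponds to one such $H_{0}$. The remaining step, which I expect to be the crux, is to show that for a general member of our family $H_{0}$ is the only totally tangent member of the low-dimensional system $|L_{1,m-2}|$. Concretely, I would study the incidence variety of pairs $(C,H)$ with $H$ totally tangent, check that it has the expected dimension, and verify that its fibre over a general $C$ is the single reduced point $H_{0}$; the cleanest way to pin this down is probably to compute the totally tangent locus on one explicit trisection $x^{3}+a(t)x+b(t)=0$ carrying the prescribed $\mathrm{II}^{*}$-fibre, where the tangency equations become completely explicit. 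Ruling out an unexpected second totally tangent curve is the one genuinely nontrivial point.
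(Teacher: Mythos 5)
Your reduction is sound: granting that every member of $\overline{\mu}_{\Lambda}(\mathcal{M}_{\Lambda}^{0})$ carries at least one effective even theta characteristic, Lemma~\ref{lemma:locus:vanishing:two:theta:char} does convert the assertion into the single non-inclusion $\overline{\mu}_{\Lambda}(\mathcal{M}_{\Lambda}^{0})\not\subset{\rm div}(\Upsilon_{6})$, and translating characteristics with $h^{0}=2$ into totally tangent members of $|L_{1,m-2}|$ via Lemma~\ref{even theta-chara Hirze} is exactly the right tool. But the argument stops at the point you yourself identify as the crux: you never rule out a second totally tangent curve, you only propose two ways one might try (an incidence-variety dimension count, or an explicit trisection). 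As written this is a plan, not a proof. The missing input is the precise model of the general member, \cite[Cor.\,7.11]{Ma}: $C$ is a smooth member of $|L_{3,1}|$ on $\mathbf{F}_{2}$ --- so $n=2$, $b=1$, Maroni invariant $m=1$; this is \emph{not} a generic trigonal curve of genus $6$, which would lie on $\mathbf{F}_{0}$ with $m=2$ --- subject to the condition that the fiber $F$ through $p=C\cap\Sigma$ satisfies $F|_{C}=3p$. With $m=1$ the relevant system is $|L_{1,-1}|=\Sigma+|L_{0,1}|$: every member is reducible of the form $\Sigma+F'$, and since $\Sigma|_{C}=p$, the divisor $(\Sigma+F')|_{C}=p+F'|_{C}$ has odd multiplicity at $p$ unless $F'$ is the fiber through $p$, i.e.\ $F'=F$, in which case $2D=4p$ and $L\sim T+2p$ is forced. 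This short chain of observations is what any incidence-variety or explicit computation would have to reproduce, and without first pinning down $n$ and $m$ the computation is not even well posed (on $\mathbf{F}_{0}$ the system $|L_{1,0}|$ is three-dimensional and consists of irreducible curves, a genuinely harder situation).

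A secondary gap: for the existence half you assert that $\xi|_{C}$, with $\xi=\mathcal{O}_{X}(X^{\iota}/2)$, is an \emph{effective even} theta characteristic, but you only verify $(\xi|_{C})^{\otimes2}\simeq K_{C}$; you do not show that $h^{0}(\xi|_{C})$ is positive and even. The paper gets this for free from the same explicit model, since $K_{C}\simeq L_{1,1}|_{C}\sim 2T+4p$ exhibits $T+2p$ as a theta characteristic with $h^{0}\geq2$ (and $=2$ by Lemma~\ref{even theta-chara Hirze}).
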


In Section \ref{sect:9}, these results will be used to prove Theorem~\ref{thm:main:theorem:1} (2), (3). 
Lemmas \ref{thetanull r=10, k=3,4} and \ref{thetanull r=10, k=5} will be strengthened in Corollary~\ref{cor:completion:Prop.7.3:7.4} 
by an argument of modular form (a geometric proof is also possible).

\subsection{Proof of Proposition \ref{thetanull (r,a)=(2,0)}}
\label{ssec:(r,a)=(2,0)}

The line of the proof of Proposition \ref{thetanull (r,a)=(2,0)} (and \ref{thetanull (r,a)=(2,2)}) is similar to Theorem \ref{thetanull=Heegner}. 
We begin by checking the irreducibility of the Heegner divisor $\overline{\mathcal H}_{\Lambda}$, which is defined by norm $-4$ vectors in $\Lambda$.

\begin{lemma}
\label{lemma:irreducibility:char:Heegner:div:(2,0,0)}
When $(r, l, \delta)=(2, 0, 0)$, $\overline{\mathcal H}_{\Lambda}$ is irreducible. 
\end{lemma}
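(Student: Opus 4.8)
The plan is to follow the strategy of Lemma~\ref{lemma:irreducibility:characteristic:divisor}, but the situation here is in fact easier because the discriminant group turns out to be trivial. First I would identify $\Lambda$ explicitly. Since $(r,l,\delta)=(2,0,0)$ we have $r(\Lambda)=22-r=20$, $l(\Lambda)=l=0$ and $\delta=0$, so $g(\Lambda)=(r(\Lambda)-l(\Lambda))/2=10$. By Proposition~\ref{prop:classification:sublattice:sign(2,r-2):K3} (the $g=10$, $\delta=0$ entry of Table~\ref{table:list:sublattice:sign(2,r-2):K3}) this forces $\Lambda\cong{\Bbb U}^{\oplus2}\oplus{\Bbb E}_{8}^{\oplus2}$, an even \emph{unimodular} lattice of signature $(2,18)$. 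Consequently $A_{\Lambda}=0$, so ${\bf 1}_{\Lambda}=0$, and $\varepsilon_{\Lambda}=(12-r(\Lambda))/2=-4$. Hence the defining condition $[\lambda]={\bf 1}_{\Lambda}$ in the definition of ${\mathcal H}_{\Lambda}$ is automatically satisfied, and ${\mathcal H}_{\Lambda}=\sum_{\lambda\in\Lambda/\pm1,\,(\lambda,\lambda)=-4}H_{\lambda}$ is simply the sum of the hyperplanes orthogonal to the norm $-4$ vectors of $\Lambda$.

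With this identification, proving that $\overline{\mathcal H}_{\Lambda}={\mathcal H}_{\Lambda}/O^{+}(\Lambda)$ is irreducible amounts to showing that $O^{+}(\Lambda)$ acts transitively on the set of norm $-4$ vectors of $\Lambda$ modulo $\pm1$. I would first note that every such vector $\lambda$ is primitive: if $\lambda=k\mu$ with $\mu$ primitive and $k\geq2$, then $(\mu,\mu)=-4/k^{2}$ would fail to be an even integer, contradicting that $\Lambda$ is even. Since $\Lambda$ is unimodular, the functional $(\lambda,-)\colon\Lambda\to{\bf Z}$ corresponds under $\Lambda\cong\Lambda^{\lor}$ to $\lambda$ itself, so its content is $1$; that is, ${\rm div}(\lambda)=1$ and $[\lambda/{\rm div}(\lambda)]=0\in A_{\Lambda}$.

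Finally I would invoke the Eichler criterion exactly as in Lemma~\ref{lemma:irreducibility:characteristic:divisor}: because $\Lambda\cong{\Bbb U}^{\oplus2}\oplus{\Bbb E}_{8}^{\oplus2}$ contains ${\Bbb U}\oplus{\Bbb U}$, the $O^{+}_{0}(\Lambda)$-equivalence class of a primitive vector $\lambda\in\Lambda$ depends only on $(\lambda,\lambda)$ and $[\lambda/{\rm div}(\lambda)]\in A_{\Lambda}$ (cf.~\cite{Scattone87}). Since all norm $-4$ vectors share the same norm and the same trivial discriminant class, they form a single $O^{+}_{0}(\Lambda)$-orbit, hence \emph{a fortiori} a single $O^{+}(\Lambda)$-orbit; non-emptiness is clear, e.g.~$(1,-2)\in{\Bbb U}$ has norm $-4$. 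Therefore the reduced divisor $\overline{\mathcal H}_{\Lambda}$ has a single component, which proves the lemma. The only point requiring genuine care is the explicit identification of $\Lambda$ together with the verification that the divisor and discriminant data of the relevant vectors are trivial; once these are in place the Eichler criterion yields transitivity immediately, so I expect no serious obstacle. Indeed this case is strictly simpler than Lemma~\ref{lemma:irreducibility:characteristic:divisor}, where $A_{\Lambda}\neq0$ forced the passage to $l'=2l$.
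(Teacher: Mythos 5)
Your proposal is correct and follows essentially the same route as the paper: the paper's proof is the one-line observation that $\Lambda\cong{\Bbb U}^{\oplus2}\oplus{\Bbb E}_{8}^{\oplus2}$ is unimodular, so by the Eichler criterion the $O^{+}(\Lambda)$-class of a primitive vector is determined by its norm. Your additional verifications (the identification of $\Lambda$ from the table, primitivity of norm $-4$ vectors, triviality of the discriminant data) are exactly the details the paper leaves implicit.
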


\begin{proof}
Since $\Lambda \simeq \mathbb{U}^{\oplus2}\oplus\mathbb{E}_8^{\oplus2}$ is unimodular, 
by the Eichler criterion the $O^{+}(\Lambda)$-equivalence class of a primitive vector of $\Lambda$ 
is determined by its norm. 
\end{proof}

We describe the members of $\mathcal{M}_{\Lambda}^{0}$ following the construction in Section 6.1 of \cite{Ma}. 
We consider curves on the Hirzebruch surface $\mathbf{F}_4$. 
Let $U\subset|L_{3,0}|$ be the open locus of smooth curves. 
We have a morphism $p\colon U \to \mathcal{M}_{\Lambda}^{0}$ 
by associating to $C\in U$ the double cover of $\mathbf{F}_4$ branched over $C+\Sigma$.

\begin{lemma}\label{lemma:GIT model (2,0,0)}
There exists a geometric quotient $U/{\rm Aut}(\mathbf{F}_4)$ 
and the period map $p$ descends to a biregular isomorphism 
$\mathcal{P}\colon U/{\rm Aut}(\mathbf{F}_4) \to \mathcal{M}_{\Lambda}^{0}$. 
In particular, the Torelli map $\mathcal{M}_{\Lambda}^{0}\to \frak{M}_{g}$ is given by $\mathcal{P}^{-1}$ and is injective, 
with the image the trigonal locus of Maroni invariant $2$. 
\end{lemma}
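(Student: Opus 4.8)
The plan is to exhibit $\mathcal{M}_{\Lambda}^{0}$ as the orbit space of $U$ under $G:=\mathrm{Aut}(\mathbf{F}_{4})$ via the double-cover map $p$, and to read off the Torelli map from this presentation. Concretely I would establish, in order: (i) $p\colon U\to\mathcal{M}_{\Lambda}^{0}$ is surjective with fibres exactly the $G$-orbits; (ii) the geometric quotient $U/G$ exists and the $G$-invariant morphism $p$ descends to a bijective morphism $\mathcal{P}\colon U/G\to\mathcal{M}_{\Lambda}^{0}$; (iii) $\mathcal{P}$ is an isomorphism; and (iv) the resulting identification carries $(X,\iota)$ to the class of the curve $C\subset\mathbf{F}_{4}$, which is trigonal with scroll invariant $4$ and Maroni invariant $2$.

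For step (i) I would invoke the construction of Section~6.1 of \cite{Ma}. A member of $\mathcal{M}_{\Lambda}^{0}$ has invariants $(r,l,\delta)=(2,0,0)$, so its genus $10$ fixed curve is trigonal of scroll invariant $4$; the quotient $Y=X/\iota$ has $\mathbf{F}_{4}$ as its relevant model, with branch divisor $C+\Sigma$ for a unique $C\in|L_{3,0}|$. Here $C+\Sigma\in|{-}2K_{\mathbf{F}_{4}}|$ since $L_{3,0}+L_{1,-4}=L_{4,-4}$, and adjunction on $\mathbf{F}_{4}$ gives $g(C)=10$; this yields surjectivity of $p$. For the fibre structure, two smooth members $C,C'\in U$ give isomorphic pairs $(X_{C},\iota_{C})\simeq(X_{C'},\iota_{C'})$ exactly when some automorphism of $\mathbf{F}_{4}$ carries $C+\Sigma$ to $C'+\Sigma$; since $\Sigma$ is the unique $(-4)$-curve of $\mathbf{F}_{4}$ it is preserved by every element of $G$, so this occurs precisely when $C'\in G\cdot C$. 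Hence the fibres of $p$ are the $G$-orbits.

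For steps (ii) and (iii) I would use that $U/G$ is constructed as a geometric quotient in \cite{Ma}; being in particular a categorical quotient, it receives the $G$-invariant morphism $p$, yielding a morphism $\mathcal{P}\colon U/G\to\mathcal{M}_{\Lambda}^{0}$, which is bijective on closed points by step (i). Both $U/G$ and $\mathcal{M}_{\Lambda}^{0}$ are normal and irreducible — the former as a geometric quotient of the smooth variety $U\subset|L_{3,0}|$, the latter as an arithmetic quotient of $\Omega_{\Lambda}^{0}$ — so over $\mathbf{C}$ the bijective dominant morphism $\mathcal{P}$ is automatically birational, and Zariski's Main Theorem then forces it to be an isomorphism.

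Finally, for step (iv), the Torelli map $\mathcal{M}_{\Lambda}^{0}\to\frak{M}_{g}$ sends $p(C)=(X_{C},\iota_{C})$ to the class of its genus $10$ fixed component, which the branch construction identifies with $C$; thus under $\mathcal{P}$ it becomes $\mathcal{P}^{-1}$ followed by $C\mapsto[C]$, and in particular is injective by the canonicity of the scroll embedding $C\subset\mathbf{F}_{4}$ for trigonal curves (Section~\ref{sect:trigonal}). Since $C\in|L_{3,0}|$ we have $b=0$, whence Maroni invariant $m=b+n-2=2$ and $g=3n+2b-2=10$; conversely the canonical scroll of any genus $10$ trigonal curve with $m=2$ is $\mathbf{F}_{4}$, so the image is exactly the Maroni-$2$ trigonal locus. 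The main obstacle is step (i), namely proving that every point of $\mathcal{M}_{\Lambda}^{0}$ is realized by such a double cover and exactly once up to $G$; this is where the surface geometry of \cite{Ma} is essential, while the descent and normality arguments of steps (ii)–(iii) are comparatively routine.
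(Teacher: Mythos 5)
Your overall architecture --- fibres of $p$ are the ${\rm Aut}(\mathbf{F}_4)$-orbits, descend to a bijective morphism, upgrade to an isomorphism by normality, read off the Torelli map --- is the same as the paper's, and your computation of the Maroni invariant and the identification of the image are correct. Two points of comparison. For the fibre structure you descend an isomorphism of pairs to an automorphism of $\mathbf{F}_4$ preserving $\Sigma$; the paper instead notes that the Torelli map recovers the abstract isomorphism class of the trigonal curve, whose scroll embedding is canonical (Section~\ref{sect:trigonal}), so the orbits are separated. Both work. For the quotient, the paper does not import its existence from \cite{Ma}: it applies \cite{GIT} Proposition~0.2 to the orbit-separating map $p$ into the normal variety ${\mathcal M}_{\Lambda}^{0}$, which simultaneously produces the geometric quotient and identifies it with the image of $p$; this is worth noting because ${\rm Aut}(\mathbf{F}_4)$ is not reductive, so the quotient is not handed to you by standard GIT. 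Your route via Zariski's Main Theorem is fine once existence of the quotient is granted.

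The one place where your sketch does not actually supply an argument is the surjectivity of $p$. Your stated reason --- the genus $10$ fixed curve is trigonal of scroll invariant $4$, hence the quotient has $\mathbf{F}_4$ as its relevant model --- is circular: trigonality of the fixed curve is a \emph{consequence} of realizing $Y\cong\mathbf{F}_4$, not a premise available from the invariants $(r,l,\delta)=(2,0,0)$ alone. The paper's proof does real work here: the vector $2({\frak e}+{\frak f})$ in the invariant lattice $H_+\simeq{\Bbb U}$ satisfies the arithmetic conditions of Lemma~\ref{HE LB} and so gives an $\iota$-invariant hyperelliptic bundle of degree $8$; Lemma~\ref{cover trans} forces $\iota$ to act as the covering involution of the resulting generically two-to-one map $X\to Y$ with $Y=\mathbf{P}^2$ or $\mathbf{F}_{2n}$, $n\leq2$; and the alternatives other than $\mathbf{F}_4$ are eliminated by the genus of the branch curve and by the invariants. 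You flag this as the main obstacle and defer it to \cite{Ma}, which is a defensible citation, but it is precisely the step that the proof of the lemma has to carry.
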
 

\begin{proof}
The $p$-fibers are the ${\rm Aut}(\mathbf{F}_4)$-orbits because 
the ${\rm Aut}(\mathbf{F}_4)$-orbits correspond to the isomorphism classes of 
trigonal curves of Maroni invariant $2$, and the Torelli map recovers these isomorphism classes. 
Since $\mathcal{M}_{\Lambda}^{0}$ is normal, then \cite{GIT} Proposition 0.2 tells that 
the image of $p$ is identified with the geometric quotient of $U$ by ${\rm Aut}(\mathbf{F}_4)$. 
It remains to show the surjectivity of $p$. 

Let $(X, \iota)$ be an arbitrary member of $\mathcal{M}_{\Lambda}^{0}$, 
and let $\{{\frak e},{\frak f}\}$ be the natural hyperbolic basis of its invariant lattice $H_+\simeq \mathbb{U}$.  
The vector $2({\frak e}+{\frak f})$ satisfies the arithmetic conditions in Lemma \ref{HE LB} 
and hence gives an $\iota$-invariant hyperelliptic bundle of degree $8$. 
This defines a generically two-to-one morphism $X\to Y$ where $Y=\mathbf{P}^{2}$ or $\mathbf{F}_{2n}$ with $n\leq2$, 
on which $\iota$ acts by the covering transformation by Lemma \ref{cover trans}. 
Among these possibilities of $Y$, 
the branch $-2K_{Y}$-curve can contain a component of genus $10$ only when $Y=\mathbf{P}^{2}$ or $\mathbf{F}_{4}$. 
The case $Y=\mathbf{P}^{2}$ cannot happen because it would be $(r, l, \delta)=(1, 1, 1)$ in that case. 
Hence $Y=\mathbf{F}_{4}$. 
Since $|-2K_{\mathbf{F}_{4}}|=\Sigma+|L_{3,0}|$ and since $L_{3,0}$ has arithmetic genus $10$, 
the branch curve should be of the form $\Sigma+C$ with smooth $C\in|L_{3,0}|$. 
Thus $(X, \iota)=p(C)$. 
\end{proof}

Let $C$ be a member of $U$. 
Since $C$ is disjoint from $\Sigma$, the bundle $L_{1,-4}|_{C}$ is trivial so that $K_C\simeq L_{1,2}|_C\simeq L_{0,6}|_C$. 
Hence $L_{0,3}|_C$ is a theta characteristic with $h^{0}(L_{0,3}|_C)=4$. 
This shows that $\overline{\mu}_{\Lambda}(\mathcal{M}_{\Lambda}^{0})\subset\frak{M}_{10}'$. 
We are interested in the locus $\overline{\mu}_{\Lambda}(\mathcal{M}_{\Lambda}^{0})\cap \frak{M}_{10}''$ 
where $C$ has another effective even theta characteristic.

\begin{lemma}\label{lem: theta chara criterion (2,0,0)}
The curve $C$ has an effective even theta characteristic different from $L_{0,3}|_C$ 
if and only if
there exists a smooth member $H$ of $|L_{1,0}|$ such that $H|_C=2D$ for some divisor $D$ of degree $6$ on $C$. 
\end{lemma}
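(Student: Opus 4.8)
The plan is to derive the statement from Lemma~\ref{even theta-chara Hirze} by translating both sides into the geometry of the linear system $|L_{1,0}|$ on $\mathbf{F}_4$, and then to match \emph{smoothness} of $H$ with the condition $L\ne L_{0,3}|_C$. Recall that $C$ is trigonal of genus $10$ with scroll invariant $n=4$ and Maroni invariant $m=2$; write $T=L_{0,1}|_C$ for the trigonal bundle and $\phi=\pi|_C\colon C\to\mathbf{P}^1$ for the degree $3$ map. Applying Lemma~\ref{even theta-chara Hirze} with $r=1$ (so that $|L_{1,m-2r}|=|L_{1,0}|$ and $g-1-3r=6$), an effective even theta characteristic $L$ corresponds to a pair $(H,D)$ with $H\in|L_{1,0}|$, $\deg D=6$, $H|_C=2D$ and $L=T+D$; here $h^0(L)\ge h^0(T)=2$, so such $L$ is effective. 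Since $L_{0,3}|_C$ is already known to be an effective even theta characteristic, the whole assertion reduces to the single dichotomy $(\ast)$: for a pair $(H,D)$ as above, $H$ is smooth if and only if $L=T+D\ne L_{0,3}|_C$.

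First I would describe $|L_{1,0}|$. Since $L_{1,0}=\mathcal{O}_{\mathbf{F}_4}(1)=\Sigma+4f$ (with $f$ the fiber class), one has $H\cdot f=1$ and $H\cdot\Sigma=0$ for every $H\in|L_{1,0}|$. From $H\cdot f=1$ it follows that a reducible $H$ must contain either a fiber or $\Sigma$; because $(L_{1,0}-f)\cdot\Sigma=-1$, containing a fiber forces $H$ to contain $\Sigma$ as well. Hence $H$ is either irreducible and disjoint from $\Sigma$ — in which case it is a section of $\pi$, thus isomorphic to $\mathbf{P}^1$ and smooth — or else $H\supseteq\Sigma$, i.e. $H=\Sigma+\pi^{\ast}E$ for some $E\in|\mathcal{O}_{\mathbf{P}^1}(4)|$, which is non-smooth. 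So $H$ is smooth if and only if $\Sigma\not\subset H$.

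Next I would prove $(\ast)$. For the direction ``non-smooth $\Rightarrow L=L_{0,3}|_C$'': if $\Sigma\subset H$ then $H=\Sigma+\pi^{\ast}E$, and since $C\cap\Sigma=\emptyset$ one gets $2D=H|_C=\phi^{\ast}E$. The divisor $\phi^{\ast}E$ is $\phi$-vertical, and comparing multiplicities fiber by fiber shows that $2D=\phi^{\ast}E$ forces $D$ itself to be a pullback $D=\phi^{\ast}E'$ with $2E'=E$; hence $D\sim 2T$ and $L=T+D\sim 3T=L_{0,3}|_C$, so $L=L_{0,3}|_C$. For the converse, if $L=L_{0,3}|_C=3T$ then $D\in|L-T|=|2T|$; using $h^0(C,2T)=h^0(\mathbf{P}^1,\mathcal{O}(2))=3$ one sees that $|2T|=\phi^{\ast}|\mathcal{O}_{\mathbf{P}^1}(2)|$, so $D=\phi^{\ast}(q_1+q_2)$ and $2D=(\pi^{\ast}(2q_1+2q_2))|_C$. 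Because the restriction $H^0(\mathbf{F}_4,L_{1,0})\to H^0(C,L_{1,0}|_C)$ is injective (its kernel is $H^0(L_{1,0}-C)=H^0(L_{-2,0})=0$), the member $H$ with $H|_C=2D$ is unique, and $\Sigma+\pi^{\ast}(2q_1+2q_2)$ is such a member; thus $\Sigma\subset H$ and $H$ is non-smooth. This establishes $(\ast)$, and feeding it into the two directions of Lemma~\ref{even theta-chara Hirze} yields the claim: a theta characteristic $L\ne L_{0,3}|_C$ produces, via Lemma~\ref{even theta-chara Hirze}, a pair $(H,D)$ whose $H$ is smooth by $(\ast)$; conversely a smooth $H$ produces $L=T+D$ which by $(\ast)$ differs from $L_{0,3}|_C$.

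The main obstacle is the half-pullback step in the first part of $(\ast)$: showing that an effective $D$ with $2D=\phi^{\ast}E$ is necessarily of the form $\phi^{\ast}E'$. The point is that over each point $q$ the multiplicity with which the reduced points of $\phi^{\ast}q$ enter $2D$ is the same for all of them, forcing that multiplicity to be even and $D$ to contain the corresponding multiple of the \emph{full} fiber; one must verify this uniformly at unramified, simply ramified and totally ramified fibers of $\phi$. Once this descent is in place, the numerical identities $K_C=6T$ and $H|_C\sim 4T$ make the passage $L\sim 3T$ immediate.
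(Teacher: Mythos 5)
Your proposal is correct and follows essentially the same route as the paper: both apply Lemma~\ref{even theta-chara Hirze} with $r=1$, observe that a singular member of $|L_{1,0}|$ must contain $\Sigma$ and hence equals $\Sigma$ plus a vertical divisor, and use the parity of multiplicities on the degree-$3$ fibers to conclude that such an $H$ forces $L\sim 3T=L_{0,3}|_C$. The only difference is that you also spell out the converse implication (that $L=L_{0,3}|_C$ forces the corresponding $H$ to be the singular member, via $|2T|=\phi^{*}|\mathcal{O}_{\mathbf{P}^1}(2)|$ and the injectivity of $H^{0}(L_{1,0})\to H^{0}(L_{1,0}|_C)$), which the paper leaves implicit.
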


\begin{proof}
By Lemma \ref{even theta-chara Hirze}, $C$ has a theta characteristic $L$ with $h^0(L)\geq2$ 
if and only if there exists $H\in|L_{1,0}|$ with $H|_C$ divisible by $2$, in which case $L$ is given by $(H|_C)/2+L_{0,1}|_C$. 
If $H$ is singular, it contains $\Sigma$ as a component and hence can be written as $H=\Sigma+\sum_{i=1}^{4}F_i$ for some $L_{0,1}$-fibers $F_1, \cdots, F_4$. 
Since $H|_C=\sum_{i=1}^{4}F_i|_C$, after renumbering we must have $F_1=F_2$ and $F_3=F_4$. 
Thus $L=L_{0,3}|_C$ in this case. 
\end{proof}

We can now complete the proof of Proposition \ref{thetanull (r,a)=(2,0)}. 
Since $\overline{\mu}_{\Lambda}({\mathcal M}_{\Lambda}^{0})\subset{\frak M}'_{10}$, 
the inverse image $\overline{\mu}_{\Lambda}^{-1}({\frak M}''_{10})$ is a divisor in ${\mathcal M}_{\Lambda}^{0}$. 
It can be easily checked with Lemma \ref{lem: theta chara criterion (2,0,0)} that 
$\overline{\mu}_{\Lambda}({\mathcal M}_{\Lambda}^{0})\cap{\frak M}''_{10}$ is non-empty. 
Hence it remains to prove the inclusion $\overline{\mu}_{\Lambda}^{-1}({\frak M}''_{10})\subset\overline{\mathcal H}_{\Lambda}$. 
Let $(X,\iota)$ be a $2$-elementary $K3$ surface with $(g, k)=(10, 1)$ such that $\overline{\mu}_{\Lambda}(X,\iota)\in{\frak M}''_{10}$. 
By Lemmas \ref{lemma:GIT model (2,0,0)} and \ref{lem: theta chara criterion (2,0,0)}, 
we have $(X, \iota)=p(C)$ for a curve $C$ as in Lemma \ref{lem: theta chara criterion (2,0,0)}. 
Let $H$ be a smooth $L_{1,0}$-curve with $H|_C$ divisible by $2$. 
The pullback of $H$ by the covering map $\pi: X\to\mathbf{F}_4$ splits into two $(-2)$-curves: 
$\pi^{\ast}H=E+\iota(E)$. 
Therefore $X$ has the $\iota$-anti-invariant cycle 
\begin{equation*}
D_{-} = E-\iota(E) 
\end{equation*}
of norm $-16$. 
Let $H_-\subset H^{2}(X,{\bf Z})$ be the anti-invariant lattice of $(X, \iota)$. 
Since 
\begin{equation*}
(D_{-}, H_{-}) = (D_{-}+E+\iota(E), H_{-}) = 2(E, H_{-}) \subset 2\mathbf{Z} 
\end{equation*}
and since $H_{-}$ is unimodular, 
$D_{-}$ is divisible by $2$ in $H_-$. 
Therefore ${\rm Pic}(X)$ contains the anti-invariant cycle $D_{-}/2$ of norm $-4$, 
which implies that the period of $(X, \iota)$ lies in $\overline{{\mathcal H}}_{\Lambda}$. 
This proves Proposition \ref{thetanull (r,a)=(2,0)}. 
\qed

\subsection{Proof of Proposition \ref{thetanull (r,a)=(2,2)}}
\label{ssec:(r,a)=(2,2)}
In this subsection we prove Proposition \ref{thetanull (r,a)=(2,2)}. 
We first explain the decomposition of the Heegner divisor $\overline{\mathcal H}_{\Lambda}$. 
Recall that $\overline{\mathcal H}_{\Lambda}$ is defined by norm $-4$ vectors $l$ in 
$\Lambda \simeq \mathbb{U}\oplus\mathbb{U}(2)\oplus\mathbb{E}_8^{\oplus2}$. 
Since $\Lambda$ cannot contain $\langle -4 \rangle$ as an orthogonal direct summand, 
we have either ${\rm div}(l)=1$ or $2$. 
By the Eichler criterion, each type of norm $-4$ vectors consist of a single $O^{+}_{0}(\Lambda)$-orbit 
(in case ${\rm div}(l)=2$, $[l/2]\in A_{\Lambda}$ is the unique element of norm $\equiv 1$ mod $2\mathbf{Z}$).  
We accordingly obtain the decomposition 
\begin{equation*}
\overline{\mathcal H}_{\Lambda} = \overline{\mathcal H}_1 + \overline{\mathcal H}_2
\end{equation*}
where $\overline{\mathcal H}_{i}$ is defined by those $l$ with ${\rm div}(l)=i$, 
and each $\overline{\mathcal H}_i$ is irreducible. 

We next recall a (well-known) construction of members of $\mathcal{M}_{\Lambda}^0$. 
Let $\tilde{U}$ be the parameter space of smooth $(2, 4)$ complete intersections in $\mathbf{P}^3$. 
For each $C\in\tilde{U}$ the quadric containing it is unique; 
$\tilde{U}$ is thus stratified according to whether the quadric is smooth or a quadratic cone. 
In the first case $C$ is a smooth bidegree $(4, 4)$ curve on $Y=\mathbf{P}^1 \times \mathbf{P}^1$, 
and in the latter case $C$ is a smooth $L_{4,0}$-curve on $Y=\mathbf{F}_2$.  
We have a period map $p:\tilde{U}\to \mathcal{M}_{\Lambda}^0$ by associating to $C$ the double cover of $Y$ branched over $C$.

\begin{lemma}\label{lemma:GIT model (2,2,0)}
The period map $p$ descends to a biregular isomorphism 
$\tilde{U}/{\rm PGL}_4 \to \mathcal{M}_{\Lambda}^0$ from the geometric quotient $\tilde{U}/{\rm PGL}_4$. 
\end{lemma}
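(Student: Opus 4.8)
The plan is to follow the same strategy as in Lemma~\ref{lemma:GIT model (2,0,0)}: I will establish (i) that the fibers of $p$ coincide with the $\mathrm{PGL}_4$-orbits on $\tilde U$, and (ii) that $p$ is surjective. Granting both, the normality of $\mathcal{M}_\Lambda^0$ together with \cite{GIT} Proposition~0.2 identifies $\mathcal{M}_\Lambda^0$ with the geometric quotient $\tilde U/\mathrm{PGL}_4$, the descended morphism $\mathcal{P}$ being a biregular isomorphism. The two $\mathrm{PGL}_4$-invariant strata of $\tilde U$, according as the ambient quadric has rank $4$ or $3$, are respected throughout, so the cases $Y=\mathbf{P}^1\times\mathbf{P}^1$ and $Y=\mathbf{F}_2$ are handled in parallel.

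For surjectivity, let $(X,\iota)\in\mathcal{M}_\Lambda^0$ and pick a basis $\{\mathfrak{e},\mathfrak{f}\}$ of the invariant lattice $H_+\simeq\mathbb{U}(2)$ with $(\mathfrak{e},\mathfrak{f})=2$. The class $\mathfrak{e}+\mathfrak{f}$ has norm $4$ and meets the hypotheses of Lemma~\ref{HE LB}: condition (a) holds with $E=\mathfrak{e}$, since $(\mathfrak{e},\mathfrak{e})=0$ and $(\mathfrak{e},\mathfrak{e}+\mathfrak{f})=2$; condition (b) is automatic because every value of the form on $\mathbb{U}(2)$ is even. By Lemma~\ref{reflection} I may assume this class is nef, whence it is an $\iota$-invariant hyperelliptic bundle $L$ with $(L,L)=4$, that is $d=2$. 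Saint-Donat's classification (Section~\ref{sect:3.3}) then forces $\phi_L(X)$ to be a surface of minimal degree $2$ in $\mathbf{P}^3$, hence a quadric: either the smooth scroll $\mathbf{P}^1\times\mathbf{P}^1$ (case (III), via $L_{1,1}$) or the quadric cone (case (IV), the image of $\mathbf{F}_2$ by $L_{1,0}$); cases (I) and (II) are excluded by $d=2$ and by $(L,L)=4$. By Lemma~\ref{cover trans}, applicable since $g=9$, the involution $\iota$ acts as the covering transformation, so $Y=\mathbf{P}^1\times\mathbf{P}^1$ or $\mathbf{F}_2$, and the branch curve $B$ is isomorphic to the smooth genus $9$ curve $X^\iota$ and lies in $|-2K_Y|$. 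Under the quadric embedding $-2K_Y=4L_Y=\mathcal{O}_Y(4)$, so $B$ is cut out from the quadric by a further quartic; that is, $B$ is a smooth $(2,4)$ complete intersection and $(X,\iota)=p(B)$.

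For the fiber computation, one inclusion is immediate, as any $\sigma\in\mathrm{PGL}_4$ carries the quadric and branch curve of $p(C)$ to those of $p(\sigma C)$, giving $p(\sigma C)\cong p(C)$. Conversely, an isomorphism $p(C)\cong p(C')$ respecting the involutions descends to an isomorphism $\bar\psi\colon Y=X/\iota\to X'/\iota'=Y'$ sending $C$ to $C'$, where $Y$ and $Y'$ are necessarily of the same type. The quadric embedding of $Y$ is furnished by the distinguished base-point-free polarization $L_Y$ ($\mathcal{O}(1,1)$ in the scroll case, $L_{1,0}$ in the cone case), and this class is preserved by every automorphism of $Y$; hence $\bar\psi^{*}L_{Y'}\simeq L_Y$, so $\bar\psi$ acts linearly on $H^0(L_Y)\simeq\mathbf{C}^4$ and extends to a $\sigma\in\mathrm{PGL}_4$ with $\sigma(C)=C'$. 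Thus the $p$-fibers are exactly the $\mathrm{PGL}_4$-orbits.

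I expect the surjectivity to be the crux. The delicate points are to confirm that the natural norm-$4$ class genuinely produces a hyperelliptic bundle whose image is a quadric, ruling out the $\mathbf{P}^2$ and Veronese alternatives of Saint-Donat, and to verify that the branch curve is cut from that quadric by a single quartic so as to lie in $\tilde U$, while keeping both the rank-$4$ and rank-$3$ strata under control. Once the quadric image is in hand, the recognition of $B$ as a $(2,4)$ complete intersection and the intrinsic characterization of the polarization $L_Y$ are routine, and the descent to the geometric quotient runs exactly as in Lemma~\ref{lemma:GIT model (2,0,0)}.
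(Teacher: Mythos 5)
Your proposal is correct and follows essentially the same route as the paper, which only sketches the argument by citing the same two ingredients: surjectivity via the natural norm~$4$ vector in $H_{+}\simeq{\Bbb U}(2)$ together with Lemmas~\ref{HE LB}, \ref{reflection} and \ref{cover trans}, and the identification of $p$-fibers with ${\rm PGL}_4$-orbits. Your fiber computation via the intrinsically characterized polarization $L_Y$ is a worked-out version of the paper's second alternative (that ${\rm PGL}_4$-orbits correspond to isomorphism classes of curves), and the surjectivity details you supply match the paper's indicated use of the norm~$4$ class.
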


\begin{proof}
This is similar to Lemma \ref{lemma:GIT model (2,0,0)}, so we only indicate minimal ingredients of the argument: 
(1) using the natural norm $4$ vector in the invariant lattice $\mathbb{U}(2)$, 
we can realize a given $(X, \iota)$ as a double cover of a quadric so that $p$ is surjective; 
(2) the $p$-fibers are ${\rm PGL}_4$-orbits either
by an argument modeled in Section 4.3 of \cite{Ma} or 
by observing that the ${\rm PGL}_4$-orbits correspond to the isomorphism classes of curves 
(cf.~\cite{A-C-G-H} Exercise IV.~F-2). 
\end{proof}

In the proof of Proposition \ref{thetanull (r,a)=(2,2)} we restrict ourselves to the generic case, namely the smooth quadric case. 
The quadratic cone case can be dealt with similarly. 
So let $Y=\mathbf{P}^1 \times \mathbf{P}^1$ and 
consider the open locus $U\subset|\mathcal{O}_{Y}(4, 4)|$ of smooth bidegree $(4, 4)$ curves. 
If $C\in U$, then $K_C\simeq \mathcal{O}_{Y}(2, 2)|_C$
and the restriction map $|\mathcal{O}_{Y}(2, 2)| \to |K_C|$ is isomorphic. 
In particular, an effective divisor $D$ of degree $8$ on $C$ satisfies $2D \sim K_C$ 
if and only if there exists a bidegree $(2, 2)$ curve $H$ on $Y$ with $H|_C=2D$.
We have one apparent theta characteristic, $\mathcal{O}_{Y}(1, 1)|_C$, which has $h^{0}=4$. 
This is the case where $H$ is a double bidegree $(1, 1)$ curve. 
By \cite{A-C-G-H} Exercise IV.~F-2, any other effective even theta characteristic of $C$, if exist, must satisfy $h^{0}=2$. 

Now let $V\subset U$ be the locus where $C$ has a theta characteristic with $h^0=2$. 
By the same argument as in Section \ref{ssec:(r,a)=(2,0)}, the proof of Proposition \ref{thetanull (r,a)=(2,2)} 
is reduced to showing the inclusion $p(V)\subset \overline{\mathcal H}_1$. 
So let $C\in V$ and $(X, \iota)=p(C)$. 
We write $\pi\colon X\to Y$ for the covering map. 
The curve $C$ admits a $1$-dimensional family $\{ H_{t} \}_{t\in\mathbf{P}^1}$ of 
``totally tangent'' curves of bidegree $(2, 2)$, i.e., $H_t|_C=2D_t$ for some divisors $D_t$ of degree $8$ on $C$, 
which are not double bidegree $(1, 1)$ curves. 
Then $\{ D_{t} \}_{t\in\mathbf{P}^1}$ is a (complete) half-canonical pencil of $C$. 
Note that this pencil can also be obtained by picking up $t=0$ and 
considering the linear system of bidegree $(2, 2)$ curves passing through $D_0$, 
which intersect $C$ at $D_0+D_t$.

\begin{claim}
A general member of $\{ H_{t} \}_{t\in\mathbf{P}^1}$ is smooth. 
\end{claim}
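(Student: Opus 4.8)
The plan is to transport the statement to the $K3$ surface $X$ through the double cover $\pi\colon X\to Y=\mathbf{P}^1\times\mathbf{P}^1$, where the pencil $\{H_t\}$ becomes an elliptic pencil whose general member is visibly smooth, and then to descend smoothness back to $H_t$.

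First I would lift the pencil. Since $H_t\in|\mathcal{O}_Y(2,2)|=|-K_Y|$ and $H_t|_C=2D_t$ is everywhere even, the pullback splits as $\pi^{\ast}H_t=E_t+\iota(E_t)$ with $E_t\neq\iota(E_t)$ prime; this is the same splitting mechanism used in Section~\ref{ssec:(r,a)=(2,0)}, the two components being exchanged by $\iota$ precisely because $H_t$ is not a double $(1,1)$-curve. Note that $\pi^{\ast}\mathcal{O}_Y(2,2)=\mathcal{O}_X(X^{\iota})$, so $E_t+\iota(E_t)\sim X^{\iota}$ and these are the elliptic curves of Proposition~\ref{prop:elliptic:curve:theta:char}. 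Because $\pi$ is finite of degree $2$ and $\pi^{-1}(H_t)$ has two distinct components, the induced morphism $\nu_t\colon E_t\to H_t$ has degree $1$, i.e. is birational. Thus it suffices to show that for general $t$ the curve $E_t$ is smooth: then $\nu_t$ exhibits $E_t$ as the normalization of $H_t$, and a genus comparison will finish the argument.

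Next I compute $E_t^{2}$. For general $t$ the divisor $D_t$ consists of $8$ distinct points, and above each of them the two local sheets $E_t,\iota(E_t)$ meet transversally, so that $E_t\cdot\iota(E_t)=\deg D_t=8$. On the other hand, by the projection formula $\pi^{\ast}H_t\cdot E_t=H_t\cdot\pi_{\ast}E_t=H_t\cdot H_t=8$, whence $E_t^{2}=\pi^{\ast}H_t\cdot E_t-E_t\cdot\iota(E_t)=0$. Since the $E_t$ move in a positive–dimensional family and $X$ is a $K3$ surface, they are algebraically, hence linearly, equivalent; together with $E_t^{2}=0$ this forces $|E_t|$ to be base–point free and to define an elliptic fibration $X\to\mathbf{P}^1$. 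In characteristic $0$ the general fibre of such a fibration is smooth (generic smoothness), and by adjunction on $X$ it has genus $p_a(E_t)=1+\tfrac12 E_t^{2}=1$. Hence the general $E_t$ is a smooth elliptic curve.

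Finally I descend. For general $t$ the curve $E_t$ is smooth and $\nu_t\colon E_t\to H_t$ is birational, so $E_t$ is the normalization of $H_t$ and has geometric genus $1$. Since $H_t$ has bidegree $(2,2)$, its arithmetic genus is $p_a(H_t)=1$, and the relation $p_a(H_t)=g(E_t)+\sum_{p}\delta_p$ forces $\sum_{p}\delta_p=0$; therefore $H_t$ is smooth. The main obstacle, and the only step requiring genuine geometry rather than bookkeeping, is the smoothness of the general $E_t$: I resolve it not by a direct Bertini argument on $Y$ (which would only control $H_t$ away from the base locus of the pencil) but by exploiting that on the $K3$ surface $X$ a moving class of self–intersection $0$ is automatically the fibre class of an elliptic fibration, so smoothness of the general member is immediate. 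Descending through $\nu_t$ then upgrades this to smoothness of $H_t$, including along the tangency locus $D_t$.
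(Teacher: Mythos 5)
There is a genuine gap, and it sits in your very first step. You assert that evenness of $H_t|_C=2D_t$ forces $\pi^{\ast}H_t$ to split as $E_t+\iota(E_t)$ with $E_t$ prime. But a decomposition of $\pi^{\ast}H_t$ into two prime divisors interchanged by $\iota$ already implies that $H_t$ is irreducible (both components must dominate $H_t$ because $\pi\circ\iota=\pi$), and irreducibility is the main content of the Claim: a singular member of $|\mathcal{O}_Y(2,2)|$ is either reducible or an irreducible rational curve, and what has to be excluded is that the whole one-parameter family lies in the discriminant. If $H_t=R_1+R_2$ with $R_i$ rational there is no preferred $E_t$ and your bookkeeping does not start; if $H_t$ is irreducible but has a node away from $C$, then $E_t$ and $\iota(E_t)$ can acquire extra intersection over the node, so the count $E_t\cdot\iota(E_t)=\deg D_t=8$, hence $E_t^{2}=0$, silently uses the smoothness of $H_t$ that you are trying to prove (in one of the two possible configurations of branches at the node one finds $E_t^{2}=-2$, and the elliptic-fibration step collapses). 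There is a second, independent problem: even for an irreducible $H_t$ of positive geometric genus, evenness of $H_t|_C$ only says that the normalization of $\pi^{-1}(H_t)$ is an \'etale double cover of $H_t$; whether it splits is governed by the $2$-torsion class $\mathcal{O}_Y(2,2)|_{H_t}\otimes\mathcal{O}_{H_t}(-D_t)\in{\rm Pic}^0(H_t)[2]$, which has no reason to vanish on a genus-one curve. The ``same splitting mechanism'' you cite from Section \ref{ssec:(r,a)=(2,0)} works there precisely because the curve being pulled back is rational, so its $2$-torsion group is trivial; that feature is absent here, and indeed the paper only invokes the splitting $\pi^{\ast}H=E+\iota(E)$ \emph{after} smoothness of $H$ has been established.

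The paper's own proof is soft and sidesteps all of this: if a general member of the family were singular it would be an irreducible rational curve or a union of rational curves, each meeting $C$ transversally at a bounded number of points and tangentially elsewhere, so its preimage in $X$ would again be a union of rational curves; letting $t$ vary would then cover the $K3$ surface $X$ by rational curves, which is absurd since a $K3$ surface is not uniruled. Your strategy reverses the logical order: the splitting and the elliptic pencil on $X$ are consequences of the smoothness statement, not tools available for its proof.
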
 

\begin{proof}
If $H_t$ is reducible, its irreducible components are smooth rational curves intersecting $C$ transversely 
at at most two points and tangent to $C$ elsewhere. 
Their pullback to $X$ split into two $(-2)$-curves. 
So if a general member is reducible, 
then the $K3$ surface $X$ would be covered by rational curves, which is absurd. 
By the same reason, a general (irreducible) member cannot be singular. 
\end{proof}

Let $H$ be a general member of $\{ H_{t} \}_{t\in\mathbf{P}^1}$. Since $H$ is totally tangent to $C$ at $8$ points,
its pullback to $X$ splits into two smooth elliptic curves: 
$\pi^{\ast} H = E + \iota(E)$. 
Hence $(X, \iota)$ possesses the $\iota$-anti-invariant cycle of norm $-16$:
\begin{equation*}
D_{-} = E-\iota(E).
\end{equation*}

\begin{claim}
$D_{-}$ is divisible by $2$ in the anti-invariant lattice $H_-$ 
and satisfies $(D_{-}/2, H_-)=\mathbf{Z}$. 
\end{claim}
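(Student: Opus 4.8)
The plan is to prove the two assertions separately and then read off that the period of $(X,\iota)$ lies on the component $\overline{\mathcal H}_1$. Throughout I write $D_+=E+\iota(E)\in H_+$ and $D_-=E-\iota(E)\in H_-$. Since $C$ has bidegree $(4,4)$ and $H$ bidegree $(2,2)$, the branch relation $\pi^{\ast}C=2X^{\iota}$ together with $[C]=2[H]$ gives $\pi^{\ast}H=X^{\iota}$; hence $D_+=\pi^{\ast}H$ is linearly equivalent to $X^{\iota}$ on the K3 surface $X$, and $D_-^2=-16$, $D_+^2=16$ by the norm of $D_-$ recorded above.

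For the divisibility of $D_-$ by $2$ in $H_-$, I would simply invoke Proposition~\ref{prop:elliptic:curve:theta:char}: since $E+\iota(E)\sim X^{\iota}$, its proof shows $D_-/2\in H_-^{\vee}$ with $[D_-/2]=\bold{1}_{H_-}$, the characteristic element of $A_{H_-}$. Because the present lattice has $\delta=0$, the discriminant form $q_{H_-}$ is integer-valued, so $q_{H_-}(\gamma)\equiv0\bmod\mathbf{Z}$ for every $\gamma\in A_{H_-}$; as $b_{H_-}$ is nondegenerate, the defining relation $b_{H_-}(\gamma,\bold{1}_{H_-})=q_{H_-}(\gamma)$ forces $\bold{1}_{H_-}=0$. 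Thus $[D_-/2]=0$, i.e. $D_-/2\in H_-$.

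For the divisor I would first pin down $D_+$. Writing $H_+=\mathbb{U}(2)=\mathbf{Z}e\oplus\mathbf{Z}f$ with $e^2=f^2=0$ and $(e,f)=2$, the same computation gives $\bold{1}_{H_+}=0$, so $D_+/2=X^{\iota}/2\in H_+$ by Lemma~\ref{fixed curve characteristic}; comparing norms ($(D_+/2)^2=4$) yields $D_+/2=e+f$ up to the evident symmetry, whence $D_+/4=(e+f)/2\in H_+^{\vee}$ represents the unique norm-$1$ class of $A_{H_+}$. Now suppose, for contradiction, that $(D_-/2,H_-)=2\mathbf{Z}$, that is ${\rm div}(D_-/2)=2$. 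Then $D_-/4\in H_-^{\vee}$, and since $(D_-/4)^2=-1$ its class $[D_-/4]$ is the unique norm-$1$ element of $A_{H_-}$. As $H^2(X,\mathbf{Z})$ is unimodular, the glue map $\lambda\colon A_{H_+}\to A_{H_-}$ is an anti-isometry, hence carries the norm-$1$ class of $A_{H_+}$ to that of $A_{H_-}$; so $\lambda([D_+/4])=[D_-/4]$, and the gluing criterion shows $\frac12E=D_+/4+D_-/4\in H^2(X,\mathbf{Z})$. This would make $E$ divisible by $2$ in $H^2(X,\mathbf{Z})$, contradicting the primitivity of the class of a general fibre $E$ of the elliptic fibration $|E|$ (a K3 elliptic fibration has no multiple fibres). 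Therefore ${\rm div}(D_-/2)=1$, i.e. $(D_-/2,H_-)=\mathbf{Z}$.

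I expect the divisor assertion to be the main obstacle: the divisibility reduces cleanly to the already-established Proposition~\ref{prop:elliptic:curve:theta:char} once one observes $\bold{1}_{H_-}=0$, whereas ruling out ${\rm div}(D_-/2)=2$ genuinely requires the interplay between the glue map of the unimodular overlattice $H^2(X,\mathbf{Z})$ and the geometric primitivity of the fibre class $E$. Once the claim is established, $D_-/2$ is an anti-invariant $(-4)$-vector of divisor $1$ in ${\rm Pic}(X)$, so the period of $(X,\iota)$ lies on $\overline{\mathcal H}_1$, which is exactly the inclusion $p(V)\subset\overline{\mathcal H}_1$ needed to finish Proposition~\ref{thetanull (r,a)=(2,2)}.
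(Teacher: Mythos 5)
Your proof is correct and its main step --- ruling out ${\rm div}(D_-/2)=2$ by noting that $[D_-/4]$ and $[D_+/4]$ are the unique norm-$1$ classes of $A_{H_\mp}\cong A_{\mathbb{U}(2)}$, that the glue map must therefore match them, and that this would force $E/2=D_+/4+D_-/4\in H^2(X,\mathbf{Z})$, contradicting primitivity of the elliptic fibre class --- is exactly the paper's argument. The only (harmless) divergence is in the divisibility statement: the paper observes directly that $D_-/2=E-\pi^{\ast}\mathcal{O}_Y(1,1)$ is an integral anti-invariant class, whereas you deduce $[D_-/2]=\mathbf{1}_{H_-}=0$ from Proposition~\ref{prop:elliptic:curve:theta:char} and the vanishing of the characteristic element when $\delta=0$; both routes are valid.
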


\begin{proof}
Since $D_{-}/2 = E - \pi^{\ast}\mathcal{O}_Y(1, 1)$ is contained in $H^{2}(X, \mathbf{Z})$, 
we have $D_{-}/2\in H_{-}$. 
If $(D_{-}/2, H_-)\ne\mathbf{Z}$, then $(D_{-}/2, H_-)\subset 2\mathbf{Z}$ so that 
$D_{-}/4$ would be contained in the dual lattice $H_{-}^{\vee}$. 
Recall that the discriminant forms $A_{H_{+}}$, $A_{H_{-}}$ are isometric to $A_{\mathbb{U}(2)}$.  
As elements of $A_{H_-}$ and $A_{H_+}$, 
$[D_{-}/4]$ and $[\pi^{\ast}\mathcal{O}_Y(1, 1)/2]$ are respectively 
the unique elements of norm $\equiv 1$ mod $2\mathbf{Z}$.  
By Nikulin \cite{Nikulin80a}, then $E/2 = D_{-}/4 + \pi^{\ast}\mathcal{O}_Y(1, 1)/2$ would be contained in $H^{2}(X, \mathbf{Z})$. 
This contradicts the well-known fact that the class of a smooth elliptic curve is primitive in ${\rm Pic}(X)$. 
\end{proof}

To sum up, if a smooth bidegree $(4, 4)$ curve $C\subset Y$ has a theta characteristic with $h^{0}=2$, 
then the associated 2-elementary $K3$ surface $(X, \iota)$ has an anti-invariant cycle $D_{-}/2$ 
of norm $-4$ and with $(D_{-}/2, H_{-})=\mathbf{Z}$ in its Picard lattice. 
Hence the period of $(X, \iota)$ lies in the component $\overline{\mathcal H}_1$ of $\overline{\mathcal H}_{\Lambda}$. 
This finishes the proof of Proposition \ref{thetanull (r,a)=(2,2)}. 
\qed

\subsection{Proof of Lemmas \ref{thetanull r=10, k=3,4} and \ref{thetanull r=10, k=5}}\label{ssec:r=10}
\par
Lemma \ref{thetanull r=10, k=3,4} is an immediate consequence of the following known description of general members of 
$\overline{\mu}_{\Lambda}(\mathcal{M}_{\Lambda}^{0})$. 
When $g=4$, they are general curves in ${\frak M}'_{4}$ by \cite[Cor.\,9.10]{Ma}; 
when $g=5$, they are general trigonal trigonal curves with vanishing thetanull by Kond\=o \cite{Kondo94}. 
Since these curves are not hyperelliptic, Lemma \ref{thetanull r=10, k=3,4} follows. 
\qed
\par
For the proof of Lemma \ref{thetanull r=10, k=5} we use the generic description given in \cite[Cor.\,7.11]{Ma}. 
Let $C$ be a smooth curve on ${\bf F}_{2}$ belonging to the linear system $|L_{3,1}|$ such that 
the $L_{0,1}$-fiber $F$ through the point $C\cap\Sigma$ intersects $C$ with multiplicity $3$ there. 
By taking the resolution of the double cover of $\mathbf{F}_2$ branched over $C+F+\Sigma$, we obtain a 2-elementary $K3$ surface with $(r, l, \delta)=(10, 0, 0)$. 
This construction covers general members of $\mathcal{M}_{\Lambda}^{0}$, 
so a general member of $\overline{\mu}_{\Lambda}(\mathcal{M}_{\Lambda}^{0})$ is a curve $C$ as above. 
\par
Denote $p:=C\cap\Sigma$ and $T:=L_{0,1}|_C$. 
Since 
\begin{equation*}
K_C \simeq L_{1,1}|_C \sim 3T +\Sigma|_C \sim 2T + 4p, 
\end{equation*}
the divisor $T+2p$ gives a theta characteristic of $C$ with $h^0(T+2p)\geq2$. 
By Lemma~\ref{even theta-chara Hirze}, we have $h^0(T+2p)=2$.
Conversely, suppose we have a theta characteristic $L$ on $C$ with $h^0(L)\geq2$. 
By Lemma \ref{even theta-chara Hirze} 
we can find a curve $H\in|L_{1,-1}|$ with $H|_C=2D$ for some divisor $D$ satisfying $L\sim T+D$. 
Since $|L_{1,-1}|=\Sigma+|L_{0,1}|$, 
$H$ is of the form $H=\Sigma+F'$ for some $F'\in|L_{0,1}|$. 
The condition $(\Sigma+F')|_C=2D$ forces $F'$ to pass through $p=\Sigma\cap C$. 
Therefore $F=F'$ and $2D=4p$. Thus $L$ is uniquely determined as $L\sim T+2p$. 
This proves Lemma \ref{thetanull r=10, k=5}.  
\qed

\section{The structure of $\tau_{M}$: the case $\delta=1$}
\label{sect:8}
In Section~\ref{sect:8}, we determine the structure of $\Phi_{M}$ when $\delta=1$.

\subsection{Borcherds products for $2$-elementary lattices}
\label{sect:8.1}
\par
Recall that the Dedekind $\eta$-function and the Jacobi theta series $\theta_{{\Bbb A}_{1}^{+}+\epsilon/2}(\tau)$, $(\epsilon=0,1)$ are
the holomorphic functions on the complex upper half-plane ${\frak H}$
$$
\eta(\tau):=q^{1/24}\prod_{n=1}^{\infty}(1-q^{n}),
\qquad
\theta_{{\Bbb A}_{1}^{+}+\epsilon/2}(\tau):=\sum_{m\in{\bf Z}+\epsilon/2}q^{m^{2}},
\qquad
q:=e^{2\pi i\tau}.
$$
\par
Let $\Lambda\subset{\Bbb L}_{K3}$ be a primitive $2$-elementary sublattice of signature $(2,r(\Lambda)-2)$.
We set
$$
\phi_{\Lambda}(\tau):=\eta(\tau)^{-8}\eta(2\tau)^{8}\eta(4\tau)^{-8}\,\theta_{{\Bbb A}_{1}^{+}}(\tau)^{12-r(\Lambda)},
$$
$$
\psi_{\Lambda}(\tau):=-16\eta(2\tau)^{-16}\eta(4\tau)^{8}\theta_{{\Bbb A}_{1}^{+}+\frac{1}{2}}(\tau)^{12-r(\Lambda)}.
$$
Let $\{{\bf e}_{\gamma}\}_{\gamma\in A_{\Lambda}}$ be the standard basis of the group ring ${\bf C}[A_{\Lambda}]$.
For $0\leq j\leq3$, set ${\bf v}_{j}:=\sum_{\gamma\in A_{\Lambda},\,q_{\Lambda}(\gamma)\equiv j/2}{\bf e}_{\gamma}$.
By \cite[Def.\,7.6, Th.\,7.7]{Yoshikawa13}, the ${\bf C}[A_{\Lambda}]$-valued function
$$
F_{\Lambda}(\tau)
:=
\phi_{\Lambda}(\tau)\,{\bf e}_{0}
+
2^{g(\Lambda)-2}\sum_{j=0}^{3}\sum_{k=0}^{3}\phi_{\Lambda}\left(\frac{\tau+k}{4}\right)i^{-jk}\,{\bf v}_{j}
+
\psi_{\Lambda}(\tau)\,{\bf e}_{{\bf 1}_{\Lambda}}
$$
is a modular form for ${\rm Mp}_{2}({\bf Z})$ of weight $1-b^{-}(\Lambda)/2$ with respect to the Weil representation
$\rho_{\Lambda}\colon{\rm Mp}_{2}({\bf Z})\to{\rm GL}({\bf C}[A_{\Lambda}])$ attached to $\Lambda$ (\cite{Borcherds98}),
where ${\rm Mp}_{2}({\bf Z})$ is the metaplectic cover of ${\rm SL}_{2}({\bf Z})$.
By \cite[Eq.(7.9)]{Yoshikawa13}, the principal part of $F_{\Lambda}$ is given by
\begin{equation}
\label{eqn:principal:part:F}
\begin{aligned}
{\mathcal P}_{\leq0}[F_{\Lambda}]:=
&
\{q^{-1}+2(16-r(\Lambda))\}\,{\bf e}_{0}
+
2^{g(\Lambda)+1}\{16-r(\Lambda)\}\,{\bf v}_{0}
\\
&
+2^{g(\Lambda)}q^{-\frac{1}{4}}{\bf v}_{3}
-
2^{16-r(\Lambda)}q^{\frac{12-r(\Lambda)}{4}}\{1+(28-r(\Lambda))q^{2}\}\,{\bf e}_{{\bf 1}_{\Lambda}}.
\end{aligned}
\end{equation}
By \eqref{eqn:principal:part:F}, we easily see that ${\mathcal P}_{\leq0}[F_{\Lambda}]=0$ if and only if $(r(\Lambda),\delta(\Lambda))=(16,0)$.
\par
Let $\ell\in{\bf Z}_{>0}$ be such that $2^{r(\Lambda)-16}|\ell$ for all $\Lambda$.
Then $\ell\,F_{\Lambda}(\tau)$ has integral Fourier expansion at $+i\infty$.
Define $\Psi_{\Lambda}^{\ell}$ as the Borcherds lift of $\ell\,F_{\Lambda}(\tau)$ (cf. \cite[Th.\,13.3]{Borcherds98}):
$$
\Psi_{\Lambda}^{\ell}:=\Psi_{\Lambda}(\cdot,\ell\,F_{\Lambda}).
$$
If $r(\Lambda)\leq16$, then $\Psi_{\Lambda}(\cdot,F_{\Lambda})$ is well defined and 
$\Psi_{\Lambda}^{\ell}=\Psi_{\Lambda}(\cdot,\ell\,F_{\Lambda})=\Psi_{\Lambda}(\cdot,F_{\Lambda})^{\ell}$ in the ordinary sense.
Since $O(\Lambda)$ (equivalently $O(q_{\Lambda})$) acts trivially on $\ell\,F_{\Lambda}$ by \cite[Th.\,7.7 (2)]{Yoshikawa13}, 
$\Psi_{\Lambda}^{\ell}$ is an automorphic form on $\Omega_{\Lambda}$ for $O^{+}(\Lambda)$ by \cite[Th.\,13.3]{Borcherds98}.
Recall that the divisors ${\mathcal D}_{\Lambda}^{+}$, ${\mathcal D}_{\Lambda}^{-}$ and ${\mathcal H}_{\Lambda}$ 
were introduced in Sections~\ref{sect:2.2} and \ref{sect:2.3}.

\begin{theorem}
\label{thm:Borcherds:lift}
The weight and the divisor of $\Psi_{\Lambda}^{\ell}$ are given as follows:
\begin{itemize}
\item[(1)]
If $r(\Lambda)\leq20$, then
$$
{\rm wt}(\Psi_{\Lambda}^{\ell})
=
\begin{cases}
\begin{array}{ll}
(16-r(\Lambda))(2^{g(\Lambda)}+1)\ell
&
(r(\Lambda)\not=12,20),
\\
(16-r(\Lambda))(2^{g(\Lambda)}+1)\ell-8(1-\delta(\Lambda))\ell
&
(r(\Lambda)=12),
\\
(16-r(\Lambda))(2^{g(\Lambda)}+1)\ell-(28-r(\Lambda))2^{15-r(\Lambda)}(1-\delta(\Lambda))\ell
&
(r(\Lambda)=20),
\end{array}
\end{cases}
$$
$$
{\rm div}(\Psi_{\Lambda}^{\ell})
=
\ell\,
\{
{\mathcal D}_{\Lambda}^{-}
+
(2^{g(\Lambda)}+1)\,{\mathcal D}_{\Lambda}^{+}
-
2^{16-r(\Lambda)}\,{\mathcal H}_{\Lambda}
\}.
$$
\item[(2)]
If $r(\Lambda)=21$, then
$$
{\rm wt}(\Psi_{\Lambda}^{\ell})
=
(16-r(\Lambda))(2^{g(\Lambda)}+1)\ell
=
-5^{3}\cdot 41\ell,
$$
$$
\begin{aligned}
{\rm div}(\Psi_{\Lambda}^{\ell})
&=
\ell\,
[
{\mathcal D}_{\Lambda}^{-}
+
(2^{g(\Lambda)}+1)\,{\mathcal D}_{\Lambda}^{+}
-
2^{16-r(\Lambda)}\{
{\mathcal H}_{\Lambda}
+
(28-r(\Lambda))\,{\mathcal D}_{\Lambda}^{+}\}
]
\\
&=
2^{-5}\ell\cdot
\{
32{\mathcal D}_{\Lambda}^{-}
+
3\cdot17\cdot643\cdot{\mathcal D}_{\Lambda}^{+}
-
{\mathcal H}_{\Lambda}
\}.
\end{aligned}
$$
\end{itemize}
\end{theorem}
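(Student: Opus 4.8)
The plan is to obtain both assertions by a direct application of Borcherds' theorem \cite[Th.\,13.3]{Borcherds98} to the vector-valued modular form $\ell\,F_{\Lambda}$, reading off the weight from its constant term and the divisor from its principal part \eqref{eqn:principal:part:F}. Since $\ell$ is chosen so that $\ell\,F_{\Lambda}$ has integral Fourier coefficients and $O(\Lambda)$ preserves $\ell\,F_{\Lambda}$ by \cite[Th.\,7.7\,(2)]{Yoshikawa13}, Borcherds' construction produces an automorphic form $\Psi_{\Lambda}^{\ell}$ for $O^{+}(\Lambda)$ whose weight equals $\ell\,c_{0}(0)/2$, where $c_{0}(0)$ is the constant term of the $\mathbf{e}_{0}$-component of $F_{\Lambda}$, and whose divisor is the weighted sum of the rational quadratic divisors $H_{\lambda}$, the multiplicity along a primitive $\lambda\in\Lambda^{\lor}$ being $\sum_{k\geq1}c_{k[\lambda]}(k^{2}\lambda^{2}/2)$.

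First I would compute the weight. From \eqref{eqn:principal:part:F} the constant term of the $\mathbf{e}_{0}$-component comes from $2(16-r(\Lambda))\mathbf{e}_{0}$ together with the $\mathbf{e}_{0}$-part of $2^{g(\Lambda)+1}(16-r(\Lambda))\mathbf{v}_{0}$, giving $c_{0}(0)=2(16-r(\Lambda))(2^{g(\Lambda)}+1)$ and hence ${\rm wt}(\Psi_{\Lambda}^{\ell})=(16-r(\Lambda))(2^{g(\Lambda)}+1)\ell$. The two exceptional values $r(\Lambda)=12,20$ arise because there the $q$-power $(12-r(\Lambda))/4$, respectively $(20-r(\Lambda))/4$, attached to $\mathbf{e}_{\mathbf{1}_{\Lambda}}$ becomes $0$; when $\delta(\Lambda)=0$ one has $\mathbf{1}_{\Lambda}=0$, so $\mathbf{e}_{\mathbf{1}_{\Lambda}}=\mathbf{e}_{0}$ and these terms contribute $-8$ and $-2^{16-r(\Lambda)}(28-r(\Lambda))$ to $c_{0}(0)$, producing the correction terms $-8(1-\delta(\Lambda))\ell$ and $-(28-r(\Lambda))2^{15-r(\Lambda)}(1-\delta(\Lambda))\ell$. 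When $\delta(\Lambda)=1$ one has $\mathbf{1}_{\Lambda}\neq0$ and no correction occurs.

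Next I would compute the divisor by sorting the negative-exponent terms of \eqref{eqn:principal:part:F} according to the decomposition $\Delta_{\Lambda}=\Delta_{\Lambda}^{+}\amalg\Delta_{\Lambda}^{-}$ of Section~\ref{sect:2.2}. The term $q^{-1}\mathbf{e}_{0}$ accounts for all roots $d\in\Delta_{\Lambda}$ (norm $-2$, trivial class). The term $2^{g(\Lambda)}q^{-1/4}\mathbf{v}_{3}$ accounts for the vectors $\mu\in\Lambda^{\lor}$ with $\mu^{2}=-1/2$; here I would use that $\Lambda$ is $2$-elementary, so $2\mu\in\Lambda$ is a root with $(2\mu)/2=\mu\in\Lambda^{\lor}$, i.e. $2\mu\in\Delta_{\Lambda}^{+}$, while conversely $q_{\Lambda}([\mu])\equiv3/2$ places $[\mu]$ in the support of $\mathbf{v}_{3}$. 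Hence the hyperplanes in this family are exactly the $H_{d}=H_{d/2}$ with $d\in\Delta_{\Lambda}^{+}$. Collecting multiplicities, a hyperplane $H_{d}$ with $d\in\Delta_{\Lambda}^{-}$ receives only $c_{0}(-1)=\ell$, whereas $H_{d}$ with $d\in\Delta_{\Lambda}^{+}$ receives $c_{0}(-1)+c_{[d/2]}(-1/4)=\ell(2^{g(\Lambda)}+1)$, yielding the part $\ell\,\mathcal{D}_{\Lambda}^{-}+\ell(2^{g(\Lambda)}+1)\mathcal{D}_{\Lambda}^{+}$. Finally the term $-2^{16-r(\Lambda)}q^{(12-r(\Lambda))/4}\mathbf{e}_{\mathbf{1}_{\Lambda}}$ contributes the vectors $\lambda\in\Lambda^{\lor}$ with $\lambda^{2}=\varepsilon_{\Lambda}$ and $[\lambda]=\mathbf{1}_{\Lambda}$, which by the definition in Section~\ref{sect:2.3} make up precisely $\mathcal{H}_{\Lambda}$ with multiplicity $-\ell\,2^{16-r(\Lambda)}$, establishing (1).

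For (2), the case $r(\Lambda)=21$, the additional principal-part term $-2^{16-r(\Lambda)}(28-r(\Lambda))q^{(20-r(\Lambda))/4}\mathbf{e}_{\mathbf{1}_{\Lambda}}$ now has a genuinely negative exponent $(20-r(\Lambda))/4=-1/4$ and contributes vectors of norm $-1/2$ and class $\mathbf{1}_{\Lambda}$; I would check from the classification of $\Lambda$ (here $M=\langle2\rangle$, $g=10$) that these are exactly the $d/2$ with $d\in\Delta_{\Lambda}^{+}$, so this term adds $-\ell\,2^{16-r(\Lambda)}(28-r(\Lambda))\mathcal{D}_{\Lambda}^{+}$, and the second displayed expression follows by substituting $r(\Lambda)=21$ and simplifying. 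The main obstacle will be the multiplicity bookkeeping: one must verify that the divisorial vectors are primitive in $\Lambda^{\lor}$ and that no higher multiple $k\lambda$ with $k\geq2$ contributes an unwanted extra term, that the three families $\mathcal{D}_{\Lambda}^{-}$, $\mathcal{D}_{\Lambda}^{+}$, $\mathcal{H}_{\Lambda}$ are counted without spurious coincidences, and---in the boundary cases $r(\Lambda)\in\{12,20,21\}$---that the terms attached to $\mathbf{e}_{\mathbf{1}_{\Lambda}}$ are correctly split between a contribution to the constant term and a contribution to the divisor, which is exactly where the interplay between $\varepsilon_{\Lambda}$, $\mathbf{1}_{\Lambda}$ and $\delta(\Lambda)$ must be handled with care.
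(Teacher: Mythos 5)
Your proposal is correct and follows exactly the route the paper takes: its entire proof of this theorem is the one-line observation that the result follows from the principal part \eqref{eqn:principal:part:F} together with \cite[Th.\,13.3]{Borcherds98}, and your detailed bookkeeping of the constant term (weight) and the negative-exponent terms sorted into ${\mathcal D}_{\Lambda}^{-}$, ${\mathcal D}_{\Lambda}^{+}$ and ${\mathcal H}_{\Lambda}$ is precisely what that one line leaves implicit. (Only a cosmetic slip: the $r(\Lambda)=12$ term contributes $-16$ to $c_{0}(0)$ and hence $-8$ to the weight, not $-8$ to $c_{0}(0)$; your final correction terms are nevertheless the right ones.)
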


\begin{pf}
By using \eqref{eqn:principal:part:F}, the result follows from \cite[Th.\,13.3]{Borcherds98}.
\end{pf}

The Petersson norm $\|\Psi_{\Lambda}^{\ell}\|=\|\Psi_{\Lambda}(\cdot,\ell\,F_{\Lambda})\|$ is an $O(\Lambda)$-invariant function on $\Omega_{\Lambda}$. 
We identify $\|\Psi_{\Lambda}^{\ell}\|$ with the corresponding function on ${\mathcal M}_{\Lambda}$ and set
$$
\|\Psi_{\Lambda}(\cdot,F_{\Lambda})\|
:=
\|\Psi(\cdot,\ell\,F_{\Lambda})\|^{1/\ell}.
$$
Then $\|\Psi_{\Lambda}(\cdot,F_{\Lambda})\|$ is independent of the choice of $\ell\in{\bf Z}_{>0}$ with $2^{r(\Lambda)-16}|\ell$.
If $r(\Lambda)\leq16$, then $\|\Psi_{\Lambda}(\cdot,F_{\Lambda})\|$ is the ordinary Petersson norm of $\Psi_{\Lambda}(\cdot,F_{\Lambda})$.

\subsection{The structure of $\Phi_{M}$: the case $\delta=1$}
\label{sect:8.2}
\par
Write $M_{g,k}$ for a primitive $2$-elementary Lorentzian sublattice of ${\Bbb L}_{K3}$ such that 
$$
(g(M_{g,k}),k(M_{g,k}),\delta(M_{g,k}))=(g,k,1).
$$
Then $M_{g,0}\cong{\Bbb A}_{1}^{+}\oplus{\Bbb A}_{1}^{\oplus10-g}$ and $(r,l,\delta)=(11-g,11-g,1)$ for $M_{g,0}$. Set
$$
\Lambda_{g,k}:=M_{g,k}^{\perp}.
$$

\begin{lemma}
\label{lemma:structure:lattice:g<2:r<10}
There exist mutually perpendicular roots $d_{1},\ldots,d_{k}\in\Delta_{\Lambda_{g,0}}^{+}$ with
$$
\Lambda_{g,0}=\Lambda_{g,k}\oplus{\bf Z}d_{1}\oplus\cdots\oplus{\bf Z}d_{k}.
$$
In particular, if $M_{g,k}$ exists, one has $\Omega_{\Lambda_{g,k}}=\Omega_{\Lambda_{g,k-1}}\cap H_{d_{k}}$.
\end{lemma}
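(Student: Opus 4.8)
The plan is to recognize the asserted orthogonal splitting as a consequence of Nikulin's classification, and then to deduce the geometric statement from the relation $H_{d}=\Omega_{\Lambda\cap d^{\perp}}$ of \eqref{eqn:root:hyperplane}.

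First I would record the following elementary equivalence for an even lattice $\Lambda$ and a root $d\in\Delta_{\Lambda}$: the cyclic lattice ${\bf Z}d\cong{\Bbb A}_{1}$ splits off as an orthogonal direct summand, i.e.\ $\Lambda={\bf Z}d\oplus(\Lambda\cap d^{\perp})$, precisely when $d/2\in\Lambda^{\lor}$, that is, when $d\in\Delta_{\Lambda}^{+}$. Indeed the orthogonal projection of $x\in\Lambda$ onto ${\bf Q}d$ equals $-\frac{1}{2}\langle x,d\rangle\,d$, and this lies in ${\bf Z}d$ for every $x\in\Lambda$ exactly when $\langle x,d\rangle\in 2{\bf Z}$ for all $x$. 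Thus exhibiting mutually perpendicular roots $d_{1},\dots,d_{k}\in\Delta_{\Lambda_{g,0}}^{+}$ with $\Lambda_{g,0}=\Lambda_{g,k}\oplus{\bf Z}d_{1}\oplus\cdots\oplus{\bf Z}d_{k}$ is the same as producing an orthogonal isometry $\Lambda_{g,0}\cong\Lambda_{g,k}\oplus{\Bbb A}_{1}^{\oplus k}$: the generators of the ${\Bbb A}_{1}$-summands are then roots with $d_{i}/2\in\Lambda_{g,0}^{\lor}$, hence lie in $\Delta^{+}$, and the splitting is manifestly orthogonal.

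I would then establish this isometry by comparing discriminant data. Since $M_{g,k}$ has invariants $(r,l,\delta)=(11-g+k,\,11-g-k,\,1)$, its orthogonal complement $\Lambda_{g,k}$ has signature $(2,9+g-k)$, $l=11-g-k$ and $\delta=1$. Hence $\Lambda_{g,k}\oplus{\Bbb A}_{1}^{\oplus k}$ is an indefinite even $2$-elementary lattice of signature $(2,9+g)$ with $l=(11-g-k)+k=11-g$ and $\delta=1$ (the summand ${\Bbb A}_{1}$ being odd). These are exactly the invariants of $\Lambda_{g,0}=M_{g,0}^{\perp}$. Because $M_{g,k}$ is assumed to exist, the lattice $\Lambda_{g,k}\oplus{\Bbb A}_{1}^{\oplus k}$ is defined, and Nikulin's uniqueness theorem \cite[Th.\,3.6.2]{Nikulin80a} yields $\Lambda_{g,0}\cong\Lambda_{g,k}\oplus{\Bbb A}_{1}^{\oplus k}$, providing the required roots $d_{1},\dots,d_{k}$.

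For the final assertion I would set $\Lambda^{[j]}:=\Lambda_{g,0}\cap d_{1}^{\perp}\cap\cdots\cap d_{j}^{\perp}$ and observe, from the orthogonality of the $d_{i}$, that $\Lambda^{[j]}=\Lambda_{g,k}\oplus\bigoplus_{i>j}{\bf Z}d_{i}\cong\Lambda_{g,k}\oplus{\Bbb A}_{1}^{\oplus(k-j)}$; a repetition of the invariant computation shows $\Lambda^{[j]}$ has the invariants of $\Lambda_{g,j}$, so that $\Lambda^{[k-1]}=\Lambda_{g,k-1}$ and $\Lambda_{g,k}=\Lambda_{g,k-1}\cap d_{k}^{\perp}$. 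Applying \eqref{eqn:root:hyperplane} to the root $d_{k}\in\Delta_{\Lambda_{g,k-1}}$ then gives $\Omega_{\Lambda_{g,k}}=\Omega_{\Lambda_{g,k-1}\cap d_{k}^{\perp}}=H_{d_{k}}$ inside $\Omega_{\Lambda_{g,k-1}}$, i.e.\ $\Omega_{\Lambda_{g,k}}=\Omega_{\Lambda_{g,k-1}}\cap H_{d_{k}}$. The argument is essentially bookkeeping; the only point demanding care, and the reason the hypothesis ``$M_{g,k}$ exists'' appears, is that the target lattice $\Lambda_{g,k}\oplus{\Bbb A}_{1}^{\oplus k}$ must be known to exist before Nikulin's uniqueness can be invoked, and one must check that the split copies of ${\Bbb A}_{1}$ are of type $+$ rather than $-$, which is exactly the equivalence established in the first step.
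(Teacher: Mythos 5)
Your argument is correct and is in substance the same as the paper's: the paper's proof simply reads the splitting $\Lambda_{g,0}\cong\Lambda_{g,k}\oplus{\Bbb A}_{1}^{\oplus k}$ off the classification table of Proposition~\ref{prop:classification:sublattice:sign(2,r-2):K3}, which rests on exactly the Nikulin uniqueness-by-$(r,l,\delta)$ argument you invoke directly. You merely make explicit two points the paper leaves implicit, namely that the generators of the split ${\Bbb A}_{1}$-summands lie in $\Delta^{+}$ and the bookkeeping behind the ``in particular'' clause via \eqref{eqn:root:hyperplane}; both are handled correctly.
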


\begin{pf}
The result follows from the classification in Table~\ref{table:list:sublattice:sign(2,r-2):K3} in 
Proposition~\ref{prop:classification:sublattice:sign(2,r-2):K3}.
\end{pf}

By Theorem~\ref{thetanull=Heegner}, 
there exist integers $a_{g},b_{g},c_{g}\in{\bf Z}_{\geq0}$ for $3\leq g\leq10$ with
\begin{equation}
\label{eqn:divisor:pullback:Igusa:-1}
{\rm div}(J_{M_{g,0}}^{*}\chi_{g}^{8})
=
a_{g}\,{\mathcal D}_{\Lambda_{g,0}}^{-}
+
b_{g}\,{\mathcal H}_{\Lambda_{g,0}}
+
c_{g}\,{\mathcal D}_{\Lambda_{g,0}}^{+}.
\end{equation}

\begin{lemma}
\label{eqn:c:(g,0):g<10}
The following inequalities and equality hold:
$$
a_{g}>0,
\qquad
b_{g}>0
\quad
(3\leq g\leq10),
\qquad
c_{g}=0
\qquad
(3\leq g\leq9).
$$
\end{lemma}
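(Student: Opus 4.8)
The plan is to prove the three assertions by localizing the section $J_{M_{g,0}}^{*}\chi_{g}^{8}$ along the three divisors occurring in \eqref{eqn:divisor:pullback:Igusa:-1}. Throughout, write $\Lambda=\Lambda_{g,0}$ and regard $J_{M_{g,0}}^{*}\chi_{g}^{8}$ as a holomorphic section of $\lambda_{M_{g,0}}^{q}$ on $\Omega_{\Lambda}^{0}\cup{\mathcal D}_{\Lambda}^{0}\cup{\mathcal D}_{\Lambda}^{1,+}$, the locus where $J_{M_{g,0}}$ is defined by Theorem~\ref{thm:extension:Torelli:map}.

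The bound $b_{g}>0$ is the most direct and follows immediately from Theorem~\ref{thetanull=Heegner}: since $\chi_{g}$ cuts out ${\frak M}_{g}'$ and $\overline{\mu}_{\Lambda}^{-1}({\frak M}_{g}')=\overline{\mathcal H}_{\Lambda}\cap{\mathcal M}_{\Lambda}^{0}$ is a nonempty reduced divisor, the section $J_{M_{g,0}}^{*}\chi_{g}^{8}$ vanishes along $\overline{\mathcal H}_{\Lambda}$, so its order $b_{g}$ is at least one.

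For $a_{g}>0$ I would localize along a generic component $H_{d}$ of ${\mathcal D}_{\Lambda}^{-}$, with $d\in\Delta_{\Lambda}^{-}$. By \cite[Th.\,2.5]{Yoshikawa13} we have $J_{M_{g,0}}|_{H_{d}^{0}}=J_{[M_{g,0}\perp d]}$, and a short lattice computation (the sublattice ${\bf Z}d\oplus(\Lambda\cap d^{\perp})$ has index two in $\Lambda$ for $d\in\Delta_{\Lambda}^{-}$, which raises $l$ by one) gives $g([M_{g,0}\perp d])=g-1$. Hence $J_{M_{g,0}}$ carries $H_{d}^{0}$ into ${\mathcal A}_{g-1}$, a stratum of the Satake boundary of ${\mathcal A}_{g}^{*}$. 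Since $\chi_{g}^{8}$ is a cusp form, it vanishes identically on ${\mathcal A}_{g-1}$, so $J_{M_{g,0}}^{*}\chi_{g}^{8}$ vanishes at every point of $H_{d}^{0}$, forcing $a_{g}\geq1$. The sharp estimate $a_{g}\geq2^{2g-1}$ advertised in the strategy would then be obtained by running the degeneration analysis of Lemma~\ref{lemma:estimate:zero:Upsilon_g:ordinary:singular:family} for $\chi_{g}$ in place of $\Upsilon_{g}$, but only the crude positivity is needed here.

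Finally, $c_{g}=0$ for $3\leq g\leq9$ is the delicate point. Localizing now along a generic component $H_{d}$ of ${\mathcal D}_{\Lambda}^{+}$, with $d\in\Delta_{\Lambda}^{+}$, the functoriality of Theorem~\ref{thm:functoriality:Torelli:map} gives $J_{M_{g,0}}|_{H_{d}^{0}}=J_{[M_{g,0}\perp d]}$; here the orthogonal splitting $\Lambda=(\Lambda\cap d^{\perp})\oplus{\bf Z}d$ preserves $g$ and raises $k$ by one, so $[M_{g,0}\perp d]=M_{g,1}$. Thus the restriction of $J_{M_{g,0}}^{*}\chi_{g}^{8}$ to $H_{d}^{0}$ is $J_{M_{g,1}}^{*}\chi_{g}^{8}$, and $c_{g}=0$ becomes equivalent to $J_{M_{g,1}}^{*}\chi_{g}^{8}\not\equiv0$, i.e. to $\overline{\mu}_{\Lambda_{g,1}}({\mathcal M}_{\Lambda_{g,1}}^{0})\not\subset{\frak M}_{g}'$. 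For $6\leq g\leq9$ this is exactly Proposition~\ref{thetanull k=1}. The hard part will be the residual range $3\leq g\leq5$, which Proposition~\ref{thetanull k=1} does not reach: there I would argue directly, using the explicit models of the genus-$g$ fixed curves of type $M_{g,1}$ (as in Section~\ref{sect:6} and \cite{Ma}), that a general such curve is non-hyperelliptic when $g=3$ and has no vanishing theta-null when $g=4,5$, and hence avoids ${\frak M}_{g}'$. A subsidiary point, to be settled by the Eichler criterion together with Table~\ref{table:list:sublattice:sign(2,r-2):K3}, is that for $3\leq g\leq9$ every relevant component of ${\mathcal D}_{\Lambda}^{+}$ reduces to $M_{g,1}$ with $\delta=1$, so that a single generic $H_{d}$ suffices to detect $c_{g}$.
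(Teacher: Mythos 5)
Your treatment of $b_{g}>0$ is exactly the paper's (non-emptiness of $\overline{\mu}_{\Lambda}^{-1}({\frak M}_{g}')$ from Theorem~\ref{thetanull=Heegner}), and your treatment of $c_{g}=0$ for $6\leq g\leq9$ via Theorem~\ref{thm:functoriality:Torelli:map} and Proposition~\ref{thetanull k=1} is also the paper's argument. For $a_{g}>0$ the paper simply invokes \cite[Prop.\,4.2 (2)]{Yoshikawa13} (which in fact yields the sharp bound $a_{g}\geq 2^{2g-1}$ used later in Proposition~\ref{prop:value:a_{g,1}}); your self-contained alternative --- restrict to $H_{d}^{0}$ for $d\in\Delta_{\Lambda}^{-}$, note that $g([M_{g,0}\perp d])=g-1$ so the extended Torelli map sends $H_{d}^{0}$ into the Satake boundary stratum ${\mathcal A}_{g-1}$, and use that $\chi_{g}^{8}$ is a cusp form --- is sound and buys you positivity without the external reference.

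The genuine gap is $c_{g}=0$ for $3\leq g\leq5$. You correctly identify that Proposition~\ref{thetanull k=1} does not cover this range, but your substitute --- ``argue directly, using the explicit models of the genus-$g$ fixed curves of type $M_{g,1}$'' --- is an unexecuted plan, and it is not clear that such explicit models are available for $k=1$ and small $g$ (the descriptions in \cite{Ma} quoted in Section~\ref{ssec:k=1} are for $6\leq g\leq9$). The paper avoids any new geometry here: since ${\mathcal D}_{\Lambda_{g,k}}^{0,+}$ maps onto $J_{M_{g,k+1}}(\Omega_{\Lambda_{g,k+1}}^{0})$ by Theorem~\ref{thm:functoriality:Torelli:map} and Lemma~\ref{lemma:structure:lattice:g<2:r<10}, one can iterate your ``restrict to a generic $H_{d}$, $d\in\Delta^{+}$'' step all the way up to $k=g$, where $r(M_{g,g})=11>10$ and $J_{M_{g,g}}(\Omega_{\Lambda_{g,g}}^{0})\not\subset\theta_{{\rm null},g}$ is already known from \cite[Prop.\,4.2 (1)]{Yoshikawa13}; descending induction on $k$ then gives $J_{M_{g,k}}({\mathcal D}_{\Lambda_{g,k}}^{0,+})\not\subset\theta_{{\rm null},g}$ for all $k\leq g-1$, and in particular $c_{g}=0$. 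You should either carry out this induction (all the ingredients are in your write-up already) or actually supply the geometric analysis of $M_{g,1}$ for $g=3,4,5$; as it stands that case is asserted, not proved.
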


\begin{pf}
We get $a_{g}>0$ by \cite[Prop.\,4.2 (2)]{Yoshikawa13} and $b_{g}>0$ by Theorem~\ref{thetanull=Heegner}.
Let $6\leq g\leq9$.
Recall that the Zariski open subset ${\mathcal D}_{\Lambda}^{0,+}$ (resp. ${\mathcal D}_{\Lambda}^{0,-}$) of ${\mathcal D}_{\Lambda}^{+}$
(resp. ${\mathcal D}_{\Lambda}^{-}$) was defined in Section~\ref{sect:2.2}.
By \eqref{eqn:functoriality:Torelli:map} and Proposition~\ref{thetanull k=1}, we get
$J_{M_{g,0}}({\mathcal D}_{\Lambda_{g,0}}^{0,+})=J_{M_{g,1}}(\Omega_{\Lambda_{g,1}}^{0})\not\subset\theta_{{\rm null},g}$,
which implies $c_{g}=0$ for $6\leq g\leq9$.
Let $1\leq g\leq 5$.
By \cite[Prop.\,4.2 (1)]{Yoshikawa13}, $J_{M_{g,g}}(\Omega_{\Lambda_{g,g}}^{0})\not\subset\theta_{{\rm null},g}$.
Since $J_{M_{g,g-1}}({\mathcal D}_{\Lambda_{g,g-1}}^{0,+})=J_{M_{g,g}}(\Omega_{\Lambda_{g,g}}^{0})$
by \eqref{eqn:functoriality:Torelli:map} and Lemma~\ref{lemma:structure:lattice:g<2:r<10}, 
we get $J_{M_{g,g-1}}(\Omega_{\Lambda_{g,g-1}}^{0})\not\subset\theta_{{\rm null},g}$ 
because ${\mathcal D}_{\Lambda_{g,g-1}}^{0,+}\subset\Omega_{\Lambda_{g,g-1}}$.
By \eqref{eqn:functoriality:Torelli:map} and Lemma~\ref{lemma:structure:lattice:g<2:r<10} again, we get
$J_{M_{g,g-2}}({\mathcal D}_{\Lambda_{g,g-2}}^{0,+})=J_{M_{g,g-1}}(\Omega_{\Lambda_{g,g-1}}^{0})\not\subset\theta_{{\rm null},g}$.
In the same way, we get inductively $J_{M_{g,k}}({\mathcal D}_{\Lambda_{g,k}}^{0,+})\not\subset\theta_{{\rm null},g}$ for $k\leq g-1$.
This proves $c_{g}=0$ for $3\leq g\leq5$.
\end{pf}

\begin{proposition}
\label{prop:divisor:pull:back:Igusa}
If $3\leq g\leq10$, the following equality of divisors on $\Omega_{\Lambda_{g,k}}$ holds:
\begin{equation}
\label{eqn:divisor:pullback:Igusa:1}
{\rm div}(J_{M_{g,k}}^{*}\chi_{g}^{8})
=
a_{g}\,{\mathcal D}_{\Lambda_{g,k}}^{-}
+
2^{k}b_{g}\,{\mathcal H}_{\Lambda_{g,k}}
+
c_{g}\,{\mathcal D}_{\Lambda_{g,k}}^{+}.
\end{equation}
\end{proposition}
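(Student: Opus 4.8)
The plan is to prove \eqref{eqn:divisor:pullback:Igusa:1} by induction on $k$, the base case $k=0$ being exactly \eqref{eqn:divisor:pullback:Igusa:-1}. First I would record one simplification: for $g=10$ and $\delta=1$ the only lattice in Table~\ref{table:list:sublattice:sign(2,r-2):K3} has $k(M)=0$, so for $g=10$ there is nothing to prove beyond the base case. Hence in the inductive step I may assume $3\leq g\leq9$, for which $c_{g}=0$ by Lemma~\ref{eqn:c:(g,0):g<10}; the identity to be proved then reads ${\rm div}(J_{M_{g,k}}^{*}\chi_{g}^{8})=a_{g}{\mathcal D}_{\Lambda_{g,k}}^{-}+2^{k}b_{g}{\mathcal H}_{\Lambda_{g,k}}$.

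For the step from $k-1$ to $k$, Lemma~\ref{lemma:structure:lattice:g<2:r<10} supplies a root $d_{k}\in\Delta_{\Lambda_{g,k-1}}^{+}$ with $\Lambda_{g,k-1}=\Lambda_{g,k}\oplus{\bf Z}d_{k}$ and $[M_{g,k-1}\perp d_{k}]=M_{g,k}$, giving the inclusion $i\colon\Omega_{\Lambda_{g,k}}=H_{d_{k}}\hookrightarrow\Omega_{\Lambda_{g,k-1}}$. By Theorem~\ref{thm:functoriality:Torelli:map} together with Lemma~\ref{lemma:stratification:discriminant:locus}, the maps $J_{M_{g,k-1}}\circ i$ and $J_{M_{g,k}}$ coincide on $\Omega_{\Lambda_{g,k}}^{0}\cup{\mathcal D}_{\Lambda_{g,k}}^{0}$, whose complement in the normal variety $\Omega_{\Lambda_{g,k}}$ has codimension $\geq2$. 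Consequently the restricted section $(J_{M_{g,k-1}}^{*}\chi_{g}^{8})|_{H_{d_{k}}}$ and $J_{M_{g,k}}^{*}\chi_{g}^{8}$ have the same divisor, so that
\begin{equation*}
{\rm div}(J_{M_{g,k}}^{*}\chi_{g}^{8})=i^{*}\,{\rm div}(J_{M_{g,k-1}}^{*}\chi_{g}^{8}).
\end{equation*}
I would then evaluate the right-hand side from the induction hypothesis ${\rm div}(J_{M_{g,k-1}}^{*}\chi_{g}^{8})=a_{g}{\mathcal D}_{\Lambda_{g,k-1}}^{-}+2^{k-1}b_{g}{\mathcal H}_{\Lambda_{g,k-1}}$, using the pullback relations $i^{*}{\mathcal D}_{\Lambda_{g,k-1}}^{-}={\mathcal D}_{\Lambda_{g,k}}^{-}$ of Proposition~\ref{prop:quasi:pull:back:discriminant:divisor} and $i^{*}{\mathcal H}_{\Lambda_{g,k-1}}=2{\mathcal H}_{\Lambda_{g,k}}$ of Proposition~\ref{prop:quasi:pull:back:characteristic:divisor}, which yield $a_{g}{\mathcal D}_{\Lambda_{g,k}}^{-}+2^{k}b_{g}{\mathcal H}_{\Lambda_{g,k}}$.

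The step that genuinely requires care, and which I expect to be the main obstacle, is the legitimacy of the restriction: for $i^{*}$ to act as honest intersection one needs $H_{d_{k}}$ to lie outside the support of ${\rm div}(J_{M_{g,k-1}}^{*}\chi_{g}^{8})$, equivalently $J_{M_{g,k}}^{*}\chi_{g}^{8}\not\equiv0$. Since $d_{k}\in\Delta^{+}$, $H_{d_{k}}$ is not a component of ${\mathcal D}_{\Lambda_{g,k-1}}^{-}$, and because $c_{g}=0$ it does not appear through ${\mathcal D}_{\Lambda_{g,k-1}}^{+}$ either. The only remaining danger is that $H_{d_{k}}$ equal a component $H_{\lambda}$ of the characteristic Heegner divisor; writing $\lambda=c\,d_{k}$ and using $\Lambda_{g,k-1}=\Lambda_{g,k}\oplus{\bf Z}d_{k}$, the condition $[\lambda]={\bf 1}_{\Lambda_{g,k-1}}={\bf 1}_{\Lambda_{g,k}}+[d_{k}/2]$ would force ${\bf 1}_{\Lambda_{g,k}}=0$, i.e.\ $\delta(\Lambda_{g,k})=0$, contradicting $\delta(\Lambda_{g,k})=\delta(M_{g,k})=1$. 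Thus $H_{d_{k}}$ is disjoint from the support, the restriction is valid, and the induction closes. It is worth emphasizing that this is exactly the point at which the hypothesis $\delta=1$ enters, both via $c_{g}=0$ and via ${\bf 1}_{\Lambda_{g,k}}\neq0$.
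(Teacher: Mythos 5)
Your proof is correct and takes essentially the same route as the paper: induction on $k$ via the splitting $\Lambda_{g,k-1}=\Lambda_{g,k}\oplus{\bf Z}d_{k}$ from Lemma~\ref{lemma:structure:lattice:g<2:r<10}, the functoriality of the Torelli map (Theorem~\ref{thm:functoriality:Torelli:map}), and the pullback formulas of Propositions~\ref{prop:quasi:pull:back:discriminant:divisor} and \ref{prop:quasi:pull:back:characteristic:divisor}. The one point you treat explicitly that the paper leaves implicit is the legitimacy of the restriction (i.e.\ that $H_{d_{k}}$ is not a component of the divisor, so $J_{M_{g,k}}^{*}\chi_{g}^{8}\not\equiv0$); in the paper this is guaranteed by the non-inclusions $J_{M_{g,k}}(\Omega_{\Lambda_{g,k}}^{0})\not\subset\theta_{{\rm null},g}$ established in the proof of Lemma~\ref{eqn:c:(g,0):g<10}, but your direct lattice-theoretic argument using ${\bf 1}_{\Lambda_{g,k}}\neq0$ is also valid.
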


\begin{pf}
When $g=10$, $M_{g,k}$ makes sense only for $k=0$ by Proposition~\ref{prop:classification:sublattice:sign(2,r-2):K3} .
Hence the assertion is obvious by \eqref{eqn:divisor:pullback:Igusa:-1} when $g=10$. 
We must prove \eqref{eqn:divisor:pullback:Igusa:1} when $g\leq9$.
We will prove by induction the existence of integers $a_{g,k},b_{g,k}\in{\bf Z}$ with
\begin{equation}
\label{eqn:divisor:pullback:Igusa:3}
{\rm div}(J_{M_{g,k}}^{*}\chi_{g}^{8})
=
a_{g,k}\,{\mathcal D}_{\Lambda_{g,k}}^{-}
+
b_{g,k}\,{\mathcal H}_{\Lambda_{g,k}}.
\end{equation}

When $k=0$, the assertion follows from \eqref{eqn:divisor:pullback:Igusa:-1} and Lemma~\ref{eqn:c:(g,0):g<10}. 
We assume \eqref{eqn:divisor:pullback:Igusa:3} for $M_{g,k}$.
By Lemma~\ref{lemma:structure:lattice:g<2:r<10}, there exists $d\in\Delta^{+}_{\Lambda_{g,k}}$ such that $\Lambda_{g,k}=\Lambda_{g,k+1}\oplus{\bf Z}d$.
Then $\Omega_{\Lambda_{g,k+1}}=H_{d}\cap\Omega_{\Lambda_{g,k}}$. 
Let $i\colon\Omega_{\Lambda_{g,k+1}}=H_{d}\cap\Omega_{\Lambda_{g,k}}\hookrightarrow\Omega_{\Lambda_{g,k}}$ be the inclusion.
By the compatibility of Torelli maps \eqref{eqn:functoriality:Torelli:map}, we get the equality of holomorphic maps
\begin{equation}
\label{eqn:functoriality:Torelli:map:sect:8}
J_{M_{g,k+1}}|_{\Omega_{\Lambda_{g,k+1}}^{0}\cup{\mathcal D}_{\Lambda_{g,k+1}}^{0}}
=
J_{M_{g,k}}\circ i|_{\Omega_{\Lambda_{g,k+1}}^{0}\cup{\mathcal D}_{\Lambda_{g,k+1}}^{0}}.
\end{equation}
Hence we get the equality of holomorphic sections on $\Omega_{\Lambda_{g,k+1}}^{0}\cup{\mathcal D}_{\Lambda_{g,k+1}}^{0}$
\begin{equation}
\label{eqn:functoriality:Igusa:cusp:form}
J_{M_{g,k+1}}^{*}\chi_{g}^{8}
=
(J_{M_{g,k}}\circ i)^{*}\chi_{g}^{8}
=
i^{*}J_{M_{g,k}}^{*}\chi_{g}^{8}.
\end{equation}
Since $\Omega_{\Lambda_{g,k+1}}\setminus(\Omega_{\Lambda_{g,k+1}}^{0}\cup{\mathcal D}_{\Lambda_{g,k+1}}^{0})$ has codimension $\geq2$
in $\Omega_{\Lambda_{g,k+1}}$, we deduce from \eqref{eqn:divisor:pullback:Igusa:3} and \eqref{eqn:functoriality:Igusa:cusp:form} 
the equation of divisors on $\Omega_{\Lambda_{g,k+1}}$
\begin{equation}
\label{eqn:divisor:pullback:Igusa:4}
\begin{aligned}
{\rm div}(J_{M_{g,k+1}}^{*}\chi_{g}^{8})
&=
{\rm div}(i^{*}J_{M_{g,k}}^{*}\chi_{g}^{8})
=
i^{*}{\rm div}(J_{M_{g,k}}^{*}\chi_{g}^{8})
=
i^{*}(
a_{g,k}{\mathcal D}_{\Lambda_{g,k}}^{-}
+
b_{g,k}{\mathcal H}_{\Lambda_{g,k}}
)
\\
&=
a_{g,k}{\mathcal D}_{\Lambda_{g,k+1}}^{-}
+
2b_{g,k}{\mathcal H}_{\Lambda_{g,k+1}}.
\end{aligned}
\end{equation}
Here the last equality follows from Propositions~\ref{prop:quasi:pull:back:discriminant:divisor} and \ref{prop:quasi:pull:back:characteristic:divisor}.
This proves \eqref{eqn:divisor:pullback:Igusa:3} for $M_{g,k+1}$. By induction, \eqref{eqn:divisor:pullback:Igusa:3} holds for all $M_{g,k}$.

Since $a_{g,k+1}=a_{g,k}$ and $b_{g,k+1}=2b_{g,k}$ for $k\geq0$ by \eqref{eqn:divisor:pullback:Igusa:3}, \eqref{eqn:divisor:pullback:Igusa:4}, 
we get
\begin{equation}
\label{eqn:recursion:relation}
a_{g,k}=a_{g},
\qquad
b_{g,k}=2^{k}b_{g}
\end{equation}
for $g\leq9$ and $k\geq0$.
This proves the result.
\end{pf}

\begin{proposition}
\label{prop:value:a_{g,1}}
If $3\leq g\leq10$, then the following inequalities hold:
$$
a_{g}\geq2^{2g-1},
\qquad
b_{g}\geq2^{4}.
$$
\end{proposition}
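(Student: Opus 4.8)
The plan is to read off $a_{g}$ and $b_{g}$ from \eqref{eqn:divisor:pullback:Igusa:-1} by computing the multiplicity of ${\rm div}(J_{M_{g,0}}^{*}\chi_{g}^{8})$ along a generic component of ${\mathcal D}_{\Lambda_{g,0}}^{-}$ and of ${\mathcal H}_{\Lambda_{g,0}}$, respectively. Since $\chi_{g}^{8}=\prod_{(a,b)\,{\rm even}}\theta_{a,b}^{8}$ and $J_{M_{g,0}}=\overline{J}_{M_{g,0}}\circ\varPi_{\Lambda}$, in each case I would restrict to a transverse holomorphic disc, write $J_{M_{g,0}}^{*}\chi_{g}^{8}$ as a product of pulled-back even theta constants, and count how many vanish and to what order, tracking the factor contributed by the ramified quotient $\varPi_{\Lambda}\colon\Omega_{\Lambda_{g,0}}\to{\mathcal M}_{\Lambda_{g,0}}$.

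For $a_{g}$, fix a generic point of $H_{d}$ with $d\in\Delta_{\Lambda_{g,0}}^{-}$ and a transverse coordinate $s$ on $\Omega_{\Lambda_{g,0}}$. Because the reflection $s_{d}\in O^{+}(\Lambda_{g,0})$ fixes $H_{d}$ and acts by $-1$ on the normal direction, the induced coordinate on ${\mathcal M}_{\Lambda_{g,0}}$ is $v=s^{2}$, exactly as in Lemma~\ref{lemma:estimate:alpha:U(2)}. Over this disc the fixed curves form, by \cite[Th.\,2.3]{Yoshikawa13}, an ordinary singular family degenerating to an irreducible one-nodal curve whose normalization $\widehat{C}_{0}$ has genus $g-1$. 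Running the computation of Lemma~\ref{lemma:estimate:zero:Upsilon_g:ordinary:singular:family} for each even theta constant separately, rather than for $\Upsilon_{g}$, the $2^{2g-2}$ even characteristics $(a,b)$ with $a_{1}=1/2$ satisfy $\theta_{a,b}\sim v^{1/8}\cdot(\text{holomorphic})$, so each factor $\theta_{a,b}^{8}$ contributes order $\geq1$ in $v$, while the remaining factors contribute order $\geq0$. Hence ${\rm ord}_{v}(\overline{J}_{M_{g,0}}^{*}\chi_{g}^{8})\geq2^{2g-2}$, and the passage $v=s^{2}$ doubles the order, giving $a_{g}={\rm ord}_{s}(J_{M_{g,0}}^{*}\chi_{g}^{8})\geq2\cdot2^{2g-2}=2^{2g-1}$. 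For a generic slice $\chi_{g-1}(\widehat{C}_{0})\neq0$ and equality holds, but only the lower bound is needed.

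For $b_{g}$, Theorem~\ref{thetanull=Heegner} sends a generic point of ${\mathcal H}_{\Lambda_{g,0}}$ into ${\frak M}_{g}'$, and by Remark~\ref{remark:uniqueness theta chara k=0} together with Lemma~\ref{lemma:locus:vanishing:two:theta:char} the corresponding curve carries a \emph{unique} effective even theta characteristic, namely $E|_{C}$ coming from the elliptic curve $E$ of Proposition~\ref{prop:elliptic:curve:theta:char}; in particular $\Upsilon_{g}\neq0$ there, so exactly one even theta constant $\theta_{a,b}$ vanishes along ${\mathcal H}_{\Lambda_{g,0}}$ and $b_{g}=8\,{\rm ord}_{{\mathcal H}_{\Lambda_{g,0}}}(J_{M_{g,0}}^{*}\theta_{a,b})$. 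As ${\mathcal H}_{\Lambda_{g,0}}$ is cut out by non-root vectors it is not in the ramification locus of $\varPi_{\Lambda}$, so no extra factor of $2$ enters and the order is that of a transverse disc of smooth curves crossing ${\frak M}_{g}'$. The key input is the multiplicity of the theta-null along the thetanull divisor of the moduli of curves: by Teixidor i Bigas \cite{TeixidoriBigas88} an even theta-null vanishes along ${\frak M}_{g}'$ to order $2$. Therefore ${\rm ord}_{{\mathcal H}_{\Lambda_{g,0}}}(J_{M_{g,0}}^{*}\theta_{a,b})\geq2$ and $b_{g}\geq8\cdot2=2^{4}$.

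The main obstacle is the $b_{g}$ estimate, which rests on the precise order-$2$ vanishing of the theta-null along ${\frak M}_{g}'$ from \cite{TeixidoriBigas88} and on checking that $J_{M_{g,0}}$ meets ${\frak M}_{g}'$ with this order preserved, i.e.\ that the transverse disc is not absorbed into a deeper stratum and that ${\mathcal H}_{\Lambda_{g,0}}$ is genuinely unramified over ${\mathcal M}_{\Lambda_{g,0}}$. By contrast the $a_{g}$ estimate is a direct adaptation of the degeneration analysis already available in \cite{Yoshikawa13} and in Lemmas~\ref{lemma:estimate:zero:Upsilon_g:ordinary:singular:family}--\ref{lemma:estimate:alpha:U(2)}, the only new bookkeeping being the passage from $\Upsilon_{g}$ to the full product $\chi_{g}^{8}$ and the factor $2$ from the ramified quotient.
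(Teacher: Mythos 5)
Your proof is correct and follows essentially the same route as the paper: the bound $a_{g}\geq2^{2g-1}$ comes from the degeneration analysis of the $2^{2(g-1)}$ even theta constants with $a_{1}=1/2$ along a transverse disc to ${\mathcal D}_{\Lambda_{g,0}}^{-}$ (the paper simply cites \cite[Prop.\,4.2 (2)]{Yoshikawa13} for this, noting that Theorem~\ref{thetanull=Heegner} supplies the needed hypothesis $J_{M_{g,0}}(\Omega_{\Lambda_{g,0}}^{0})\not\subset\theta_{{\rm null},g}$), and the bound $b_{g}\geq2^{4}$ rests, exactly as in the paper, on Teixidor i Bigas's order-$2$ vanishing \eqref{eqn:divisor:thetanull:TeixidoriBigas} pulled back through the Torelli map with integral multiplicity. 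The only quibble is your assertion that ${\mathcal H}_{\Lambda_{g,0}}$ lies outside the ramification locus of $\varPi_{\Lambda}$: this is not justified (for $g=3$ one has $\varepsilon_{\Lambda}=-1$ and the reflection in a defining vector $\lambda$ is integral, hence lies in $O(\Lambda)$), but it is also unnecessary, since any such ramification could only increase the order of vanishing and you only claim a lower bound.
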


\begin{pf}
The inequality $a_{g}\geq2^{2g-1}$ follows from \cite[Prop.\,4.2 (2)]{Yoshikawa13}.
(To confirm $J_{M}(\Omega_{\Lambda}^{0})\not\subset\theta_{{\rm null},g}$, it is assumed either $r>10$ or $(r,\delta)=(10,1)$ there.
Since the same proof of \cite[Prop.\,4.2 (2)]{Yoshikawa13} works under the same assumption $J_{M}(\Omega_{\Lambda}^{0})\not\subset\theta_{{\rm null},g}$
and since $J_{M_{g,0}}(\Omega_{\Lambda_{g,0}}^{0})\not\subset\theta_{{\rm null},g}$ for $3\leq g\leq10$ by Theorem~\ref{thetanull=Heegner},
the same conclusion as in \cite[Prop.\,4.2 (2)]{Yoshikawa13} still holds for $M_{g,0}$.)
Let us prove the second inequality.
\par
Let $j\colon{\frak M}_{g}\ni C\to{\rm Jac}(C)\in{\mathcal A}_{g}$ be the Torelli map.
By \cite[Prop.\,3.1]{TeixidoriBigas88} or \cite[p.542 Proof of Th.\,1]{Tsuyumine91}, we get the equality of divisors on ${\frak M}_{g}$
\begin{equation}
\label{eqn:divisor:thetanull:TeixidoriBigas}
{\rm div}(j^{*}\chi_{g})|_{{\frak M}_{g}}
=
2\,{\frak M}'_{g}.
\end{equation}
\par
Set $\Lambda:=\Lambda_{g,0}$ and ${\mu}_{\Lambda}^{0}:=\overline{\mu}_{\Lambda}\circ\varPi_{\Lambda}|_{\Omega_{\Lambda}^{0}}$,
where $\varPi_{\Lambda}\colon\Omega_{\Lambda}\to{\mathcal M}_{\Lambda}$ is the projection.
Since $J_{M_{g,0}}|_{\Omega_{\Lambda}^{0}}=j\circ{\mu}_{\Lambda}^{0}$, 
we get by \eqref{eqn:divisor:thetanull:TeixidoriBigas} the following equality of divisors on $\Omega_{\Lambda}^{0}$
\begin{equation}
\label{eqn:divisor:pullback:Igusa:5}
{\rm div}(J_{M_{g,0}}^{*}\chi_{g}^{8})|_{\Omega_{\Lambda}^{0}}
=
({\mu}_{\Lambda}^{0})^{*}{\rm div}(j^{*}\chi_{g}^{8})|_{{\frak M}_{g}}
=
2^{4}({\mu}_{\Lambda}^{0})^{*}{\frak M}'_{g}.
\end{equation}
Since ${\rm Supp}(({\mu}_{\Lambda}^{0})^{*}{\frak M}'_{g})\subset{\mathcal H}_{\Lambda}\cap\Omega_{\Lambda}^{0}$ 
and since $\overline{\mathcal H}_{\Lambda}$ is irreducible by Theorem~\ref{thetanull=Heegner},
there exists $\beta_{g}\in{\bf Z}_{>0}$ such that the following equality of divisors on $\Omega_{\Lambda}^{0}$ holds
\begin{equation}
\label{eqn:pullbacl:mu:Lambda:2}
({\mu}_{\Lambda}^{0})^{*}\overline{\frak M}'_{g}
=
\beta_{g}\cdot{\mathcal H}_{\Lambda}|_{\Omega_{\Lambda}^{0}}.
\end{equation}
By \eqref{eqn:divisor:pullback:Igusa:5}, \eqref{eqn:pullbacl:mu:Lambda:2}, we get the equality of divisors on $\Omega_{\Lambda}^{0}$
\begin{equation}
\label{eqn:divisor:pullback:Igusa:6}
{\rm div}(J_{\Lambda}^{*}\chi_{g}^{8})|_{\Omega_{\Lambda}^{0}}
=
2^{4}\beta_{g}\cdot{\mathcal H}_{\Lambda}|_{\Omega_{\Lambda}^{0}}.
\end{equation}
Comparing \eqref{eqn:divisor:pullback:Igusa:-1} with \eqref{eqn:divisor:pullback:Igusa:6}, 
we get $b_{g}=2^{4}\beta_{g}\geq2^{4}$.
This completes the proof.
\end{pf}

\begin{theorem}
\label{thm:structure:g<6:r<10}
There exists a constant $C_{g,k,\ell}$ depending only on $g$, $k$, $\ell$ 
such that the following equality of automorphic forms on $\Omega_{\Lambda_{g,k}}$ holds:
$$
\Phi_{M_{g,k}}^{2^{g-1}(2^{g}+1)}
=
C_{g,k,\ell}\,
\Psi_{\Lambda_{g,k}}^{2^{g-1}\ell}
\otimes
J_{M_{g,k}}^{*}\chi_{g}^{8\ell}.
$$
In particular, there exists a constant $C_{g,k}$ depending only on $g$, $k$ such that
$$
\tau_{M_{g,k}}^{-2^{g}(2^{g}+1)}
=
C_{g,k}\,
\left\|
\Psi_{\Lambda_{g,k}}(\cdot,2^{g-1}F_{\Lambda_{g,k}})
\right\|
\cdot 
J_{M_{g,k}}^{*}
\left\|
\chi_{g}^{8}
\right\|.
$$
\end{theorem}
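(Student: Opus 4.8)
The plan is to carry out steps (a)--(d) of the introduction in the case $\delta=1$, $3\le g\le10$. First I would check that $\Phi_{M_{g,k}}^{2^{g-1}(2^{g}+1)}$ and $\Psi_{\Lambda_{g,k}}^{2^{g-1}\ell}\otimes J_{M_{g,k}}^{*}\chi_{g}^{8\ell}$ have the same weight $(p,q)$. By Theorem~\ref{thm:automorphic:property:Phi:M} the left-hand side has weight $(2^{g-1}(2^{g}+1)\ell(r(M_{g,k})-6),\,2^{g+1}(2^{g}+1)\ell)$; on the right, $\Psi_{\Lambda_{g,k}}^{2^{g-1}\ell}$ contributes only to the first slot with weight $2^{g-1}\ell(16-r(\Lambda_{g,k}))(2^{g}+1)$ by Theorem~\ref{thm:Borcherds:lift} (the corrections at $r(\Lambda)=12,20$ vanish because $\delta(\Lambda_{g,k})=1$), while $J_{M_{g,k}}^{*}\chi_{g}^{8\ell}$ contributes only to the second slot with weight $2^{g+1}(2^{g}+1)\ell$. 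Since $16-r(\Lambda_{g,k})=r(M_{g,k})-6$, the first slots agree. The quotient of two automorphic forms of equal weight being an $O^{+}(\Lambda_{g,k})$-invariant meromorphic function on $\Omega_{\Lambda_{g,k}}$, hence a meromorphic function on the normal projective variety ${\mathcal M}_{\Lambda_{g,k}}^{*}$, it then suffices to prove the divisor inequality \eqref{eqn:effectivitytoprove:sect:0}: once $(\Psi_{\Lambda_{g,k}}^{2^{g-1}\ell}\otimes J_{M_{g,k}}^{*}\chi_{g}^{8\ell})/\Phi_{M_{g,k}}^{2^{g-1}(2^{g}+1)}$ is shown to be holomorphic, the Koecher principle and the compactness of ${\mathcal M}_{\Lambda_{g,k}}^{*}$ force it to be a nonzero constant.

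Next I would compute both divisors explicitly. By \eqref{eqn:automorphic:property:torsion}, ${\rm div}\,\Phi_{M_{g,k}}^{2^{g-1}(2^{g}+1)}=2^{g-1}(2^{g}+1)\ell\,({\mathcal D}_{\Lambda_{g,k}}^{+}+{\mathcal D}_{\Lambda_{g,k}}^{-})$, while Theorem~\ref{thm:Borcherds:lift} and Proposition~\ref{prop:divisor:pull:back:Igusa} express ${\rm div}(\Psi_{\Lambda_{g,k}}^{2^{g-1}\ell}\otimes J_{M_{g,k}}^{*}\chi_{g}^{8\ell})$ as an explicit combination of ${\mathcal D}_{\Lambda_{g,k}}^{\pm}$ and ${\mathcal H}_{\Lambda_{g,k}}$. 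Using $16-r(\Lambda_{g,k})=5-g+k$, the inequality \eqref{eqn:effectivitytoprove:sect:0} splits into one scalar inequality along each of ${\mathcal D}_{\Lambda_{g,k}}^{-}$, ${\mathcal H}_{\Lambda_{g,k}}$, ${\mathcal D}_{\Lambda_{g,k}}^{+}$. The ${\mathcal D}^{-}$-inequality is $2^{g-1}+a_{g}\ge 2^{g-1}(2^{g}+1)$, i.e. $a_{g}\ge 2^{2g-1}$, and the ${\mathcal H}$-inequality is $2^{k}b_{g}\ge 2^{g-1}\cdot 2^{5-g+k}$, i.e. $b_{g}\ge 2^{4}$; both hold by Proposition~\ref{prop:value:a_{g,1}}. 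I would stress that the ${\mathcal H}$-inequality holds with equality, which is precisely what cancels the pole of $\Psi_{\Lambda_{g,k}}^{2^{g-1}\ell}$ along ${\mathcal H}_{\Lambda_{g,k}}$ against the zero of $J_{M_{g,k}}^{*}\chi_{g}^{8\ell}$, making the product genuinely holomorphic (so the quotient above is holomorphic, not merely meromorphic).

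The ${\mathcal D}^{+}$-inequality is where the two regimes part, and its $g=10$ instance is the main obstacle. For $3\le g\le9$ we have $r(\Lambda_{g,k})\le20$ with no ${\mathcal D}^{+}$-correction in Theorem~\ref{thm:Borcherds:lift}(1), so the ${\mathcal D}^{+}$-coefficients match as soon as $c_{g}\ge0$; indeed $c_{g}=0$ by Lemma~\ref{eqn:c:(g,0):g<10}, giving equality. For $g=10$ (where only $k=0$ occurs) we have $r(\Lambda_{10,0})=21$, and Theorem~\ref{thm:Borcherds:lift}(2) adds an extra $-2^{16-r}(28-r){\mathcal D}_{\Lambda}^{+}=-7\cdot2^{-5}{\mathcal D}_{\Lambda_{10,0}}^{+}$ term in ${\rm div}\,\Psi_{\Lambda_{10,0}}^{\ell}$, so the ${\mathcal D}^{+}$-inequality becomes $c_{10}\ge 7\cdot2^{4}=112$, which is not supplied by Proposition~\ref{prop:value:a_{g,1}}. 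This bound requires a separate argument. The reason $c_{10}>0$ is conceptual: by Theorem~\ref{thm:functoriality:Torelli:map} the restriction of $J_{M_{10,0}}$ to a general point of ${\mathcal D}_{\Lambda_{10,0}}^{+}$ is the Torelli map of $[M_{10,0}\perp d]$, whose period image lies in the $(r,\delta)=(2,0)$ stratum on which $\chi_{10}$ vanishes identically, so $J_{M_{10,0}}^{*}\chi_{10}^{8}$ vanishes along ${\mathcal D}_{\Lambda_{10,0}}^{+}$. I expect the quantitative bound $c_{10}\ge112$ to come from a local degeneration analysis of $\chi_{10}$ transverse to ${\mathcal D}_{\Lambda_{10,0}}^{+}$, in the spirit of Lemma~\ref{lemma:estimate:zero:Upsilon_g:ordinary:singular:family}; this is the technical heart of the $g=10$ case.

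Finally, granting \eqref{eqn:effectivitytoprove:sect:0} in all cases, the quotient is a holomorphic automorphic form of weight $(0,0)$, hence equals a nonzero constant $C_{g,k,\ell}^{-1}$, which gives the first displayed identity. Taking Petersson norms and using $\tau_{M_{g,k}}=\|\Phi_{M_{g,k}}\|^{-1/2\ell}$ together with $\|\Psi_{\Lambda_{g,k}}^{2^{g-1}\ell}\|=\|\Psi_{\Lambda_{g,k}}(\cdot,2^{g-1}F_{\Lambda_{g,k}})\|^{\ell}$ and $\|J_{M_{g,k}}^{*}\chi_{g}^{8\ell}\|=(J_{M_{g,k}}^{*}\|\chi_{g}^{8}\|)^{\ell}$, I would extract $\ell$-th roots to obtain the second identity with $C_{g,k}=|C_{g,k,\ell}|^{1/\ell}>0$; this is independent of $\ell$ since every remaining quantity in the formula is.
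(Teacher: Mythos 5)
For $3\le g\le 9$ your argument is essentially the paper's: same weight comparison, same reduction to effectivity of ${\rm div}(\varphi_{\Lambda})$ via the Koecher principle on ${\mathcal M}_{\Lambda}^{*}$, and the same scalar inequalities along ${\mathcal D}_{\Lambda_{g,k}}^{-}$, ${\mathcal H}_{\Lambda_{g,k}}$, ${\mathcal D}_{\Lambda_{g,k}}^{+}$ settled by Proposition~\ref{prop:value:a_{g,1}} and Lemma~\ref{eqn:c:(g,0):g<10}. (Two small remarks: the theorem also covers $g\le 2$, which the paper disposes of by quoting Yoshikawa's earlier work rather than re-running the argument; and along ${\mathcal H}_{\Lambda_{g,k}}$ you only know $b_{g}\ge 2^{4}$ a priori --- the equality $b_{g}=2^{4}$ is a \emph{consequence} of the constancy of $\varphi_{\Lambda}$, not an input, though the inequality is all you need for holomorphy.)

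The genuine gap is in the $g=10$ case, and you have correctly located it but not closed it. You reduce matters to the bound $c_{10}\ge 2^{4}\cdot 7$ and then say you ``expect'' it to follow from a local degeneration analysis in the spirit of Lemma~\ref{lemma:estimate:zero:Upsilon_g:ordinary:singular:family}. That lemma, however, governs ordinary singular (nodal) degenerations of the fixed curve, which is what happens transverse to ${\mathcal D}_{\Lambda}^{-}$; transverse to ${\mathcal D}_{\Lambda_{10,0}}^{+}$ the fixed curve stays a smooth genus-$10$ curve and merely acquires a vanishing even theta constant (it lands in the $(r,\delta)=(2,0)$ stratum, where $\chi_{10}$ vanishes identically), so that tool does not compute the vanishing order and no substitute is provided. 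The paper sidesteps the bound entirely: from $a_{10}\ge 2^{19}$ and $b_{10}\ge 2^{4}$ alone, $\varphi_{\Lambda}$ is holomorphic and nonvanishing on ${\mathcal M}_{\Lambda}\setminus\overline{\mathcal D}_{\Lambda}^{+}$, and Lemma~\ref{cor:meromorphic:function:modular:variety:A1} --- resting on Shah's GIT model of plane sextics with at most one node, whose image avoids $\overline{\mathcal D}_{\Lambda}^{+}$ and whose complement in $|{\mathcal O}_{{\bf P}^{2}}(6)|$ has codimension $2$ --- shows every holomorphic function on ${\mathcal M}_{\Lambda}\setminus\overline{\mathcal D}_{\Lambda}^{+}$ is constant. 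The value $c_{10}=2^{4}\cdot 7$ then falls out afterwards (Remark~\ref{remark:thetanull:(g,k)=(10,1)}) rather than being proved in advance. You need either this lemma or an actual proof of $c_{10}\ge 112$; as written, the $g=10$ case is not established.
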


\begin{pf}
Since the result was proved in \cite[Th.\,9.1]{Yoshikawa13} in the case $g\leq2$, we assume $g\geq3$ in what follows.
Set $M:=M_{g,k}$ and $\Lambda:=\Lambda_{g,k}$. 
Then $r(\Lambda)\geq5$ and hence ${\mathcal M}_{\Lambda}^{*}\setminus{\mathcal M}_{\Lambda}$ has codimension $\geq2$ in ${\mathcal M}^{*}$. 
By definition, $M=M_{g,k}$ has invariants $r(M)=11+k-g$, $l(M)=11-k-g$, so that $r(\Lambda)=11-k+g$ and $l(\Lambda)=11-k-g$.
\par
Since ${\rm wt}(\chi_{g})=2^{g-2}(2^{g}+1)$ and ${\rm wt}(\Psi_{\Lambda}^{\ell})=\{16-r(\Lambda)\}(2^{g}+1)\ell$ by Theorem~\ref{thm:Borcherds:lift}, 
we get
\begin{equation}
\label{eqn:weight:product:1}
{\rm wt}(
\Psi_{\Lambda}^{2^{g-1}\ell}
\otimes
J_{M}^{*}\chi_{g}^{8\ell}
)
=
2^{g-1}(2^{g}+1)\ell\cdot(16-r(\Lambda),4).
\end{equation}
Since ${\rm wt}(\Phi_{\Lambda})=\ell(16-r(\Lambda),4)$ by Theorem~\ref{thm:automorphic:property:Phi:M}, we get by \eqref{eqn:weight:product:1}
\begin{equation}
\label{eqn:weight:product:2}
{\rm wt}(
(\Psi_{\Lambda}^{2^{g-1}}
\otimes
J_{M}^{*}\chi_{g}^{8})^{\ell}
/
\Phi_{\Lambda}^{2^{g-1}(2^{g}+1)}
)
=
(0,0).
\end{equation}
By \eqref{eqn:weight:product:2},  
$$
\varphi_{\Lambda}
:=
(\Psi_{\Lambda}^{2^{g-1}\ell}\otimes J_{M}^{*}\chi_{g}^{8\ell})/\Phi_{\Lambda}^{2^{g-1}(2^{g}+1)}.
$$
descends to a meromorphic function on ${\mathcal M}_{\Lambda}$. Since ${\mathcal M}_{\Lambda}^{*}$ is normal and 
since $\dim{\mathcal M}_{\Lambda}^{*}\setminus{\mathcal M}_{\Lambda}\leq\dim{\mathcal M}_{\Lambda}^{*}-2$, 
$\varphi_{\Lambda}$ extends to a meromorphic function on ${\mathcal M}_{\Lambda}^{*}$.
\par{\bf (1) }
Let $r(M)\geq2$. Hence $r(\Lambda)\leq20$. 
By Theorem~\ref{thm:Borcherds:lift} and Proposition~\ref{prop:divisor:pull:back:Igusa}, 
\begin{equation}
\label{eqn:divisor:product:1}
\begin{aligned}
{\rm div}(
\Psi_{\Lambda}^{2^{g-1}}
\otimes
J_{M}^{*}\chi_{g}^{8}
)
&=
2^{g-1}
\{
{\mathcal D}_{\Lambda}^{-}
+
(2^{g}+1){\mathcal D}_{\Lambda}^{+}
-
2^{16-r(\Lambda)}{\mathcal H}_{\Lambda}
\}
+
a_{g}{\mathcal D}_{\Lambda}^{-}
+
2^{k}b_{g}{\mathcal H}_{\Lambda}
\\
&=
(2^{g-1}+a_{g}){\mathcal D}_{\Lambda}^{-}
+
2^{g-1}(2^{g}+1){\mathcal D}_{\Lambda}^{+}
+
2^{k}(b_{g}-2^{4}){\mathcal H}_{\Lambda}.
\end{aligned}
\end{equation}
Since ${\rm div}(\Phi_{\Lambda})=\ell\,{\mathcal D}_{\Lambda}$,
we deduce from \eqref{eqn:divisor:product:1} that
\begin{equation}
\label{eqn:divisor:product:2}
{\rm div}
(
\varphi_{\Lambda}
)
=
(a_{g}-2^{2g-1})\ell\,{\mathcal D}_{\Lambda}^{-}
+
2^{k}(b_{g}-2^{4})\ell\,{\mathcal H}_{\Lambda}.
\end{equation}
Since $a_{g}\geq 2^{2g-1}$ and $b_{g}\geq2^{4}$ by Proposition~\ref{prop:value:a_{g,1}},
the divisor of $\varphi_{\Lambda}$ is effective by \eqref{eqn:divisor:product:2}.
Since $\varphi_{\Lambda}$ is a {\em holomorphic} function on ${\mathcal M}_{\Lambda}^{*}$,
$\varphi_{\Lambda}$ must be a constant function on ${\mathcal M}_{\Lambda}^{*}$, which implies that
\begin{equation}
\label{eqn:value:b_g}
a_{g}=2^{2g-1},
\qquad
b_{g}=2^{4}.
\end{equation}
This completes the proof when $r(M)\geq 2$.
\par{\bf (2) }
Assume $r(M)=1$. Then $M={\Bbb A}_{1}^{+}$ and $r(\Lambda)=21$, $g=10$, $k=0$, $\delta=1$.
By Theorem~\ref{thm:Borcherds:lift} (2), Proposition~\ref{prop:divisor:pull:back:Igusa} and the equality $(g-1)+16-r(\Lambda)=4$, we get
\begin{equation}
\label{eqn:divisor:product:1:+:rk1}
\begin{aligned}
\,&
{\rm div}
(
\Psi_{\Lambda}^{2^{g-1}\ell}
\otimes
J_{M}^{*}\chi_{g}^{8\ell}
)
\\
&=
2^{g-1}\ell
\{
{\mathcal D}_{\Lambda}^{-}
+
(2^{g}+1)\,{\mathcal D}_{\Lambda}^{+}
-
2^{16-r(\Lambda)}\,{\mathcal H}_{\Lambda}
-
2^{16-r(\Lambda)}(28-r(\Lambda))\,{\mathcal D}_{\Lambda}^{+}
\}
\\
&\quad
+
\ell\,
\{
a_{g}\,{\mathcal D}_{\Lambda}^{-}
+
b_{g}\,{\mathcal H}_{\Lambda}
+
c_{g}\,{\mathcal D}_{\Lambda}^{+}
\}
\\
&=
\ell
\{(2^{g-1}+a_{g})\,{\mathcal D}_{\Lambda}^{-}
+
(b_{g}-2^{4}){\mathcal H}_{\Lambda}
\}
+
\ell\,\{(2^{2g-1}+2^{g-1}+c_{g})-2^{4}\cdot7)\}{\mathcal D}_{\Lambda}^{+}.
\end{aligned}
\end{equation}
Since ${\rm div}(\Phi_{\Lambda})=\ell\,{\mathcal D}_{\Lambda}$, we get by \eqref{eqn:divisor:product:1:+:rk1}
\begin{equation}
\label{eqn:divisor:product:2:+:rk1}
\begin{aligned}
{\rm div}(\varphi_{\Lambda})
&=
\ell\,
\{
(a_{g}-2^{2g-1})\,{\mathcal D}_{\Lambda}^{-}
+
(b_{g}-2^{4})\,{\mathcal H}_{\Lambda}
\}
+
\ell(c_{g}-2^{4}\cdot7)\,{\mathcal D}_{\Lambda}^{+}.
\end{aligned}
\end{equation}
Since $a_{g}\geq 2^{2g-1}$ and $b_{g}\geq2^{4}$ by Proposition~\ref{prop:value:a_{g,1}},
it follows from \eqref{eqn:divisor:product:2:+:rk1} that $\varphi_{\Lambda}$ is a non-zero {\em holomorphic} function on 
${\mathcal M}_{\Lambda}\setminus\overline{\mathcal D}_{\Lambda}^{+}$.
By Lemma~\ref{cor:meromorphic:function:modular:variety:A1} below, $\varphi_{\Lambda}$ is a non-zero constant.
This completes the proof of Theorem~\ref{thm:structure:g<6:r<10}.
\end{pf}

\begin{lemma}
\label{cor:meromorphic:function:modular:variety:A1}
When $M\simeq{\Bbb A}_{1}^{+}$, 
any holomorphic function on ${\mathcal M}_{\Lambda}\setminus\overline{\mathcal D}_{\Lambda}^{+}$ 
is a constant. 
\end{lemma}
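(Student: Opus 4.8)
The plan is to reduce the statement to the \emph{rigidity} of the prime divisor $\overline{\mathcal D}_{\Lambda}^{+}$ on the projective variety ${\mathcal M}_{\Lambda}^{*}$, and then to isolate the one genuinely global input. First I would pin down $\overline{\mathcal D}_{\Lambda}^{+}$ explicitly. For $M\simeq{\Bbb A}_{1}^{+}$ one has $\Lambda\simeq{\Bbb U}^{\oplus2}\oplus{\Bbb E}_{8}^{\oplus2}\oplus{\Bbb A}_{1}$, and writing $e$ for a generator of the ${\Bbb A}_{1}$-summand, every $d\in\Delta_{\Lambda}^{+}$ satisfies $\langle d,\lambda\rangle\in2{\bf Z}$ for all $\lambda\in\Lambda$; since $e^{\perp}={\Bbb U}^{\oplus2}\oplus{\Bbb E}_{8}^{\oplus2}$ is unimodular this forces $d\in{\bf Z}e$, whence $\Delta_{\Lambda}^{+}=\{\pm e\}$. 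Thus ${\mathcal D}_{\Lambda}^{+}=H_{e}$ is a single hyperplane orbit, so $\overline{\mathcal D}_{\Lambda}^{+}$ is an irreducible divisor, canonically identified (up to a finite group) with the modular variety of $e^{\perp}={\Bbb U}^{\oplus2}\oplus{\Bbb E}_{8}^{\oplus2}$. Now let $f$ be holomorphic on ${\mathcal M}_{\Lambda}\setminus\overline{\mathcal D}_{\Lambda}^{+}$. Since ${\mathcal M}_{\Lambda}^{*}$ is normal and ${\mathcal M}_{\Lambda}^{*}\setminus{\mathcal M}_{\Lambda}$ has codimension $\geq2$ (Section~\ref{sect:2}), $f$ extends to a rational function on ${\mathcal M}_{\Lambda}^{*}$ whose polar divisor is supported on the single prime divisor $\overline{\mathcal D}_{\Lambda}^{+}$; hence $f\in\bigcup_{m\geq0}H^{0}({\mathcal M}_{\Lambda}^{*},{\mathcal O}(m\overline{\mathcal D}_{\Lambda}^{+}))$, and the lemma becomes equivalent to $h^{0}({\mathcal O}(m\overline{\mathcal D}_{\Lambda}^{+}))=1$ for all $m\geq0$.

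To attack this rigidity I would filter $H^{0}({\mathcal O}(m\overline{\mathcal D}_{\Lambda}^{+}))$ by pole order along $\overline{\mathcal D}_{\Lambda}^{+}$. From the exact sequences $0\to{\mathcal O}((j-1)\overline{\mathcal D}_{\Lambda}^{+})\to{\mathcal O}(j\overline{\mathcal D}_{\Lambda}^{+})\to N^{\otimes j}\to0$, where $N={\mathcal O}(\overline{\mathcal D}_{\Lambda}^{+})|_{\overline{\mathcal D}_{\Lambda}^{+}}$ is the normal bundle, each graded piece injects, via its leading (residual) coefficient, into $H^{0}(\overline{\mathcal D}_{\Lambda}^{+},N^{\otimes j})$. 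Because $H_{e}$ is the zero locus of the linear section $x\mapsto\langle x,e\rangle$, one computes that $N$ is a positive power of the ample Hodge bundle of $\overline{\mathcal D}_{\Lambda}^{+}$, so $H^{0}(N^{\otimes j})$ is a space of genuine modular forms, nonzero for large $j$. Rigidity therefore cannot follow from a weight count; the real content is that all the leading-coefficient maps vanish, i.e.\ that no section of ${\mathcal O}(m\overline{\mathcal D}_{\Lambda}^{+})$ other than the $m$-th power of the tautological section of ${\mathcal O}(\overline{\mathcal D}_{\Lambda}^{+})$ survives to the boundary $\overline{\mathcal D}_{\Lambda}^{+}$.

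The hard part is precisely this vanishing, equivalently the assertion that there is no $O^{+}(\Lambda)$-automorphic form whose divisor is a positive multiple of $\overline{\mathcal D}_{\Lambda}^{+}$ alone. I would prove it by identifying the leading coefficient with a quasi-pullback to $\overline{\mathcal D}_{\Lambda}^{+}$ and showing that this restriction is forced to vanish: either through the Weil-representation input governing reflective Borcherds products for $\Lambda$ (the relevant obstruction space of vector-valued modular forms being trivial, a mechanism already visible in the principal part~\eqref{eqn:principal:part:F} and in Theorem~\ref{thm:Borcherds:lift}(2), where $\Psi_{\Lambda}^{\ell}$ is necessarily forced to charge ${\mathcal D}_{\Lambda}^{-}$ and ${\mathcal H}_{\Lambda}$ as well), or through the geometric GIT model of ${\mathcal M}_{\Lambda}$ as a quotient of plane sextics, in which $\overline{\mathcal D}_{\Lambda}^{+}$ is not cut out by a single invariant modulo the full discriminant. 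Granting the vanishing, the filtration collapses to $h^{0}({\mathcal O}(m\overline{\mathcal D}_{\Lambda}^{+}))=1$, so $f$ has no pole, extends holomorphically across $\overline{\mathcal D}_{\Lambda}^{+}$ to all of ${\mathcal M}_{\Lambda}^{*}$, and is constant by the Koecher principle. The main obstacle is thus concentrated entirely in the non-existence of such residual forms on $\overline{\mathcal D}_{\Lambda}^{+}$; by contrast the lattice computation, the irreducibility, and the boundary extension are routine.
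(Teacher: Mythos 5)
There is a genuine gap, and it sits exactly where you say the difficulty is concentrated. Your reduction — extend $f$ to a rational function on ${\mathcal M}_{\Lambda}^{*}$ with polar divisor supported on the irreducible divisor $\overline{\mathcal D}_{\Lambda}^{+}$, and then show that no such function with a nontrivial pole exists — is a correct reformulation of the lemma, but the crucial non-existence statement is never proved. You offer two possible mechanisms (triviality of an obstruction space of vector-valued modular forms governing reflective forms for $\Lambda$, or a GIT statement that $\overline{\mathcal D}_{\Lambda}^{+}$ is ``not cut out by a single invariant'') and carry out neither; the first would require an actual computation in the space of weight $23/2$ forms of type $\rho_{\Lambda}$, and the second is not a precise assertion. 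Since everything else in your argument is, as you note, routine, the proposal as written does not prove the lemma. The paper avoids this entirely with a short concrete argument: by Shah, the geometric quotient $V/{\rm PGL}_{3}$ of plane sextics with at most one node embeds as an open subset of ${\mathcal M}_{\Lambda}$ whose image lies in ${\mathcal M}_{\Lambda}\setminus\overline{\mathcal D}_{\Lambda}^{+}$ and contains the dense open ${\mathcal M}_{\Lambda}^{0}\cong U/{\rm PGL}_{3}$; a holomorphic function on ${\mathcal M}_{\Lambda}\setminus\overline{\mathcal D}_{\Lambda}^{+}$ pulls back to $V$, extends across the codimension-$2$ complement of $V$ in $|{\mathcal O}_{{\bf P}^{2}}(6)|$, is therefore constant there, and hence constant on ${\mathcal M}_{\Lambda}\setminus\overline{\mathcal D}_{\Lambda}^{+}$ by density. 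If you want to salvage your route, the honest way to finish is essentially to import this same geometric input.

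A secondary error: your opening lattice computation is wrong. The condition $d/2\in\Lambda^{\vee}$ forces the component of $d$ in the unimodular summand $e^{\perp}={\Bbb U}^{\oplus2}\oplus{\Bbb E}_{8}^{\oplus2}$ to lie in $2e^{\perp}$, not to vanish; for instance $d=2u+3e$ with $u\in e^{\perp}$, $u^{2}=4$, satisfies $d^{2}=-2$ and $d/2\in\Lambda^{\vee}$. So $\Delta_{\Lambda}^{+}\neq\{\pm e\}$ and ${\mathcal D}_{\Lambda}^{+}$ is an infinite sum of hyperplanes forming a single $O^{+}(\Lambda)$-orbit (by the Eichler criterion, since all such $d$ are primitive of norm $-2$ with $[d/2]=[e/2]$ in $A_{\Lambda}$). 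The irreducibility of $\overline{\mathcal D}_{\Lambda}^{+}$, which is what your reduction actually needs, survives, but the identification of ${\mathcal D}_{\Lambda}^{+}$ with the single hyperplane $H_{e}$ — and the normal-bundle computation you base on it — does not.
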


\begin{proof}
Let $U\subset |\mathcal{O}_{\mathbf{P}^2}(6)|$ be the space of smooth plane sextics, 
and let $V\subset |\mathcal{O}_{\mathbf{P}^2}(6)|$ be that of sextics with at most one node. 
By the stability criterion for plane sextics \cite{Shah80}, we have a geometric quotient $V/{\rm PGL}_3$ of $V$ by ${\rm PGL}_3$,
which contains $U/{\rm PGL}_3$ as an open set. 
It is well-known that ${\mathcal M}_{\Lambda}^{0}$ is isomorphic to $U/{\rm PGL}_3$ 
by associating to a smooth plane sextic the double covers of $\mathbf{P}^2$ branched over it. 
Shah \cite{Shah80} has shown that 
this isomorphism extends to an open embedding $V/{\rm PGL}_3 \hookrightarrow \mathcal{M}_{\Lambda}$ 
and that its image is contained in ${\mathcal M}_{\Lambda}\setminus\overline{\mathcal D}_{\Lambda}^{+}$. 
Hence a holomorphic function on ${\mathcal M}_{\Lambda}\setminus\overline{\mathcal D}_{\Lambda}^{+}$ 
gives that on $V/{\rm PGL}_3$, which in turn is pulled-back to $V$. 
Since the complement of $V$ in $|\mathcal{O}_{\mathbf{P}^2}(6)|$ is of codimension $2$ in $|\mathcal{O}_{\mathbf{P}^2}(6)|$, 
a holomorphic function on $V$ extends to $|\mathcal{O}_{\mathbf{P}^2}(6)|$ holomorphically 
and so is a constant. 
\end{proof}

\begin{remark}
\label{remark:thetanull:(g,k)=(10,1)}
Let $(g,k)=(10,0)$. Since $\varphi_{\Lambda}$ is constant when $\Lambda=({\Bbb A}_{1}^{+})^{\perp}$, we get  
\begin{equation}
\label{eqn:c:(10,0)}
a_{10}=2^{2g-1}=2^{19},
\qquad
b_{10}=2^{4},
\qquad
c_{10}=2^{4}\cdot7
\end{equation}
by \eqref{eqn:divisor:product:2:+:rk1}. In particular, $J_{{\Bbb A}_{1}^{+}}^{*}\chi_{10}$ vanishes on ${\mathcal D}_{\Lambda}^{+}$.
Since ${\Bbb U}=[{\Bbb A}_{1}^{+}\oplus{\bf Z}d]$ for any $d\in\Delta_{\Lambda}^{+}$, this, together with \eqref{eqn:functoriality:Torelli:map}, 
implies that $J_{\Bbb U}^{*}\chi_{10}$ vanishes identically on $\Omega_{{\Bbb U}^{\perp}}$.
\end{remark}

\section
{The structure of $\tau_{M}$: the case $\delta=0$}
\label{sect:9}
\par
In Section~\ref{sect:9}, $M\subset{\Bbb L}_{K3}$ is assumed to be a primitive $2$-elementary Lorentzian sublattice with $\delta=0$. 
As before, we set $\Lambda=M^{\perp}$.

\subsection{The structure of $\Phi_{M}$: the case $r\not=2,10$ and $\delta=0$}
\label{sect:9.1}
\par

\begin{lemma}
\label{lemma:div:Igusa:(gk,delta)=(6,1,0)}
Let $\Lambda\cong{\Bbb U}\oplus{\Bbb U}(k)\oplus{\Bbb D}_{4}\oplus{\Bbb E}_{8}$ with $k=1,2$.
Then the following equality of divisors on $\Omega_{\Lambda}$ holds:
$$
{\rm div}(J_{M}^{*}\chi_{g}^{8})
=
2^{g-1}(2^{g}+1)\,{\mathcal D}_{\Lambda}.
$$
\end{lemma}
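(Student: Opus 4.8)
The plan is to derive this $\delta=0$ statement from the $\delta=1$ computation of Proposition~\ref{prop:divisor:pull:back:Igusa} by restricting to a $(-2)$-hyperplane. Both lattices in the statement have $r(\Lambda)=16$ and $\delta(\Lambda)=0$; explicitly, ${\Bbb U}\oplus{\Bbb U}(1)\oplus{\Bbb D}_{4}\oplus{\Bbb E}_{8}={\Bbb U}^{\oplus2}\oplus{\Bbb D}_{4}\oplus{\Bbb E}_{8}$ has $g=7$, while ${\Bbb U}\oplus{\Bbb U}(2)\oplus{\Bbb D}_{4}\oplus{\Bbb E}_{8}\cong{\Bbb U}^{\oplus2}\oplus{\Bbb D}_{4}\oplus{\Bbb D}_{8}$ has $g=6$.

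First I would record two lattice identities special to the case $\delta=0$, $r(\Lambda)=16$. Because $\delta(\Lambda)=0$ the characteristic element vanishes, ${\bf 1}_{\Lambda}=0$; a root $d$ satisfies $d/2\in\Lambda^{\lor}$ only if $\Lambda$ splits off ${\bf Z}d\cong{\Bbb A}_{1}$, which would force $\delta(\Lambda)=1$. Hence $\Delta_{\Lambda}^{+}=\emptyset$ and ${\mathcal D}_{\Lambda}={\mathcal D}_{\Lambda}^{-}$. Furthermore $\varepsilon_{\Lambda}=(12-16)/2=-2$, and since ${\bf 1}_{\Lambda}=0$ the defining condition for ${\mathcal H}_{\Lambda}$ becomes $\lambda\in\Lambda$, $\lambda^{2}=-2$; thus ${\mathcal H}_{\Lambda}$ is the reduced sum of $H_{d}$ over all roots, that is ${\mathcal H}_{\Lambda}={\mathcal D}_{\Lambda}^{-}={\mathcal D}_{\Lambda}$ as reduced divisors. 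These two collapses are the only place where the hypotheses $\delta=0$, $r(\Lambda)=16$ enter.

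Next I would realize $\Lambda$ as a $(-2)$-hyperplane section of a $\delta=1$ lattice. Put $\tilde{\Lambda}=\Lambda\oplus{\Bbb A}_{1}$ and let $d$ generate the ${\Bbb A}_{1}$-summand, so that $d\in\Delta_{\tilde{\Lambda}}^{+}$ and $\tilde{\Lambda}\cap d^{\perp}=\Lambda$. A direct count gives $g(\tilde{\Lambda})=g$ and $k(M_{\tilde{\Lambda}})=g-6$, whence $\tilde{\Lambda}\cong\Lambda_{g,g-6}$ (a literal equality $\Lambda_{7,1}={\Bbb U}^{\oplus2}\oplus{\Bbb D}_{4}\oplus{\Bbb E}_{8}\oplus{\Bbb A}_{1}$ when $g=7$, and Nikulin's theorem when $g=6$). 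By Proposition~\ref{prop:divisor:pull:back:Igusa}, together with $a_{g}=2^{2g-1}$, $b_{g}=2^{4}$ from \eqref{eqn:value:b_g} and $c_{g}=0$ (as $g\le9$),
$$
{\rm div}(J_{M_{\tilde{\Lambda}}}^{*}\chi_{g}^{8})=2^{2g-1}\,{\mathcal D}_{\tilde{\Lambda}}^{-}+2^{g-2}\,{\mathcal H}_{\tilde{\Lambda}}.
$$
Restricting along $i\colon\Omega_{\Lambda}=H_{d}\hookrightarrow\Omega_{\tilde{\Lambda}}$, the functoriality of the Torelli map (Theorem~\ref{thm:functoriality:Torelli:map}, via Lemma~\ref{lemma:stratification:discriminant:locus}) gives $J_{M_{\Lambda}}^{*}\chi_{g}^{8}=i^{*}(J_{M_{\tilde{\Lambda}}}^{*}\chi_{g}^{8})$ on $\Omega_{\Lambda}^{0}\cup{\mathcal D}_{\Lambda}^{0}$; since the complement has codimension $\geq2$, the resulting divisor identity extends to all of $\Omega_{\Lambda}$, exactly as in the proof of Proposition~\ref{prop:divisor:pull:back:Igusa}. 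Applying Propositions~\ref{prop:quasi:pull:back:discriminant:divisor} and \ref{prop:quasi:pull:back:characteristic:divisor}, $i^{*}{\mathcal D}_{\tilde{\Lambda}}^{-}={\mathcal D}_{\Lambda}^{-}$ and $i^{*}{\mathcal H}_{\tilde{\Lambda}}=2{\mathcal H}_{\Lambda}$, so
$$
{\rm div}(J_{M_{\Lambda}}^{*}\chi_{g}^{8})=2^{2g-1}\,{\mathcal D}_{\Lambda}^{-}+2^{g-1}\,{\mathcal H}_{\Lambda}.
$$
The identity ${\mathcal H}_{\Lambda}={\mathcal D}_{\Lambda}^{-}={\mathcal D}_{\Lambda}$ from the first step then merges the two terms into $(2^{2g-1}+2^{g-1})\,{\mathcal D}_{\Lambda}=2^{g-1}(2^{g}+1)\,{\mathcal D}_{\Lambda}$, which is the assertion.

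The main obstacle is bookkeeping rather than analysis. One must select the correct $O(\tilde{\Lambda})$-orbit of $d$ so that $\tilde{\Lambda}\cap d^{\perp}$ is exactly $\Lambda$ and not some other $\delta=1$ restriction, and one must notice the numerical coincidence $k(M_{\tilde{\Lambda}})=g-6$ that makes the Heegner contribution $2^{g-1}$ combine with the discriminant contribution $2^{2g-1}$ into the single coefficient $2^{g-1}(2^{g}+1)$. Everything else is inherited from the $\delta=1$ analysis of Section~\ref{sect:8} and the two collapses ${\mathcal D}_{\Lambda}^{+}=\emptyset$ and ${\mathcal H}_{\Lambda}={\mathcal D}_{\Lambda}$; no new geometric input (such as the thetanull estimates of Sections~\ref{sect:6}--\ref{sect:7}) is needed here, since it has already been absorbed into the values $a_{g}$ and $b_{g}$.
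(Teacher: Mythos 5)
Your proof is correct and follows essentially the same route as the paper's: both realize $\Lambda$ as $d^{\perp}$ inside $\Lambda\oplus{\Bbb A}_{1}\cong\Lambda_{g,k'}$ (your $k'=g-6$ agrees with the paper's indexing), restrict the divisor formula of Proposition~\ref{prop:divisor:pull:back:Igusa} with $a_{g}=2^{2g-1}$, $b_{g}=2^{4}$, $c_{g}=0$ via Propositions~\ref{prop:quasi:pull:back:discriminant:divisor} and \ref{prop:quasi:pull:back:characteristic:divisor}, and then use ${\mathcal D}_{\Lambda}^{+}=0$ and ${\mathcal H}_{\Lambda}={\mathcal D}_{\Lambda}$ (from $\delta(\Lambda)=0$, ${\bf 1}_{\Lambda}=0$, $\varepsilon_{\Lambda}=-2$) to merge the two terms into $2^{g-1}(2^{g}+1)\,{\mathcal D}_{\Lambda}$.
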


\begin{pf}
Recall that the lattice $\Lambda_{g,k}$ was defined in Section~\ref{sect:8}.
Since $\Lambda_{g,k-1}\cong\Lambda\oplus{\Bbb A}_{1}$ by comparing the invariants $(r,l,\delta)$,
there is a root $d\in\Delta_{\Lambda_{g,k-1}}^{+}$ with $\Lambda_{g,k-1}\cap d^{\perp}\cong\Lambda$.
Since $d\in\Delta_{\Lambda_{g,k-1}}^{+}$, we get by \eqref{eqn:divisor:pullback:Igusa:1}, \eqref{eqn:recursion:relation}, \eqref{eqn:value:b_g}
and Propositions~\ref{prop:quasi:pull:back:discriminant:divisor} and \ref{prop:quasi:pull:back:characteristic:divisor}
$$
\begin{aligned}
{\rm div}(J_{M}^{*}\chi_{g}^{8})
&=
{\rm div}(J_{M_{g,k-1}}^{*}\chi_{g}^{8})|_{\Omega_{\Lambda}}
=
\{
2^{2g-1}{\mathcal D}_{\Lambda_{g,k-1}}^{-}
+
2^{k+3}{\mathcal H}_{\Lambda_{g,k-1}}
\}|_{\Omega_{\Lambda}}
\\
&=
2^{2g-1}{\mathcal D}_{\Lambda}^{-}
+
2^{k+4}{\mathcal H}_{\Lambda}
=
2^{g-1}(2^{g}+1){\mathcal D}_{\Lambda}.
\end{aligned}
$$
To get the last equality, we used $g-1=k+4$, ${\mathcal D}_{\Lambda}^{+}=0$ and
${\mathcal H}_{\Lambda}={\mathcal D}_{\Lambda}$ for $\Lambda$, 
where the last two equalities follow from $\varepsilon_{\Lambda}=-2$, $\delta(\Lambda)=0$ and ${\bf 1}_{\Lambda}=0$.
\end{pf}

\begin{theorem}
\label{thm:structure:r=6:delta=0}
If $r\not=2,10$ and $\delta=0$, there is a constant $C_{M,\ell}>0$ depending only on $M$ and $\ell$ 
such that the following equality of automorphic forms on $\Omega_{\Lambda}$ holds
$$
\Phi_{M}^{2^{g-1}(2^{g}+1)}
=
C_{M,\ell}\,
\Psi_{\Lambda}^{2^{g-1}\ell}
\otimes
J_{M}^{*}\chi_{g}^{8\ell}.
$$
In particular, there is a constant $C_{M}>0$ depending only on $M$ 
such that the following equality of automorphic forms on $\Omega_{\Lambda}$ holds:
$$
\tau_{M}^{-2^{g}(2^{g}+1)}
=
C_{M}\,
\left\|
\Psi_{\Lambda}(\cdot,2^{g-1}F_{\Lambda})
\right\|
\cdot 
J_{M}^{*}
\left\|
\chi_{g}^{8}
\right\|.
$$
\end{theorem}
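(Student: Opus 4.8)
The plan is to mimic the proof of Theorem~\ref{thm:structure:g<6:r<10}, with the divisor of $J_{M}^{*}\chi_{g}^{8}$ now supplied by Lemma~\ref{lemma:div:Igusa:(gk,delta)=(6,1,0)} in place of the Heegner analysis of the $\delta=1$ case. Since the case $g\leq2$ was settled in \cite{Yoshikawa13}, I would assume $g\geq3$; inspecting the $\delta=0$ column of Table~\ref{table:list:sublattice:sign(2,r-2):K3} shows that the remaining types with $r\neq2,10$ and $g\geq3$ are exactly $\Lambda\cong{\Bbb U}^{\oplus2}\oplus{\Bbb D}_{4}$ ($g=3$, $r=14$, $r(\Lambda)=8$) and the two lattices ${\Bbb U}\oplus{\Bbb U}(k)\oplus{\Bbb D}_{4}\oplus{\Bbb E}_{8}$ ($k=1,2$, giving $g=7,6$ and $r=6$, $r(\Lambda)=16$). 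The first step is to match weights. Because $r(\Lambda)=22-r\neq12,20$ whenever $r\neq10,2$, Theorem~\ref{thm:Borcherds:lift}~(1) gives ${\rm wt}(\Psi_{\Lambda}^{\ell})=(16-r(\Lambda))(2^{g}+1)\ell$ \emph{with no} correction term, and since $16-r(\Lambda)=r-6$ this combines with ${\rm wt}(\chi_{g}^{8})=2^{g+1}(2^{g}+1)$ to yield ${\rm wt}(\Psi_{\Lambda}^{2^{g-1}\ell}\otimes J_{M}^{*}\chi_{g}^{8\ell})=2^{g-1}(2^{g}+1)\ell\,(r-6,4)={\rm wt}(\Phi_{M}^{2^{g-1}(2^{g}+1)})$, the last equality by Theorem~\ref{thm:automorphic:property:Phi:M}.

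The crux is to show ${\rm div}(\Psi_{\Lambda}^{2^{g-1}}\otimes J_{M}^{*}\chi_{g}^{8})=2^{g-1}(2^{g}+1)\,{\mathcal D}_{\Lambda}={\rm div}(\Phi_{M}^{2^{g-1}(2^{g}+1)})/\ell$. For a $\delta=0$ lattice one has ${\bf 1}_{\Lambda}=0$, and a short root-theoretic check (the summands ${\Bbb U},{\Bbb U}(2),{\Bbb D}_{4},{\Bbb E}_{8}$ admit no root $d$ with $d/2\in\Lambda^{\lor}$) gives $\Delta_{\Lambda}^{+}=\emptyset$, so ${\mathcal D}_{\Lambda}^{+}=0$ and ${\mathcal D}_{\Lambda}={\mathcal D}_{\Lambda}^{-}$. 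When $r(\Lambda)=8$ one has $\varepsilon_{\Lambda}=2>0$, hence ${\mathcal H}_{\Lambda}=0$, so Theorem~\ref{thm:Borcherds:lift} yields ${\rm div}(\Psi_{\Lambda}^{2^{g-1}})=2^{g-1}{\mathcal D}_{\Lambda}$; the divisor of $J_{M}^{*}\chi_{3}^{8}$ is then obtained by restricting the $\delta=1$ formula \eqref{eqn:divisor:pullback:Igusa:1} along a root $d\in\Delta_{\Lambda_{3,k}}^{+}$ with $\Lambda_{3,k}\cong\Lambda\oplus{\Bbb A}_{1}$, using Theorem~\ref{thm:functoriality:Torelli:map} together with Propositions~\ref{prop:quasi:pull:back:discriminant:divisor} and \ref{prop:quasi:pull:back:characteristic:divisor}; since $c_{3}=0$ and ${\mathcal H}_{\Lambda_{3,k}}=0$, this restricts to $a_{3}{\mathcal D}_{\Lambda}^{-}=2^{5}{\mathcal D}_{\Lambda}$ by \eqref{eqn:value:b_g}, and $2^{g-1}+2^{5}=4+32=36=2^{g-1}(2^{g}+1)$. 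When $r(\Lambda)=16$ one has $\varepsilon_{\Lambda}=-2$ and ${\mathcal H}_{\Lambda}={\mathcal D}_{\Lambda}$, so the three terms of Theorem~\ref{thm:Borcherds:lift} telescope to ${\rm div}(\Psi_{\Lambda}^{2^{g-1}})=2^{g-1}\{{\mathcal D}_{\Lambda}^{-}-{\mathcal D}_{\Lambda}\}=0$, while Lemma~\ref{lemma:div:Igusa:(gk,delta)=(6,1,0)} supplies ${\rm div}(J_{M}^{*}\chi_{g}^{8})=2^{g-1}(2^{g}+1){\mathcal D}_{\Lambda}$. In both cases the total divisor is $2^{g-1}(2^{g}+1){\mathcal D}_{\Lambda}$, as required.

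With weights and divisors matched, the quotient $\varphi_{\Lambda}:=(\Psi_{\Lambda}^{2^{g-1}\ell}\otimes J_{M}^{*}\chi_{g}^{8\ell})/\Phi_{M}^{2^{g-1}(2^{g}+1)}$ is an automorphic form of weight $(0,0)$ with trivial divisor, hence descends to a meromorphic function on ${\mathcal M}_{\Lambda}$. Since $r(\Lambda)\geq5$ here, $\dim({\mathcal M}_{\Lambda}^{*}\setminus{\mathcal M}_{\Lambda})\leq r(\Lambda)-2-2$, so $\varphi_{\Lambda}$ extends to the normal projective variety ${\mathcal M}_{\Lambda}^{*}$ as a holomorphic, nowhere-vanishing function, hence is a nonzero constant $C_{M,\ell}$; this gives the first identity. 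The second identity follows by taking Petersson norms and invoking $\tau_{M}=\|\Phi_{M}\|^{-1/2\ell}$ from Theorem~\ref{thm:automorphic:property:Phi:M}, the definition $\|\Psi_{\Lambda}(\cdot,F_{\Lambda})\|=\|\Psi_{\Lambda}^{\ell}\|^{1/\ell}$, and $J_{M}^{*}\|\chi_{g}^{8}\|=\|J_{M}^{*}\chi_{g}^{8}\|$, which convert the $2^{g-1}(2^{g}+1)$-th power into the displayed $-2^{g}(2^{g}+1)$ exponent on $\tau_{M}$.

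The main obstacle is the divisor identity for $J_{M}^{*}\chi_{g}^{8}$ outside the $r=6$ range, since Lemma~\ref{lemma:div:Igusa:(gk,delta)=(6,1,0)} is stated only there; for $r=14$ its proof nonetheless carries over once one exhibits the orthogonal splitting $\Lambda_{g,k}\cong\Lambda\oplus{\Bbb A}_{1}$ with the distinguished root lying in $\Delta_{\Lambda_{g,k}}^{+}$, after which the restriction of \eqref{eqn:divisor:pullback:Igusa:1} is immediate. The only genuinely delicate points are the finite lattice-theoretic verifications that $\Delta_{\Lambda}^{+}=\emptyset$ and that ${\mathcal H}_{\Lambda}$ equals $0$ or ${\mathcal D}_{\Lambda}$ according to the sign of $\varepsilon_{\Lambda}$; these are routine but must be confirmed type by type.
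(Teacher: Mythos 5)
Your proposal is correct and, on the genuinely new cases ($r=6$, $\Lambda\cong{\Bbb U}\oplus{\Bbb U}(k)\oplus{\Bbb D}_{4}\oplus{\Bbb E}_{8}$), it reproduces the paper's proof essentially verbatim: weight matching via Theorems~\ref{thm:Borcherds:lift} and \ref{thm:automorphic:property:Phi:M}, the identities ${\mathcal D}_{\Lambda}^{+}=0$, ${\mathcal H}_{\Lambda}={\mathcal D}_{\Lambda}$ forcing ${\rm div}(\Psi_{\Lambda}^{\ell})=0$, the divisor of $J_{M}^{*}\chi_{g}^{8}$ from Lemma~\ref{lemma:div:Igusa:(gk,delta)=(6,1,0)}, and the Koecher-principle conclusion on the normal compactification ${\mathcal M}_{\Lambda}^{*}$. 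The only divergence is bookkeeping: the paper reduces at once to $2<r<10$ by citing \cite[Th.\,9.1]{Yoshikawa13} for all $r>10$, which already disposes of the type $\Lambda\cong{\Bbb U}^{\oplus2}\oplus{\Bbb D}_{4}$ ($g=3$, $r=14$), whereas you discard only $g\leq2$ and therefore treat that type afresh by restricting \eqref{eqn:divisor:pullback:Igusa:1} from $\Lambda_{3,5}\cong\Lambda\oplus{\Bbb A}_{1}$ along a root in $\Delta^{+}$ via Theorem~\ref{thm:functoriality:Torelli:map} and Propositions~\ref{prop:quasi:pull:back:discriminant:divisor}, \ref{prop:quasi:pull:back:characteristic:divisor}; this is valid (it is exactly the mechanism of Lemma~\ref{lemma:div:Igusa:(gk,delta)=(6,1,0)}, and the arithmetic $2^{g-1}+a_{3}=4+32=2^{g-1}(2^{g}+1)$ checks out), just redundant with the cited result.
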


\begin{pf}
When $r>10$, the result was proved in \cite[Th.\,9.1]{Yoshikawa13}. We may assume $2<r<10$ and $\delta=0$.
By Proposition~\ref{prop:classification:sublattice:sign(2,r-2):K3}, we get $r=6$
and $\Lambda\cong{\Bbb U}\oplus{\Bbb U}(k)\oplus{\Bbb D}_{4}\oplus{\Bbb E}_{8}$, $k=1,2$.
Since $\delta(\Lambda)=0$, we have ${\mathcal D}_{\Lambda}^{+}=0$ and ${\bf 1}_{\Lambda}=0$. 
Since $r(\Lambda)=16$, we have $\varepsilon_{\Lambda}=-2$, which, together with ${\bf 1}_{\Lambda}=0$, yields that 
${\mathcal H}_{\Lambda}={\mathcal D}_{\Lambda}$. 
By Theorem~\ref{thm:Borcherds:lift}, we get ${\rm wt}(\Psi_{\Lambda}^{\ell})=0$ and
$$
{\rm div}(\Psi_{\Lambda}^{\ell})
=
\ell\,({\mathcal D}_{\Lambda}^{-}-2^{16-r(\Lambda)}{\mathcal H}_{\Lambda})
=
\ell\,({\mathcal D}_{\Lambda}-{\mathcal D}_{\Lambda})
=
0,
$$
which implies that $\Psi_{\Lambda}$ is a non-zero constant function on $\Omega_{\Lambda}$. 

Set
$\varphi_{\Lambda}:=\Psi_{\Lambda}^{2^{g-1}\ell}\otimes J_{M}^{*}\chi_{g}^{8\ell}/\Phi_{M}^{2^{g-1}(2^{g}+1)}$.
Since $\Psi_{\Lambda}$ is a non-zero constant,
we deduce from Lemma~\ref{lemma:div:Igusa:(gk,delta)=(6,1,0)} and
${\rm wt}(\Phi_{M})=(0,4\ell)$, ${\rm div}(\Phi_{M})=\ell\,{\mathcal D}_{\Lambda}$ that
\begin{equation}
\label{eqn:weight:product:3}
{\rm wt}(\varphi_{\Lambda})=(0,0),
\qquad
{\rm div}(\varphi_{\Lambda})=0.
\end{equation}
Hence $\varphi_{\Lambda}$ is a {\em holomorphic} function on ${\mathcal M}_{\Lambda}$ by \eqref{eqn:weight:product:3}
and extends holomorphically to ${\mathcal M}_{\Lambda}^{*}$.
Thus $\varphi_{\Lambda}$ is a non-zero constant.
\end{pf}

\subsection
{The structure of $\Phi_{M}$: the case $(r,\delta)=(10,0)$}
\label{sect:9.2}
\par
In this subsection, we assume that $M$ is non-exceptional and
$$
(r,\delta)=(10,0).
$$ 
Then $0\leq l\leq 8$ and $2\leq g\leq6$.
Since $J_{M}^{*}\chi_{g}$ vanishes identically on $\Omega_{\Lambda}$ (e.g. \cite[Prop.\,9.3]{Yoshikawa13}), 
Theorem~\ref{thm:structure:g<6:r<10} does not hold in this case.
Identify ${\frak M}_{g}$ with its image by the Torelli map $j\colon{\frak M}_{g}\hookrightarrow{\mathcal A}_{g}$.
Then $J_{M}\colon\Omega_{\Lambda}^{0}\to{\mathcal A}_{g}$ is identified with the map
$\mu_{\Lambda}=j^{-1}\circ J_{M}\colon\Omega_{\Lambda}^{0}\to{\frak M}_{g}$.
Write ${\frak H}_{{\rm hyp},g}\subset{\frak M}_{g}$ for the hyperelliptic locus.

\begin{proposition}
\label{prop:zero:Upsilon}
Let $M$ be non-exceptional with $(r,\delta)=(10,0)$. Then $J_{M}^{*}\Upsilon_{g}$ does not vanish identically on $\Omega_{\Lambda}^{0}$.
Moreover, for any $d\in\Delta_{\Lambda}$, $J_{[M\perp d]}^{*}\chi_{g-1}$ is nowhere vanishing on $\Omega_{\Lambda\cap d^{\perp}}^{0}$.
\end{proposition}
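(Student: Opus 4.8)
The plan is to reformulate both statements in terms of effective even theta characteristics of the fixed curves. Write $C$ for the genus $g$ component of $X^{\iota}$. By Lemma~\ref{lemma:locus:vanishing:two:theta:char}, $C$ carries at least two effective even theta characteristics exactly when $\chi_{g}(\varOmega(C))=\Upsilon_{g}(\varOmega(C))=0$. To prove $J_{M}^{*}\Upsilon_{g}\not\equiv0$ it therefore suffices to produce one fixed curve $C$ that is not of this kind: either a $C$ with $\chi_{g}(\varOmega(C))\neq0$, or --- using $\chi_{g}\equiv0$ on $J_{M}(\Omega_{\Lambda}^{0})$ --- a $C$ with at most one effective even theta characteristic. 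Dually, Part~2 amounts to showing that no genus $g-1$ fixed curve of type $[M\perp d]$ carries an effective even theta characteristic at all.

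I would dispose of Part~2 first, since it reduces to the divisor computation already available in the $\delta=1$ case. Because $\delta(\Lambda)=0$ forces ${\bf 1}_{\Lambda}=0$ and $\mathcal{D}_{\Lambda}^{+}=0$, every root $d\in\Delta_{\Lambda}$ lies in $\Delta_{\Lambda}^{-}$; crossing $H_{d}$ is a nodal degeneration of the fixed curve, so $[M\perp d]$ has rank $11$ and genus $g-1$. As an odd-rank $2$-elementary lattice it cannot be of type $\delta=0$ (these all have even rank), whence $\delta([M\perp d])=1$ and $[M\perp d]\cong M_{g-1,g-1}$, $\Lambda\cap d^{\perp}\cong\Lambda_{g-1,g-1}$. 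For $g-1\geq3$ I would invoke Proposition~\ref{prop:divisor:pull:back:Igusa}: the divisor of $J_{[M\perp d]}^{*}\chi_{g-1}^{8}$ equals $a_{g-1}\mathcal{D}^{-}_{\Lambda\cap d^{\perp}}+2^{\,g-1}b_{g-1}\mathcal{H}_{\Lambda\cap d^{\perp}}+c_{g-1}\mathcal{D}^{+}_{\Lambda\cap d^{\perp}}$, and since $r(\Lambda\cap d^{\perp})=11\leq12$ gives $\mathcal{H}_{\Lambda\cap d^{\perp}}=0$, this divisor is supported on $\mathcal{D}_{\Lambda\cap d^{\perp}}$. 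Hence $J_{[M\perp d]}^{*}\chi_{g-1}$ has no zero on the complement $\Omega_{\Lambda\cap d^{\perp}}^{0}$, which is precisely the asserted nowhere-vanishing. The residual cases $g-1\in\{1,2\}$ are immediate, since $\chi_{1}$ and $\chi_{2}$ vanish only on degenerate (reducible) principally polarized abelian varieties, whereas our fixed curves are smooth and irreducible.

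For Part~1 I would argue genus by genus, the guiding point being that hyperellipticity is the only source of many effective theta characteristics in low genus. For $g=2$ a smooth genus $2$ curve has none, so $\chi_{2}\neq0$ and $\Upsilon_{2}\neq0$ at a general member. For $g=3$ every curve has at most one (the unique $g^{1}_{2}$ if hyperelliptic, none otherwise). For $g=4$ the general member is non-hyperelliptic by Lemma~\ref{thetanull r=10, k=3,4}, and such a curve has at most one vanishing thetanull, according as its canonical quadric is smooth or a cone. For $g=6$ the statement is exactly Lemma~\ref{thetanull r=10, k=5}.

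The hard case will be $g=5$, where Lemma~\ref{thetanull r=10, k=3,4} only yields non-hyperellipticity, and a non-hyperelliptic genus $5$ curve may a priori carry several effective even theta characteristics. Here I would fall back on Kond\=o's explicit description of the general member as a general trigonal curve with a vanishing thetanull, together with the Maroni analysis of Section~\ref{sect:trigonal}: by Lemma~\ref{even theta-chara Hirze} the effective even theta characteristics of a trigonal curve correspond to curves $H$ in a fixed linear system on the ambient Hirzebruch surface that are totally tangent to $C$, and the task is to verify that the general such $C$ admits exactly one. This tangency count, rather than any lattice-theoretic bookkeeping, is where the real work lies.
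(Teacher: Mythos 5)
Your reduction via Lemma~\ref{lemma:locus:vanishing:two:theta:char} is the right frame, and your treatment of the second assertion is correct, though it takes a different route from the paper: the paper simply observes $r([M\perp d])=11>10$ and cites \cite[Prop.~4.2~(1)]{Yoshikawa13}, whereas you re-derive the claim internally by identifying $[M\perp d]\cong M_{g-1,g-1}$ and reading off from Proposition~\ref{prop:divisor:pull:back:Igusa} that ${\rm div}(J_{[M\perp d]}^{*}\chi_{g-1}^{8})$ is supported on ${\mathcal D}_{\Lambda\cap d^{\perp}}$ because ${\mathcal H}_{\Lambda\cap d^{\perp}}=0$ when $r(\Lambda\cap d^{\perp})=11\leq12$. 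That works, and the residual genus $1,2$ cases are indeed immediate.

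Two genuine problems remain in the first assertion. First, your $g=2$ case rests on a false premise: for $(r,l,\delta)=(10,8,0)$ one has $M\cong{\Bbb U}\oplus{\Bbb E}_{8}(2)$ and $X^{\iota}$ is a disjoint union of \emph{two elliptic curves}, not a smooth connected genus $2$ curve. Its period lies on the locus of products of elliptic curves in ${\mathcal A}_{2}$, where exactly one even theta constant vanishes; so $\chi_{2}$ does vanish identically on $J_{M}(\Omega_{\Lambda}^{0})$ --- consistent with the fact, recorded just before the proposition, that $J_{M}^{*}\chi_{g}\equiv0$ for every $(r,\delta)=(10,0)$, and in direct contradiction with your claim that $\chi_{2}\neq0$ at a general member. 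The conclusion $\Upsilon_{2}\neq0$ is still true (only one theta constant vanishes at such a product), but not by the argument you give. Second, you leave the $g=5$ case unproved: you correctly identify that non-hyperellipticity does not bound the number of effective even theta characteristics of a genus $5$ curve, but the ``tangency count'' you defer is precisely the content that must be supplied. The paper avoids this computation by combining Kond\=o's description of the general member (a general trigonal curve with a vanishing thetanull) with the identity ${\rm div}(F_{5})\cap{\rm div}(\chi_{5})\cap{\rm div}(\Upsilon_{5})\cap{\frak M}_{5}={\frak H}_{{\rm hyp},5}$ of Fontanari--Pascolutti, where $F_{5}$ is the genus~$5$ Schottky form cutting out the trigonal locus; this reduces $g=5$ to the same hyperellipticity contradiction you use for $g=4$. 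Until the $g=2$ argument is corrected and the $g=5$ count is actually carried out (or replaced by such a citation), the proof is incomplete.
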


\begin{pf}
For the first assertion,
it suffices to prove $\mu_{\Lambda}(\Omega_{\Lambda}^{0})\not\subset{\rm div}(\Upsilon_{g})\cap{\frak M}_{g}$.
Since $\Upsilon_{2}$ is nowhere vanishing on the diagonal locus of ${\frak S}_{2}$ 
and since $\mu_{\Lambda}(\Omega_{\Lambda}^{0})$ is the image of the diagonal locus by the projection ${\frak S}_{2}\to{\mathcal A}_{2}$,
we get ${\rm div}(\Upsilon_{2})\cap\mu_{\Lambda}(\Omega_{\Lambda}^{0})=\emptyset$.
Similarly, we have ${\rm div}(\Upsilon_{3})\cap\mu_{\Lambda}(\Omega_{\Lambda}^{0})=\emptyset$ by \cite[Lemma 11]{Igusa67}.
Let $g=4$. 
Since $\mu_{\Lambda}(\Omega_{\Lambda}^{0})\subset{\rm div}(\chi_{4})$, 
the inclusion $\mu_{\Lambda}(\Omega_{\Lambda}^{0})\subset{\rm div}(\Upsilon_{4})\cap{\frak M}_{4}$ would imply
$\mu_{\Lambda}(\Omega_{\Lambda}^{0})\subset{\rm div}(\chi_{4})\cap{\rm div}(\Upsilon_{4})\cap{\frak M}_{4}$.
Since the right hand side coincides with ${\frak H}_{\rm hyp,4}$ by \cite[p.544 Cor.]{Igusa82},
this last inclusion contradicts Lemma~\ref{thetanull r=10, k=3,4}. Thus $\mu_{\Lambda}(\Omega_{\Lambda}^{0})\not\subset{\rm div}(\Upsilon_{4})\cap{\frak M}_{4}$.
Let $g=5$. 
Let $F_{5}$ be the Schottky form in genus $5$ (cf. \cite[p.1018]{GrushevskySalvatiManni11}), 
whose zero divisor characterizes the (closure of) trigonal locus of ${\frak M}_{5}$ (cf. \cite[Cor.\,18]{GrushevskySalvatiManni11}).
By \cite{Kondo94}, a general point of $\mu_{\Lambda}(\Omega_{\Lambda}^{0})$ is contained in the intersection of the thetanull divisor and
the trigonal locus.
Then the inclusion $\mu_{\Lambda}(\Omega_{\Lambda}^{0})\subset{\rm div}(\Upsilon_{5})\cap{\frak M}_{5}$ would imply
$$
\mu_{\Lambda}(\Omega_{\Lambda}^{0})
\subset
{\rm div}(F_{5})\cap{\rm div}(\chi_{5})\cap{\rm div}(\Upsilon_{5})\cap{\frak M}_{5}.
$$
Since the right hand side coincides with ${\frak H}_{\rm hyp,5}$ by \cite[p.67]{FontanariPascolutti12},
this last inclusion contradicts Lemma~\ref{thetanull r=10, k=3,4}. 
Thus $\mu_{\Lambda}(\Omega_{\Lambda}^{0})\not\subset{\rm div}(\Upsilon_{5})\cap{\frak M}_{5}$.
When $g=6$, we get $\mu_{\Lambda}(\Omega_{\Lambda}^{0})\not\subset{\rm div}(\Upsilon_{6})\cap{\frak M}_{6}$
by Lemma~\ref{lemma:locus:vanishing:two:theta:char} and Lemma~\ref{thetanull r=10, k=5}. This proves the first assertion. 
Since $r([M\perp d])>10$, the second assertion follows from \cite[Prop.\,4.2 (1)]{Yoshikawa13}.
This completes the proof.
\end{pf}

\begin{theorem}
\label{thm:structure:r=10:delta=0}
Let $M$ be non-exceptional with $(r,\delta)=(10,0)$.
Then there is a constant $C_{M,\ell}>0$ depending only on $M$ and $\ell$ 
such that the following equality of automorphic forms on $\Omega_{\Lambda}$ holds
$$
\Phi_{M}^{(2^{g-1}+1)(2^{g}-1)}
=
C_{M,\ell}\,
\Psi_{\Lambda}^{(2^{g-1}+1)\ell}
\otimes
J_{M}^{*}\Upsilon_{g}^{\ell}.
$$
In particular, there is a constant $C_{M}>0$ depending only on $M$ such that
$$
\tau_{M}^{-(2^{g}+2)(2^{g}-1)}
=
C_{M}\,
\left\|
\Psi_{\Lambda}(\cdot,(2^{g-1}+1)\,F_{\Lambda})
\right\|
\cdot 
J_{M}^{*}
\left\|
\Upsilon_{g}
\right\|.
$$
\end{theorem}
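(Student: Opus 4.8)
The plan is to follow the template of Theorems~\ref{thm:structure:g<6:r<10} and \ref{thm:structure:r=6:delta=0}: form the quotient
$$
\varphi_{\Lambda}
:=
\Psi_{\Lambda}^{(2^{g-1}+1)\ell}\otimes J_{M}^{*}\Upsilon_{g}^{\ell}\big/\Phi_{M}^{(2^{g-1}+1)(2^{g}-1)},
$$
check that it has weight $(0,0)$ so that it descends to a meromorphic function on ${\mathcal M}_{\Lambda}$, and then prove that its divisor is effective. Since $r(\Lambda)=12\geq5$, the boundary ${\mathcal M}_{\Lambda}^{*}\setminus{\mathcal M}_{\Lambda}$ has codimension $\geq2$ and ${\mathcal M}_{\Lambda}^{*}$ is normal, so a function with effective divisor extends holomorphically to the compact ${\mathcal M}_{\Lambda}^{*}$ and is therefore a nonzero constant $C_{M,\ell}$. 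The weight bookkeeping is routine: with $r(\Lambda)=12$ and $\delta(\Lambda)=0$, Theorem~\ref{thm:Borcherds:lift}~(1) gives ${\rm wt}(\Psi_{\Lambda}^{\ell})=4(2^{g}+1)\ell-8\ell=4(2^{g}-1)\ell$, while $\Upsilon_{g}$ has Siegel weight $2(2^{g}-1)(2^{g}+2)$ and $\Phi_{M}$ has weight $(4\ell,4\ell)$ by Theorem~\ref{thm:automorphic:property:Phi:M}. Using $2^{g}+2=2(2^{g-1}+1)$ one verifies that both factors of the numerator assemble to the weight of $\Phi_{M}^{(2^{g-1}+1)(2^{g}-1)}$ in each component; this is precisely why the exponent $2^{g-1}+1$ and the form $\Upsilon_{g}$ are forced here in place of $2^{g-1}$ and $\chi_{g}^{8}$, the discrepancy being absorbed by the correction term $-8\ell$ in the Borcherds weight at $r(\Lambda)=12$.

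Next I would compute the divisors. The decisive simplification is that $r(\Lambda)=12$ forces ${\mathcal H}_{\Lambda}=0$ (Section~\ref{sect:2.3}), and $\delta(\Lambda)=0$ forces ${\mathcal D}_{\Lambda}^{+}=0$: a root $d$ with $d/2\in\Lambda^{\vee}$ would give $q_{\Lambda}([d/2])=\langle d,d\rangle/4=-1/2\pmod{2{\bf Z}}$, incompatible with even parity. Hence Theorem~\ref{thm:Borcherds:lift}~(1) collapses to ${\rm div}(\Psi_{\Lambda}^{\ell})=\ell\,{\mathcal D}_{\Lambda}^{-}=\ell\,{\mathcal D}_{\Lambda}$, which coincides with ${\rm div}(\Phi_{M})=\ell\,{\mathcal D}_{\Lambda}$. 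Writing ${\rm div}(J_{M}^{*}\Upsilon_{g})=\alpha\,{\mathcal D}_{\Lambda}^{-}+R$ with $R\geq0$ the effective part supported on $\Omega_{\Lambda}^{0}$ (recording where the fixed curve acquires a second effective even theta characteristic), a direct substitution yields
$$
{\rm div}(\varphi_{\Lambda})
=
\ell\{\alpha-2(2^{2(g-1)}-1)\}\,{\mathcal D}_{\Lambda}^{-}
+
\ell R,
$$
where the arithmetic identity $(2^{g-1}+1)(2^{g}-2)=2(2^{2(g-1)}-1)$ produces the clean coefficient of ${\mathcal D}_{\Lambda}^{-}$.

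So everything reduces to the lower bound $\alpha\geq2(2^{2(g-1)}-1)$ for the vanishing order of $J_{M}^{*}\Upsilon_{g}$ along ${\mathcal D}_{\Lambda}^{-}$, and this is the main obstacle. I would obtain it from Lemma~\ref{lemma:estimate:alpha:U(2)}, applied to a disc $\gamma$ meeting $\overline{\mathcal D}_{\Lambda}^{0}$ transversally at a generic point of the $\Delta^{-}$-stratum: taking the lift $\tilde{\gamma}$ with $\gamma(t^{2})=\varPi_{\Lambda}\circ\tilde{\gamma}(t)$, the transversal intersection of $\tilde{\gamma}$ with $H_{d}$ identifies $\alpha$ with ${\rm ord}_{t=0}\tilde{\gamma}^{*}(J_{M}^{*}\Upsilon_{g})\geq2(2^{2(g-1)}-1)$. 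The hypotheses of that lemma are supplied by Proposition~\ref{prop:zero:Upsilon}: since $J_{M}^{*}\chi_{g}\equiv0$ here we have $\chi_{g}(J_{M}(\tilde{\gamma}(t)))=0$, the first assertion of Proposition~\ref{prop:zero:Upsilon} gives $J_{M}^{*}\Upsilon_{g}\not\equiv0$, and its second assertion gives $\chi_{g-1}(J_{[M\perp d]}(\tilde{\gamma}(0)))\neq0$. Granting this bound, ${\rm div}(\varphi_{\Lambda})\geq0$, so $\varphi_{\Lambda}=C_{M,\ell}$ is constant (and in fact $\alpha=2(2^{2(g-1)}-1)$, $R=0$ a posteriori), proving the identity of automorphic forms; taking Petersson norms, using $\tau_{M}=\|\Phi_{M}\|^{-1/2\ell}$ and $2(2^{g-1}+1)=2^{g}+2$, and extracting the $\ell$-th root then yields the stated formula for $\tau_{M}^{-(2^{g}+2)(2^{g}-1)}$. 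The genuinely hard content sits entirely in Lemma~\ref{lemma:estimate:alpha:U(2)} and Proposition~\ref{prop:zero:Upsilon} — that is, in the bosonization estimate for $\Upsilon_{g}$ near an ordinary singular degeneration and in the geometric non-degeneracy results of Section~\ref{sect:7} — whereas the present theorem is the comparatively formal assembly of those ingredients.
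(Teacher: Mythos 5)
Your proposal is correct and follows essentially the same route as the paper's proof: the same quotient $\varphi_{\Lambda}$, the same weight/divisor bookkeeping (with ${\mathcal H}_{\Lambda}=0$ and ${\mathcal D}_{\Lambda}^{+}=0$ collapsing ${\rm div}(\Psi_{\Lambda}^{\ell})$ to $\ell\,{\mathcal D}_{\Lambda}$), the same reduction to the bound $\alpha\geq2(2^{2(g-1)}-1)$ via Lemma~\ref{lemma:estimate:alpha:U(2)} with hypotheses supplied by Proposition~\ref{prop:zero:Upsilon}, and the same Koecher-principle conclusion. The only cosmetic difference is that you rederive the weight and divisor of $\Psi_{\Lambda}$ from Theorem~\ref{thm:Borcherds:lift}~(1) where the paper cites \cite[Th.\,8.1]{Yoshikawa13} directly; the content is identical.
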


\begin{pf}
Since $(r,\delta)=(10,0)$, we get $(r(\Lambda),\delta(\Lambda))=(12,0)$.
By \cite[Th.\,8.1]{Yoshikawa13}, 
\begin{equation}
\label{eqn:weight:zero:Psi:r=10:delta=0}
{\rm wt}(\Psi_{\Lambda})=(4(2^{g}-1),0),
\qquad
{\rm div}(\Psi_{\Lambda})
=
{\mathcal D}_{\Lambda}.
\end{equation}
Since $J_{M}^{*}\Upsilon_{g}$ does not vanish identically on $\Omega_{\Lambda}^{0}$ by Proposition~\ref{prop:zero:Upsilon}
and since $\overline{\mathcal D}_{\Lambda}$ is irreducible, 
there exists $a\in{\bf Z}_{\geq0}$ and an effective divisor ${\mathcal E}_{\Lambda}$ on $\Omega_{\Lambda}$ such that
\begin{equation}
\label{eqn:weight:zero:pullback:Upsilon:r=10:delta=0}
{\rm wt}(J_{M}^{*}\Upsilon_{g})=(0,4(2^{g-1}+1)(2^{g}-1)),
\qquad
{\rm div}(J_{M}^{*}\Upsilon_{g})=a\,{\mathcal D}_{\Lambda}+{\mathcal E}_{\Lambda}.
\end{equation}
Set $\varphi_{\Lambda}:=\Psi_{\Lambda}^{(2^{g-1}+1)\ell}\otimes J_{M}^{*}\Upsilon_{g}^{\ell}/\Phi_{M}^{(2^{g-1}+1)(2^{g}-1)}$.
Comparing \eqref{eqn:weight:zero:Psi:r=10:delta=0}, \eqref{eqn:weight:zero:pullback:Upsilon:r=10:delta=0}
and ${\rm wt}(\Phi_{M})=(4\ell,4\ell)$, ${\rm div}(\Phi_{M})=\ell\,{\mathcal D}_{\Lambda}$,
we get
\begin{equation}
\label{eqn:weight:product:Upsilon}
{\rm wt}(\varphi_{\Lambda})=(0,0),
\qquad
{\rm div}(\varphi_{\Lambda})
=
\ell\,\{
a-2(2^{2(g-1)}-1)
\}
{\mathcal D}_{\Lambda}
+
\ell\,{\mathcal E}_{\Lambda}.
\end{equation}
By Proposition~\ref{prop:zero:Upsilon}, we can apply Lemma~\ref{lemma:estimate:alpha:U(2)} 
to a general curve $\gamma\colon\varDelta\to{\mathcal M}_{\Lambda}$ intersecting $\overline{\mathcal D}_{\Lambda}^{0}$ transversally. 
Since $a\geq2(2^{2(g-1)}-1)$ by Lemma~\ref{lemma:estimate:alpha:U(2)}, we get ${\rm div}(\varphi_{\Lambda})\geq0$.
By the Koecher principle, $\varphi_{\Lambda}$ is a non-zero constant. 
\end{pf}

In the rest of this section, we determine $\Phi_{M}$ for the remaining $M$, i.e., those $M$ with $(r,\delta)=(2,0)$.
Then, either $M\cong{\Bbb U}$ or ${\Bbb U}(2)$.

\subsection
{The structure of $\Phi_{\Bbb U}$}
\label{sect:9.3}
\par
In Section~\ref{sect:9.3}, we set 
$$
M:={\Bbb U},
\qquad
\Lambda=M^{\perp}:={\Bbb U}^{\oplus2}\oplus{\Bbb E}_{8}^{\oplus2}.
$$ 
Then $g=10$ and $J_{M}^{*}\chi_{10}$ vanishes identically on $\Omega_{\Lambda}^{0}$. 
\par
Let $E_{4}(\tau)=\theta_{{\Bbb E}_{8}^{+}}(\tau)=1+240q+\cdots$ be the Eisenstein series of weight $4$ (or equivalently
the theta series of ${\Bbb E}_{8}^{+}$) and set
\begin{equation}
\label{eqn:correcting:modular:form:(r,a)=(2,0)}
f_{\Lambda}(\tau):=E_{4}(\tau)/\eta(\tau)^{24}=q^{-1}+264+O(q).
\end{equation}
Then $f_{\Lambda}(\tau)$ is a modular form of weight  $-8$.
In Section~\ref{sect:9.3}, we prove the following:

\begin{theorem}
\label{thm:formula:Phi:U}
There exists a constant $C_{M,\ell}>0$ such that
$$
\Phi_{M}^{(2^{g-1}+1)(2^{g}-1)}
=
C_{M,\ell}\,\Psi_{\Lambda}(\cdot,2^{g-1}F_{\Lambda}+f_{\Lambda})^{\ell}\otimes J_{M}^{*}\Upsilon_{g}^{\ell}.
$$
In particular, there is a constant $C_{M}>0$ depending only on $M=\Bbb U$ such that 
$$
\tau_{M}^{-(2^{g}-1)(2^{g}+2)}
=
C_{M}\,
\left\|
\Psi_{\Lambda}(\cdot,2^{g-1}F_{\Lambda}+f_{\Lambda})
\right\|
\cdot 
J_{M}^{*}
\left\|
\Upsilon_{g}
\right\|.
$$
\end{theorem}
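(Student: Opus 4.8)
The plan is to follow the pattern of Theorems~\ref{thm:structure:r=6:delta=0} and \ref{thm:structure:r=10:delta=0}: form the quotient
$$
\varphi_{\Lambda}
:=
\Psi_{\Lambda}(\cdot,2^{g-1}F_{\Lambda}+f_{\Lambda})^{\ell}\otimes J_{M}^{*}\Upsilon_{g}^{\ell}
\,/\,
\Phi_{M}^{(2^{g-1}+1)(2^{g}-1)},
$$
check that it has weight $(0,0)$, and prove it is a nonzero constant. Here $g=10$ and $\Lambda={\Bbb U}^{\oplus2}\oplus{\Bbb E}_{8}^{\oplus2}$ is unimodular, so ${\bf 1}_{\Lambda}=0$ and $\Delta_{\Lambda}^{+}=\emptyset$ (whence ${\mathcal D}_{\Lambda}^{+}=0$ and ${\mathcal D}_{\Lambda}={\mathcal D}_{\Lambda}^{-}$), while $\varepsilon_{\Lambda}=-4$ makes ${\mathcal H}_{\Lambda}$ the \emph{nonzero} Heegner divisor cut out by norm $-4$ vectors. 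Since $J_{M}^{*}\chi_{10}\equiv0$, the form $\chi_{g}^{8}$ must be replaced by $\Upsilon_{g}$, and the new feature compared with the other cases $(r,\delta)=(10,0)$ (where $r(\Lambda)=12$ and ${\mathcal H}_{\Lambda}=0$) is precisely the presence of ${\mathcal H}_{\Lambda}$.

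First I would compute the two divisors. By Theorem~\ref{thm:Borcherds:lift}(1) with $r(\Lambda)=20$ one has ${\rm div}(\Psi_{\Lambda}^{\ell})=\ell({\mathcal D}_{\Lambda}-2^{-4}{\mathcal H}_{\Lambda})$, while the lift of $f_{\Lambda}=q^{-1}+264+O(q)$ is read off its principal part: the single term $q^{-1}{\bf e}_{0}$ produces the divisor ${\mathcal D}_{\Lambda}$ (norm $-2$ vectors) and \emph{no} ${\mathcal H}_{\Lambda}$-contribution, and the constant $264$ contributes the weight $132$. Hence
$$
{\rm div}(\Psi_{\Lambda}(\cdot,2^{g-1}F_{\Lambda}+f_{\Lambda}))
=
(2^{g-1}+1)\,{\mathcal D}_{\Lambda}-2^{g-5}\,{\mathcal H}_{\Lambda}.
$$
On the other side $J_{M}^{*}\Upsilon_{g}$ is holomorphic, and by Proposition~\ref{thetanull (r,a)=(2,0)} together with Lemma~\ref{lemma:locus:vanishing:two:theta:char} its zero locus in ${\mathcal M}_{\Lambda}^{0}$ is exactly $\overline{\mathcal H}_{\Lambda}$; so ${\rm div}(J_{M}^{*}\Upsilon_{g})=a\,{\mathcal D}_{\Lambda}+b\,{\mathcal H}_{\Lambda}$ with $a,b\geq0$. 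Combining these with ${\rm div}(\Phi_{M})=\ell\,{\mathcal D}_{\Lambda}$ and checking that the weights agree — here $f_{\Lambda}$ is exactly what supplies the extra $+{\mathcal D}_{\Lambda}$ and the weight correction $132$ needed to reconcile ${\rm wt}(\Phi_{M})=(-4\ell,4\ell)$ with the Borcherds weight at $r(\Lambda)=20$ — one obtains
$$
{\rm div}(\varphi_{\Lambda})
=
\ell\bigl(a-2(2^{2(g-1)}-1)\bigr){\mathcal D}_{\Lambda}
+
\ell\bigl(b-2^{g-5}\bigr){\mathcal H}_{\Lambda}.
$$

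The coefficient along ${\mathcal D}_{\Lambda}$ is nonnegative by Lemma~\ref{lemma:estimate:alpha:U(2)}, which gives $a\geq2(2^{2(g-1)}-1)$; to apply it I would first verify its hypotheses ($J_{M}^{*}\chi_{g}\equiv0$, $J_{M}^{*}\Upsilon_{g}\not\equiv0$, and $\chi_{g-1}(J_{[M\perp d]}(\cdot))\neq0$ on the relevant boundary stratum) by the arguments of Proposition~\ref{prop:zero:Upsilon}. Once both coefficients are shown nonnegative, $\varphi_{\Lambda}$ descends to a holomorphic function on ${\mathcal M}_{\Lambda}$, extends across the codimension $\geq2$ boundary of ${\mathcal M}_{\Lambda}^{*}$ by normality, and is thus a constant, which is positive since both sides are (powers of) Petersson norms. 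The norm statement then follows by taking Petersson norms and using $2(2^{g-1}+1)(2^{g}-1)=(2^{g}-1)(2^{g}+2)$ together with $\tau_{M}=\|\Phi_{M}\|^{-1/2\ell}$.

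The main obstacle is the lower bound $b\geq2^{g-5}$ for the vanishing order of $J_{M}^{*}\Upsilon_{g}$ along the \emph{interior} Heegner divisor ${\mathcal H}_{\Lambda}$. Unlike the bound along ${\mathcal D}_{\Lambda}$, this cannot be extracted from a nodal degeneration: the surface $X$ stays smooth and it is the fixed curve that acquires a second effective even theta characteristic as one crosses ${\mathcal H}_{\Lambda}$. I would attack it by restricting to a holomorphic curve meeting ${\mathcal H}_{\Lambda}$ transversally and analyzing the local vanishing of $\Upsilon_{g}$ there: since $\chi_{g}$ vanishes identically along the image, only one term of $\Upsilon_{g}=\sum_{(a,b)}\prod_{(c,d)\neq(a,b)}\theta_{c,d}^{8}$ survives, and its order equals $8$ times that of the second theta-null appearing on ${\mathcal H}_{\Lambda}$, multiplied by the transversal multiplicity of $\mu_{\Lambda}$ along $\overline{\mu}_{\Lambda}^{-1}({\frak M}_{g}'')$ furnished by Proposition~\ref{thetanull (r,a)=(2,0)}. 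An estimate of Riemann-singularity type for this theta-null order, in the spirit of Lemma~\ref{lemma:estimate:zero:Upsilon_g:ordinary:singular:family}, should yield $b\geq2^{g-5}$; a posteriori the constancy of $\varphi_{\Lambda}$ forces the sharp equalities $a=2(2^{2(g-1)}-1)$ and $b=2^{g-5}$.
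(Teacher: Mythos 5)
Your overall architecture --- form $\varphi_{\Lambda}=\Psi_{\Lambda}(\cdot,2^{g-1}F_{\Lambda}+f_{\Lambda})^{\ell}\otimes J_{M}^{*}\Upsilon_{g}^{\ell}/\Phi_{M}^{(2^{g-1}+1)(2^{g}-1)}$, match weights, compute divisors, bound the $\mathcal{D}_{\Lambda}$-coefficient via Lemma~\ref{lemma:estimate:alpha:U(2)}, and conclude by the Koecher principle --- is exactly the paper's, and your divisor and weight computations for the Borcherds side (including the role of $f_{\Lambda}$ and the identity ${\rm div}(\Psi_{\Lambda}(\cdot,2^{9}F_{\Lambda}+f_{\Lambda}))=(2^{9}+1)\mathcal{D}_{\Lambda}-2^{5}\mathcal{H}_{\Lambda}$) are correct. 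The gap is precisely where you locate it: the bound $b\geq 2^{5}$ for the coefficient of $\mathcal{H}_{\Lambda}$ in ${\rm div}(J_{M}^{*}\Upsilon_{10})$, and your proposed route to it does not work as sketched. Since $J_{M}^{*}\Upsilon_{10}$ restricted near a generic point of $\mathcal{H}_{\Lambda}$ is, up to a nonvanishing factor, the eighth power of a single additional theta constant $\theta_{a_{1},b_{1}}$, you need that theta constant to vanish to order $\geq 4$ along $\mathcal{H}_{\Lambda}$. A ``Riemann-singularity type'' or square-root/level-structure argument (the one actually used in the parallel Lemma~\ref{lemma:estimate:beta:U(2)} for ${\Bbb U}(2)$) only gives even order, i.e.\ order $\geq2$ and hence $b\geq2^{4}$ --- a factor of $2$ short, which leaves ${\rm div}(\varphi_{\Lambda})$ with a potentially negative $\mathcal{H}_{\Lambda}$-coefficient and kills the Koecher argument. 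Note also that Teixidor i Bigas's order-$2$ statement is about the generic point of $\theta_{{\rm null},g}$ in ${\frak M}_{g}$, whereas here you restrict to the $18$-dimensional trigonal Maroni-$2$ locus and then to a divisor inside it, so the order can (and does) jump.

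The paper determines $\beta=2^{5}$ exactly by a different mechanism: it embeds $\Omega_{\Lambda}$ as the hyperplane $H_{d}\subset\Omega_{L}$ for $L=\Lambda\oplus{\Bbb A}_{1}$ with $L^{\perp}={\Bbb A}_{1}^{+}$, lifts $J_{{\Bbb A}_{1}^{+}}$ locally to ${\frak S}_{10}$, and tracks the divisors of the individual theta constants $\widetilde{J}_{{\Bbb A}_{1}^{+}}^{*}\theta_{a,b}^{8}$ (Lemmas~\ref{lemma:concentration:nonzero:c_(a,b)}--\ref{lemma:value:beta:U}). The uniqueness of the effective even theta characteristic (Remark~\ref{remark:uniqueness theta chara k=0}) forces the vanishing to concentrate on theta constants other than $\theta_{a_{0},b_{0}}$, the reflection $s_{d}$ pairs the contributions of $\lambda'\pm d/2$, and the quasi-pullback relation $i^{*}\mathcal{H}_{L}=2\mathcal{H}_{\Lambda}$ of Proposition~\ref{prop:quasi:pull:back:characteristic:divisor} supplies the crucial factor of $2$; the input $\sum_{(a,b)\neq(a_{0},b_{0})}c_{a,b}(\lambda)=b_{10}=2^{4}$ comes from the already-established divisor of $J_{{\Bbb A}_{1}^{+}}^{*}\chi_{10}^{8}$ in Theorem~\ref{thm:structure:g<6:r<10} and Remark~\ref{remark:thetanull:(g,k)=(10,1)}. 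So $\beta=2\cdot b_{10}=2^{5}$ is inherited from the rank-one case rather than estimated intrinsically on $\Omega_{\Lambda}$; without some version of this descent from $\Omega_{L}$ (or another argument pinning down the order-$4$ vanishing of the second theta constant), your proof does not close.
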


For the proof of Theorem~\ref{thm:formula:Phi:U}, we first prove the following:

\begin{lemma}
\label{lemma:non-vanishing:Upsilon:(r,a)=(2,0)}
$J_{M}^{*}\Upsilon_{10}$ is nowhere vanishing on $\Omega_{\Lambda}^{0}\setminus{\mathcal H}_{\Lambda}$.
\end{lemma}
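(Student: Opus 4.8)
The plan is to reduce the asserted non-vanishing to the already-established geometric description of the characteristic Heegner divisor, Proposition~\ref{thetanull (r,a)=(2,0)}, by translating between the vanishing of the Siegel modular form $\Upsilon_{10}$ and the number of effective even theta characteristics of the genus $10$ fixed curve, using the purely analytic criterion of Lemma~\ref{lemma:locus:vanishing:two:theta:char}. The argument is essentially formal once those two results are in hand.

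First I would record the key input special to $M\cong{\Bbb U}$, namely that it has invariants $(r,\delta)=(2,0)$ and $g=10$, and that by Remark~\ref{remark:thetanull:(g,k)=(10,1)} the pullback $J_{M}^{*}\chi_{10}$ vanishes identically on $\Omega_{\Lambda}$. Consequently, for every $[\eta]\in\Omega_{\Lambda}^{0}$, writing $C$ for the genus $10$ component of the fixed curve of the corresponding $2$-elementary $K3$ surface, one has $\chi_{10}(J_{M}([\eta]))=\chi_{10}(\varOmega(C))=0$. In particular $C$ always carries at least one effective even theta characteristic, so $\overline{\mu}_{\Lambda}({\mathcal M}_{\Lambda}^{0})\subset{\frak M}_{10}'$.

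Next I would invoke Lemma~\ref{lemma:locus:vanishing:two:theta:char}. Since $\chi_{10}(\varOmega(C))=0$ holds automatically, the vanishing $\Upsilon_{10}(\varOmega(C))=0$ becomes equivalent to the existence of at least two distinct vanishing even theta constants at $\varOmega(C)$, that is, to $C$ possessing at least two effective even theta characteristics, i.e.\ to $\overline{\mu}_{\Lambda}([\eta])\in{\frak M}_{10}''$. Thus the zero locus of $J_{M}^{*}\Upsilon_{10}$ on $\Omega_{\Lambda}^{0}$ is exactly $\overline{\mu}_{\Lambda}^{-1}({\frak M}_{10}'')\cap\Omega_{\Lambda}^{0}$. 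Applying Proposition~\ref{thetanull (r,a)=(2,0)}, which identifies $\overline{\mu}_{\Lambda}^{-1}({\frak M}_{10}'')$ with $\overline{\mathcal H}_{\Lambda}\cap{\mathcal M}_{\Lambda}^{0}$, and pulling back to $\Omega_{\Lambda}^{0}$, I conclude that $J_{M}^{*}\Upsilon_{10}$ vanishes precisely on ${\mathcal H}_{\Lambda}\cap\Omega_{\Lambda}^{0}$, hence is nowhere vanishing on $\Omega_{\Lambda}^{0}\setminus{\mathcal H}_{\Lambda}$.

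The genuinely hard content — the construction of a second effective even theta characteristic from an $\iota$-anti-invariant elliptic curve class and the verification that its presence forces the period into $\overline{\mathcal H}_{\Lambda}$ — is already packaged into Proposition~\ref{thetanull (r,a)=(2,0)}, so I do not expect a serious obstacle here. The only points demanding care are formal bookkeeping: that $J_{M}^{*}\chi_{10}\equiv0$ does \emph{not} force $J_{M}^{*}\Upsilon_{10}$ to degenerate, since $\Upsilon_{10}=\sum_{(a,b)}\prod_{(c,d)\neq(a,b)}\theta_{c,d}^{8}$ is a genuinely different holomorphic modular form whose pullback need not be identically zero; and that the dichotomy ``exactly one versus at least two effective even theta characteristics'' is matched correctly with ``a single versus at least two vanishing even theta constants'' in Lemma~\ref{lemma:locus:vanishing:two:theta:char}, together with the observation that $\overline{\mu}_{\Lambda}({\mathcal M}_{\Lambda}^{0})\subset{\frak M}_{10}'$ guarantees that $\overline{\mu}_{\Lambda}^{-1}({\frak M}_{10}'')$ is indeed the full vanishing locus.
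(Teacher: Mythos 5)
Your proposal is correct and takes essentially the same route as the paper: the paper's proof also rests entirely on Proposition~\ref{thetanull (r,a)=(2,0)}, using its ``in particular'' clause to get a \emph{unique} vanishing even theta constant $\theta_{a_0,b_0}$ at $\varOmega(C)$ for periods outside ${\mathcal H}_{\Lambda}$, and then concludes directly that $\Upsilon_{10}(\varOmega(C))=\prod_{(a,b)\neq(a_0,b_0)}\theta_{a,b}(\varOmega(C))^{8}\neq0$. You merely package that last one-line computation through Lemma~\ref{lemma:locus:vanishing:two:theta:char} together with the identical vanishing of $J_{M}^{*}\chi_{10}$, which is an equivalent bookkeeping of the same argument.
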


\begin{pf}
Let $(X,\iota,\alpha)$ be an arbitrary marked $2$-elementary $K3$ surface of type $M={\Bbb U}$ with period in 
$\Omega_{\Lambda}\setminus{\mathcal H}_{\Lambda}$.
Let $C$ be the component of genus $10$ of $X^{\iota}$. Fix a symplectic basis of $H_{1}(C,{\bf Z})$, so that $\varOmega(C)\in{\frak S}_{10}$, 
where $\varOmega(C)$ is the period of $C$ with respect to the symplectic basis.
By Proposition~\ref{thetanull (r,a)=(2,0)}, there is a unique even pair $(a_{0},b_{0})$, $a_{0},b_{0}\in\{0,\frac{1}{2}\}^{10}$ such that
$\theta_{a_{0},b_{0}}(\varOmega(C))=0$ and $\theta_{a,b}(\varOmega(C))\not=0$ for all even pair $(a,b)$ with $(a,b)\not=(a_{0},b_{0})$.
Hence we get
$$
J_{M}^{*}\Upsilon_{10}(X,\iota)
=
\Upsilon_{10}(\varOmega(C))
=
\prod_{(a,b)\not=(a_{0},b_{0})}\theta_{a,b}(\varOmega(C))^{8}\not=0.
$$
This proves the lemma.
\end{pf}

By Lemma~\ref{lemma:non-vanishing:Upsilon:(r,a)=(2,0)}, there exist $\alpha,\beta\in{\bf Z}_{>0}$ such that 
\begin{equation}
\label{eqn:divisor:Upsilon:U:1}
{\rm div}(J_{M}^{*}\Upsilon_{10})
=
\alpha\,{\mathcal D}_{\Lambda}+\beta\,{\mathcal H}_{\Lambda}.
\end{equation}

To prove Theorem~\ref{thm:formula:Phi:U}, we must determine $\beta$. We use the following notation: Set
$$
L:={\Bbb U}\oplus{\Bbb U}\oplus{\Bbb E}_{8}\oplus{\Bbb E}_{8}\oplus{\Bbb A}_{1}=\Lambda\oplus{\Bbb A}_{1}.
$$
Let $d\in\Delta^{+}_{L}$ be a generator of ${\Bbb A}_{1}$. 
Then $\Lambda=L\cap d^{\perp}$, $L^{\perp}={\Bbb A}_{1}^{+}$ and $A_{L}=\{0,{\bf 1}_{L}\}$, where ${\bf 1}_{L}=[d/2]$.
As before, we make the identification
$$
H_{d}=\Omega_{L\cap d^{\perp}}=\Omega_{\Lambda}.
$$
\par
Let $[\eta]\in H_{d}^{0}$ be an arbitrary point. Let $U\cong\varDelta^{19}$ be a small neighborhood of $[\eta]$ in $\Omega_{L}$ 
equipped with a system of coordinates $(t,s_{1},\ldots,s_{18})$ such that
$U\cap{\mathcal D}_{L}=U\cap H_{d}=U\cap\Omega_{\Lambda}=\{(t,s)\in U;\,t=0\}\cong\varDelta^{18}$.
Since $d\in\Delta^{+}_{L}$ and hence $g(L)=g(\Lambda)=10$ by \cite[Lemma 11.5]{Yoshikawa13}, 
the Torelli map $J_{{\Bbb A}_{1}^{+}}$ is a holomorphic map from $U$ to ${\mathcal A}_{10}$ by Theorem~\ref{thm:functoriality:Torelli:map}.
Let $\varPi\colon{\frak S}_{10}\to{\mathcal A}_{10}$ be the projection. 
Since $U$ is contractible, $J_{{\Bbb A}_{1}^{+}}\colon U\to{\mathcal A}_{10}$ is liftable. 
Namely, there exists a holomorphic map $\widetilde{J}_{{\Bbb A}_{1}^{+}}\colon U\to{\frak S}_{10}$ such that 
$$
J_{{\Bbb A}_{1}^{+}}=\varPi\circ\widetilde{J}_{{\Bbb A}_{1}^{+}}.
$$
Since $\widetilde{J}_{{\Bbb A}_{1}^{+}}$ takes its values in ${\frak S}_{10}$, 
the value $\theta_{a,b}(\widetilde{J}_{{\Bbb A}_{1}^{+}}(t,s))$ makes sense for all $(s,t)\in U$ and even $(a,b)$, $a,b\in\{0,1/2\}^{10}$. 
Since $\widetilde{J}_{{\Bbb A}_{1}^{+}}^{*}\chi_{10}=\prod_{(a,b)\,{\rm even}}\widetilde{J}_{{\Bbb A}_{1}^{+}}^{*}\theta_{a,b}$ is nowhere vanishing 
on $U\setminus({\mathcal D}_{L}\cup{\mathcal H}_{L})$
and since there exists by Proposition~\ref{thetanull (r,a)=(2,0)} a unique even theta constant $\theta_{a_{0},b_{0}}$ vanishing identically on $H_{d}$,
we get the following:
\begin{itemize}
\item
$U\cap H_{d}$ is a component of $U\cap{\rm div}(\widetilde{J}_{{\Bbb A}_{1}^{+}}^{*}\theta_{a_{0},b_{0}})$;
\item
$U\cap H_{d}$ is not a component of $U\cap{\rm div}(\widetilde{J}_{{\Bbb A}_{1}^{+}}^{*}\theta_{a,b})$ for any even $(a,b)\not=(a_{0},b_{0})$.
\end{itemize}
Thus there exist $c\in{\bf Z}_{>0}$, $c_{a,b}(\lambda)\in{\bf Z}_{\geq0}$ such that for $(a,b)=(a_{0},b_{0})$
\begin{equation}
\label{eqn:divisor:theta:constant:1}
{\rm div}(\widetilde{J}_{{\Bbb A}_{1}^{+}}^{*}\theta_{a_{0},b_{0}}^{8})|_{U}
=
c\,H_{d}
+
\sum_{\lambda\in L^{\lor}/\pm1,\,\lambda^{2}=-9/2,\,|\langle\lambda,d\rangle|=1,\,[\lambda]={\bf 1}_{L}}
c_{a_{0},b_{0}}(\lambda)\,H_{\lambda}
\end{equation}
and such that for all $(a,b)\not=(a_{0},b_{0})$
\begin{equation}
\label{eqn:divisor:theta:constant:2}
{\rm div}(\widetilde{J}_{{\Bbb A}_{1}^{+}}^{*}\theta_{a,b}^{8})|_{U}
=
\sum_{\lambda\in L^{\lor}/\pm1,\,\lambda^{2}=-9/2,\,|\langle\lambda,d\rangle|=1,\,[\lambda]={\bf 1}_{L}}
c_{a,b}(\lambda)\,H_{\lambda}.
\end{equation}
Let $s_{d}\in O(L)$ be the reflection with respect to $d$. 
Since $J_{{\Bbb A}_{1}^{+}}\circ s_{d}=J_{{\Bbb A}_{1}^{+}}$, we have $s_{d}^{*}\circ(\widetilde{J}_{{\Bbb A}_{1}^{+}})^{*}=(\widetilde{J}_{{\Bbb A}_{1}^{+}})^{*}$,
which implies the following equality for every even pair $(a,b)$
\begin{equation}
\label{eqn:invariance:coeff:reflection}
c_{a,b}(s_{d}(\lambda))=c_{a,b}(\lambda).
\end{equation}

\begin{lemma}
\label{lemma:concentration:nonzero:c_(a,b)}
Let $\lambda\in L^{\lor}$ be such that $\lambda^{2}=-9/2$, $|\langle\lambda,d\rangle|=1$, $[\lambda]={\bf 1}_{L}$.
If $c_{a_{0},b_{0}}(\lambda)>0$, then $c_{a,b}(\lambda)=0$ for all $(a,b)\not=(a_{0},b_{0})$.
\end{lemma}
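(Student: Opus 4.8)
The plan is to translate the vanishing of each pulled-back theta constant $\widetilde{J}_{{\Bbb A}_{1}^{+}}^{*}\theta_{a,b}$ along a Heegner hyperplane $H_{\lambda}$ into a statement about effective even theta characteristics on the fixed curve, and then to contradict the uniqueness of such a characteristic along $\overline{\mathcal H}_{L}$. First I would observe that the vectors $\lambda\in L^{\lor}$ occurring in \eqref{eqn:divisor:theta:constant:1} and \eqref{eqn:divisor:theta:constant:2}, namely those with $\lambda^{2}=-9/2=\varepsilon_{L}$ and $[\lambda]={\bf 1}_{L}$, are precisely the defining vectors of the characteristic Heegner divisor ${\mathcal H}_{L}$, where $L=\Lambda_{10,0}$. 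Hence each such $H_{\lambda}$ is a component of ${\mathcal H}_{L}$, and a general point of $H_{\lambda}\cap U$ lies in $\Omega_{L}^{0}$ off $H_{d}$, so it corresponds to a $2$-elementary $K3$ surface of type ${\Bbb A}_{1}^{+}$ whose genus $10$ fixed curve $C$ is smooth and whose period, under the marking implicit in $\widetilde{J}_{{\Bbb A}_{1}^{+}}$, lies in ${\frak S}_{10}$. By the Riemann singularity theorem, $c_{a,b}(\lambda)>0$ holds exactly when $\theta_{a,b}$ vanishes at $\widetilde{J}_{{\Bbb A}_{1}^{+}}$ of a general point of $H_{\lambda}$, i.e. when the even theta characteristic labelled $(a,b)$ is effective on the generic such $C$.

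Granting this dictionary, I would argue by contradiction. Suppose $c_{a_{0},b_{0}}(\lambda)>0$ and also $c_{a,b}(\lambda)>0$ for some even pair $(a,b)\not=(a_{0},b_{0})$. Then at a general point of $H_{\lambda}$ the two \emph{distinct} even theta constants $\theta_{a_{0},b_{0}}$ and $\theta_{a,b}$ both vanish, so by Lemma~\ref{lemma:locus:vanishing:two:theta:char} the associated curve $C$ carries at least two effective even theta characteristics; equivalently $\overline{\mu}_{L}(C)\in{\frak M}_{10}''$. On the other hand, $\overline{\mathcal H}_{L}=\overline{\mathcal H}_{\Lambda_{10,0}}$ is irreducible by Theorem~\ref{thetanull=Heegner}, so the general point of $H_{\lambda}$ maps to a general member of $\overline{\mathcal H}_{L}\cap{\mathcal M}_{L}^{0}=\overline{\mu}_{L}^{-1}({\frak M}_{10}')$. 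By Remark~\ref{remark:uniqueness theta chara k=0} such a general member has a \emph{unique} effective even theta characteristic, hence does not lie in ${\frak M}_{10}''$. This contradiction forces $c_{a,b}(\lambda)=0$ for every $(a,b)\not=(a_{0},b_{0})$, which is the claim. (The same argument in fact shows that at most one even theta constant can vanish along each $H_{\lambda}$, but only the stated special case is needed below.)

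The step requiring the most care, and the main obstacle, is precisely the dictionary between the analytic vanishing of $\widetilde{J}_{{\Bbb A}_{1}^{+}}^{*}\theta_{a,b}$ along $H_{\lambda}$ and the presence of an effective even theta characteristic on the geometric curve $C$: I must verify that $\widetilde{J}_{{\Bbb A}_{1}^{+}}$ genuinely lands in ${\frak S}_{10}$ on a full neighbourhood of the general point of $H_{\lambda}$, so that the individual constants $\theta_{a,b}$ (and not merely the invariants $\chi_{10}$, $\Upsilon_{10}$) are defined there, and that distinct pairs $(a_{0},b_{0})\not=(a,b)$ label genuinely distinct theta characteristics on $C$ for the chosen symplectic basis. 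Both points are local: the first follows from the contractibility of $U$ used to lift $J_{{\Bbb A}_{1}^{+}}$ to $\widetilde{J}_{{\Bbb A}_{1}^{+}}$, and the second from the fact that the assignment of even theta characteristics to pairs $(a,b)$ is a bijection once a marking is fixed. Finally I note that the reflection relation \eqref{eqn:invariance:coeff:reflection} plays no role here; it is needed only to compare the coefficients at $\lambda$ and $s_{d}(\lambda)=\lambda'\mp(d/2)$ in the subsequent determination of $\beta$.
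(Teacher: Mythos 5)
Your proposal is correct and follows essentially the same route as the paper: the paper's proof likewise observes that if $c_{a_{0},b_{0}}(\lambda)>0$ and $c_{a',b'}(\lambda)>0$ for distinct even pairs, then every $2$-elementary $K3$ surface of type ${\Bbb A}_{1}^{+}$ with period in $U\cap H_{\lambda}\setminus H_{d}$ would carry two distinct effective even theta characteristics, contradicting Remark~\ref{remark:uniqueness theta chara k=0}. Your extra care about the lift $\widetilde{J}_{{\Bbb A}_{1}^{+}}$ landing in ${\frak S}_{10}$ and the identification of the $H_{\lambda}$ with components of ${\mathcal H}_{L}$ matches what the surrounding text of Section~\ref{sect:9.3} already sets up.
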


\begin{pf}
Assume $c_{a_{0},b_{0}}(\lambda)>0$ and $c_{a',b'}(\lambda)>0$ for some even $(a',b')\not=(a_{0},b_{0})$.
Then, for every $2$-elementary $K3$ surface $(X,\iota)$ of type ${\Bbb A}_{1}^{+}$ whose period lies in $U\cap H_{\lambda}\setminus H_{d}$, 
$X^{\iota}$ has two distinct effective even half canonical bundles corresponding to $(a_{0},b_{0})$ and $(a',b')$. 
This contradicts Remark~\ref{remark:uniqueness theta chara k=0}. 
\end{pf}

Recall that $i\colon\Omega_{\Lambda}\hookrightarrow\Omega_{L}$ is the inclusion induced by that of lattices $\Lambda=L\cap d^{\perp}\subset L$.
On $H_{d}^{0}\cap U$, set 
\begin{equation}
\label{eqn:functoriality:Torelli:map:2}
\widetilde{J}_{M}:=\widetilde{J}_{{\Bbb A}_{1}^{+}}|_{U\cap H_{d}^{0}}.
\end{equation}
By \eqref{eqn:functoriality:Torelli:map}, we have $J_{M}=\varPi\circ\widetilde{J}_{M}$.
\par
Since $U\cap H_{d}\subset H_{d}^{0}=\Omega_{\Lambda}^{0}$, we have the following equality of divisors on 
$U\cap\Omega_{\Lambda}^{0}$ by \eqref{eqn:divisor:Upsilon:U:1}
\begin{equation}
\label{eqn:divisor:Upsilon:U:2}
{\rm div}(\widetilde{J}_{M}^{*}\Upsilon_{10})|_{U\cap H_{d}}
=
\beta\,\sum_{\mu\in\Lambda/\pm1,\,\mu^{2}=-4}H_{\mu}.
\end{equation}

\begin{lemma}
\label{lemma:coeff:div:theta:const}
For every $\lambda\in L^{\lor}$ with 
$\lambda^{2}=-9/2$, $\langle\lambda,d\rangle=\pm1$, $\lambda\equiv{\bf 1}_{L}\mod L$,
the following equalities hold
$$
c_{a_{0},b_{0}}(\lambda)=0,
\qquad
\sum_{(a,b)\not=(a_{0},b_{0})}c_{a,b}(\lambda)=\beta/2.
$$
\end{lemma}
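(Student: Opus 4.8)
The plan is to compute ${\rm div}(\widetilde{J}_{M}^{*}\Upsilon_{10})$ along $U\cap H_{d}$ in two ways and equate the coefficient of a single prime divisor. Fix $\lambda$ as in the statement. Using the orthogonal splitting $L=\Lambda\oplus{\bf Z}d$ (recall $d\in\Delta_{L}^{+}$), write $\lambda=\lambda'+m(d/2)$ with $\lambda'\in\Lambda^{\lor}$ the $\Lambda^{\lor}$-component and $m=-\langle\lambda,d\rangle=\pm1$; since $\Lambda$ is unimodular we have $\lambda'\in\Lambda$, and a short computation using $(d/2)^{2}=-1/2$ gives $(\lambda')^{2}=\lambda^{2}+m^{2}/2=-4$. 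Thus $H_{\lambda'}$ is one of the prime components of ${\mathcal H}_{\Lambda}$ occurring in \eqref{eqn:divisor:Upsilon:U:2}. I will take $[\eta]$ to be a general point of $H_{\lambda'}$; as $\lambda'$ is not a root, this point still lies in $H_{d}^{0}$, so the local picture and the identities \eqref{eqn:divisor:theta:constant:2}, \eqref{eqn:divisor:Upsilon:U:2} remain valid, and now $H_{\lambda'}$ meets $U$.

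The first computation exploits the product structure $\Upsilon_{10}=\sum_{(a,b)}\prod_{(a',b')\neq(a,b)}\theta_{a',b'}^{8}$. Because $\theta_{a_{0},b_{0}}$ vanishes identically on $H_{d}$, every summand except the one indexed by $(a_{0},b_{0})$ carries a factor $\theta_{a_{0},b_{0}}^{8}$ and hence restricts to $0$ on $H_{d}$; therefore, as sections over $U\cap H_{d}$,
$$
\widetilde{J}_{M}^{*}\Upsilon_{10}=\prod_{(a,b)\neq(a_{0},b_{0})}\widetilde{J}_{M}^{*}\theta_{a,b}^{8},
$$
so that ${\rm div}(\widetilde{J}_{M}^{*}\Upsilon_{10})|_{U\cap H_{d}}=\sum_{(a,b)\neq(a_{0},b_{0})}{\rm div}(\widetilde{J}_{M}^{*}\theta_{a,b}^{8})|_{U\cap H_{d}}$.

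Next I would restrict the divisors \eqref{eqn:divisor:theta:constant:2} to $H_{d}$. For $(a,b)\neq(a_{0},b_{0})$ the divisor $H_{d}$ is not a component, so the restriction is legitimate. For a vector $\mu$ in the support of \eqref{eqn:divisor:theta:constant:2} one has $\langle x,\mu\rangle=\langle x,\mu'\rangle$ for all $x\in\Lambda\otimes{\bf C}$, where $\mu'$ is the $\Lambda^{\lor}$-component of $\mu$; hence $H_{\mu}$ meets $H_{d}$ transversally in the reduced divisor $H_{\mu'}$. The vectors $\mu$ with $\mu^{2}=-9/2$, $|\langle\mu,d\rangle|=1$, $[\mu]={\bf 1}_{L}$ projecting to $\lambda'$ are exactly $\mu=\lambda'\pm(d/2)$, and these are interchanged by $s_{d}$. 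By the reflection invariance \eqref{eqn:invariance:coeff:reflection}, the coefficient of $H_{\lambda'}$ in ${\rm div}(\widetilde{J}_{M}^{*}\theta_{a,b}^{8})|_{U\cap H_{d}}$ equals $c_{a,b}(\lambda'+d/2)+c_{a,b}(\lambda'-d/2)=2c_{a,b}(\lambda)$. Summing over $(a,b)\neq(a_{0},b_{0})$, the coefficient of $H_{\lambda'}$ in ${\rm div}(\widetilde{J}_{M}^{*}\Upsilon_{10})|_{U\cap H_{d}}$ is $2\sum_{(a,b)\neq(a_{0},b_{0})}c_{a,b}(\lambda)$.

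To finish, I compare with \eqref{eqn:divisor:Upsilon:U:2}, in which the coefficient of $H_{\lambda'}$ is $\beta$; here I use that, as $[\eta]\in H_{d}^{0}$ avoids every root hyperplane other than $H_{d}$, the term $\alpha{\mathcal D}_{\Lambda}$ of \eqref{eqn:divisor:Upsilon:U:1} does not meet $U\cap H_{d}$. This yields $2\sum_{(a,b)\neq(a_{0},b_{0})}c_{a,b}(\lambda)=\beta$, the second asserted equality. Since $\beta>0$, the sum is strictly positive, so $c_{a_{1},b_{1}}(\lambda)>0$ for some $(a_{1},b_{1})\neq(a_{0},b_{0})$; the contrapositive of Lemma~\ref{lemma:concentration:nonzero:c_(a,b)} then forces $c_{a_{0},b_{0}}(\lambda)=0$, the first equality. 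I expect the only delicate point to be the bookkeeping in the third paragraph—verifying the transversal, multiplicity-one identity $H_{\mu}\cap H_{d}=H_{\mu'}$ and the precise two-to-one structure of the lifts $\lambda'\pm(d/2)$ together with the $s_{d}$-symmetry—rather than any conceptual obstacle.
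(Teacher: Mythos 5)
Your proposal is correct and follows essentially the same route as the paper: identify $\widetilde{J}_{M}^{*}\Upsilon_{10}|_{U\cap H_{d}}$ with $\prod_{(a,b)\neq(a_{0},b_{0})}\widetilde{J}_{M}^{*}\theta_{a,b}^{8}$, restrict \eqref{eqn:divisor:theta:constant:2} to $H_{d}$ via the two-to-one correspondence $\lambda=\lambda'\pm d/2$ of \eqref{eqn:fiber:projection}, compare with \eqref{eqn:divisor:Upsilon:U:2}, halve by the $s_{d}$-invariance \eqref{eqn:invariance:coeff:reflection}, and invoke Lemma~\ref{lemma:concentration:nonzero:c_(a,b)} together with $\beta>0$ to force $c_{a_{0},b_{0}}(\lambda)=0$. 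Your explicit choice of $[\eta]$ on $H_{\lambda'}\cap H_{d}^{0}$ so that the relevant component actually meets $U$ is a point of care the paper leaves implicit, but the argument is the same.
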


\begin{pf}
Since
$
\widetilde{J}_{{\Bbb A}_{1}^{+}}^{*}\left(\chi_{10}/\theta_{a_{0},b_{0}}\right)^{8}|_{U\cap H_{d}}
=
\widetilde{J}_{{\Bbb A}_{1}^{+}}^{*}\Upsilon_{10}|_{U\cap H_{d}}
$
by the definitions of $\Upsilon_{g}$ and $\theta_{a_{0},b_{0}}$, we get the equality of divisors on $U\cap\Omega_{\Lambda}^{0}$
\begin{equation}
\label{eqn:divisor:chi10/theta:1}
i^{*}{\rm div}(\widetilde{J}_{{\Bbb A}_{1}^{+}}^{*}\left(\chi_{10}/\theta_{a_{0},b_{0}}\right)^{8}|_{U})
=
i^{*}{\rm div}(\widetilde{J}_{{\Bbb A}_{1}^{+}}^{*}\Upsilon_{10}|_{U})
=
{\rm div}(\widetilde{J}_{M}^{*}\Upsilon_{10}),
\end{equation}
where the second equality follows from \eqref{eqn:functoriality:Torelli:map:2}.
On the other hand, we get by \eqref{eqn:divisor:theta:constant:2}
$$
{\rm div}(\widetilde{J}_{{\Bbb A}_{1}^{+}}^{*}\left(\chi_{10}/\theta_{a_{0},b_{0}}\right)^{8}|_{U})
=
\sum_{\lambda\in L^{\lor}/\pm1,\,\lambda^{2}=-9/2,\,|\langle\lambda,d\rangle|=1,\,[\lambda]={\bf 1}_{L}}
(\sum_{(a,b)\not=(a_{0},b_{0})}c_{a,b}(\lambda))\,H_{\lambda},
$$
which, together with \eqref{eqn:fiber:projection}, yields the following equality of divisors on $U$
\begin{equation}
\label{eqn:divisor:chi10/theta:2}
i^{*}{\rm div}\left(\widetilde{J}_{{\Bbb A}_{1}^{+}}^{*}\left(\frac{\chi_{10}}{\theta_{a_{0},b_{0}}}\right)^{8}|_{U}\right)
=
\sum_{\mu\in\Lambda/\pm1,\,\mu^{2}=-4}
\sum_{(a,b)\not=(a_{0},b_{0})}\{c_{a,b}(\mu+\frac{d}{2})+c_{a,b}(\mu-\frac{d}{2})\}\,H_{\mu}.
\end{equation}
Substituting \eqref{eqn:divisor:Upsilon:U:2}, \eqref{eqn:divisor:chi10/theta:2} into \eqref{eqn:divisor:chi10/theta:1}
and comparing the coefficients of $H_{\mu}$, we get
\begin{equation}
\label{eqn:sum:c_(a,b)}
\sum_{(a,b)\not=(a_{0},b_{0})}\{c_{a,b}(\mu+\frac{d}{2})+c_{a,b}(\mu-\frac{d}{2})\}=\beta.
\end{equation}
Since $c_{a,b}(\mu+\frac{d}{2})=c_{a,b}(\mu-\frac{d}{2})$ by \eqref{eqn:invariance:coeff:reflection}, 
we get $\sum_{(a,b)\not=(a_{0},b_{0})}c_{a,b}(\lambda)=\beta/2$ by \eqref{eqn:sum:c_(a,b)}.
If $c_{a_{0},b_{0}}(\lambda)>0$, then $\sum_{(a,b)\not=(a_{0},b_{0})}c_{a,b}(\lambda)=0$ by Lemma~\ref{lemma:concentration:nonzero:c_(a,b)}.
Since $\beta>0$, this contradicts the equality $\sum_{(a,b)\not=(a_{0},b_{0})}c_{a,b}(\lambda)=\beta/2$.
Thus $c_{a_{0},b_{0}}(\lambda)=0$.
\end{pf}

\begin{lemma}
\label{lemma:value:beta:U}
One has the equality $\beta=2^{5}$.
\end{lemma}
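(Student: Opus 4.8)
The plan is to read off $\beta$ by matching the local divisor computation near $H_d^0$ recorded in \eqref{eqn:divisor:theta:constant:1}--\eqref{eqn:divisor:theta:constant:2} against the already-known global divisor of $J_{{\Bbb A}_{1}^{+}}^{*}\chi_{10}^{8}$ on $\Omega_{L}$. Recall that $L=\Lambda_{10,0}=({\Bbb A}_{1}^{+})^{\perp}$, so that Remark~\ref{remark:thetanull:(g,k)=(10,1)}, in particular \eqref{eqn:c:(10,0)}, gives
\[
{\rm div}(J_{{\Bbb A}_{1}^{+}}^{*}\chi_{10}^{8})=2^{19}\,{\mathcal D}_{L}^{-}+2^{4}\,{\mathcal H}_{L}+2^{4}\cdot7\,{\mathcal D}_{L}^{+},
\]
so that the coefficient $b_{10}$ of the reduced characteristic Heegner divisor ${\mathcal H}_{L}$ equals $2^{4}$.

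First I would fix one of the hyperplanes $H_{\lambda}$ occurring in \eqref{eqn:divisor:theta:constant:1}--\eqref{eqn:divisor:theta:constant:2}, namely $\lambda\in L^{\lor}$ with $\lambda^{2}=-9/2$, $|\langle\lambda,d\rangle|=1$ and $[\lambda]={\bf 1}_{L}$. Since $\varepsilon_{L}=(12-r(L))/2=-9/2$ and $[\lambda]={\bf 1}_{L}$, the hyperplane $H_{\lambda}$ is by definition a component of ${\mathcal H}_{L}$; it is not a component of ${\mathcal D}_{L}^{\pm}$, which are cut out by roots (norm $-2$ vectors), because $\lambda^{2}=-9/2$. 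Hence the multiplicity of ${\rm div}(J_{{\Bbb A}_{1}^{+}}^{*}\chi_{10}^{8})$ along $H_{\lambda}$ equals exactly $b_{10}=2^{4}$.

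On the other hand, since $\chi_{10}^{8}=\prod_{(a,b)\,{\rm even}}\theta_{a,b}^{8}$, summing \eqref{eqn:divisor:theta:constant:1} and \eqref{eqn:divisor:theta:constant:2} over all even $(a,b)$ shows that this same multiplicity along $H_{\lambda}$ equals $\sum_{(a,b)}c_{a,b}(\lambda)$. By Lemma~\ref{lemma:coeff:div:theta:const} we have $c_{a_{0},b_{0}}(\lambda)=0$ and $\sum_{(a,b)\neq(a_{0},b_{0})}c_{a,b}(\lambda)=\beta/2$, whence $\sum_{(a,b)}c_{a,b}(\lambda)=\beta/2$. Comparing the two evaluations of the multiplicity along $H_{\lambda}$ then yields $\beta/2=2^{4}$, that is, $\beta=2^{5}$.

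The one point requiring care, and the main obstacle, is justifying that the coefficients $c_{a,b}(\lambda)$ computed in the contractible chart $U$ about a generic $[\eta]\in H_{d}^{0}$ do record the global multiplicity of $J_{{\Bbb A}_{1}^{+}}^{*}\chi_{10}^{8}$ along the (irreducible) divisor $H_{\lambda}$, and that no stray component of ${\mathcal D}_{L}^{\pm}$ contributes to $H_{\lambda}$ inside $U$. The former holds because $J_{{\Bbb A}_{1}^{+}}^{*}\chi_{10}^{8}$ is a global automorphic form and $U\cap H_{\lambda}$ is a dense open subset of $H_{\lambda}$, so the local and global multiplicities agree; the latter follows from the norm distinction just noted together with the defining property of $H_{d}^{0}$. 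Once these are in place, the numerical comparison above is immediate.
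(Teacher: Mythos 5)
Your proof is correct and follows essentially the same route as the paper: both compare the local divisor data \eqref{eqn:divisor:theta:constant:1}--\eqref{eqn:divisor:theta:constant:2} together with Lemma~\ref{lemma:coeff:div:theta:const} against the global formula ${\rm div}(J_{{\Bbb A}_{1}^{+}}^{*}\chi_{10}^{8})=2^{19}\,{\mathcal D}_{L}^{-}+2^{4}\,{\mathcal H}_{L}+2^{4}\cdot 7\,{\mathcal D}_{L}^{+}$ of Remark~\ref{remark:thetanull:(g,k)=(10,1)}, the paper matching the whole divisor on $U$ where you match only the coefficient along a single $H_{\lambda}$. One minor caveat: excluding $H_{\lambda}$ from being a component of ${\mathcal D}_{L}^{\pm}$ is not purely a matter of norms (a vector of norm $-9/2$ with $[\lambda]={\bf 1}_{L}$ could a priori equal $\tfrac{3}{2}\delta$ for some $\delta\in\Delta_{L}^{+}$, in which case $H_{\lambda}=H_{\delta}$); what actually rules this out is your hypothesis $|\langle\lambda,d\rangle|=1$, since $\langle\tfrac{3}{2}\delta,d\rangle\in 3{\bf Z}$ for $\delta\in\Delta_{L}^{+}$.
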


\begin{pf}
By \eqref{eqn:divisor:theta:constant:1}, \eqref{eqn:divisor:theta:constant:2} and Lemma~\ref{lemma:coeff:div:theta:const}, we get on $U$
$$
{\rm div}(\widetilde{J}_{{\Bbb A}_{1}^{+}}^{*}\theta_{a_{0},b_{0}}^{8})=c\,H_{d},
\qquad
{\rm div}(\widetilde{J}_{{\Bbb A}_{1}^{+}}^{*}(\chi_{10}/\theta_{a_{0},b_{0}})^{8})
=
(\beta/2)\cdot{\mathcal H}_{L},
$$
which yields the equality of divisors 
${\rm div}(J_{{\Bbb A}_{1}^{+}}^{*}\chi_{10}^{8})=c\,{\mathcal D}_{L}^{+}+(\beta/2)\cdot{\mathcal H}_{L}$
on $U$.
By Remark~\ref{remark:thetanull:(g,k)=(10,1)}, we get $c=c_{10}=2^{4}\cdot7$ and $\beta/2=b_{10}=2^{4}$.
Thus $\beta=2^{5}$.
\end{pf}

{\bf Proof of Theorem~\ref{thm:formula:Phi:U} }
By Theorem~\ref{thm:Borcherds:lift} (1), we have
\begin{equation}
\label{eqn:weight:Psi:II_{2,18}}
{\rm wt}(\Psi_{\Lambda}^{2^{4}})=-4(2^{5}+1)(2^{9}-2^{4}+1),
\qquad
{\rm div}(\Psi_{\Lambda}^{2^{4}})=2^{4}\,{\mathcal D}_{\Lambda}-{\mathcal H}_{\Lambda}.
\end{equation}
By \cite[Th.\,13.3]{Borcherds98}, we get
\begin{equation}
\label{eqn:weight:divisor:lift:E_4/eta^24}
{\rm wt}(\Psi_{\Lambda}(\cdot,f_{\Lambda}))=2^{2}(2^{5}+1),
\qquad
{\rm div}(\Psi_{\Lambda}(\cdot,f_{\Lambda}))={\mathcal D}_{\Lambda}.
\end{equation}
By \eqref{eqn:weight:Psi:II_{2,18}}, \eqref{eqn:weight:divisor:lift:E_4/eta^24}, we get
\begin{equation}
\label{eqn:weight:Psi:Lambda:U}
{\rm wt}(\Psi_{\Lambda}(\cdot,2^{g-1}F_{\Lambda}+f_{\Lambda}))
=
-2^{2}(2^{9}+1)(2^{10}-1),
\end{equation}
\begin{equation}
\label{eqn:divisor:Psi:Lambda:U}
{\rm div}(\Psi_{\Lambda}(\cdot,2^{g-1}F_{\Lambda}+f_{\Lambda}))
=
(2^{9}+1){\mathcal D}_{\Lambda}-2^{5}{\mathcal H}_{\Lambda}.
\end{equation}
Set
$\varphi_{\Lambda}:=\Psi_{\Lambda}(\cdot,2^{g-1}F_{\Lambda}+f_{\Lambda})^{\ell}\otimes\Upsilon_{g}^{\ell}/\Phi_{M}^{(2^{g-1}+1)(2^{g}-1)}$.
Since ${\rm wt}(\Phi_{M})=(-4\ell,4\ell)$ and ${\rm div}(\Phi_{M})=\ell\,{\mathcal D}_{\Lambda}$, we deduce from
\eqref{eqn:weight:Psi:Lambda:U}, \eqref{eqn:divisor:Psi:Lambda:U} that
${\rm wt}(\varphi_{\Lambda})=(0,0)$ 
and
$$
{\rm div}(\varphi_{\Lambda})
=
\ell\{\alpha-2(2^{18}-1)\}\,{\mathcal D}_{\Lambda}+(\beta-2^{5})\,{\mathcal H}_{\Lambda}
=
\ell\{\alpha-2(2^{18}-1)\}\,{\mathcal D}_{\Lambda},
$$
where we used Lemma~\ref{lemma:value:beta:U} to get the second equality.
By Lemma~\ref{lemma:non-vanishing:Upsilon:(r,a)=(2,0)} and Theorem~\ref{thm:structure:g<6:r<10} for $[M\perp d]$, $d\in\Delta_{\Lambda}$, 
Lemma~\ref{lemma:estimate:alpha:U(2)} applies to a general curve $\gamma\colon\varDelta\to{\mathcal M}_{\Lambda}$
intersecting $\overline{\mathcal D}_{\Lambda}^{0}$ transversally. 
Since $\alpha\geq 2(2^{18}-1)$ by Lemma~\ref{lemma:estimate:alpha:U(2)}, we get ${\rm div}(\varphi_{\Lambda})\geq0$. 
As before, this implies that $\varphi_{\Lambda}$ is a constant.
\qed

\subsection
{The structure of $\Phi_{{\Bbb U}(2)}$}
\label{sect:9.4}
\par
In Section~\ref{sect:9.4}, we set
$$
M:={\Bbb U}(2),
\qquad
\Lambda=M^{\perp}:={\Bbb U}(2)\oplus{\Bbb U}\oplus{\Bbb E}_{8}\oplus{\Bbb E}_{8}.
$$
Then $g=9$ and $J_{M}^{*}\chi_{9}$ vanishes identically.
Let $\{{\frak e},{\frak f}\}$ be a basis of $M={\Bbb U}(2)$ with ${\frak e}^{2}={\frak f}^{2}=0$ and $\langle{\frak e},{\frak f}\rangle=2$.
Hence $({\frak e}+{\frak f})/2\in A_{\Lambda}$ is the unique element with non-zero norm.
Let ${\bf e}_{00},{\bf e}_{01},{\bf e}_{10},{\bf e}_{11}$ be the standard basis of ${\bf C}[A_{\Lambda}]={\bf C}[A_{{\Bbb U}(2)}]$,
where ${\bf e}_{\alpha\beta}$ corresponds to $(\alpha\,{\frak e}+\beta\,{\frak f})/2\in A_{\Lambda}$. 
Applying the construction \cite[Proof of Lemma 11.1]{Borcherds00}, \cite[Th.\,6.2]{Scheithauer06}
to the modular form $\eta(\tau)^{-8}\eta(2\tau)^{-8}$, we define
\begin{equation}
\label{eqn:correcting:modular:form:(r,a)=(2,2)}
\begin{aligned}
f_{\Lambda}(\tau)
&:=
8
\sum_{\gamma\in A_{\Lambda}}
\{
\eta\left(\frac{\tau}{2}\right)^{-8}\eta(\tau)^{-8}
+
(-1)^{q_{\Lambda}(\gamma)}\eta\left(\frac{\tau+1}{2}\right)^{-8}\eta(\tau+1)^{-8}
\}
\,{\bf e}_{\gamma}
\\
&\quad
+\eta(\tau)^{-8}\eta(2\tau)^{-8}\,{\bf e}_{00}.
\end{aligned}
\end{equation}
Then $f_{\Lambda}(\tau)$ is an $O(A_{\Lambda})$-invariant modular form of weight $-8$ and of type $\rho_{\Lambda}$ with principal part 
\begin{equation}
\label{eqn:principal:part:f:(r,a)=(2,2)}
{\mathcal P}_{\leq0}[f_{\Lambda}]=(q^{-1}+136)\,{\bf e}_{00}+16q^{-1/2}\,{\bf e}_{11}.
\end{equation}
Since $O(q_{\Lambda})$ preserves $({\frak e}+{\frak f})/2$, the Heegner divisor of $\Omega_{\Lambda}$
$$
{\mathcal H}_{\Lambda}(-1,{\bf e}_{11})
:=
\sum_{\lambda\in\Lambda^{\lor}/\pm1;\,\lambda^{2}=-1,\,[\lambda]=({\frak e}+{\frak f})/2\mod\Lambda}H_{\lambda}
$$ 
is $O^{+}(\Lambda)$-invariant. 
By \cite[Th.\,13.3]{Borcherds98}, the Borcherds lift $\Psi_{\Lambda}(\cdot,f_{\Lambda})$ is an automorphic form on $\Omega_{\Lambda}$ 
for $O^{+}(\Lambda)$ such that
\begin{equation}
\label{eqn:zero:divisor:Borcherds:product:f}
{\rm wt}\,\Psi_{\Lambda}(\cdot,f_{\Lambda})
=
68=2^{6}+2^{2},
\qquad
{\rm div}\,\Psi_{\Lambda}(\cdot,f_{\Lambda})
=
{\mathcal D}_{\Lambda}+2^{4}\,{\mathcal H}_{\Lambda}(-1,{\bf e}_{11}).
\end{equation}

In this subsection, we prove the following:

\begin{theorem}
\label{thm:formula:Phi:U(2)}
There exists a constant $C_{M,\ell}>0$ such that
$$
\Phi_{M}^{(2^{g-1}+1)(2^{g}-1)}
=
C_{M,\ell}\,\Psi_{\Lambda}(\cdot,2^{g-1}F_{\Lambda}+f_{\Lambda})^{\ell}\otimes J_{M}^{*}\Upsilon_{g}^{\ell}.
$$
In particular, there is a constant $C_{M}>0$ depending only on $M={\Bbb U}(2)$ such that 
$$
\tau_{M}^{-(2^{g}+2)(2^{g}-1)}
=
C_{M}\,
\left\|
\Psi_{\Lambda}(\cdot,2^{g-1}F_{\Lambda}+f_{\Lambda})
\right\|
\cdot 
J_{M}^{*}
\left\|
\Upsilon_{g}
\right\|.
$$
\end{theorem}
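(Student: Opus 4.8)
The plan is to mirror the proof of Theorem~\ref{thm:formula:Phi:U} for $M={\Bbb U}$, reducing the identity to a divisor computation together with one exact numerical constant. Here $\Lambda={\Bbb U}(2)\oplus{\Bbb U}\oplus{\Bbb E}_{8}^{\oplus2}$ has $r(\Lambda)=20$, $g=9$, $\delta(\Lambda)=0$, ${\bf 1}_{\Lambda}=0$ and $\varepsilon_{\Lambda}=-4$. Since $\Lambda$ has no root of divisor $2$, one has $\mathcal{D}_{\Lambda}^{+}=0$, so $\mathcal{D}_{\Lambda}=\mathcal{D}_{\Lambda}^{-}$, which is irreducible by the Eichler criterion (as $\Lambda\supset{\Bbb U}\oplus{\Bbb U}(2)$). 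The characteristic Heegner divisor splits as $\mathcal{H}_{\Lambda}=\mathcal{H}_{1}+\mathcal{H}_{2}$ according to whether the defining norm $-4$ vectors have divisor $1$ or $2$, and passing from $\lambda\in\Lambda^{\vee}$ of norm $-1$ to $2\lambda\in\Lambda$ of norm $-4$ identifies $\mathcal{H}_{\Lambda}(-1,{\bf e}_{11})=\mathcal{H}_{2}$. I would set
$$
\varphi_{\Lambda}:=\Psi_{\Lambda}(\cdot,2^{g-1}F_{\Lambda}+f_{\Lambda})^{\ell}\otimes J_{M}^{*}\Upsilon_{g}^{\ell}\big/\Phi_{M}^{(2^{g-1}+1)(2^{g}-1)}
$$
and aim to show that $\varphi_{\Lambda}$ is a nonzero constant; that $\mathrm{wt}\,\varphi_{\Lambda}=(0,0)$ is a direct check against Theorem~\ref{thm:Borcherds:lift}, \eqref{eqn:zero:divisor:Borcherds:product:f} and Theorem~\ref{thm:automorphic:property:Phi:M}.

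Next I would compute the divisor of the Borcherds factor. By Theorem~\ref{thm:Borcherds:lift}(1) with $r(\Lambda)=20$ and $\mathcal{D}_{\Lambda}^{+}=0$ one has $\mathrm{div}\,\Psi_{\Lambda}(\cdot,2^{g-1}F_{\Lambda})=2^{g-1}\mathcal{D}_{\Lambda}-2^{4}(\mathcal{H}_{1}+\mathcal{H}_{2})$, while \eqref{eqn:zero:divisor:Borcherds:product:f} contributes $\mathcal{D}_{\Lambda}+2^{4}\mathcal{H}_{2}$; the $\mathcal{H}_{2}$-terms cancel and
$$
\mathrm{div}\,\Psi_{\Lambda}(\cdot,2^{g-1}F_{\Lambda}+f_{\Lambda})=(2^{g-1}+1)\mathcal{D}_{\Lambda}-2^{4}\mathcal{H}_{1}.
$$
This cancellation is precisely the role of the correcting form $f_{\Lambda}$. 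On the other hand, Proposition~\ref{thetanull (r,a)=(2,2)} shows $J_{M}^{*}\Upsilon_{g}\not\equiv0$ and that it vanishes on $\Omega_{\Lambda}^{0}$ exactly along $\mathcal{H}_{1}$; since $\overline{\mathcal{H}}_{1}$ and $\overline{\mathcal{D}}_{\Lambda}$ are irreducible, $\mathrm{div}\,J_{M}^{*}\Upsilon_{g}=\alpha\,\mathcal{D}_{\Lambda}+\beta_{1}\mathcal{H}_{1}$ with $\alpha\geq0$ and $\beta_{1}>0$. Hence $\mathrm{div}\,\varphi_{\Lambda}=\ell\{\alpha-2(2^{2(g-1)}-1)\}\mathcal{D}_{\Lambda}+\ell(\beta_{1}-2^{4})\mathcal{H}_{1}$, so everything reduces to the estimates $\alpha\geq2(2^{2(g-1)}-1)$ and $\beta_{1}=2^{4}$.

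The bound on $\alpha$ I would obtain as in the $M={\Bbb U}$ case, by applying Lemma~\ref{lemma:estimate:alpha:U(2)} to a general disc transverse to $\overline{\mathcal{D}}_{\Lambda}^{0}$: its hypotheses hold because $J_{M}^{*}\chi_{g}\equiv0$, $J_{M}^{*}\Upsilon_{g}\not\equiv0$, and $\chi_{g-1}(J_{[M\perp d]})\not\equiv0$ for $d\in\Delta_{\Lambda}^{-}$, the last since $[M\perp d]$ has $r=3\neq2,10$ and so falls under the already-proven cases of Theorems~\ref{thm:structure:g<6:r<10} and \ref{thm:structure:r=6:delta=0}. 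The genuine difficulty is the exact value $\beta_{1}=2^{4}$. Unlike the unimodular case $M={\Bbb U}$, here $\Lambda$ is not unimodular and $\Lambda\oplus{\Bbb A}_{1}$ is not the orthogonal complement of any admissible type, so $\Omega_{\Lambda}$ cannot be realized as a divisor $2$ hyperplane of a larger modular variety of the \emph{same} genus. Instead I would embed $\Omega_{\Lambda}=H_{d}$ as the hyperplane of a \emph{divisor one} root $d\in\Delta_{L}^{-}$ inside $\Omega_{L}$ with $L={\Bbb U}^{\oplus2}\oplus{\Bbb E}_{8}^{\oplus2}\oplus{\Bbb A}_{1}$ (so $L^{\perp}={\Bbb A}_{1}^{+}$, $g(L)=10$, $L\cap d^{\perp}\cong\Lambda$, $[{\Bbb A}_{1}^{+}\perp d]=M$), which is available because $H_{d}^{0}\subset\mathcal{D}_{L}^{0,-}\subset\mathcal{D}_{L}^{0}$, where $J_{{\Bbb A}_{1}^{+}}$ extends holomorphically by Theorem~\ref{thm:extension:Torelli:map} and restricts to $J_{M}$ by the compatibility of \cite{Yoshikawa13}.

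On a neighborhood of a generic point of $H_{d}$ I would then analyze the even theta constants $\widetilde{J}_{{\Bbb A}_{1}^{+}}^{*}\theta_{a,b}$ as in Lemmas~\ref{lemma:concentration:nonzero:c_(a,b)}--\ref{lemma:value:beta:U}, with two new features. First, restriction along a divisor one root lowers the genus from $10$ to $9$, so the theta constants must be compared through the nodal degeneration formula, separating characteristics by whether the node component $a_{1}$ equals $0$ or $1/2$. Second, one needs a divisor one analogue of Proposition~\ref{prop:quasi:pull:back:characteristic:divisor} to express $i^{*}\mathcal{H}_{L}$, where the geometry of the unique effective even theta characteristic (Proposition~\ref{thetanull (r,a)=(2,2)}) singles out the component $\mathcal{H}_{1}$. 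Granting these, the identity $\Upsilon_{9}=(\chi_{10}/\theta_{a_{0},b_{0}})^{8}$ along $H_{d}$ together with the known coefficient $b_{10}=2^{4}$ of $\mathcal{H}_{L}$ in $\mathrm{div}\,J_{{\Bbb A}_{1}^{+}}^{*}\chi_{10}^{8}$ (Remark~\ref{remark:thetanull:(g,k)=(10,1)}) yields $\beta_{1}=2^{4}$. With $\beta_{1}=2^{4}$ and the bound on $\alpha$, we obtain $\mathrm{div}\,\varphi_{\Lambda}\geq0$; since $\mathcal{M}_{\Lambda}^{*}$ is normal with boundary of codimension $\geq2$, $\varphi_{\Lambda}$ extends holomorphically to $\mathcal{M}_{\Lambda}^{*}$ and is therefore constant. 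The formula for $\tau_{M}$ then follows by taking Petersson norms and using $\tau_{M}=\|\Phi_{M}\|^{-1/2\ell}$.
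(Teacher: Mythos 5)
Your reduction of the theorem to the two divisor estimates is sound and agrees with the paper: the weight bookkeeping, the cancellation of the divisor-two component $\mathcal{H}_{\Lambda}(-1,{\bf e}_{11})$ between $\Psi_{\Lambda}^{2^{8}}$ and $\Psi_{\Lambda}(\cdot,f_{\Lambda})$, the use of Proposition~\ref{thetanull (r,a)=(2,2)} to write ${\rm div}(J_{M}^{*}\Upsilon_{9})=\alpha\,\mathcal{D}_{\Lambda}+\beta\,\mathcal{H}_{1}$, and the bound $\alpha\geq2(2^{16}-1)$ via Lemma~\ref{lemma:estimate:alpha:U(2)} all match Section~\ref{sect:9.4}. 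The gap is in your treatment of $\beta$. The embedding you propose does not exist: for $L={\Bbb U}^{\oplus2}\oplus{\Bbb E}_{8}^{\oplus2}\oplus{\Bbb A}_{1}$ and a root $d\in\Delta_{L}^{-}$, the lattice $[{\Bbb A}_{1}^{+}\perp d]$ equals ${\Bbb A}_{1}^{+}\oplus{\bf Z}d\cong{\Bbb A}_{1}^{+}\oplus{\Bbb A}_{1}$ (the index of ${\Bbb A}_{1}^{+}\oplus{\bf Z}d$ in its primitive closure must be $1$, since index $2$ would force $d/2\in L^{\lor}$, i.e.\ $d\in\Delta_{L}^{+}$), and this lattice has $\delta=1$, not $\delta=0$. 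More bluntly, ${\Bbb U}(2)$ contains no vector of norm $2$, so it cannot contain ${\Bbb A}_{1}^{+}$ at all and is never of the form $[{\Bbb A}_{1}^{+}\perp d]$; consequently $\Omega_{\Lambda}$ for $\Lambda={\Bbb U}(2)^{\perp}$ is not a component $H_{d}$ of the discriminant divisor of $\Omega_{L}$, and the genus-degeneration comparison with $J_{{\Bbb A}_{1}^{+}}$ that worked for $M={\Bbb U}$ has no analogue here. Your whole mechanism for computing $\beta$ therefore collapses.

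The paper's actual argument (Lemma~\ref{lemma:estimate:beta:U(2)}) avoids this entirely and also avoids needing the exact value: it only proves the inequality $\beta\geq2^{4}$, which already gives ${\rm div}(\varphi_{\Lambda})\geq0$, and the equality $\beta=2^{4}$ then falls out of the constancy of $\varphi_{\Lambda}$. The inequality is obtained by a purely local argument on $\Omega_{\Lambda}$ itself: near a generic point of $H_{\lambda}$ ($\lambda^{2}=-4$, ${\rm div}(\lambda)=1$) one has a marked family of fixed curves of genus $9$ admitting a level $4l$-structure, so by Tsuyumine's theorem each $\widetilde{J}_{M}^{*}\sqrt{\theta_{a,b}}$ is a single-valued holomorphic function on the neighborhood; the theta constant $\theta_{a_{1},b_{1}}$ whose zero locus is $H_{\lambda}$ (supplied by Proposition~\ref{thetanull (r,a)=(2,2)}) therefore vanishes to \emph{even} order $2c$ along $H_{\lambda}$, and since $\Upsilon_{9}$ contains the factor $\theta_{a_{1},b_{1}}^{8}$ one gets $\beta\geq16c\geq2^{4}$. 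You would need to replace your embedding step with an argument of this kind (or some other direct lower bound on $\beta$) for the proof to go through.
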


Define the reduced divisor ${\mathcal H}_{1}$ on $\Omega_{\Lambda}$ as
$$
{\mathcal H}_{1}
:=
{\mathcal H}_{\Lambda}-{\mathcal H}_{\Lambda}(-1,{\bf e}_{11})
=
\sum_{\lambda\in\Lambda/\pm1,\,\lambda^{2}=-4,\,{\rm div}(\lambda)=1}H_{\lambda}.
$$
The divisor $\overline{\mathcal H}_{1}\subset{\mathcal M}_{\Lambda}$ in Section~\ref{ssec:(r,a)=(2,2)} is obtained as the quotient 
$\overline{\mathcal H}_{1}={\mathcal H}_{1}/O(\Lambda)$.

\begin{lemma}
\label{lemma:non-vanishing:Upsilon:(r,a)=(2,2)}
$J_{M}^{*}\Upsilon_{9}$ is nowhere vanishing on $\Omega_{\Lambda}^{0}\setminus{\mathcal H}_{1}$.
\end{lemma}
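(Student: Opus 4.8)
The plan is to reduce the statement entirely to Proposition~\ref{thetanull (r,a)=(2,2)}, in close analogy with the genus $10$ case already handled in Lemma~\ref{lemma:non-vanishing:Upsilon:(r,a)=(2,0)}. The guiding principle is that on the image of the Torelli map the form $\chi_{9}$ vanishes identically (as recalled at the start of Section~\ref{sect:9.4}), so that by Lemma~\ref{lemma:locus:vanishing:two:theta:char} the vanishing of $\Upsilon_{9}$ is equivalent to the fixed curve carrying \emph{at least two} effective even theta characteristics; and Proposition~\ref{thetanull (r,a)=(2,2)} identifies that locus exactly.

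First I would take an arbitrary marked $2$-elementary $K3$ surface $(X,\iota,\alpha)$ of type $M={\Bbb U}(2)$ whose period lies in $\Omega_{\Lambda}^{0}\setminus{\mathcal H}_{1}$, and let $C=X^{\iota}$ be the genus $9$ fixed curve. After fixing a symplectic basis of $H_{1}(C,{\bf Z})$ we obtain $\varOmega(C)\in{\frak S}_{9}$, and the value of $J_{M}^{*}\Upsilon_{9}$ at the period is $\Upsilon_{9}(\varOmega(C))$ up to the automorphic factor, so it suffices to show $\Upsilon_{9}(\varOmega(C))\neq0$.

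The key step is the following chain of implications. Since $(X,\iota)$ belongs to the first series of Section~\ref{sect:7}, we have $\overline{\mu}_{\Lambda}({\mathcal M}_{\Lambda}^{0})\subset{\frak M}_{9}'$, hence $\chi_{9}(\varOmega(C))=0$. Because the period lies outside ${\mathcal H}_{1}$, whose $O(\Lambda)$-quotient is $\overline{\mathcal H}_{1}$ (as recorded just before this lemma), the corresponding point of ${\mathcal M}_{\Lambda}^{0}$ lies outside $\overline{\mathcal H}_{1}=\overline{\mu}_{\Lambda}^{-1}({\frak M}_{9}'')$ by Proposition~\ref{thetanull (r,a)=(2,2)}; thus $\overline{\mu}_{\Lambda}(X,\iota)\in{\frak M}_{9}'\setminus{\frak M}_{9}''$. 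Since ${\frak M}_{9}''={\rm div}(\Upsilon_{9})\cap{\frak M}_{9}'$, this forces $\Upsilon_{9}(\varOmega(C))\neq0$, which is the desired nonvanishing. Equivalently, the last step may be phrased through Lemma~\ref{lemma:locus:vanishing:two:theta:char}: outside $\overline{\mathcal H}_{1}$ the curve $C$ has a unique effective even theta characteristic, so exactly one even theta constant vanishes, whence $\Upsilon_{9}(\varOmega(C))\neq0$.

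I do not expect a genuine obstacle here, as the substantive geometry has already been absorbed into Proposition~\ref{thetanull (r,a)=(2,2)}. The one point requiring care is that one removes only the component ${\mathcal H}_{1}$ rather than the full characteristic Heegner divisor ${\mathcal H}_{\Lambda}={\mathcal H}_{1}+{\mathcal H}_{\Lambda}(-1,{\bf e}_{11})$: I must confirm that at a period lying on the other component ${\mathcal H}_{\Lambda}(-1,{\bf e}_{11})$ but off ${\mathcal H}_{1}$ the form $\Upsilon_{9}$ still does not vanish. This is exactly what the \emph{equality} (not merely an inclusion) $\overline{\mathcal H}_{1}=\overline{\mu}_{\Lambda}^{-1}({\frak M}_{9}'')$ in Proposition~\ref{thetanull (r,a)=(2,2)} supplies, since it pins the zero locus of $J_{M}^{*}\Upsilon_{9}$ in $\Omega_{\Lambda}^{0}$ to ${\mathcal H}_{1}$ precisely, with no contribution from ${\mathcal H}_{\Lambda}(-1,{\bf e}_{11})$.
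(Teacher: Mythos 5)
Your proposal is correct and is essentially the paper's own argument: the paper proves this lemma by running the proof of Lemma~\ref{lemma:non-vanishing:Upsilon:(r,a)=(2,0)} verbatim with Proposition~\ref{thetanull (r,a)=(2,2)} in place of Proposition~\ref{thetanull (r,a)=(2,0)}, i.e.\ outside $\overline{\mathcal H}_{1}=\overline{\mu}_{\Lambda}^{-1}({\frak M}_{9}'')$ exactly one even theta constant vanishes at $\varOmega(X^{\iota})$, so $\Upsilon_{9}(\varOmega(X^{\iota}))=\prod_{(a,b)\neq(a_{0},b_{0})}\theta_{a,b}(\varOmega(X^{\iota}))^{8}\neq0$. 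Your closing observation that the equality (not just an inclusion) in Proposition~\ref{thetanull (r,a)=(2,2)} is what lets you discard only ${\mathcal H}_{1}$ rather than all of ${\mathcal H}_{\Lambda}$ is exactly the point that makes the statement sharp.
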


\begin{pf}
By using Proposition~\ref{thetanull (r,a)=(2,2)} instead of Proposition~\ref{thetanull (r,a)=(2,0)},
the proof is parallel to that of Lemma~\ref{lemma:non-vanishing:Upsilon:(r,a)=(2,0)}.
\end{pf}

By Lemma~\ref{lemma:non-vanishing:Upsilon:(r,a)=(2,2)}, there exist $\alpha,\beta\in{\bf Z}_{>0}$ such that
\begin{equation}
\label{eqn:divisor:Upsilon:U(2)}
{\rm div}(J_{M}^{*}\Upsilon_{9})=\alpha\,{\mathcal D}_{\Lambda}+\beta\,{\mathcal H}_{1}.
\end{equation}

\begin{lemma}
\label{lemma:estimate:beta:U(2)}
One has the inequality $\beta\geq2^{4}$.
\end{lemma}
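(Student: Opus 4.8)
The plan is to reduce the estimate to the classical order-two vanishing of a single even theta constant along the thetanull divisor of ${\frak M}_g$, and then transfer this lower bound through the Torelli map $\mu_\Lambda$. Recall that here $g=9$ and that, by Proposition~\ref{thetanull (r,a)=(2,2)}, a general member $(X,\iota)\in{\mathcal M}_\Lambda^0$ has a \emph{unique} effective even theta characteristic on $X^\iota$. Fixing a local marking of a reference curve as in Section~\ref{sect:9.3}, this yields a unique even pair $(a_0,b_0)$ with $J_M^*\theta_{a_0,b_0}\equiv0$ on $\Omega_\Lambda^0$ and $J_M^*\theta_{a,b}\not\equiv0$ for every even $(a,b)\neq(a_0,b_0)$. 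Since $\Upsilon_g=\sum_{(a,b)}\prod_{(c,d)\neq(a,b)}\theta_{c,d}^8$, every summand other than the one indexed by $(a_0,b_0)$ carries the factor $J_M^*\theta_{a_0,b_0}^8\equiv0$, so on $\Omega_\Lambda^0$ one has the identity $J_M^*\Upsilon_g=\prod_{(a,b)\neq(a_0,b_0)}(J_M^*\theta_{a,b})^8$. Comparing with \eqref{eqn:divisor:Upsilon:U(2)} gives
\[
\beta={\rm ord}_{{\mathcal H}_1}(J_M^*\Upsilon_g)=8\sum_{(a,b)\neq(a_0,b_0)}{\rm ord}_{{\mathcal H}_1}(J_M^*\theta_{a,b}),
\]
so it suffices to exhibit a single even pair whose pullback vanishes to order $\geq2$ along ${\mathcal H}_1$.

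First I would locate such a pair. By Proposition~\ref{thetanull (r,a)=(2,2)} we have $\overline{\mathcal H}_1=\overline{\mu}_\Lambda^{-1}({\frak M}_g'')$, and by Lemma~\ref{lemma:locus:vanishing:two:theta:char} the locus ${\frak M}_g''$ parametrizes curves carrying \emph{at least two} effective even theta characteristics. Hence at a general point of ${\mathcal H}_1$ the fixed curve $X^\iota$ has, besides $\theta_{a_0,b_0}$, a second effective even theta characteristic; after shrinking and lifting to ${\frak S}_g$ exactly as in Section~\ref{sect:9.3}, this is represented by a well-defined even pair $(a_1,b_1)\neq(a_0,b_0)$, and the image $\mu_\Lambda$ of a general point of ${\mathcal H}_1$ lies on the sheet $\{\theta_{a_1,b_1}=0\}$ of the thetanull divisor of ${\frak M}_g$.

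The key input is then the multiplicity formula \eqref{eqn:divisor:thetanull:TeixidoriBigas}: since $\chi_g=\prod_{(a,b)}\theta_{a,b}$ and ${\rm div}(j^*\chi_g)|_{{\frak M}_g}=2\,{\frak M}_g'$, and since a general point of ${\frak M}_g'$ carries a single effective even theta characteristic, each individual theta constant $j^*\theta_{a,b}$ vanishes to order $2$ along its own sheet of ${\frak M}_g'$. Writing $J_M=j\circ\mu_\Lambda$ gives $J_M^*\theta_{a_1,b_1}=\mu_\Lambda^*(j^*\theta_{a_1,b_1})$. Locally $j^*\theta_{a_1,b_1}=u\,s^2$ with $s$ a defining equation of the sheet and $u$ holomorphic and nonvanishing at its general point; because $J_M^*\Upsilon_g\not\equiv0$ by Lemma~\ref{lemma:non-vanishing:Upsilon:(r,a)=(2,2)}, the image $\mu_\Lambda(\Omega_\Lambda^0)$ is not contained in $\{\theta_{a_1,b_1}=0\}$, so $\mu_\Lambda^*s\not\equiv0$ and vanishes along ${\mathcal H}_1$ to order $\geq1$. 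Thus $J_M^*\theta_{a_1,b_1}=(\mu_\Lambda^*u)(\mu_\Lambda^*s)^2$ vanishes to order $\geq2$ along ${\mathcal H}_1$, and substituting into the displayed formula yields $\beta\geq8\cdot2=2^4$.

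The main obstacle is precisely this transfer step: one must ensure that the order-two multiplicity, valid on the curve moduli space ${\frak M}_g$, survives restriction to the $18$-dimensional image $\mu_\Lambda(\Omega_\Lambda^0)$ and pullback to the Heegner divisor. Concretely this requires (i) the local single-valuedness of $\theta_{a_1,b_1}$ near a general point of ${\mathcal H}_1$, handled by the lift $\widetilde{J}_M$ to ${\frak S}_g$ as in Section~\ref{sect:9.3}, and (ii) the non-containment $\mu_\Lambda(\Omega_\Lambda^0)\not\subset\{\theta_{a_1,b_1}=0\}$, which is exactly the content of Lemma~\ref{lemma:non-vanishing:Upsilon:(r,a)=(2,2)}. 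I emphasize that, unlike the case $M={\Bbb U}$ treated by the explicit theta-constant bookkeeping of Section~\ref{sect:9.3}, no lattice embedding realizing $\Omega_\Lambda$ inside a larger period domain is available here: the only rank-one $2$-elementary type is ${\Bbb A}_1^+$, which has genus $10$, whereas passing to $[M\perp d]$ with $d\in\Delta^+$ preserves the genus, so there is no rank-one type yielding ${\Bbb U}(2)$ in genus $9$. This is why the softer order-two argument, giving only the inequality $\beta\geq2^4$ rather than an exact value, is the natural route; equality will then be forced a posteriori by the Koecher principle in the proof of Theorem~\ref{thm:formula:Phi:U(2)}.
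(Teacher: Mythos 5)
Your proposal is correct in substance and follows the paper's top-level strategy --- isolate the identically vanishing constant $\theta_{a_0,b_0}$, write $J_{M}^{*}\Upsilon_{9}$ on a local lift as the product of the remaining $(\widetilde{J}_{M}^{*}\theta_{a,b})^{8}$, and show that one further constant $\theta_{a_1,b_1}$ vanishes to order $\geq2$ along a component $H_{\lambda}$ of ${\mathcal H}_{1}$ --- but the key input for that last step is different. The paper never passes through ${\frak M}_{g}$: it notes that the family of fixed curves over a small $U\subset\Omega_{\Lambda}$ admits a level $4l$-structure, so that by \cite[Th.\,1]{Tsuyumine91} the square root $\widetilde{J}_{M}^{*}\sqrt{\theta_{a_1,b_1}}$ is a single-valued holomorphic function on $U$, whence ${\rm div}(\widetilde{J}_{M}^{*}\theta_{a_1,b_1})=2c\,H_{\lambda}$ with $c\geq1$ for free. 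You instead extract from \eqref{eqn:divisor:thetanull:TeixidoriBigas} that each individual $j^{*}\theta_{a,b}$ has divisor exactly twice its sheet of the thetanull locus on the level-$2$ moduli space, and then pull back through $\mu_{\Lambda}$. This works, but the transfer is the delicate point you only partly address: the factorization $j^{*}\theta_{a_1,b_1}=u\,s^{2}$ with $u$ a unit must be available near $\mu_{\Lambda}(x)$ for $x$ a general point of ${\mathcal H}_{1}$, whose image is a positive-codimension subvariety of the sheet rather than a general point of it; to get it there you need the divisor identity ${\rm div}(j^{*}\theta_{a_1,b_1})=2S_{a_1,b_1}$ to hold globally (not just at a generic point), together with normality and local principality of the sheet at $\mu_{\Lambda}(x)$ --- the latter holding because the relevant $(4,4)$-curves are automorphism-free, so the level cover is smooth there. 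Tsuyumine's square-root theorem packages exactly this global evenness, which is why the paper's route is shorter; since both inputs come from the same source \cite{Tsuyumine91}, the two arguments are close cousins. Your closing observations --- that no rank-one lattice embedding is available here, unlike for $M={\Bbb U}$, and that the exact value $\beta=2^{4}$ is only forced a posteriori by the Koecher principle --- accurately reflect the paper.
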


\begin{pf}
Let $\lambda\in\Lambda$ be an arbitrary vector such that $\lambda^{2}=-4$ and ${\rm div}(\lambda)=1$. We set
$$
H_{\lambda}^{0}
:=
H_{\lambda}
\setminus
(
{\mathcal D}_{\Lambda}\cup\bigcup_{\lambda'\in\Lambda,\,(\lambda')^{2}=-4,\,{\rm div}(\lambda')=1}H_{\lambda'}
).
$$
Then $H_{\lambda}^{0}$ is a non-empty Zariski open subset of $H_{\lambda}$.
Let $[\eta]\in H_{\lambda}^{0}$ be an arbitrary point. Let $U\cong\varDelta^{18}$ be a small neighborhood of $[\eta]$ in $\Omega_{\Lambda}$ 
such that $U\cap({\mathcal H}_{1}\cup{\mathcal D}_{\Lambda})=U\cap H_{\lambda}^{0}\cong\varDelta^{17}$.
Since $U$ is small enough, 
there is a marked family of $2$-elementary $K3$ surfaces $(p\colon({\mathcal X},\iota)\to U,\alpha)$ of type $M={\Bbb U}(2)$,
whose period map is the inclusion $U\hookrightarrow\Omega_{\Lambda}$. Set ${\mathcal C}:={\mathcal X}^{\iota}$.
Then $p\colon{\mathcal C}\to U$ is a family of smooth curves of genus $9$. 
Set $C_{t}:=p^{-1}(t)\cap{\mathcal C}$ for $t\in U$.
The period map $J_{M}|_{U}$ is a holomorphic map from $U$ to ${\mathcal A}_{9}$
such that $J_{M}(t)=\varOmega(C_{t})$.
Since $U$ is contractible, the local system $R^{1}(p|_{\mathcal C})_{*}{\bf Z}$ is trivial and admits a symplectic basis.
Hence $J_{M}\colon U\to{\mathcal A}_{9}$ lifts to a holomorphic map
$\widetilde{J}_{M}\colon U\to{\frak S}_{9}$ such that $J_{M}=\varPi\circ\widetilde{J}_{M}$,
where $\varPi\colon{\frak S}_{9}\to{\mathcal A}_{9}$ is the projection. 
\par
Since $\widetilde{J}_{M}$ takes its values in ${\frak S}_{9}$, 
the value of the theta constant  $\theta_{a,b}(\widetilde{J}_{M}(t))$ makes sense
for all $t\in U$ and for every even pair $(a,b)$, $a,b\in\{0,1/2\}^{9}$. 
Moreover, since the family $p\colon{\mathcal C}\to U$ admits a level $4l$-structure for any $l\in{\bf Z}_{>0}$,
the square root $\widetilde{J}_{M}^{*}\sqrt{\theta_{a,b}}$ is a well-defined holomorphic section of a holomorphic line bundle on $U$ 
for every even pair $(a,b)$ by \cite[Th.\,1]{Tsuyumine91}. Since any holomorphic line bundle on $U$ is trivial, 
we may regard $\widetilde{J}_{M}^{*}\sqrt{\theta_{a,b}}\in{\mathcal O}(U)$.
\par
We deduce from Proposition~\ref{thetanull (r,a)=(2,2)} the existence of a unique even pair $(a_{0},b_{0})$ and at least one even pair $(a_{1},b_{1})$
with the following properties:
\begin{itemize}
\item[(1)]
$\widetilde{J}_{M}^{*}\theta_{a_{0},b_{0}}$ vanishes identically on $U$;
\item[(2)]
Set-theoretically, $U\cap{\rm div}(\widetilde{J}_{M}^{*}\theta_{a_{1},b_{1}})=U\cap H_{\lambda}$.
\end{itemize}
By (2) and the fact $\widetilde{J}_{M}^{*}\sqrt{\theta_{a_{1},b_{1}}}\in{\mathcal O}(U)$,
there exists $c\in{\bf Z}_{>0}$ such that on $U$
$$
{\rm div}(\widetilde{J}_{M}^{*}\theta_{a_{1},b_{1}})=2c\,H_{\lambda}.
$$
By (1) and the definition of $\Upsilon_{9}$, we have
$$
\widetilde{J}_{M}^{*}\Upsilon_{9}|_{U}
=
\prod_{(a,b)\not=(a_{0},b_{0})}\widetilde{J}_{M}^{*}\theta_{a,b}^{8}
=
\widetilde{J}_{M}^{*}\theta_{a_{1},b_{1}}^{8}\cdot\prod_{(a,b)\not=(a_{0},b_{0}),(a_{1},b_{1})}\widetilde{J}_{M}^{*}\theta_{a,b}^{8}.
$$
Setting $E:={\rm div}(\prod_{(a,b)\not=(a_{0},b_{0}),(a_{1},b_{1})}\widetilde{J}_{M}^{*}\theta_{a,b}^{8})$,
we get the equality of divisors on $U$
$$
{\rm div}(\widetilde{J}_{M}^{*}\Upsilon_{9}|_{U})=16c\,H_{\lambda}+E.
$$
Thus we get the desired inequality $\beta\geq16c\geq2^{4}$.
\end{pf}

{\bf Proof of Theorem~\ref{thm:formula:Phi:U(2)} }
By Theorem~\ref{thm:Borcherds:lift} (2), we get
\begin{equation}
\label{eqn:weight:Psi:Lambda:U(2)}
{\rm wt}(\Psi_{\Lambda}^{\ell})=-2^{-2}\{2^{4}(2^{9}+1)+1\}\ell,
\qquad
{\rm div}(\Psi_{\Lambda}^{\ell})=\ell\left({\mathcal D}_{\Lambda}-2^{-4}{\mathcal H}_{\Lambda}\right).
\end{equation}
By \eqref{eqn:zero:divisor:Borcherds:product:f} and \eqref{eqn:weight:Psi:Lambda:U(2)}, we get
\begin{equation}
\label{eqn:wt:div:(r,a)=(2,2)}
{\rm wt}(\Psi_{\Lambda}^{2^{8}}\cdot\Psi_{\Lambda}(\cdot,f_{\Lambda}))=-2^{2}(2^{8}+1)(2^{9}-1),
\qquad
{\rm div}(\Psi_{\Lambda}^{2^{8}}\cdot\Psi_{\Lambda}(\cdot,f_{\Lambda}))=(2^{8}+1){\mathcal D}_{\Lambda}-2^{4}{\mathcal H}_{1}.
\end{equation}
Since ${\rm wt}(J_{M}^{*}\Upsilon_{9})=(0,2^{2}(2^{8}+1)(2^{9}-1))$ and since
${\rm div}(J_{M}^{*}\Upsilon_{9})=\alpha\,{\mathcal D}_{\Lambda}+\beta\,{\mathcal H}_{1}$ by \eqref{eqn:divisor:Upsilon:U(2)}, we get
$$
{\rm wt}\left(
\Psi_{\Lambda}(\cdot,2^{8}F_{\Lambda}+f_{\Lambda})\otimes J_{M}^{*}\Upsilon_{9}
\right)
=
\left(
-4(2^{8}+1)(2^{9}-1),4(2^{8}+1)(2^{9}-1)
\right),
$$
$$
{\rm div}\left(
\Psi_{\Lambda}(\cdot,2^{8}F_{\Lambda}+f_{\Lambda})\otimes J_{M}^{*}\Upsilon_{9}
\right)
=
(\alpha+2^{8}+1)\,{\mathcal D}_{\Lambda}+(\beta-2^{4})\,{\mathcal H}_{1}.
$$
Since ${\rm wt}(\Phi_{M})=(-4\ell,4\ell)$ and ${\rm div}(\Phi_{M})=\ell\,{\mathcal D}_{\Lambda}$,
$$
\varphi_{\Lambda}
:=
\Psi_{\Lambda}(\cdot,2^{8}F_{\Lambda}+f_{\Lambda})^{\ell}\otimes J_{M}^{*}\Upsilon_{9}^{\ell}
/
\Phi_{M}^{(2^{8}+1)(2^{9}-1)}
$$
is a meromorphic function on ${\mathcal M}_{\Lambda}$ with divisor
$$
{\rm div}(\varphi_{\Lambda})
=
\ell\{\alpha-2(2^{16}-1)\}\,\overline{\mathcal D}_{\Lambda}+\ell(\beta-2^{4})\,\overline{\mathcal H}_{1}.
$$
By Lemma~\ref{lemma:non-vanishing:Upsilon:(r,a)=(2,2)} and Theorem~\ref{thm:structure:g<6:r<10} for $[M\perp d]$, $d\in\Delta_{\Lambda}$, 
Lemma~\ref{lemma:estimate:alpha:U(2)} applies to a general curve $\gamma\colon\varDelta\to{\mathcal M}_{\Lambda}$
intersecting $\overline{\mathcal D}_{\Lambda}^{0}$ transversally.
Since $\alpha\geq2(2^{16}-1)$ and $\beta\geq2^{4}$ by Lemmas~\ref{lemma:estimate:alpha:U(2)} and \ref{lemma:estimate:beta:U(2)},
we get ${\rm div}(\varphi_{\Lambda})\geq0$. Thus $\varphi_{\Lambda}$ is a constant.
\qed

\subsection
{The divisors of $J_{M}^{*}\chi_{g}^{8}$ and $J_{M}^{*}\Upsilon_{g}$}
\label{sect:9.5}
\par

\begin{theorem}
\label{thm:divisor:pullback:chi:upsilon}
Let $M$ be a {\em non-exceptional} primitive $2$-elementary Lorentzian sublattice of ${\Bbb L}_{K3}$ with $\Lambda=M^{\perp}$
and invariants $(r,l,\delta)$.
Then the following holds:
\newline{$(1)$}
If $g=0$, i.e., $r+l=22$, then ${\rm div}(J_{M}^{*}\chi_{g}^{8})=0$.
\newline{$(2)$}
If $r\geq2$, $\delta=1$ and $1\leq g\leq9$, then
${\rm div}(J_{M}^{*}\chi_{g}^{8})=2^{2g-1}{\mathcal D}_{\Lambda}^{-}+2^{k+4}{\mathcal H}_{\Lambda}$.
\newline{$(3)$}
If $(r,\delta)=(1,1)$, then 
${\rm div}(J_{M}^{*}\chi_{g}^{8})=2^{19}{\mathcal D}_{\Lambda}^{-}+2^{4}\cdot7\,{\mathcal D}_{\Lambda}^{+}+2^{4}{\mathcal H}_{\Lambda}$.
\newline{$(4)$}
If $\delta=0$ and $r\not=2,10$, then
${\rm div}(J_{M}^{*}\chi_{g}^{8})=2^{g-1}(2^{g}+1)\,{\mathcal D}_{\Lambda}$.
\newline{$(5)$}
If $(r,\delta)=(2,0)$ or $(10,0)$, then $J_{M}^{*}\chi_{g}^{8}$ vanishes identically on $\Omega_{\Lambda}$.
\newline{$(6)$}
If $(r,\delta)=(10,0)$, then
${\rm div}(J_{M}^{*}\Upsilon_{g})=2(2^{2(g-1)}-1)\,{\mathcal D}_{\Lambda}$.
\newline{$(7)$}
If $(r,l,\delta)=(2,0,0)$, then 
${\rm div}(J_{M}^{*}\Upsilon_{10})=2(2^{18}-1)\,{\mathcal D}_{\Lambda}+2^{5}\,{\mathcal H}_{\Lambda}$.
\newline{$(8)$}
If $(r,l,\delta)=(2,2,0)$, then 
${\rm div}(J_{M}^{*}\Upsilon_{9})=2(2^{16}-1)\,{\mathcal D}_{\Lambda}+2^{4}\,\{{\mathcal H}_{\Lambda}-{\mathcal H}_{\Lambda}(-1,{\bf e}_{11})\}$.
\end{theorem}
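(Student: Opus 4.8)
The plan is to treat this as a consolidation theorem: each of the eight cases should be read off directly from the divisor computations already assembled in Sections~\ref{sect:8} and \ref{sect:9}, so the work is to partition all admissible invariants $(r,l,\delta)$ (via Proposition~\ref{prop:classification:sublattice:sign(2,r-2):K3}) into exactly these eight cases and then cite the sharp coefficients. First I would dispose of (1): for $g=0$ the convention $\chi_0=\Upsilon_0=1$ from Section~\ref{sect:4.1} makes $J_M^*\chi_0^8$ the constant section $1$, so its divisor vanishes.

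For the $\chi$-cases with $\delta=1$, namely (2) and (3), the backbone is Proposition~\ref{prop:divisor:pull:back:Igusa}, which already writes ${\rm div}(J_{M_{g,k}}^*\chi_g^8)=a_g\,{\mathcal D}^-_{\Lambda_{g,k}}+2^k b_g\,{\mathcal H}_{\Lambda_{g,k}}+c_g\,{\mathcal D}^+_{\Lambda_{g,k}}$. I would substitute the sharp values. For $r\geq2$, $\delta=1$, $3\leq g\leq9$, the vanishing $c_g=0$ is Lemma~\ref{eqn:c:(g,0):g<10} and the equalities $a_g=2^{2g-1}$, $b_g=2^4$ are \eqref{eqn:value:b_g}, giving $2^{2g-1}{\mathcal D}^-_\Lambda+2^{k+4}{\mathcal H}_\Lambda$; the remaining range $g\leq2$ is covered by \cite{Yoshikawa13}. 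For (3) the lattice is $M\cong{\Bbb A}_1^+$, i.e.\ $(g,k)=(10,0)$, and I would insert $a_{10}=2^{19}$, $b_{10}=2^4$, $c_{10}=2^4\cdot7$ from \eqref{eqn:c:(10,0)} of Remark~\ref{remark:thetanull:(g,k)=(10,1)}.

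For the $\chi$-cases with $\delta=0$, Proposition~\ref{prop:classification:sublattice:sign(2,r-2):K3} shows that $r\neq2,10$ leaves only $r>10$ (treated in \cite{Yoshikawa13}) and $r=6$ (treated in Lemma~\ref{lemma:div:Igusa:(gk,delta)=(6,1,0)}), both yielding the stated $2^{g-1}(2^g+1)\,{\mathcal D}_\Lambda$ of (4). For (5) the point is that the generic fixed curve carries an apparent effective even theta characteristic --- $L_{0,3}|_C$ when $M\cong{\Bbb U}$ and ${\mathcal O}_Y(1,1)|_C$ when $M\cong{\Bbb U}(2)$ (cf.\ Sections~\ref{ssec:(r,a)=(2,0)} and \ref{ssec:(r,a)=(2,2)}), and likewise \cite[Prop.\,9.3]{Yoshikawa13} when $(r,\delta)=(10,0)$; hence $\overline{\mu}_\Lambda$ maps into $\theta_{{\rm null},g}$ and the pullback of $\chi_g^8$ vanishes identically.

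For the $\Upsilon$-cases (6)--(8), I would extract the divisors from the constancy arguments already in place: in each of Theorems~\ref{thm:structure:r=10:delta=0}, \ref{thm:formula:Phi:U}, \ref{thm:formula:Phi:U(2)} the quotient $\varphi_\Lambda$ is shown to be a nonzero constant, which pins the a priori inequalities to equalities. Thus (6) follows since ${\rm div}(\varphi_\Lambda)=\ell\{a-2(2^{2(g-1)}-1)\}{\mathcal D}_\Lambda+\ell\,{\mathcal E}_\Lambda=0$ forces $a=2(2^{2(g-1)}-1)$ and ${\mathcal E}_\Lambda=0$; case (7) combines $\beta=2^5$ from Lemma~\ref{lemma:value:beta:U} with $\alpha=2(2^{18}-1)$ in \eqref{eqn:divisor:Upsilon:U:1}; and case (8) combines $\beta=2^4$ from Lemma~\ref{lemma:estimate:beta:U(2)} with $\alpha=2(2^{16}-1)$ in \eqref{eqn:divisor:Upsilon:U(2)}, where ${\mathcal H}_1={\mathcal H}_\Lambda-{\mathcal H}_\Lambda(-1,{\bf e}_{11})$. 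The only genuine point of care --- rather than of difficulty --- is the bookkeeping that every admissible $(r,l,\delta)$ lands in precisely one case with the correct $g$ and $k$, since the sharp coefficients themselves are already forced by the earlier constancy arguments; so I expect the main obstacle to be organizational completeness rather than new mathematics.
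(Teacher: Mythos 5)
Your proposal matches the paper's own proof, which is exactly this consolidation: each case is read off from the earlier divisor computations ((1) from $\chi_{0}=1$; (2), (3) from Proposition~\ref{prop:divisor:pull:back:Igusa}, Lemma~\ref{eqn:c:(g,0):g<10}, \eqref{eqn:value:b_g}, \eqref{eqn:c:(10,0)} and \cite{Yoshikawa13} for $g\leq2$; (4) from Lemma~\ref{lemma:div:Igusa:(gk,delta)=(6,1,0)} and \cite{Yoshikawa13}; (5)--(8) from the sharp coefficients forced by the constancy of $\varphi_{\Lambda}$ in Theorems~\ref{thm:structure:r=10:delta=0}, \ref{thm:formula:Phi:U}, \ref{thm:formula:Phi:U(2)}). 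The only slip is attributing $\beta=2^{4}$ in case (8) to Lemma~\ref{lemma:estimate:beta:U(2)}, which gives only $\beta\geq2^{4}$; the equality comes, as you correctly say in your framing sentence, from ${\rm div}(\varphi_{\Lambda})=0$ in the proof of Theorem~\ref{thm:formula:Phi:U(2)}.
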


\begin{pf}
The assertion (1) is obvious since $\chi_{g}=1$ for $g=0$. 
For $g=1,2$ (resp. $3\leq g\leq9$), we get (2) by \cite[Prop.\,4.2 (2), (3)]{Yoshikawa13}
(resp. Lemma~\ref{eqn:c:(g,0):g<10}, Proposition~\ref{prop:divisor:pull:back:Igusa}, \eqref{eqn:value:b_g}).
We get (3) by Proposition~\ref{prop:divisor:pull:back:Igusa}, \eqref{eqn:value:b_g}, \eqref{eqn:c:(10,0)}.
When $r>10$ (resp. $2<r<10$), we get (4) by \cite[Eq.\,(9.3)]{Yoshikawa13} and the equality $a=E=0$ in \cite[Proof of Th.\,9.1]{Yoshikawa13}
(resp. Lemma~\ref{lemma:div:Igusa:(gk,delta)=(6,1,0)}).
We get (5) by \cite[Prop.\,9.3]{Yoshikawa13} when $r=10$.
When $r=2$, $\delta=0$, there are two possible cases: $g=10$ and $g=9$. 
In case $g=10$, (5) was proved in Remark~\ref{remark:thetanull:(g,k)=(10,1)}.
In case $g=9$, (5) was proved in Section~\ref{ssec:(r,a)=(2,2)}. This proves (5).
We get (6) by \eqref{eqn:weight:zero:pullback:Upsilon:r=10:delta=0} and the equality
$a=2(2^{2(g-1)}-1)$ in the proof of Theorem~\ref{thm:structure:r=10:delta=0}.
We get (7) by \eqref{eqn:divisor:Upsilon:U:1},
since $\beta=2^{5}$ and $\alpha=2(2^{18}-1)$ in the proof of Theorem~\ref{thm:formula:Phi:U}.
We get (8) by \eqref{eqn:divisor:Upsilon:U(2)},
since the equalities $\alpha=2(2^{16}-1)$ and $\beta=2^{4}$ follow from the proof of Theorem~\ref{thm:formula:Phi:U(2)}.
This completes the proof.
\end{pf}

\begin{corollary}
\label{cor:completion:Prop.7.3:7.4}
For $(r,l,\delta)=(10,2,0),(10,4,0)$, $J_{M}(\Omega_{\Lambda}^{0})$ is disjoint from the hyperelliptic locus.  
For $(r,l,\delta)=(10,0,0)$, every member of $J_{M}(\Omega_{\Lambda}^{0})$ has exactly one effective even theta characteristic.
\end{corollary}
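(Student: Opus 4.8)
The plan is to deduce both assertions directly from the divisor computation in Theorem~\ref{thm:divisor:pullback:chi:upsilon}, so that the corollary becomes a formal combination of that result with Lemma~\ref{lemma:locus:vanishing:two:theta:char}. The first step I would carry out is to record the key non-vanishing. By Theorem~\ref{thm:divisor:pullback:chi:upsilon}(6), for $(r,\delta)=(10,0)$ the divisor of $J_{M}^{*}\Upsilon_{g}$ equals $2(2^{2(g-1)}-1)\,{\mathcal D}_{\Lambda}$, hence is supported entirely on the discriminant ${\mathcal D}_{\Lambda}$. Therefore $J_{M}^{*}\Upsilon_{g}$ is nowhere vanishing on $\Omega_{\Lambda}^{0}=\Omega_{\Lambda}\setminus{\mathcal D}_{\Lambda}$. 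Translating through the period dictionary $J_{M}([\eta])=\varOmega(C)$, where $C=\mu_{\Lambda}([\eta])$ is the genus-$g$ component of the fixed curve, this says precisely that $\Upsilon_{g}(\varOmega(C))\neq0$ for every $(X,\iota)$ whose period lies in $\Omega_{\Lambda}^{0}$.

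For the hyperelliptic assertion, which concerns $(r,l)=(10,2)$ and $(10,4)$, i.e.\ $g=5$ and $g=4$ respectively, I would use the inclusion ${\frak H}_{{\rm hyp},g}\subset{\rm div}(\Upsilon_{g})\cap{\frak M}_{g}$. For $g=4$ this is Igusa's identity ${\rm div}(\chi_{4})\cap{\rm div}(\Upsilon_{4})\cap{\frak M}_{4}={\frak H}_{{\rm hyp},4}$, and for $g=5$ it is the description of ${\frak H}_{{\rm hyp},5}$ already invoked in the proof of Proposition~\ref{prop:zero:Upsilon}; in both cases a hyperelliptic curve of the relevant genus satisfies $\Upsilon_{g}=0$ (equivalently, by Lemma~\ref{lemma:locus:vanishing:two:theta:char}, it carries at least two effective even theta characteristics). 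Since the first step shows $J_{M}^{*}\Upsilon_{g}$ does not vanish anywhere on $\Omega_{\Lambda}^{0}$, no period point can map into the hyperelliptic locus, so $J_{M}(\Omega_{\Lambda}^{0})$ is disjoint from it.

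For the case $(r,l)=(10,0)$, i.e.\ $g=6$, I would combine the non-vanishing of $\Upsilon_{6}$ with the identical vanishing of $\chi_{6}$. By Theorem~\ref{thm:divisor:pullback:chi:upsilon}(5) we have $J_{M}^{*}\chi_{6}\equiv0$, so $\chi_{6}(\varOmega(C))=0$ for every curve $C$ in the image; via the Riemann singularity theorem this forces at least one effective even theta characteristic on $C$. On the other hand $\Upsilon_{6}(\varOmega(C))\neq0$ by the first step, so by Lemma~\ref{lemma:locus:vanishing:two:theta:char} the curve $C$ cannot possess two distinct effective even theta characteristics. Hence it has exactly one, as claimed.

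The main obstacle has in fact already been overcome in Theorem~\ref{thm:divisor:pullback:chi:upsilon}: once the support of ${\rm div}(J_{M}^{*}\Upsilon_{g})$ is pinned to ${\mathcal D}_{\Lambda}$, the corollary follows formally. The only point requiring care is the correct application of the period-theoretic dictionary --- that $\Upsilon_{g}\neq0$ at $J_{M}([\eta])$ genuinely forbids $\mu_{\Lambda}([\eta])$ from being hyperelliptic (for $g=4,5$) or from carrying a second effective even theta characteristic (for $g=6$) --- together with the genus bookkeeping $g(10,2,0)=5$, $g(10,4,0)=4$, and $g(10,0,0)=6$, which aligns these three cases with the statements of Lemmas~\ref{thetanull r=10, k=3,4} and \ref{thetanull r=10, k=5} that the corollary strengthens.
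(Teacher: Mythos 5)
Your proposal is correct and follows essentially the same route as the paper: both deduce the nowhere-vanishing of $J_{M}^{*}\Upsilon_{g}$ on $\Omega_{\Lambda}^{0}$ from Theorem~\ref{thm:divisor:pullback:chi:upsilon}~(6), then use the set-theoretic descriptions of ${\frak H}_{{\rm hyp},4}$ and ${\frak H}_{{\rm hyp},5}$ as intersections containing ${\rm div}(\Upsilon_{g})$ for the first assertion, and combine the identical vanishing of $J_{M}^{*}\chi_{6}$ from part~(5) with Lemma~\ref{lemma:locus:vanishing:two:theta:char} for the second. The genus bookkeeping $g=5,4,6$ for the three lattice types is also as in the paper.
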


\begin{pf}
Set-theoretically, ${\frak H}_{{\rm hyp},4}$ (resp. ${\frak H}_{{\rm hyp},5}$) is given by ${\rm div}(\chi_{4})\cap{\rm div}(\Upsilon_{4})$ 
(resp. ${\rm div}(\chi_{5})\cap{\rm div}(\Upsilon_{5})\cap{\rm div}(F_{5})$) on ${\frak M}_{4}$ (resp. ${\frak M}_{5}$).
Since $J_{M}^{*}\Upsilon_{4}$ (resp. $J_{M}^{*}\Upsilon_{5}$) is nowhere vanishing on $\Omega_{\Lambda}^{0}$ by 
Theorem~\ref{thm:divisor:pullback:chi:upsilon} (6) for $(r,l,\delta)=(10,4,0)$ (resp. $(10,2,0)$), we get the first assertion.
When $(r,l,\delta)=(10,0,0)$, 
$J_{M}^{*}\chi_{6}$ vanishes identically on $\Omega_{\Lambda}^{0}$ and $J_{M}^{*}\Upsilon_{6}$ is nowhere vanishing on $\Omega_{\Lambda}^{0}$
by Theorem~\ref{thm:divisor:pullback:chi:upsilon} (5), (6). This, together with Lemma~\ref{lemma:locus:vanishing:two:theta:char}, implies the second assertion.
\end{pf}

Let us give a geometric interpretation of Theorem~\ref{thm:divisor:pullback:chi:upsilon}, 
where we use the notion of log Del Pezzo surfaces of index $\leq2$. We refer to \cite{AlexeevNikulin06} for this notion. 
Let $S$ be a log Del Pezzo surface of index $\leq2$. By \cite[Th.\,1.5]{AlexeevNikulin06}, the bi-anticanonical system of $S$ contains a smooth member.
For any smooth member $C\in|-2K_{S}|$, one can canonically associate a $2$-elementary $K3$ surface $(X_{(S,C)},\iota_{(S,C)})$
whose quotient $X_{(S,C)}/\iota_{(S,C)}$ is the right resolution of $S$. (See \cite[Sect.\,2.1]{AlexeevNikulin06}.)
We define the invariant $\delta(S)\in\{0,1\}$ as that of the $2$-elementary lattice $H^{2}(X_{(S,C)},{\bf Z})_{+}$.
Then $\delta(S)$ is independent of the choice of a smooth member $C\in|-2K_{S}|$.

\begin{corollary}
\label{cor:theta:char:-2K:curve}
Let $S$ be a log Del Pezzo surface of index $\leq2$ and let $C\in|-2K_{S}|$ be a smooth member.
If $S\not\cong{\bf F}_{0},{\bf P}(1,1,2)$, then the following hold:
\newline{$(1)$}
When $(\rho(S),\delta(S))\not=(2,0),(10,0)$,
$C$ has an effective even theta characteristic if and only if the period of $(X_{(S,C)},\iota_{(S,C)})$ lies in the characteristic Heegner divisor.
\newline{$(2)$}
When $(\rho(S),\delta(S))=(2,0)$ or $(10,0)$, $C$ always has an effective even theta characteristic. 
Moreover, $C$ has at least two effective even theta characteristics if and only if
 the period of $(X_{(S,C)},\iota_{(S,C)})$ lies in the characteristic Heegner divisor.
\end{corollary}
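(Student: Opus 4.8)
The plan is to deduce the corollary from the divisor formula of Theorem~\ref{thm:divisor:pullback:chi:upsilon} through the Torelli map, converting each statement about theta characteristics of $C$ into a statement about the vanishing of a Siegel modular form at the period. First I would fix the dictionary supplied by the correspondence of \cite{AlexeevNikulin06}: the double cover $X_{(S,C)}\to X_{(S,C)}/\iota_{(S,C)}$ is branched along (the proper transform of) $C$, so its ramification identifies $C$ with the genus $g$ component of the fixed curve $X_{(S,C)}^{\iota}$, and the invariants match as $(\rho(S),\delta(S))=(r,\delta)$, where $M=H^{2}(X_{(S,C)},\mathbf{Z})_{+}$ has invariants $(r,\delta)$ and $\Lambda=M^{\perp}$. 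In particular the period of $(X_{(S,C)},\iota_{(S,C)})$ is a point of $\mathcal{M}_{\Lambda}^{0}$, and the Torelli map sends it to $\varOmega(C)\in\mathcal{A}_{g}$. The exclusion $S\not\cong\mathbf{F}_{0},\mathbf{P}(1,1,2)$ discards exactly the two surfaces realizing the type $(r,l,\delta)=(2,2,0)$, for which $\overline{\mathcal H}_{\Lambda}$ is reducible (Section~\ref{ssec:(r,a)=(2,2)}); this is what keeps the characteristic Heegner divisor irreducible (or zero) in every remaining case.

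Next I would translate the two geometric conditions. The curve $C$ carries an effective even theta characteristic precisely when $\varOmega(C)\in\theta_{{\rm null},g}$, i.e. when $J_{M}^{*}\chi_{g}$ vanishes at the period; and by Lemma~\ref{lemma:locus:vanishing:two:theta:char} it carries at least two of them precisely when $\chi_{g}(\varOmega(C))=\Upsilon_{g}(\varOmega(C))=0$, i.e. when $J_{M}^{*}\chi_{g}$ and $J_{M}^{*}\Upsilon_{g}$ both vanish there. Since the period always lies in $\Omega_{\Lambda}^{0}$, only the restriction of these divisors to $\Omega_{\Lambda}^{0}$ is relevant, and every $\mathcal{D}_{\Lambda}^{\pm}$-component is automatically discarded.

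Then I would read the support off Theorem~\ref{thm:divisor:pullback:chi:upsilon}. For case (1), parts (1)--(4) show that, restricted to $\Omega_{\Lambda}^{0}$, the divisor of $J_{M}^{*}\chi_{g}^{8}$ is supported exactly on $\overline{\mathcal H}_{\Lambda}\cap\mathcal{M}_{\Lambda}^{0}$: when $\delta=1$ (parts (2),(3)) this is the explicit $\mathcal{H}_{\Lambda}$-term, and when $\delta=0,\ r\ne2,10$ (part (4)) the divisor is a multiple of $\mathcal{D}_{\Lambda}$ while simultaneously $\overline{\mathcal H}_{\Lambda}\cap\mathcal{M}_{\Lambda}^{0}=\emptyset$, because $\mathbf{1}_{\Lambda}=0$ forces $\mathcal{H}_{\Lambda}=0$ (if $\varepsilon_{\Lambda}\ge0$) or $\mathcal{H}_{\Lambda}=\mathcal{D}_{\Lambda}$ (if $\varepsilon_{\Lambda}=-2$); in either subcase both sides of the asserted equivalence are empty. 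This gives (1). For case (2), part (5) gives $J_{M}^{*}\chi_{g}\equiv0$, so $C$ always has an effective even theta characteristic; and the ``moreover'' statement follows because part (7) shows that on $\Omega_{\Lambda}^{0}$ the form $J_{M}^{*}\Upsilon_{10}$ vanishes exactly along $\mathcal{H}_{\Lambda}$, which is irreducible by Lemma~\ref{lemma:irreducibility:char:Heegner:div:(2,0,0)}, in the type $(2,0,0)$, whereas part (6) shows $J_{M}^{*}\Upsilon_{g}$ is nowhere zero on $\Omega_{\Lambda}^{0}$ when $(r,\delta)=(10,0)$, where moreover $\mathcal{H}_{\Lambda}=0$, so the equivalence holds vacuously.

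The hard part will not be any single argument but the uniformity of the bookkeeping: I must check, in each of the eight types of Theorem~\ref{thm:divisor:pullback:chi:upsilon}, that the restriction to $\Omega_{\Lambda}^{0}$ of the relevant modular divisor coincides with $\overline{\mathcal H}_{\Lambda}\cap\mathcal{M}_{\Lambda}^{0}$, and in particular that the ``empty versus empty'' and the ``vacuous'' cases are matched correctly. The most delicate point is precisely the exclusion of $\mathbf{F}_{0}$ and $\mathbf{P}(1,1,2)$: without it the type $(2,2,0)$ would enter case (2), and there part (8) shows $J_{M}^{*}\Upsilon_{9}$ vanishes only along the single component $\overline{\mathcal H}_{1}$ of the reducible divisor $\overline{\mathcal H}_{\Lambda}=\overline{\mathcal H}_{1}+\overline{\mathcal H}_{2}$, so the clean equivalence with the full characteristic Heegner divisor would break down. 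Beyond this, all genuine content is already contained in Theorem~\ref{thm:divisor:pullback:chi:upsilon} and Lemma~\ref{lemma:locus:vanishing:two:theta:char}, so the corollary is essentially their geometric reinterpretation.
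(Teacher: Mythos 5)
Your proposal is correct and follows exactly the paper's route: the paper's own proof is the one-line observation that the period of $C$ is the image of $(X_{(S,C)},\iota_{(S,C)})$ under the Torelli map, so the corollary follows from Theorem~\ref{thm:divisor:pullback:chi:upsilon} (together with Lemma~\ref{lemma:locus:vanishing:two:theta:char}). Your case-by-case bookkeeping — including the observation that excluding ${\bf F}_{0}$ and ${\bf P}(1,1,2)$ removes precisely the type $(r,l,\delta)=(2,2,0)$, where part (8) would only detect the component $\overline{\mathcal H}_{1}$ of the reducible divisor $\overline{\mathcal H}_{\Lambda}$ — is accurate and simply makes explicit what the paper leaves implicit.
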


\begin{pf}
Since the period of $C$ is exactly the image of $(X_{(S,C)},\iota_{(S,C)})$ by the Torelli map, the result follows from
Theorem~\ref{thm:divisor:pullback:chi:upsilon}.
\end{pf}

\subsection
{The quasi-affinity of ${\mathcal M}_{\Lambda}^{0}$}
\label{sect:9.6}
\par
As an application of the results in Sections~\ref{sect:8} and \ref{sect:9}, we obtain the quasi-affinity of ${\mathcal M}_{\Lambda}^{0}$
for a wide range of $\Lambda$ as follows.

\begin{theorem}
\label{thm:quasiaffinity}
If $\Lambda$ is a primitive $2$-elementary sublattice of ${\Bbb L}_{K3}$ with $r(\Lambda)<16$ and signature $(2,r(\Lambda)-2)$, 
then ${\mathcal M}_{\Lambda}^{0}$ is quasi-affine.
\end{theorem}

\begin{pf}
By \cite[Prop.\,2.2]{Yoshikawa13}, $\overline{J}_{M}^{0}$ extends to a meromorphic map from ${\mathcal M}_{\Lambda}^{*}$ to ${\mathcal A}_{g}^{*}$. 
We regard ${\mathcal M}_{\Lambda}^{0}$ as a Zariski open subset of a subvariety of ${\mathcal M}_{\Lambda}^{*}\times{\mathcal A}_{g}^{*}$ via 
the embedding ${\rm id}_{{\mathcal M}_{\Lambda}^{0}}\times \overline{J}_{M}^{0}$. It suffices to prove the existence of a meromorphic section of an ample line bundle
on ${\mathcal M}_{\Lambda}^{*}\times{\mathcal A}_{g}^{*}$, which is nowhere vanishing on ${\mathcal M}_{\Lambda}^{0}$
(\cite[Prop.\,5.1.2]{Grothendieck61}). 
Let $\lambda_{\Lambda}$ be the Hodge bundle on ${\mathcal M}_{\Lambda}^{*}$. 
By Baily-Borel, the line bundle $\lambda_{\Lambda}^{\otimes a}\boxtimes{\mathcal F}_{g}^{\otimes b}$ on 
${\mathcal M}_{\Lambda}^{*}\times{\mathcal A}_{g}^{*}$ is ample if $a>0$ and $b>0$.
Under the assumption $r>6$, it follows from Theorems~\ref{thm:structure:g<6:r<10}, \ref{thm:structure:r=6:delta=0}, \ref{thm:structure:r=10:delta=0}
that $\Phi_{M}^{\nu}$ is a meromorphic section of $\lambda_{\Lambda}^{\otimes a}\boxtimes{\mathcal F}_{g}^{\otimes b}$
for some $a,b,\nu\in{\mathbf Z}_{>0}$. 
Since $\Phi_{M}^{\nu}$ is nowhere vanishing on ${\mathcal M}_{\Lambda}^{0}$ by Theorem~\ref{thm:automorphic:property:Phi:M}, 
$\Phi_{M}^{\nu}$ is a desired section.
\end{pf}

By \cite{BKPSB98}, it is known that ${\mathcal M}_{\Lambda}^{0}$ is quasi-affine when 
$\Lambda={\Bbb U}^{\oplus2}\oplus{\Bbb E}_{8}^{\oplus2}\oplus{\Bbb A}_{1}$, ${\Bbb U}^{\oplus2}\oplus{\Bbb E}_{8}^{\oplus2}$.
This, together with Theorem~\ref{thm:quasiaffinity}, implies that ${\mathcal M}_{\Lambda}^{0}$ is quasi-affine possibly except for $12$ isometry classes 
of primitive $2$-elementary sublattices of ${\Bbb L}_{K3}$.

\section{Spin-$1/2$ bosonization formula and a factorization of $\tau_{M}$}
\label{sect:10}
\par
In Section~\ref{sect:10}, we introduce a twisted version $\tau_{M}^{\rm spin}$ of $\tau_{M}$ and give its explicit formula
purely in terms of Borcherds products. The relations \eqref{eqn:relation:tau:spin:non-spin:1}, \eqref{eqn:relation:tau:spin:non-spin:2}
below provide a factorization of $\tau_{M}$ at the level of holomorphic torsion invariants.

\subsection
{Spin-$1/2$ bosonization formula}
\label{sect:10.1}
\par
Let $C$ be a smooth projective curve of genus $g$ and let $\Sigma$ be its theta characteristic.
The pair $(C,\Sigma)$ is called a spin curve. A theta characteristic $\Sigma$ is ineffective if $h^{0}(\Sigma)=0$. 
Let $\omega$ be a K\"ahler form on $C$. Then $\Sigma$ is equipped with the Hermitian metric induced by $\omega$.
Let $\tau(C,\Sigma;\omega)$ be the analytic torsion of $\Sigma$ with respect to $\omega$. 
Recall that ${\rm vol}(C,\omega)=\int_{C}\omega/2\pi$. We set
$$
\widetilde{\tau}_{g}(C,\Sigma)
:=
{\rm Vol}(C,\omega)\tau(C,\omega)\tau(C,\Sigma;\omega)^{2}.
$$
By the anomaly formula for Quillen metrics \cite{BGS88}, if $\Sigma$ is ineffective, 
$\widetilde{\tau}_{g}(C,\Sigma)$ is independent of the choice of a K\"ahler form $\omega$ on $C$.
Thus we get an invariant $\widetilde{\tau}_{g}$ of ineffective spin curves of genus $g$.
In Section~\ref{sect:10.1}, we recall the spin-$1/2$ bosonization formula \cite{AGMV86}, \cite{BostNelson86}, \cite{Fay92},
which gives an explicit formula for $\widetilde{\tau}_{g}$ viewed as a function on the moduli space of ineffective spin curves of genus $g$ 
with level $2$-structure.
\par
Let $V$ be a fixed symplectic vector space of rank $2g$ over ${\bf F}_{2}$ equipped with
a fixed symplectic basis $\{{\frak e}_{1},\ldots,{\frak e}_{g},{\frak f}_{1},\ldots,{\frak f}_{g}\}$.
A level $2$-structure on $C\in{\frak M}_{g}$ is defined as an isomorphism of symplectic vector spaces
$\alpha\colon V\cong H_{1}(C,{\bf F}_{2})$, where $H_{1}(C,{\bf F}_{2})$ is equipped with the intersection pairing.
Let ${\rm Alb}(C)[2]$ (resp. ${\rm Pic}^{0}(C)[2]$) be the $2$-division points of the Albanese variety ${\rm Alb}(C)$ 
(resp. Picard variety ${\rm Pic}^{0}(C)$). By the canonical isomorphism 
$H_{1}(C,{\bf F}_{2})\cong\frac{1}{2}H_{1}(C,{\bf Z})/H_{1}(C,{\bf Z})\cong{\rm Alb}(C)[2]$ and the Abel-Jacobi isomorphism
${\rm Pic}^{0}(C)\cong{\rm Alb}(C)$, a level $2$-structure on $C$ is identified with a symplectic basis of 
${\rm Pic}^{0}(C)[2]$ with respect to the Weil pairing.
\par
Let ${\frak M}_{g}(2)$ be the moduli space of projective curves of genus $g$ with level $2$-structure
and let $p\colon{\frak M}_{g}(2)\to{\frak M}_{g}$ be the natural projection. 
Let ${\mathcal S}_{g}^{+}$ be the moduli space of even spin curves of genus $g$ and 
let $\pi\colon{\mathcal S}_{g}^{+}\to{\frak M}_{g}$ be the natural projection.
We define ${\mathcal S}_{g}^{+}(2)$ as the fiber product ${\mathcal S}_{g}^{+}\times_{{\frak M}_{g}}{\frak M}_{g}(2)$. 
The projection from ${\mathcal S}_{g}^{+}(2)$ to ${\frak M}_{g}(2)$ (resp. ${\mathcal S}_{g}^{+}$)
is denoted again by $p$ (resp. $\pi$).
The covering $p\colon{\mathcal S}_{g}^{+}(2)\to{\frak M}_{g}(2)$ of degree $2^{g-1}(2^{g}+1)$ is trivial as follows.
On $(C,\alpha)\in{\frak M}_{g}(2)$, there is a distinguished even theta characteristic $\kappa\in{\rm Pic}^{g-1}(C)$ 
called Riemann's constant (e.g. \cite[p.6 and Lemma 1.5]{Fay92}). 
For every even pair $(a,b)$, $a,b\in\{0,1/2\}^{g}$, we define a section $\sigma_{a,b}\colon{\frak M}_{g}(2)\to{\mathcal S}_{g}^{+}(2)$ by
$\sigma_{a,b}(C,\alpha):=(C,\kappa\otimes\chi_{a,b},\alpha)$, where $\chi_{a,b}\in{\rm Pic}^{0}(C)[2]$ is the point 
corresponding to $\sum_{i}2a_{i}{\frak e}_{i}+\sum_{j}2b_{j}{\frak f}_{j}\in{\bf F}_{2}^{2g}$ via the isomorphism
${\bf F}_{2}^{2g}\cong{\rm Pic}^{0}(C)[2]$ induced by $\alpha$.
In this way, we get a decomposition ${\mathcal S}_{g}^{+}(2)=\amalg_{(a,b)\,{\rm even}}\sigma_{a,b}({\frak M}_{g}(2))$.
We set
${\mathcal S}_{g}^{+,0}(2):=\amalg_{(a,b)\,{\rm even}}\sigma_{a,b}({\frak M}_{g}(2)\setminus{\rm div}(\theta_{a,b}))$
and ${\mathcal S}_{g}^{+,0}:=\pi({\mathcal S}_{g}^{+,0}(2))=\bigcup_{(a,b)\,{\rm even}}
\pi(\sigma_{a,b}({\frak M}_{g}(2)\setminus{\rm div}(\theta_{a,b})))$. 
Since $h^{0}(\kappa\otimes\chi_{a,b})=0$ if and only if $\theta_{a,b}(\varOmega(C))\not=0$ for $(C,\alpha)\in{\frak M}_{g}(2)$, 
$\widetilde{\tau}_{g}$ is a function on ${\mathcal S}_{g}^{+,0}$.
\par
On the other hand, for every even pair $(a,b)$, the theta constant $\theta_{a,b}$ is a section of a certain line bundle 
on ${\frak M}_{g}(2)$ and its Petersson norm $\|\theta_{a,b}\|$ is a function on ${\frak M}_{g}(2)$. 
For $g=0$, we define $\|\theta_{a,b}\|:=1$. Let $\zeta_{\bf Q}(s)$ be the Riemann zeta function.
By the spin-$1/2$ bosonization formula \cite{AGMV86}, \cite{BostNelson86}, \cite[Th.\,4.9 (i), p.94 Eq.\,(4.58), p.97 Eq.\,(5.7)]{Fay92},
the following equality of functions on ${\frak M}_{g}(2)\setminus{\rm div}(\theta_{a,b})$ holds:
\begin{equation}
\label{eqn:spin:1/2:bosonization:0}
\sigma_{a,b}^{*}\pi^{*}\widetilde{\tau}_{g}
=
{\frak c}_{g}\,\|\theta_{a,b}\|^{-4},
\qquad
{\frak c}_{g}:=(4\pi)^{-g}e^{6(1-g)(2\zeta_{\bf Q}'(-1)+\zeta_{\bf Q}(-1))},
\end{equation}
where ${\frak c}_{g}$ is evaluated by the arithmetic Riemann-Roch theorem \cite{GilletSoule92} for $g=0$,
by Kronecker's limit formula and Ray-Singer's formula \cite[Th.\,4.1]{RaySinger73} for $g=1$,
and by Wentworth's formula \cite[Eq.\,(1.1) and Cor.\,1.1]{Wentworth12} for $g\geq2$.
In other words,
\begin{equation}
\label{eqn:spin:1/2:bosonization:1}
{\rm Vol}(C,\omega)\tau(C,\omega)\tau(C,\kappa\otimes\chi_{a,b};\omega)^{2}
=
{\frak c}_{g}\,\|\theta_{a,b}(\varOmega(C))\|^{-4}
\end{equation}
for all $(C,\alpha)\in{\frak M}_{g}(2)\setminus{\rm div}(\theta_{a,b})$ and even pairs $(a,b)$. 
Notice that the Laplacians (resp. volume) in \cite{Wentworth12} differ from ours by the scaling factor $2$ (resp. $2\pi$).
Hence Wentworth's formula \cite[Eq.\,(1.1) and Cor.\,1.1]{Wentworth12} reads
\begin{equation}
\label{eqn:spin:1/2:bosonization:2}
\begin{aligned}
\,&
{\rm Area}(C,\omega)2^{-\zeta_{{\mathcal O}_{C}}(0)}\tau(C,\omega)2^{-\zeta_{\Sigma}(0)}\tau(C,\kappa\otimes\chi_{a,b};\omega)^{2}
\\
&=
(4\pi e^{6(2\zeta_{\bf Q}'(-1)+\zeta_{\bf Q}(-1))})^{1-g}
\|\theta_{a,b}(\varOmega(C))\|^{-4},
\end{aligned}
\end{equation}
where $\zeta_{{\mathcal O}_{C}}(s)$ (resp. $\zeta_{\Sigma}(s)$) is the spectral zeta function of the Laplacian
$(\bar{\partial}+\bar{\partial}^{*})^{2}$ acting on the smooth sections of ${\mathcal O}_{C}$ (resp. $\Sigma$)
and ${\rm Area}(C,\omega):=\int_{C}\omega$.
Since
$$
\zeta_{{\mathcal O}_{C}}(0)=\frac{\chi(C)}{6}-h^{0}({\mathcal O}_{C})=-\frac{g+2}{3},
\qquad
\zeta_{\Sigma}(0)=\frac{\chi(C)}{6}+\frac{1}{2}\deg\Sigma-h^{0}(\Sigma)=\frac{g-1}{6}
$$
by \cite[p.37 l.16]{Fay92} and since ${\rm Vol}(C,\omega)={\rm Area}(C,\omega)/2\pi$, 
we get the value ${\frak c}_{g}$ in \eqref{eqn:spin:1/2:bosonization:0}.
\par
Let us extend the definition of $\widetilde{\tau}_{g}$ to disconnected curves as follows.
A line bundle on a disconnected curve is a theta characteristic if it is a componentwise theta characteristic. 
Similarly, a theta characteristic on a disconnected curve is ineffective if it is componentwise ineffective.
In what follows, for a disjoint union of smooth projective curves $C=\amalg_{i\in I}C_{i}$ with $g(C_{i}):=g_{i}$
and an ineffective theta characteristic $\Sigma=\{\Sigma_{i}\}_{i\in I}$ on $C$, we define 
$$
\widetilde{\tau}_{g}(C,\Sigma):=\prod_{i\in I}\widetilde{\tau}_{g_{i}}(C_{i},\Sigma_{i}),
$$
where $g:=\sum_{i\in I}g_{i}$ is the total genus of $C$.

\subsection
{A factorization of $\tau_{M}$}
\label{sect:10.2}
\par
We introduce the following twisted version of $\tau_{M}$.

\begin{definition}
\label{def:tau:spin}
Let $(X,\iota)$ be a $2$-elementary $K3$ surface  of type $M$ and let $\gamma$ be an $\iota$-invariant K\"ahler form on $X$.
If $M\not\cong{\Bbb U}(2)\oplus{\Bbb E}_{8}(2)$, define
$$
\begin{aligned}
\tau_{M}^{\rm spin}(X,\iota)
&:=
\prod_{\Sigma^{2}=K_{X^{\iota}},\,h^{0}(\Sigma)=0}
\left\{
{\rm Vol}(X,\gamma)^{\frac{14-r}{4}}
\tau_{{\bf Z}_{2}}(X,\gamma)(\iota)\,
\tau(X^{\iota},\Sigma;\gamma|_{X^{\iota}})^{-2}
\right.
\\
&\qquad\qquad\qquad
\left.
\times
\exp\left[
\frac{1}{8}\int_{X^{\iota}}
\log\left.\left(\frac{\eta\wedge\bar{\eta}}
{\gamma^{2}/2!}\cdot\frac{{\rm Vol}(X,\gamma)}{\|\eta\|_{L^{2}}^{2}}
\right)\right|_{X^{\iota}}
c_{1}(X^{\iota},\gamma|_{X^{\iota}})
\right]
\right\},
\end{aligned}
$$
where $\Sigma$ runs over all {\em ineffective} theta characteristics on $X^{\iota}$. 
If $M\cong{\Bbb U}(2)\oplus{\Bbb E}_{8}(2)$ and hence $X^{\iota}=\emptyset$, define
$$
\tau_{M}^{\rm spin}(X,\iota)
:=
\tau_{M}(X,\iota)^{2}.
$$
\end{definition}

Recall that the vector-valued modular form $f_{\Lambda}$ of type $\rho_{\Lambda}$ was defined by 
\eqref{eqn:correcting:modular:form:(r,a)=(2,0)}, \eqref{eqn:correcting:modular:form:(r,a)=(2,2)} when $r=22-r(\Lambda)=2$.
We extend its definition to the case $r\not=2$ by setting
$$
f_{\Lambda}:=\delta_{r,10}\,F_{\Lambda}.
$$
Then Theorem~\ref{thm:main:theorem:1} is interpreted as the modularity of $\tau_{M}^{\rm spin}$ as follows.

\begin{theorem}
\label{thm:tau:spin}
There exists a constant $C'_{M}>0$ depending only on $M$ such that 
the following equality of functions on ${\mathcal M}_{\Lambda}^{0}\setminus{\mathcal H}_{\Lambda}$ holds:
\begin{equation}
\label{eqn:formula:tau:spin}
\tau_{M}^{\rm spin}
=
C'_{M}\,
\left\|
\Psi_{\Lambda}(\cdot,2^{g-1}F_{\Lambda}+f_{\Lambda})
\right\|^{-1/2}.
\end{equation}
\end{theorem}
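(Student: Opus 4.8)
The plan is to rewrite $\tau_M^{\rm spin}$ by means of the spin-$1/2$ bosonization formula \eqref{eqn:spin:1/2:bosonization:1}, reduce it to a symmetric product of Petersson norms of even theta constants, and then feed in Theorem~\ref{thm:main:theorem:1}. Write $A$ for the factor common to $\tau_M$ (Section~\ref{sect:5}) and to each factor of $\tau_M^{\rm spin}$ (Definition~\ref{def:tau:spin}), namely the product of ${\rm Vol}(X,\gamma)^{(14-r)/4}$, the equivariant torsion $\tau_{{\bf Z}_{2}}(X,\gamma)(\iota)$ and the exponential curvature integral. Then $\tau_M=A\cdot{\rm Vol}(X^{\iota},\gamma|_{X^{\iota}})\,\tau(X^{\iota},\gamma|_{X^{\iota}})$, while each factor of $\tau_M^{\rm spin}$ equals $A\cdot\tau(X^{\iota},\Sigma;\gamma|_{X^{\iota}})^{-2}$. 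On $X^{\iota}=C\amalg E_1\amalg\cdots\amalg E_k$ an \emph{ineffective} theta characteristic is $\mathcal{O}(-1)$ on each rational component $E_i$ and restricts to an ineffective (hence even) theta characteristic on the genus-$g$ component $C$; thus the number $n$ of factors equals the number of ineffective even theta characteristics of $C$.

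Applying \eqref{eqn:spin:1/2:bosonization:1} to $C$ and to each $E_i$ (where $\|\theta\|=1$ and the genus-$0$ constant ${\frak c}_0$ occurs), the torsion factorizes as
$$
\tau(X^{\iota},\Sigma;\gamma|_{X^{\iota}})^{-2}={\frak c}_g^{-1}{\frak c}_0^{-k}\,\|\theta_{a,b}(\varOmega(C))\|^{4}\,{\rm Vol}(X^{\iota},\gamma|_{X^{\iota}})\,\tau(X^{\iota},\gamma|_{X^{\iota}}),
$$
where $(a,b)$ is the even pair attached to $\Sigma|_{C}$ via Riemann's constant. Taking the product over all ineffective $\Sigma$, the $n$ copies of ${\rm Vol}(X^{\iota})\tau(X^{\iota})$ recombine with $A^{n}$ into $\tau_M^{n}$, giving the intermediate identity
$$
\tau_M^{\rm spin}=({\frak c}_g{\frak c}_0^{k})^{-n}\,\tau_M^{n}\prod_{\Sigma}\|\theta_{a,b}(\varOmega(C))\|^{4}.
$$
The content of this step is that every analytic quantity except $\tau_M$ and the theta constants cancels.

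It remains to evaluate $n$ and the theta product in each case and to invoke the main theorem. In case~(1), parts (2)--(4) of Theorem~\ref{thm:divisor:pullback:chi:upsilon} show that ${\rm div}(J_M^{*}\chi_g^{8})$ is supported on $\mathcal{D}_{\Lambda}\cup\mathcal{H}_{\Lambda}$, so $J_M^{*}\chi_g^{8}$ is nowhere zero on $\mathcal{M}_{\Lambda}^{0}\setminus\mathcal{H}_{\Lambda}$; hence no even theta constant vanishes, all $n=2^{g-1}(2^{g}+1)$ even theta characteristics of $C$ are ineffective, and $\prod_{\Sigma}\|\theta_{a,b}\|^{4}=J_M^{*}\|\chi_g^{8}\|^{1/2}$. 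Substituting Theorem~\ref{thm:main:theorem:1}(1) in the form $J_M^{*}\|\chi_g^{8}\|^{1/2}=C_M^{-1/2}\,\tau_M^{-2^{g-1}(2^{g}+1)}\,\|\Psi_{\Lambda}(\cdot,2^{g-1}F_{\Lambda}+f_{\Lambda})\|^{-1/2}$ makes $\tau_M^{n}$ cancel exactly, yielding the assertion with $C'_M=({\frak c}_g{\frak c}_0^{k})^{-n}C_M^{-1/2}$. In case~(2), parts (5)--(8) of Theorem~\ref{thm:divisor:pullback:chi:upsilon} give $J_M^{*}\chi_g\equiv0$ while $J_M^{*}\Upsilon_g$ is non-zero on $\mathcal{M}_{\Lambda}^{0}\setminus\mathcal{H}_{\Lambda}$, so by Lemma~\ref{lemma:locus:vanishing:two:theta:char} exactly one even theta constant $\theta_{a_0,b_0}$ vanishes; thus $n=(2^{g-1}+1)(2^{g}-1)$, $\Upsilon_g=\prod_{(a,b)\neq(a_0,b_0)}\theta_{a,b}^{8}$ and $\prod_{\Sigma}\|\theta_{a,b}\|^{4}=J_M^{*}\|\Upsilon_g\|^{1/2}$, whence Theorem~\ref{thm:main:theorem:1}(2) gives the result after the same cancellation, using $(2^{g}-1)(2^{g}+2)/2=(2^{g-1}+1)(2^{g}-1)=n$. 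The exceptional $M\cong{\Bbb U}(2)\oplus{\Bbb E}_{8}(2)$, where $X^{\iota}=\emptyset$ and $\tau_M^{\rm spin}:=\tau_M^{2}$, is verified directly against Theorem~\ref{thm:main:theorem:1}(2) with the conventions $g=1$ and $J_M^{*}\|\Upsilon_g\|=1$.

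I expect the main obstacle to be making the theta-constant product descend correctly to $\mathcal{M}_{\Lambda}^{0}\setminus\mathcal{H}_{\Lambda}$: the individual $\theta_{a,b}$ and the identity \eqref{eqn:spin:1/2:bosonization:1} live on the level-$2$ cover ${\frak M}_g(2)$, so I must check that the monodromy permutes the ineffective even theta characteristics among themselves and that the product over the full monodromy-invariant set --- all of them in case~(1), all but the canonically determined vanishing one in case~(2) --- defines a single-valued function below, equal to $J_M^{*}\|\chi_g^{8}\|^{1/2}$ or $J_M^{*}\|\Upsilon_g\|^{1/2}$ respectively. This descent is precisely what forces the jump of $\tau_M^{\rm spin}$ across $\mathcal{H}_{\Lambda}$, where $n$ changes.
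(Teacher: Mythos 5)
Your argument is, in substance, the paper's own proof: the paper likewise divides each factor of $\tau_{M}^{\rm spin}$ by $\widetilde{\tau}_{g}(X^{\iota},\Sigma)={\rm Vol}(X^{\iota})\tau(X^{\iota})\tau(X^{\iota},\Sigma)^{2}$ to produce $\tau_{M}/\widetilde{\tau}_{g}(X^{\iota},\Sigma)$, applies the bosonization identity \eqref{eqn:spin:1/2:bosonization:1} componentwise (with ${\frak c}_{0}^{k}$ from the rational components), identifies the resulting theta product with $J_{M}^{*}\|\chi_{g}^{8}\|^{1/2}$ or $J_{M}^{*}\|\Upsilon_{g}\|^{1/2}$ according to whether $X^{\iota}$ has zero or exactly one effective even theta characteristic off ${\mathcal H}_{\Lambda}$, and then cancels $\tau_{M}^{n}$ against Theorem~\ref{thm:main:theorem:1}; your constants $C'_{M}=c_{M}C_{M}^{-1/2}$ agree with the paper's $(C'_{M})^{2}=c_{M}^{2}/C_{M}$.

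One small omission: your decomposition $X^{\iota}=C^{(g)}\amalg E_{1}\amalg\cdots\amalg E_{k}$, and hence the sentence identifying $n$ with the number of ineffective even theta characteristics of \emph{the} genus-$g$ component, fails for exactly one lattice, $M\cong{\Bbb U}\oplus{\Bbb E}_{8}(2)$ (so $(r,l,\delta)=(10,8,0)$, $g=2$), where $X^{\iota}$ is a disjoint union of two elliptic curves and there is no genus-$2$ component. The paper treats this separately in its Case~3: there are $3\times3=9=(2^{g-1}+1)(2^{g}-1)$ ineffective theta characteristics, the constant is $c_{M}=({\frak c}_{1}^{-2})^{9}$, and the identification of the theta product with $J_{M}^{*}\|\Upsilon_{2}\|^{1/2}$ uses the factorization of genus-$2$ theta constants on the diagonal locus of ${\frak S}_{2}$ together with the fact that the unique even genus-$2$ characteristic vanishing there is the one whose two genus-$1$ components are both odd. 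This is a one-sentence patch, not a flaw in the method, but your proof as written does not cover that type. Your closing remark about descent from ${\frak M}_{g}(2)$ is a legitimate point which the paper handles implicitly by phrasing the vanishing characteristic via ``a suitable level $2$ structure''; the monodromy-invariance of the full set of ineffective even characteristics (respectively of its complement, a single distinguished element in case~(2), by Propositions~\ref{thetanull (r,a)=(2,0)}, \ref{thetanull (r,a)=(2,2)} and Lemma~\ref{thetanull r=10, k=5}) is indeed what makes the product single-valued.
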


\begin{pf} 
For a $2$-elementary $K3$ surface $(X,\iota)$ of type $M$ with $(r,l,\delta)\not=(10,10,0),(10,8,0)$, recall that
$X^{\iota}$ consists of a curve of genus $g=g(M)$ and $k=k(M)$ smooth rational curves (cf. Section~\ref{sect:3.2}).
\par{\em (Case 1) }
If $(r,\delta)\not=(2,0),(10,0)$ and $X^{\iota}$ has no effective even theta characteristics, we get
\begin{equation}
\label{eqn:relation:tau:spin:non-spin:1}
\begin{aligned}
\tau_{M}^{\rm spin}(X,\iota)
&=
\frac
{\tau_{M}(X,\iota)^{2^{g-1}(2^{g}+1)}}
{\prod_{\Sigma^{2}=K_{X^{\iota}},\,h^{0}(\Sigma)=0}\widetilde{\tau}_{g}(X^{\iota},\Sigma)}
=
\frac
{\tau_{M}(X,\iota)^{2^{g-1}(2^{g}+1)}}
{\prod_{(a,b)\,{\rm even}}{\frak c}_{g}{\frak c}_{0}^{k}\|\theta_{a,b}(\varOmega(X^{\iota}))\|^{-4}}
\\
&=
c_{M}
\tau_{M}(X,\iota)^{2^{g-1}(2^{g}+1)}\|\chi_{g}(\varOmega(X^{\iota}))^{8}\|^{1/2}
\end{aligned}
\end{equation}
with 
$$
c_{M}:=({\frak c}_{g}^{-1}{\frak c}_{0}^{-k})^{2^{g-1}(2^{g}+1)}=\{(4\pi)^{g}e^{6(10-r)(2\zeta_{\bf Q}'(-1)+\zeta_{\bf Q}(-1))}\}^{-2^{g-1}(2^{g}+1)}.
$$
Here the first equality of \eqref{eqn:relation:tau:spin:non-spin:1} follows from Definition~\ref{def:tau:spin} and the second follows from
\eqref{eqn:spin:1/2:bosonization:1}.
Comparing Theorem~\ref{thm:main:theorem:1} and \eqref{eqn:relation:tau:spin:non-spin:1}, 
we get \eqref{eqn:formula:tau:spin} with $(C'_{M})^{2}:=c_{M}^{2}/C_{M}$ in this case.
\par{\em (Case 2) }
If $(r,\delta)=(2,0),(10,0)$ with $(r,l,\delta)\not=(10,10,0),(10,8,0)$ and if $X^{\iota}$ has a unique effective even theta characteristic
corresponding to a theta constant $\theta_{a_{0},b_{0}}(\varOmega(X^{\iota}))$ with respect to 
a suitable level $2$ structure, we get in the same way
\begin{equation}
\label{eqn:relation:tau:spin:non-spin:2}
\begin{aligned}
\tau_{M}^{\rm spin}(X,\iota)
&=
\frac
{\tau_{M}(X,\iota)^{(2^{g}-1)(2^{g-1}+1)}}
{\prod_{\Sigma^{2}=K_{X^{\iota}},\,h^{0}(\Sigma)=0}\widetilde{\tau}(X^{\iota},\Sigma)}
=
\frac
{\tau_{M}(X,\iota)^{(2^{g}-1)(2^{g-1}+1)}}
{\prod_{(a,b)\not=(a_{0},b_{0}),{\rm even}}{\frak c}_{0}^{k}{\frak c}_{g}\|\theta_{a,b}(\varOmega(X^{\iota}))\|^{-4}}
\\
&=
c_{M}
\tau_{M}(X,\iota)^{(2^{g}-1)(2^{g-1}+1)}
\|\Upsilon_{g}(\varOmega(X^{\iota}))\|^{1/2}
\end{aligned}
\end{equation}
with 
$$
c_{M}:=({\frak c}_{0}^{-k}{\frak c}_{g}^{-1})^{(2^{g}-1)(2^{g-1}+1)}=\{(4\pi)^{g}e^{6(10-r)(2\zeta_{\bf Q}'(-1)+\zeta_{\bf Q}(-1))}\}^{-(2^{g}-1)(2^{g-1}+1)}.
$$
By Theorem~\ref{thm:main:theorem:1} and \eqref{eqn:relation:tau:spin:non-spin:2}, 
we get \eqref{eqn:formula:tau:spin} with $(C'_{M})^{2}:=c_{M}^{2}/C_{M}$ in this case.
\par{\em (Case 3) }
If $(r,l,\delta)=(10,10,0)$, then $X^{\iota}=\emptyset$. Since we defined $g=1$ in this case, 
we get \eqref{eqn:formula:tau:spin} with $(C'_{M})^{2}:=C_{M}^{-1}$ by Definition~\ref{def:tau:spin} and Theorem~\ref{thm:main:theorem:1}. 
If $(r,l,\delta)=(10,8,0)$, then $X^{\iota}$ consists of two disjoint elliptic curves. In the same way as in \eqref{eqn:relation:tau:spin:non-spin:2},
we get \eqref{eqn:formula:tau:spin} with $(C'_{M})^{2}:=c_{M}^{2}/C_{M}$, $c_{M}=({\frak c}_{1}^{-2})^{9}=(4\pi)^{-18}$ in this case.
This completes the proof.
\end{pf}

\begin{remark}
Assume ${\mathcal H}_{\Lambda}\not=\emptyset$.
As the period of a $2$-elementary $K3$ surface of type $M$ approaches to a point of ${\mathcal H}_{\Lambda}$,
one of the ineffective even theta characteristics on its fixed curve becomes effective in the limit and the value $\tau_{M}^{\rm spin}$ 
jumps there. Because of this jumping, $\tau_{M}^{\rm spin}$ is a discontinuous function on ${\mathcal M}_{\Lambda}^{0}$.
Since $\|\Psi_{\Lambda}(\cdot,2^{g-1}F_{\Lambda}+f_{\Lambda})\|$ is also discontinuous along ${\mathcal H}_{\Lambda}$
by \cite[Th.\,1.1 (i)]{Schofer09}, it is an interesting problem of comparing $\tau_{M}^{\rm spin}$ and 
$\|\Psi_{\Lambda}(\cdot,2^{g-1}F_{\Lambda}+f_{\Lambda})\|$ on the locus ${\mathcal H}_{\Lambda}$.
\end{remark}

\subsection
{A uniqueness of elliptic modular form corresponding to $\tau_{M}^{\rm spin}$}
\label{sect:10.3}
\par
Set
\begin{equation}
\label{eqn:Phi:spin}
\Phi_{M}^{\rm spin}:=\Psi_{\Lambda}(\cdot,2^{g-1}F_{\Lambda}+f_{\Lambda}).
\end{equation}
Since $\tau_{M}^{\rm spin}=C'_{M}\|\Phi_{M}^{\rm spin}\|^{-1/2}$, $\Phi_{M}^{\rm spin}$ can be identified with $\tau_{M}^{\rm spin}$.
In this subsection, we study the uniqueness of elliptic modular form whose Borcherds lift is $\Phi_{M}^{\rm spin}$. 
\par
For a modular form $\varphi(\tau)$ of type $\rho_{\Lambda}$ with weight $1-b^{-}(\Lambda)/2$, we write
$$
\varphi(\tau)=\sum_{\gamma\in A_{\Lambda}}{\bf e}_{\gamma}\sum_{n\in\gamma^{2}/2+{\bf Z}}c_{\gamma}(n;\varphi)q^{n}
$$
for its Fourier expansion. The principal part of $\varphi$ is the Laurent polynomial defined as
$$
{\mathcal P}[\varphi]:=\sum_{\gamma\in A_{\Lambda}}{\bf e}_{\gamma}\sum_{n\in\gamma^{2}/2+{\bf Z},\,n<0}c_{\gamma}(n;\varphi)q^{n}
\in
{\bf C}[q^{-1/4}]\otimes{\bf C}[A_{\Lambda}].
$$
Notice that we used the notation
${\mathcal P}_{\leq0}[\varphi]=\sum_{\gamma\in A_{\Lambda}}{\bf e}_{\gamma}\sum_{n\in\gamma^{2}/2+{\bf Z},\,n\leq0}c_{\gamma}(n;\varphi)q^{n}$
in the previous sections. Obviously, ${\mathcal P}_{\leq0}[\varphi]-{\mathcal P}[\varphi]\in{\bf C}[A_{\Lambda}]$ is the constant term of $\varphi$.
\par
Similarly, for a Heegner divisor 
$H=\sum_{\gamma\in A_{\Lambda}}\sum_{n\in\gamma^{2}/2+{\bf Z},\,n<0}a_{\gamma}(n)\,H(n,\gamma)$ on $\Omega_{\Lambda}$,
where $H(n,\gamma):=\sum_{\lambda\in(\gamma+\Lambda)/\pm1,\,\lambda^{2}=2n}H_{\lambda}$, $n\in{\bf Q}$, $\gamma\in A_{\Lambda}$,
we define
$$
{\frak P}[H]:=\sum_{\gamma\in A_{\Lambda}}{\bf e}_{\gamma}\sum_{n\in\gamma^{2}/2+{\bf Z},\,n<0}a_{\gamma}(n)\,q^{n}
\in
{\bf C}[q^{-1/4}]\otimes{\bf C}[A_{\Lambda}].
$$
Comparing \eqref{eqn:principal:part:F}, \eqref{eqn:correcting:modular:form:(r,a)=(2,0)}, \eqref{eqn:principal:part:f:(r,a)=(2,2)}
with Theorem~\ref{thm:Borcherds:lift}, \eqref{eqn:divisor:Psi:Lambda:U}, \eqref{eqn:wt:div:(r,a)=(2,2)},
we have the equality
\begin{equation}
\label{eqn:comparison:principal:part}
{\mathcal P}[2^{g-1}F_{\Lambda}+f_{\Lambda}]={\frak P}[{\rm div}(\Phi_{M}^{\rm spin})]
\end{equation}
if $\Lambda\not\cong{\Bbb U}(2)^{\oplus2},({\Bbb A}_{1}^{+})^{\oplus2}$ or equivalently ${\mathcal D}_{\Lambda}\not=0$.
When $\Lambda\cong{\Bbb U}(2)^{\oplus2}$ or $({\Bbb A}_{1}^{+})^{\oplus2}$, we have 
${\mathcal P}[2^{g-1}F_{\Lambda}+f_{\Lambda}]\not=0$ but ${\frak P}[{\rm div}(\Phi_{M}^{\rm spin})]=0$.
Thus \eqref{eqn:comparison:principal:part} does not hold in these two cases.
Except them, the elliptic modular form $2^{g-1}F_{\Lambda}+f_{\Lambda}$ is characterized uniquely by the holomorphic torsion invariant $\tau_{M}^{\rm spin}$ 
as follows.

\begin{theorem}
\label{thm:converse}
If $\Lambda\not\cong{\Bbb U}(2)^{\oplus2},({\Bbb A}_{1}^{+})^{\oplus2}$, then there exists a unique $O(q_{\Lambda})$-invariant elliptic modular form 
$\varphi_{\Lambda}$ of type $\rho_{\Lambda}$ with weight $1-b^{-}(\Lambda)/2$ such that
\begin{equation}
\label{eqn:div&wt}
{\mathcal P}[\varphi_{\Lambda}]={\frak P}[{\rm div}(\Phi_{M}^{\rm spin})],
\qquad
c_{0}(0;\varphi_{\Lambda})/2={\rm wt}(\Phi_{M}^{\rm spin}).
\end{equation}
In particular, the $O(q_{\Lambda})$-invariance and \eqref{eqn:div&wt} characterize $2^{g-1}F_{\Lambda}+f_{\Lambda}$ uniquely.
\end{theorem}

\begin{pf}
Let $\varphi_{\Lambda}$ be an $O(q_{\Lambda})$-invariant modular form satisfying \eqref{eqn:div&wt}.
Set $\psi:=\varphi_{\Lambda}-(2^{g-1}F_{\Lambda}+f_{\Lambda})$. 
This is a modular form of type $\rho_{\Lambda}$ and weight $1-b^{-}(\Lambda)/2$ 
which is holomorphic at the cusp, is $O(q_{\Lambda})$-invariant, and satisfies $c_{0}(0; \psi)=0$. 
We must prove $\psi=0$. 
When $b^{-}(\Lambda)>2$, $\psi$ has negative weight and hence $\psi=0$. 
\par
Let $b^{-}(\Lambda)=2$. 
Then $\psi$ has weight $0$, so it must be a constant vector of ${\bf C}[A_{\Lambda}]$. 
From \cite[Lemma 3.9.1]{Nikulin80a} and \cite[Th.\,1]{Skoruppa08}, we deduce that 
${\rm Mp}_{2}({\bf Z})\times O(q_{\Lambda})$-invariant vectors in ${\mathbf C}[A_{\Lambda}]$ are scalar multiple of  
$\nu_{\Lambda}{\bf e}_{0}+{\bf v}_{\Lambda}+\nu_{\Lambda}\delta(\Lambda){\bf e}_{{\bf 1}_{\Lambda}}$
for some $\nu_{\Lambda}\in{\bf Z}_{>0}$, where 
${\mathbf v}_{\Lambda} =\sum_{\gamma\ne 0,{\bf 1}_{\Lambda},\gamma^{2}\equiv 0}{\bf e}_{\gamma}$. 
Since $c_{0}(0; \psi)=0$, we have $\psi=0$. 
\par
Let $b^{-}(\Lambda)=1$. By \cite[Th.\,5 and Th.\,9]{Skoruppa08} there is a canonical isomorphism between 
the space of modular froms of type $\rho_{\Lambda}$ and weight $1/2$ 
with the space of ${\rm Mp}_{2}({\mathbf Z})$-invariant vectors in 
${\mathbf C}[A_{{\Bbb A}_{1}}]\otimes{\mathbf C}[A_{\Lambda}] \simeq {\mathbf C}[A_{{\Bbb A}_{1}\oplus\Lambda}]$. 
By \cite[Th.\,1]{Skoruppa08}, the latter is generated by the vectors $I_{U}=\sum_{\gamma\in U}{\bf e}_{\gamma}$ 
where $U$ runs over self-dual isotropic subgroups of $A_{{\Bbb A}_{1}\oplus\Lambda}$. 
In this isomorphism the modular form corresponding to $I_{U}$ is given by 
$\sum_{\gamma\in U}\theta_{{\Bbb A}_{1}^{+}+\gamma_{1}}{\bf e}_{\gamma_{2}}$ 
where $\gamma=(\gamma_{1}, \gamma_{2})\in A_{{\Bbb A}_{1}\oplus\Lambda}$ (see \cite[Th.\,8]{Skoruppa08}).  
Now we have $\Lambda\cong{\Bbb A}_{1}^{+}\oplus{\Bbb U}$ or $({\Bbb A}_{1}^{+})^{\oplus2}\oplus{\Bbb A}_{1}$. 
In the first case, 
$A_{{\Bbb A}_{1}\oplus\Lambda}$ contains a unique nonzero isotropic element, 
and the corresponding modular form $\varphi$ has $c_{0}(0; \varphi)\ne 0$. 
In the second case, 
$A_{{\Bbb A}_{1}\oplus\Lambda}$ contains exactly two isotropic subgroups of rank $2$, 
which can be switched by an element of $O(q_{\Lambda})$. 
If $\varphi_{1}$ and $\varphi_{2}$ are the corresponding modular forms, then 
$\psi$ must be a scalar multiple of $\varphi_{1}+\varphi_{2}$. 
Again we have $c_{0}(0; \varphi_{1}+\varphi_{2})\ne 0$, so $\psi=0$.
\end{pf}

\section
{An equivariant analogue of Borcherds' conjecture}
\label{sect:11}
\par
In this section, we study an equivariant analogue of Borcherds' conjecture \cite{Borcherds98}. Let us explain briefly this conjecture.
Let $X_{K3}$ be the oriented $4$-manifold underlying a $K3$ surface. Let ${\mathcal E}$ be the set of Ricci-flat Riemannian metrics on $X_{K3}$ 
with normalized volume $1$ (cf. \cite{Yau78}). 
For $\gamma\in{\mathcal E}$, let $\Delta_{\gamma}$ be the Laplacian of $(X_{K3},\gamma)$ acting on $C^{\infty}(X_{K3})$.
Let $\det\Delta_{\gamma}$ be the regularized determinant of $\Delta_{\gamma}$. Then the assignment 
$\det\Delta\colon{\mathcal E}\ni\gamma\to\det\Delta_{\gamma}\in{\mathbf R}$
is a function on ${\mathcal E}$. In \cite[Example 15.2]{Borcherds98}, Borcherds conjectured that $\det\Delta$ is given by the automorphic form 
$\Phi_{{\Bbb L}_{K3}}(\cdot,1,E_{4}/\eta^{24})$ on $G({\Bbb L}_{K3})$, the period space of ${\mathcal E}$, 
where $E_{4}(\tau)$ is the Eisenstein series of weight $4$. To our knowledge, this conjecture is still open.
In Section~\ref{sect:11}, instead of the original Borcherds' conjecture, we study its equivariant analogue. 
\par
Let $\iota\colon X_{K3}\to X_{K3}$ be a $C^{\infty}$ involution. We define the lattices $H^{2}(X_{K3},{\mathbf Z})_{\pm}$ as in the preceding sections. 
Then $\iota$ is called {\it hyperbolic} if $H^{2}(X_{K3},{\mathbf Z})_{+}$ is Lorentzian.
Let ${\mathcal E}^{\iota}$ be the set of $\iota$-invariant Ricci-flat Riemannian metrics on $X_{K3}$ with volume $1$. 
Since we are interested in an equivariant analogue of Borcherds' conjecture, throughout Section~\ref{sect:11}, we restrict our consideration to 
those involutions $\iota$ satisfying
\begin{equation}
\label{eqn:nonemptyness}
{\mathcal E}^{\iota}\not=\emptyset.
\end{equation}
By \cite[Props.\,3.4, 3.6]{Yoshikawa08}, if $\iota$ is hyperbolic, then \eqref{eqn:nonemptyness} is equivalent to the existence of 
a complex structure $I$ on $X_{K3}$ such that $\iota$ is an anti-symplectic holomorphic involution on $(X_{K3},I)$. 
In particular, if $\iota$ is hyperbolic with \eqref{eqn:nonemptyness}, then $X_{K3}^{\iota}$ is a disjoint union of (possibly empty) smooth compact real surfaces.
\par
Let $\iota$ be a hyperbolic involution on $X_{K3}$ with \eqref{eqn:nonemptyness}. Its type is defined as the isometry class of 
$H^{2}(X_{K3},{\mathbf Z})_{+}$. Let $M$ be the type of $\iota$ and set $\Lambda:=M^{\perp_{{\Bbb L}_{K3}}}$.  
Then $M$ and $\Lambda$ are primitive $2$-elementary sublattices of ${\Bbb L}_{K3}$.
To formulate an equivariant analogue of Borcherds' conjecture, we construct two functions on ${\mathcal E}^{\iota}$. 
\par
Let $\gamma\in{\mathcal E}^{\iota}$. Let $C^{\infty}(X_{K3})_{\pm}$ be the $\pm1$-eigenspace of the $\iota$-action on $C^{\infty}(X_{K3})$. 
Since $\Delta_{\gamma}$ preserves $C^{\infty}(X_{K3})_{\pm}$, we can define $\Delta_{\gamma,\pm}:=\Delta_{\gamma}|_{C^{\infty}(X_{K3})_{\pm}}$.
Let $\zeta_{\pm}(s)$ be the spectral zeta function of $\Delta_{\gamma,\pm}$. 
The equivariant determinant of $\Delta_{\gamma}$ is defined as (cf. \cite{Bismut95})
$$
\det{}_{{\mathbf Z}_{2}}\Delta_{\gamma}(\iota):=\exp[-\zeta'_{+}(0)+\zeta'_{-}(0)].
$$
\par
Assume $X_{K3}^{\iota}\not=\emptyset$.
Let $S_{\gamma}$ be a spinor bundle on the fixed point set $(X_{K3}^{\iota}, \gamma|_{X^{\iota}_{K3}})$. 
Let $D_{S_{\gamma}}$ be the Dirac operator acting on $C^{\infty}(S_{\gamma})$. 
Let $\det D_{S_{\gamma}}^{2}$ be the regularized determinant of $D_{S_{\gamma}}^{2}$.
If $\zeta_{D_{S_{\gamma}}^{2}}(s)$ denotes the spectral zeta function of $D_{S_{\gamma}}^{2}$, then 
$$
\det D_{S_{\gamma}}^{2}:=\exp(-\zeta'_{D_{S_{\gamma}}^{2}}(0)).
$$
When $X_{K3}^{\iota}=\emptyset$, we define $\det D_{S_{\gamma}}^{2}:=1$. As an equivariant analogue of the function $\det\Delta$ on ${\mathcal E}$,
we consider the following function on ${\mathcal E}^{\iota}$.

\begin{definition}
\label{def:twisted:equiv:det}
For $\gamma\in{\mathcal E}^{\iota}$, define
$$
\tau_{\iota}^{\rm spin}(\gamma)
=
\prod_{S_{\gamma}\,\,{\rm ineffective}} (\det{}_{{\mathbf Z}_{2}}\Delta_{\gamma}(\iota))^{-2}\,\det D_{S_{\gamma}}^{2},
$$
where $S_{\gamma}$ runs over the spinor bundles on $(X_{K3}^{\iota}, \gamma|_{X^{\iota}_{K3}})$ with $\ker D_{S_{\gamma}}=0$.
\end{definition}

\par
Let us construct an automorphic function on the period space of ${\mathcal E}^{\iota}$. 
By e.g. \cite[Prop.\,3.6]{Yoshikawa08}, there exists a hyperk\"ahler structure $(I,J,K)$ on $(X_{K3},\gamma)$ such that
\begin{equation}
\label{eqn:compatible:HK:str}
\iota_{*}I=I\iota_{*},
\qquad
\iota_{*}J=-J\iota_{*},
\qquad
\iota_{*}K=-K\iota_{*}.
\end{equation}
By \cite[Lemma 3.17]{Yoshikawa08}, the pair of conjugate points of ${\mathcal M}_{\Lambda}$ defined as
\begin{equation}
\label{eqn:period:Ricci:flat:metric}
\pi_{\iota}(\gamma):=[\alpha(\omega_{J}\pm\sqrt{-1}\omega_{K})],
\qquad
\omega_{J}:=\gamma(\cdot,J(\cdot)),
\quad
\omega_{K}:=\gamma(\cdot,K(\cdot))
\end{equation}
is independent of the choice of a triplet $(I,J,K)$ satisfying \eqref{eqn:compatible:HK:str} and a marking, i.e.,
an isometry $\alpha\colon H^{2}(X_{K3},{\mathbf Z})\to{\Bbb L}_{K3}$ satisfying $\alpha(H^{2}(X,{\mathbf Z})_{+})=M$.
The pair of conjugate points $\pi_{\iota}(\gamma)\in{\mathcal M}_{\Lambda}$ is called the period of $\gamma\in{\mathcal E}^{\iota}$.

\begin{lemma}
\label{lemma:complex:conjugation}
$\pi_{\iota}^{*}\|\Psi_{\Lambda}(\cdot,2^{g-1}F_{\Lambda}+f_{\Lambda})\|$ is a well defined function on ${\mathcal E}^{\iota}$.
\end{lemma}

\begin{pf}
Write $\Lambda={\Bbb U}(-N)\oplus L$, $N\in\{1,2\}$, where $L$ is a Lorentzian lattice. Set $C_{L}:=\{x\in L\otimes{\mathbf R};\,\langle x,x\rangle>0\}$. 
Since $L$ is Lorentzian, $C_{L}$ consists of two components $C_{L}^{\pm}$ with $C_{L}^{-}=-C_{L}^{+}$. 
Then $L\otimes{\mathbf R}+\sqrt{-1}C_{L}\subset L\otimes{\mathbf C}$ is isomorphic to $\Omega_{\Lambda}$ via the map
$$
\exp_{N}\colon L\otimes{\mathbf R}+\sqrt{-1}C_{L}\ni z \to \exp_{N}(z):=( (1/N, \langle z,z\rangle/2), z) \in\Omega_{\Lambda}.
$$
Since $\exp_{N}\circ(-1_{L})=(1_{{\Bbb U}(-N)}\oplus -1_{L})\circ\exp_{N}$ and since $-1_{L}$ exchanges the components of $C_{L}$, 
$1_{{\Bbb U}(-N)}\oplus -1_{L}\in O(\Lambda)$ exchanges the components of $\Omega_{\Lambda}$. 
\par
Set $\eta:=\alpha(\omega_{J}+\sqrt{-1}\omega_{K})$. Let $z\in L\otimes{\mathbf R}+\sqrt{-1}C_{L}$ be such that $[\eta]=\exp_{N}(z)$. 
Then $\bar{\eta}=\alpha(\omega_{J}-\sqrt{-1}\omega_{K})$ and $[\bar{\eta}]=\exp_{N}(\bar{z})$.
Let $\Omega_{\Lambda}^{+}$ be the component of $\Omega_{\Lambda}$ such that $[\eta]\in\Omega_{\Lambda}^{+}$.
Let $\Omega_{\Lambda}^{-}$ be the remaining component. Then $\Omega_{\Lambda}^{-}=\overline{\Omega_{\Lambda}^{+}}$ and 
$[\bar{\eta}]\in\Omega_{\Lambda}^{-}$. 
Since $(1_{{\Bbb U}(-N)}\oplus -1_{L})[\bar{\eta}] = [(1_{{\Bbb U}(-N)}\oplus -1_{L})(\exp_{N}(\bar{z}))] = [\exp_{N}(-\bar{z})] \in \Omega_{\Lambda}^{+}$,
the point of $\Omega_{\Lambda}^{+}/O^{+}(\Lambda)$ corresponding to $[\bar{\eta}]$ is represented by $[\exp_{N}(-\bar{z})]$.
\par
For simplicity, write $\Psi(\cdot)$ for $\Psi_{\Lambda}(\cdot,2^{g-1}F_{\Lambda}+f_{\Lambda})$. Let $w$ be its weight.
Then $\Psi\in{\mathcal O}(L\otimes{\mathbf R}+\sqrt{-1}C_{L}^{+})$. By the definition of Petersson norm (cf. Section~\ref{sect:4.2}),
\begin{equation}
\label{eqn:Petersson:norm}
\|\Psi([\eta])\|^{2}=\langle\Im z,\Im z\rangle^{w} |\Psi(z)|^{2},
\qquad
\|\Psi([\bar{\eta}])\|^{2}=\langle\Im(-\bar{z}),\Im(-\bar{z})\rangle^{w} |\Psi(-\bar{z})|^{2}.
\end{equation}
Since $\Psi$ is a Borcherds product, it is expressed as a Fourier series
\begin{equation}
\label{eqn:def:Borcherds:prod}
\Psi(z) = \sum_{\lambda\in L^{\lor}} a(\lambda) \,e^{2\pi i\langle\lambda,z\rangle}
\qquad
(z\in L\otimes{\mathbf R}+\sqrt{-1}C_{L}^{+}),
\end{equation}
with $a(\lambda)\in{\mathbf Z}$.
Since $\overline{e^{2\pi i\langle l,z\rangle}}=e^{2\pi i\langle l,-\bar{z}\rangle}$ and $a(l)\in{\mathbf Z}$ for all $l\in L^{\lor}$ and hence
$\overline{\Psi(z)}=\Psi(-\bar{z})$, 
we deduce from \eqref{eqn:Petersson:norm} and \eqref{eqn:def:Borcherds:prod} that $\|\Psi([\eta])\|^{2}=\|\Psi([\bar{\eta}])\|^{2}$.
\end{pf}

Now we can formulate an equivariant analogue of Borcherds' conjecture as the coincidence of the two functions $\tau_{\iota}^{\rm spin}$
and $\pi_{\iota}^{*}\|\Psi_{\Lambda}(\cdot,2^{g-1}F_{\Lambda}+f_{\Lambda})\|$ on ${\mathcal E}^{\iota}$.
By Theorem~\ref{thm:tau:spin}, we have an affirmative answer to this problem.

\begin{theorem}
\label{thm:main:theorem:3} 
Let $\iota$ be a hyperbolic involution on $X_{K3}$ with ${\mathcal E}^{\iota}\not=\emptyset$.
Then the following equality of functions on $\pi_{\iota}^{-1}({\mathcal M}_{\Lambda}^{0}\setminus{\mathcal H}_{\Lambda})\subset{\mathcal E}^{\iota}$ holds:
$$
\tau_{\iota}^{\rm spin} = C''_{M}\,\pi_{\iota}^{*}\|\Psi_{\Lambda}(\cdot,2^{g-1}F_{\Lambda}+f_{\Lambda})\|^{-1/2},
$$
where $C''_{M}$ is a constant depending only on $M$.
\end{theorem}

\begin{pf}
Let $\gamma\in{\mathcal E}^{\iota}$. 
Let $(I,J,K)$ be a hyperk\"ahler structure on $(X_{K3},\gamma)$ with \eqref{eqn:compatible:HK:str} and set $X_{I}:=(X_{K3},I)$. 
Then $(X_{I},\iota)$ is a $2$-elementary $K3$ surface of type $M$. By \cite[Lemma 4.3]{Yoshikawa08}, we have
\begin{equation}
\label{eqn:equiv:det}
(\det{}_{{\mathbf Z}_{2}}\Delta_{\gamma}(\iota))^{-2} = \tau_{{\mathbf Z}_{2}}(X_{I},\gamma)(\iota).
\end{equation}
\par
On the other hand, it is classical that $D_{S_{\gamma}}$ can be identified with the Dolbeault operator $\sqrt{2}(\bar{\partial}+\bar{\partial}^{*})$ 
acting on $A^{0,*}(X_{K3}^{\iota},\Sigma_{S_{\gamma}})$, where $\Sigma_{S_{\gamma}}$ is the theta characteristic on $X_{K3}^{\iota}$ 
corresponding to $S_{\gamma}$. Here $X_{K3}^{\iota}$ is endowed with the complex structure induced by $\gamma$. 
Since $D_{S_{\gamma}}^{2}=2(\bar{\partial}+\bar{\partial}^{*})^{2}$ by this identification, we have 
$\zeta_{D_{S_{\gamma}}^{2}}(s)=2^{-s+1}\zeta_{\Sigma_{S_{\gamma}}}(s)$, where $\zeta_{\Sigma_{S_{\gamma}}}(s)$ is the spectral zeta function of
$(\bar{\partial}+\bar{\partial}^{*})^{2}|_{A^{0,0}(\Sigma_{S_{\gamma}})}$ as in Section~\ref{sect:10.1}. Hence
\begin{equation}
\label{eqn:det:Dirac:op}
\det D_{S_{\gamma}}^{2}
=
2^{2\zeta_{\Sigma_{S_{\gamma}}}(0)}\tau(X_{I}^{\iota},\Sigma_{S_{\gamma}};\gamma|_{X_{I}^{\iota}})^{-2}
=
2^{\frac{g-1}{3}}\tau(X_{I}^{\iota},\Sigma_{S_{\gamma}};\gamma|_{X_{I}^{\iota}})^{-2}.
\end{equation}
By \eqref{eqn:equiv:det}, \eqref{eqn:det:Dirac:op} and the definitions of $\tau^{\rm spin}_{M}$ and $\tau^{\rm spin}_{\iota}$, we get
\begin{equation}
\label{eqn:interpretation}
\tau_{\iota}^{\rm spin}(\gamma) = 2^{\frac{(g-1)N(\gamma,\iota)}{3}} \tau_{M}^{\rm spin}(X_{I},\iota),
\end{equation}
where $N(\gamma,\iota)$ is the number of ineffective spinor bundles on $(X_{K3}^{\iota},\gamma)$.
\par
Let $\gamma\in \pi_{\iota}^{-1}({\mathcal M}_{\Lambda}^{0}\setminus{\mathcal H}_{\Lambda})$. 
Since $N(\gamma,\iota)$ is a constant function on $\pi_{\iota}^{-1}({\mathcal M}_{\Lambda}^{0}\setminus{\mathcal H}_{\Lambda})$
by Theorem~\ref{thm:divisor:pullback:chi:upsilon} and \eqref{eqn:period:Ricci:flat:metric}
and since $\pi_{\iota}(\gamma)$ is given by the pair of $\overline{\pi}_{M}(X_{I},\iota)$ and its conjugate point by \eqref{eqn:period:Ricci:flat:metric}, 
the result follows from Theorem~\ref{thm:tau:spin} and \eqref{eqn:interpretation}.
\end{pf}

As in \cite{Yoshikawa08}, \cite[Sect.\,10]{Yoshikawa13}, we obtain, as a corollary of Theorem~\ref{thm:main:theorem:3}, 
an interpretation of Theorem~\ref{thm:tau:spin} on the mirror side, i.e., in terms of {\it real $K3$ surfaces}.
Recall that a pair consisting of a $K3$ surface and an {\em anti-holomorphic} involution is called a real $K3$ surface.
The set of fixed points of the involution on a real $K3$ surface is the set of real points.  
A holomorphic $2$-form on a real $K3$ surface is said to be {\em defined over ${\mathbf R}$} 
if it is mapped to its complex conjugation by the involution.
In view of mirror symmetry for $K3$ surfaces with involution \cite[Sect.\,2]{GrossWilson97}, 
the following corollary is a counter part of Theorem~\ref{thm:tau:spin} in mirror symmetry.

\begin{corollary}
\label{cor:real:K3}
Let $(Y,\sigma)$ be a real $K3$ surface. Let $M$ be the type of $\sigma$ and let $\alpha$ be a marking with $\alpha(H^{2}(X,{\mathbf Z})_{+})=M$. 
Let $\gamma$ be a $\sigma$-invariant Ricci-flat K\"ahler metric on $Y$ with volume $1$. Let $\omega_{\gamma}$ be the K\"ahler form of $\gamma$ and
let $\eta_{\gamma}$ be a holomorphic $2$-form on $Y$ defined over ${\mathbf R}$ such that 
$\eta_{\gamma}\wedge\bar{\eta}_{\gamma}=2\omega_{\gamma}^{2}$. Then
$$
\tau^{\rm spin}_{\sigma}(\gamma)
=
C''_{M}\,\|\Psi_{\Lambda}(\alpha(\Im(\eta_{\gamma})+\sqrt{-1}\omega_{\gamma}),2^{g-1}F_{\Lambda}+f_{\Lambda})\|^{-1/2},
$$
where $\Im(\eta_{\gamma})+\sqrt{-1}\omega_{\gamma}\in H^{2}(Y,{\mathbf R})+\sqrt{-1}{\mathcal K}_{Y}$ is a point of the complexified K\"ahler cone 
of the K\"ahler surface $(Y,\omega_{\gamma})$ with $B$-field $\Im(\eta_{\gamma})$.  
\end{corollary}

\begin{pf}
Set $I'=K$, $J'=-J$, $K'=I$, where $(I,J,K)$ is a hyperk\"ahler structure on $(X_{K3},\gamma)$ satisfying \eqref{eqn:compatible:HK:str} 
for $\sigma$. Then $(I',J',K')$ is a hyperk\"ahler structure on $(X_{K3},\gamma)$ such that $\sigma_{*}I'=-I'\sigma_{*}$, $\sigma_{*}J'=-J'\sigma_{*}$,
$\sigma_{*}K'=K'\sigma_{*}$. Set $Y=(X_{K3},I')$. Then $\sigma$ is an anti-holomorphic involution on $Y$. On the other hand, $\sigma$ is an
anti-symplectic holomorphic involution on $(X_{K3},K')$. 
By \cite[p.514]{GrossWilson97}, we see that $\Re(\eta_{\gamma})$ and $\Im(\eta_{\gamma})+\sqrt{-1}\omega_{\gamma}$ are a K\"ahler form and 
a holomorphic $2$-form on $(X_{K3},K')$, respectively. From this interpretation and Theorem~\ref{thm:main:theorem:3}, the result follows.
\end{pf}

Is it possible to prove Corollary~\ref{cor:real:K3} without passing through algebraic geometry? 
Such a proof will provide a new understanding of Theorem~\ref{thm:tau:spin}.



\begin{thebibliography}{99}


\bibitem{AlexeevNikulin06}
Alexeev, V., Nikulin, V.V.
\newblock
{\em Del Pezzo and $K3$ surfaces},
\newblock 
MSJ Memoires
\newblock 
{\bf 15},
\newblock
Math. Soc. Japan
\newblock 
(2006).


\bibitem{AGMV86}
Alvarez-Gaum\'e, L., Moore, G., Vafa, C.
\newblock
{\em Theta functions, modular invariance, and strings},
\newblock
Commun. Math. Phys.
\newblock
{\bf 106}
\newblock
(1986),
\newblock
1--40.


\bibitem{A-C-G-H}
Arbarello, E.; Cornalba, M.; Griffiths, P. A.; Harris, J. 
\textit{Geometry of Algebraic Curves, I.} 
\newblock
Springer, 
\newblock 
Berlin
\newblock
(1985).  


\bibitem{B-H-P-V}
Barth, W. P.; Hulek, K.; Peters, C. A. M.; Van de Ven, A.
\textit{Compact Complex Surfaces}, 
\newblock
Second edition
\newblock
Springer, 
\newblock 
Berlin
\newblock
(2004). 


\bibitem{BCOV94}
Bershadsky, M., Cecotti, S., Ooguri, H., Vafa, C.
\newblock
\textit{Kodaira-Spencer theory of gravity and exact results for quantum string amplitudes},
\newblock 
Commun. Math. Phys.
\newblock 
{\bf 165}
\newblock 
(1994),
\newblock 
311--427.


\bibitem{Bismut95}
Bismut, J.-M.
\newblock 
{\em Equivariant immersions and Quillen metrics},
\newblock 
J. Differential Geom.
\newblock 
{\bf 41}
\newblock 
(1995),
\newblock 
53--157.


\bibitem{BGS88}
Bismut, J.-M., Gillet, H., Soul\'e, C.
\newblock
{\em Analytic torsion and holomorphic determinant bundles I,II,III},
\newblock 
Commun. Math. Phys.
\newblock
{\bf 115}
\newblock 
(1988),
\newblock 
49--78, 79--126, 301--351.


\bibitem{Borcherds98}
Borcherds, R.E.
\newblock 
{\em Automorphic forms with singularities 
on Grassmanians},
\newblock 
Invent. Math.
\newblock 
{\bf 132}
\newblock 
(1998),
\newblock 
491-562.


\bibitem{Borcherds00}
Borcherds, R.E.
\newblock 
{\em Reflection groups of Lorentzian lattices},
\newblock 
Duke Math. J.
\newblock 
{\bf 104}
\newblock 
(2000),
\newblock 
319--366.


\bibitem{BKPSB98}
Borcherds, R.E., Katzarkov, L., Pantev, T., Shepherd-Barron, N.I.
\newblock 
{\em Families of $K3$ surfaces},
\newblock 
J. Algebraic Geom.
\newblock 
{\bf 7}
\newblock 
(1998),
\newblock 
183--193.


\bibitem{Borel72}
Borel, A.
\newblock 
{\em Some metric properties of arithmetic quotients of symmetric spaces and an extension theorem},
\newblock 
J. Differential Geom.
\newblock 
{\bf 6}
\newblock 
(1972),
\newblock 
543--560.


\bibitem{BostNelson86}
Bost, J.-B., Nelson, P.
\newblock
{\em Spin-$\frac{1}{2}$ bosonization on compact surfaces},
\newblock
Phy. Rev. Letters
\newblock
{\bf 57}
\newblock
(1986),
\newblock
795--798.


\bibitem{FLY08}
Fang, H., Lu, Z., Yoshikawa, K.-I.
\newblock
{\em Analytic torsion for Calabi--Yau threefolds},
\newblock
J. Differential Geom.
\newblock
{\bf 80}
\newblock
(2008),
\newblock
175--259.


\bibitem{Fay92}
Fay, J.
\newblock
{\em Kernel Functions, Analytic Torsion, and Moduli Spaces},
\newblock
Memoirs of A.M.S.
\newblock
{\bf 464}
\newblock
(1992).


\bibitem{FinashinKharlamov08}
Finashin, S., Kharlamov, V.
\newblock
{\em Deformation classes of real four-dimensional cubic hypersurfaces},
\newblock
J. Algebraic Geom.,
\newblock
{\bf 17}
\newblock
(2008),
\newblock
677--707.


\bibitem{FontanariPascolutti12}
Fontanari, C., Pascolutti, S.
\newblock
{\em An affine open covering of ${\mathcal M}_{g}$ for $g\leq5$},
\newblock
Geom. Dedicata
\newblock
{\bf 158}
\newblock
(2012),
\newblock
61--68.


\bibitem{GilletSoule92} 
Gillet, H., Soul\'e, C.
\newblock 
{\em An arithmetic Riemann-Roch theorem},
\newblock 
Invent. Math.
\newblock 
{\bf 110}
\newblock 
(1992),
\newblock 
473--543.


\bibitem{GrossWilson97}
Gross, M., Wilson, P.M.H.
\newblock
{\em Mirror symmetry via $3$-tori for a class of Calabi-Yau threefolds},
\newblock
Math. Ann.
\newblock
{\bf 309}
\newblock
(1997),
\newblock
505-531.


\bibitem{Grothendieck61}
Grothenidieck, A.
\newblock
{\em \'El\'ements de g\'eom\'etrie alg\'ebrique: II},
\newblock
Publ. Math. I.H.E.S.
\newblock
{\bf 8}
\newblock
(1961),
\newblock
5-222.


\bibitem{GrushevskySalvatiManni11}
Grushevsky, S., Salvati-Manni, R.
\newblock
{\em The superstring cosmological constant and the Schottky form in genus $5$},
\newblock
Amer. J. Math.
\newblock
{\bf 133}
\newblock
(2011),
\newblock
1007--1027.


\bibitem{Igusa67}
Igusa, J.-I.
\newblock 
{\em Modular forms and projective invariants},
\newblock 
Amer. J. Math.
\newblock 
{\bf 89}
\newblock 
(1967),
\newblock 
817-855.


\bibitem{Igusa72}
Igusa, J.-I.,
\newblock
{\em Theta Functions},
\newblock
Springer,
\newblock 
Berlin,
\newblock 
(1972).


\bibitem{Igusa82}
Igusa, J.-I.
\newblock 
{\em On the irreducibility of Schottky's divisor},
\newblock 
J. Fac. Sci. Univ. Tokyo
\newblock 
{\bf 28}
\newblock 
(1982),
\newblock 
531-545.


\bibitem{Kondo94}
Kond\=o, S.
\newblock
{\em The rationality of the moduli space of Enriques surfaces},
\newblock
Compositio Math.
\newblock
{\bf 91}
\newblock
(1994),
\newblock
159--173.


\bibitem{Ma}
Ma, S.
\newblock 
{\em Rationality of the moduli spaces of $2$-elementary $K3$ surfaces},
\newblock 
J. Algebraic Geom. 
\newblock 
{\bf 24}
\newblock 
(2015),
\newblock 
81--158. 


\bibitem{MaX00}
Ma, X.
\newblock 
{\em Submersions and equivariant Quillen metrics},
\newblock 
Ann. Inst. Fourier
\newblock 
{\bf 50}
\newblock 
(2000),
\newblock 
1539--1588.


\bibitem{MaillotRoessler09}
Maillot, V., R\"ossler, D.
\newblock
{\em Formes automorphes et th\'eor\`emes de Riemann--Roch arithm\'etiques},
\newblock
From Probability to Geometry (II),
\newblock 
Ast\'erisque
\newblock
{\bf 328}
\newblock
(2009),
\newblock
233--249.


\bibitem{M-Sc}
Martens, G.; Schreyer, F.-O. 
\textit{Line bundles and syzygies of trigonal bundles.} 
\newblock
Abh. Math. Sem. Univ. Hamburg 
\newblock
\textbf{86} 
\newblock
(1986), 
\newblock
169--189. 


\bibitem{May}
Mayer, A. L. 
\textit{Families of $K3$ surfaces.} 
\newblock
Nagoya Math. J. 
\newblock
\textbf{48} 
\newblock
(1972), 
\newblock
1--17. 


\bibitem{GIT}
Mumford, D., Fogarty, J., Kirwan, F.
\newblock
\textit{Geometric Invariant Theory.} 
\newblock
Third edition. 
\newblock
Springer, 
\newblock
(1994). 


\bibitem{Nikulin80a}
Nikulin, V.V.
\newblock 
{\em Integral symmetric bilinear forms and some of their applications},
\newblock 
Math. USSR Izv.
\newblock 
{\bf 14}
\newblock 
(1980),
\newblock 
103--167.


\bibitem{Nikulin83}
Nikulin, V.V.
\newblock 
{\em Factor groups of groups of automorphisms of hyperbolic forms 
with respect to subgroups generated by $2$-reflections},
\newblock 
J. Soviet Math.
\newblock 
{\bf 22}
\newblock 
(1983),
\newblock 
1401--1476.


\bibitem{RaySinger73}
Ray, D.B., Singer, I.M.
\newblock
{\em Analytic torsion for complex manifolds},
\newblock 
Ann. of Math.
\newblock
{\bf 98}
\newblock 
(1973),
\newblock 
154--177.


\bibitem{SD}
Saint-Donat, B.
\newblock
\textit{Projective models of $K3$ surfaces},
\newblock 
Amer. J. Math. 
\newblock
\textbf{96} 
\newblock
(1974), 
\newblock
602--639.


\bibitem{Scattone87}
Scattone, F.
\newblock 
{\em On the compactification of moduli spaces for algebraic $K3$ surfaces},
\newblock 
Memoirs of the AMS
\newblock 
{\bf 70}
\newblock 
374
\newblock
(1987).


\bibitem{Scheithauer06}
Scheithauer, N. R.
\newblock 
{\em On the classification of automorphic products
and generalized Kac--Moody algebras},
\newblock 
Invent. Math.
\newblock 
{\bf 164}
\newblock 
(2006),
\newblock
641--678.


\bibitem{Schofer09}
Schofer, J.
\newblock
{\em Borcherds forms and generalizations of singular moduli},
\newblock
J. reine angew. Math.
\newblock
{\bf 629}
\newblock
(2009),
\newblock
1--36.


\bibitem{Shah80}
Shah, J. 
\newblock
\textit{A complete moduli space for $K3$ surfaces of degree $2$},
\newblock 
Ann. of Math. 
\newblock
\textbf{112} 
\newblock
(1980), 
\newblock
485--510. 


\bibitem{Skoruppa08}
Skoruppa, N.-P.
\newblock
{\em Jacobi forms of critical weight and Weil representations},
\newblock
Modular forms on Schiermonnikoog,
\newblock
Cambridge Univ. Press,
\newblock
Cambridge,
\newblock
(2008),
\newblock
239--266.


\bibitem{TeixidoriBigas88}
Teixidor i Bigas, M.
\newblock
{\em The divisor of curves with a vanishing theta-null},
\newblock
Compositio Math.
\newblock
{\bf 66}
\newblock
(1988),
\newblock
15--22.


\bibitem{Tsuyumine91}
Tsuyumine, S.
\newblock
{\em Thetanullwerte on a moduli space of curves and hyperelliptic loci},
\newblock
Math. Zeit.
\newblock
{\bf 207}
\newblock
(1991),
\newblock
539--568.


\bibitem{Wentworth12}
Wentworth, R.A.
\newblock
{\em Gluing formulas for determinants of Dolbeault laplacians on Riemann surfaces},
\newblock
Commun. Anal. Geom.
\newblock
{\bf 20}
\newblock
(2012),
\newblock
455--499.


\bibitem{Yau78}
Yau, S.-T.
\newblock 
{\em On the Ricci curvature of a compact K\"ahler manifold and the complex Monge-Amp\`ere Equation, I},
\newblock 
Commun. Pure Appl. Math.
\newblock
{\bf 31}
\newblock 
(1978),
\newblock 
339--411


\bibitem{Yoshikawa04}
Yoshikawa, K.-I.
\newblock 
{\em $K3$ surfaces with involution, equivariant analytic torsion, and automorphic forms on the moduli space},
\newblock 
Invent. Math.
\newblock 
{\bf 156}
\newblock 
(2004),
\newblock 
53--117.


\bibitem{Yoshikawa08}
Yoshikawa, K.-I.
\newblock 
{\em Real $K3$ surfaces without real points, equivariant determinant of the Laplacian, and the Borcherds $\Phi$-function},
\newblock 
Math. Zeit.
\newblock 
{\bf 258}
\newblock
(2008),
\newblock
213--225.


\bibitem{Yoshikawa13}
Yoshikawa, K.-I.
\newblock
{\em $K3$ surfaces with involution, equivariant analytic torsion, and automorphic forms on the moduli space II},
\newblock
J. reine angew. Math. 
\newblock
{\bf 677}
\newblock
(2013),
\newblock
15--70.


\bibitem{Yoshikawa12}
Yoshikawa, K.-I.
\newblock 
{\em $K3$ surfaces with involution, equivariant analytic torsion, and automorphic forms on the moduli space III},
\newblock 
Math. Zeit.
\newblock
{\bf 272}
\newblock
(2012),
\newblock
175--190.


\bibitem{Yoshikawa14}
Yoshikawa, K.-I.
\newblock
{\em Analytic torsion for Borcea-Voisin threefolds},
\newblock
``Geometry, Analysis and Probability'' in Honor of Jean-Michel Bismut,
\newblock
Bost, J.-B., Hofer, H., Labourie, F., Le Jan, Y., Ma, X., Zhang, W. (Eds),
\newblock
Progress in Math.
\newblock
{\bf 310}
\newblock
(2017),
\newblock
279--361.


\end{thebibliography}
\end{document}